\newtheorem{theorem}{Theorem}[section]
\newtheorem{lemma}[theorem]{Lemma}
\newtheorem{thm}[theorem]{Theorem}
\newtheorem{prop}[theorem]{Proposition}
\newtheorem{rem}[theorem]{Remark}
\newtheorem{coro}[theorem]{Corollary}
\newtheorem{defn}[theorem]{Definition}
\newtheorem{con/que}[theorem]{Conjecture/Question}
\newcommand{\ra}{\rightarrow}
\newcommand{\mo}{\mathcal{O}}
\newcommand{\mf}{\mathcal{F}}
\newcommand{\mg}{\mathcal{G}}
\newcommand{\ma}{\mathcal{A}}
\newcommand{\me}{\mathcal{E}}
\newcommand{\mi}{\mathcal{I}}
\newcommand{\mk}{\mathcal{K}}
\newcommand{\mext}{\mathbb{E}\mathtt{x}\mathtt{t}}
\newcommand{\mt}{\mathcal{T}}
\newcommand{\mn}{\mathcal{N}}
\newcommand{\cl}{\mathcal{L}}
\newcommand{\ts}{\textbf{S}}
\newcommand{\tb}{\mathtt{B}}
\newcommand{\tw}{\textbf{W}}
\newcommand{\mc}{\mathcal{C}}
\newcommand{\mr}{\mathcal{R}}
\newcommand{\E}{\mathscr{E}}
\newcommand{\F}{\mathscr{F}}
\newcommand{\G}{\mathscr{G}}
\newcommand{\hh}{\mathscr{H}}
\newcommand{\A}{\mathscr{A}}
\newcommand{\B}{\mathscr{B}}
\newcommand{\I}{\mathscr{I}}
\newcommand{\Hom}{\operatorname{Hom}}
\newcommand{\Ext}{\operatorname{Ext}}
\newcommand{\Pic}{\operatorname{Pic}}
\newcommand{\Tor}{\operatorname{Tor}}
\def\<{\langle}
\def\>{\rangle}
\newcommand{\z}{\Theta}
\newcommand{\dzd}{D_{\Theta_{dH}}}
\newcommand{\ml}{M(L,0)}
\newcommand{\md}{M(dH,0)}
\newcommand{\wrn}{W(r,0,n)}
\newcommand{\wrr}{W(r,0,r)}
\newcommand{\kwrn}{\mathfrak{W}(r,0,n)}
\newcommand{\lcl}{\lambda_{c^r_n}(L)}
\newcommand{\lrl}{\lambda_r(L)}
\newcommand{\crn}{c^r_n}
\newcommand{\cb}{\textbf{CB}}
\newcommand{\ls}{|L|}
\newcommand{\p}{\mathbb{P}}
\newcommand{\bz}{\mathbb{Z}}
\newcommand{\bc}{\mathbb{C}}
\newcommand{\km}{\mathfrak{M}}
\newcommand{\kw}{\mathfrak{W}}
\newcommand{\kp}{\mathfrak{P}}
\newcommand{\ks}{\mathfrak{S}}
\newcommand{\ke}{\mathfrak{E}}
\newcommand{\kz}{\mathfrak{Z}}
\begin{document}
\fontsize{12pt}{14pt} \textwidth=14cm \textheight=21 cm
\numberwithin{equation}{section}
\title{Strange duality on rational surfaces II: higher rank cases.}
\author{Yao Yuan}
\address{Yau Mathematical Sciences Center, Tsinghua University, 100084, Beijing, P. R. China}
\email{yyuan@mail.tsinghua.edu.cn; yyuan@math.tsinghua.edu.cn}
\subjclass[2000]{Primary 14D05}

\begin{abstract} We study Le Potier's strange duality conjecture on a rational surface.  We focus on the strange duality map $SD_{c_n^r,L}$ which involves the moduli space of rank $r$ sheaves with trivial first Chern class and second Chern class $n$, and the moduli space of 1-dimensional sheaves with determinant $L$ and Euler characteristic 0.   We show there is an exact sequence relating the map $SD_{c_r^r,L}$ to $SD_{c^{r-1}_{r},L}$ and $SD_{c_r^r,L\otimes K_X}$ for all $r\geq1$ under some conditions on $X$ and $L$ which applies to a large number of cases on $\p^2$ or Hirzebruch surfaces .
Also on $\mathbb{P}^2$ 
we show that for any $r>0$, $SD_{c^r_r,dH}$ is an isomorphism for $d=1,2$, injective for $d=3$ and moreover $SD_{c_3^3,rH}$ and $SD_{c_3^2,rH}$ are injective.   At the end we prove that the map $SD_{c_n^2,L}$ ($n\geq2$) is an isomorphism for $X=\mathbb{P}^2$ or Fano rational ruled surfaces and $g_L=3$, and hence so is $SD_{c_3^3,L}$ as a corollary of our main result.   

\end{abstract}
\maketitle
\tableofcontents
\section{Introduction.}
\subsection{History \& Set-up}\qquad

Strange duality conjecture was at first formulated for moduli spaces of vector bundles over curves (see \cite{Bea},\cite{DT}) and has been proved about ten years before (\cite{Bel1},\cite{Bel2},\cite{MO1}).  Under some suitable conditions, this conjecture can also be formulated for moduli spaces of semistable sheaves over surfaces.  However, there is no general extension to surfaces so far.  Mainly there are two formulations for surfaces, one of which is due to Le Potier (see \cite{LPst}, \cite{Da2} or \S2.4 in \cite{GY}) for simply connected surfaces, while the other is due to Marian-Oprea for K3 and Abelian surfaces (see \cite{MO2} or \cite{MO4}).  Both formulations have been studied by many people and the conjecture has been proven true for a number of cases (\cite{Abe},\cite{Abe2},\cite{BMOY},\cite{Da1},\cite{Da2},\cite{GY},\cite{MO3},\cite{MO4},\cite{MO5},\cite{Yuan1},\cite{Yuan5},\cite{Yuan6}).  In spite of that, on strange duality for surfaces what we have known is still little. 

In this paper, we will work on Le Potier's strange duality conjecture.  Let us briefly review the set-up.  More details can be found in \cite{Da2}, \cite{LPst}, \cite{MO2} or \S2 in \cite{GY}. 

Let $X$ be any smooth projective scheme over $\bc$.  Let $u$ and $c$ be two elements in the Grothendieck group $K(X)$ of coherent sheaves on $X$, assume moreover $u$ is orthogonal to $c$ with respect to the Euler characteristic, i.e. the flat tensor $\mf_u\otimes^{L} \mf_c$ is of Euler characteristic zero for any $\mf_u$ ($\mf_c$, resp.) a sheaf in class $u$ ($c$, resp.).  Denote by $M(u)$ ($M(c)$, resp.) the moduli space of semistable sheaves of class $u$ ($c$, resp.).  We ask the determinant line bundle $\lambda_u(c)$ ($\lambda_c(u)$, resp.) associated to $c$ ($u$, resp.) on $M(u)$ ($M(c)$, resp.) is well-defined.  Notice that if there are strictly semistable sheaves, we will need a slightly stronger condition than $\chi(\mf_u\otimes^{L} \mf_c)=0$ to define $\lambda_u(c)$ and $\lambda_c(u)$.  We refer to \S 2 in \cite{GY} or Chapter 8 in \cite{HL} for the explicit definition of determinant line bundles.  The definition in \cite{HL} is dual to what we use in this paper.

The locus $\mathscr{D}_{c,u}:=\{(\mf_c,\mf_u)\in M(c)\times M(u)|H^0(\mf_c\otimes \mf_u)\neq 0\}$ is closed in $M(c)\times M(u)$.  If $\mathscr{D}$ is a divisor of the line bundle $\lambda_c(u)\boxtimes\lambda_u(c)$ (not always the case on surfaces), then the section induced by $\mathscr{D}_{c,u}$ defines the following \emph{strange duality map} up to scalars.
\begin{equation}\label{map}SD_{c,u}:H^0(M(c),\lambda_c(u))^{\vee}\ra H^0(M(u),\lambda_u(c)).\end{equation}
Strange duality conjecture says that $SD_{c,u}$ is an isomorphism.

In Le Potier's formulation (\cite{LPst} p.9), the condition ($\bigstar$) as follows is satisfied, which assures that $\mathscr{D}_{c,u}$ is a divisor of the line bundle $\lambda_c(u)\boxtimes\lambda_u(c)$ and hence the map $SD_{c,u}$ can be defined.

($\bigstar $) \emph{for all semistable sheaves $\mf$ of class $c$ and semistable sheaves $\mg$ of class $u$ on $X$, $\Tor^i(\mf,\mg)=0$ $\forall~i\ge 1$, and $H^2(X,\mf\otimes \mg)=0$.}

In this paper, we let $X$ be a rational surface over $\mathbb{C}$ and specify $u=u_{L}$ and $c=c^r_n$ (def. see \S\ref{NaP} (4) (5)).  It is easy to check that ($\bigstar$) is fulfilled.  We want to study whether $SD_{c,u}$ in (\ref{map}) is an isomorphism.  We also write $SD_{c^r_n,L}$ for our specified $c=c^r_n$ and $u=u_L$, in particular $SD_{r,L}:=SD_{c_r^r,L}$.  

\subsection{Results.}\qquad

Our results are of two parts.  In the first part, we construct a bridge from maps $SD_{c_r^{r-1},L}$ and $SD_{r,L\otimes K_X}$ to $SD_{r,L}$.  The main result contains Proposition \ref{hrmain} and Proposition \ref{beta1}, and we prove the following three theorems as applications to our main result.
\begin{thm}[Corollary \ref{beta2}]\label{introbeta2}Let $(X,L)$ be one of the following cases.
\begin{enumerate}
\item $X=\p^2$, $L=dH$ for $d>0$.
\item $X=\p(\mo_{\p^1}\oplus\mo_{\p^1}(e)):=\Sigma_e$ with $F$ the fiber class and $G$ the section such that $G.G=-e$, then 
$L=aG+bF$ are one of the following
\begin{itemize}
\item $\min\{a,b\}\leq 1$;
\item $\min\{a,b\}\geq 2$, $e\neq 1$, $L$ ample;
\item $\min\{a,b\}\geq 2$, $e=1$, $b\geq a+[a/2]$ with $[a/2]$ the integral part of $a/2$.
\end{itemize}
\end{enumerate}
Then we have for all $r\geq2$
\[\left.\begin{array}{r}SD_{c_r^{r-1},L}\text{ is injective (surjective, an isomorphism, resp.)}\\
SD_{r,L\otimes K_X}\text{ is injective (surjective, an isomorphism, resp.)}\end{array}\right\}\Rightarrow
\text{So is }SD_{r,L}.\] 
\end{thm}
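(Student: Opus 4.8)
The plan is to turn the whole statement into a single application of the short five lemma. By the main result, Propositions \ref{hrmain} and \ref{beta1} produce, whenever $(X,L)$ meets their hypotheses, a commutative diagram with two exact rows in which the top row is assembled from the source spaces $H^0(M(c_\bullet),\lambda_{c_\bullet}(u_\bullet))^{\vee}$, the bottom row from the target spaces $H^0(M(u_\bullet),\lambda_{u_\bullet}(c_\bullet))$, and the three vertical arrows are $SD_{c_r^{r-1},L}$, $SD_{r,L}$ and $SD_{r,L\otimes K_X}$ read left to right, so that $SD_{r,L}$ is the central column. Granting this diagram, the three implications are purely homological and simultaneous: by the injective, surjective, and isomorphism forms of the short five lemma, if the two outer arrows $SD_{c_r^{r-1},L}$ and $SD_{r,L\otimes K_X}$ are both injective (resp.\ both surjective, resp.\ both isomorphisms), then the central arrow $SD_{r,L}$ is injective (resp.\ surjective, resp.\ an isomorphism). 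The commutativity and the exactness of the rows are exactly what Propositions \ref{hrmain} and \ref{beta1} supply, so no further geometry is needed at this step.

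Consequently the real content of the corollary is to verify that each pair $(X,L)$ in the list actually satisfies the numerical and positivity hypotheses required to invoke those two propositions. First I would unwind those hypotheses into explicit conditions on $L$ (and on $L\otimes K_X$ when it is effective): essentially that a general curve in the relevant linear systems be integral, so that the moduli spaces of one-dimensional sheaves and their theta bundles behave as the propositions demand, together with the cohomological vanishing that makes the horizontal map $\beta$ controlled by Proposition \ref{beta1} surjective with the predicted kernel. For $X=\p^2$, $L=dH$, this reduces to intersection inequalities in $d$ that are immediate. The cases $d\leq 3$ are degenerate and must be treated separately: there $L\otimes K_X=(d-3)H$ is ineffective, the moduli space $M(u_{L\otimes K_X})$ is empty, the corresponding outer term of the ladder vanishes, and the hypothesis on $SD_{r,L\otimes K_X}$ (a map between zero spaces) is vacuous; the diagram then collapses to an isomorphism $SD_{r,L}\cong SD_{c_r^{r-1},L}$, which gives the stated implication directly.

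For the Hirzebruch surfaces $\Sigma_e$ the verification is the case split dictated by the list, using $L\cdot F=a$ and $L\cdot G=b-ae$. When $\min\{a,b\}\leq 1$ the system $|L|$ is simple enough that integrality and the vanishing conditions can be checked by hand. When $\min\{a,b\}\geq 2$ I would separate the ample case with $e\neq 1$, where ampleness of $L$ already yields all the positivity the propositions need, from the case $e=1$, where the presence of the $(-1)$-curve $G$ on $\Sigma_1$ forces the stronger bound $b\geq a+[a/2]$; I expect this inequality is precisely what guarantees that $L$ and the twists entering the exact sequence restrict positively enough to $G$ and to the fibers $F$ for the hypotheses of Proposition \ref{beta1} to hold. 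Matching these Chern-class and positivity inequalities term by term against the precise statements of \ref{hrmain} and \ref{beta1} is the main obstacle, since the five-lemma step is formal; in particular the $\Sigma_1$ bound $b\geq a+[a/2]$ is the delicate point and is where I would expect the argument to require the most care.
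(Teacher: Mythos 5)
Your first paragraph is, in substance, exactly the paper's argument made explicit. Proposition \ref{hrmain} supplies the commutative ladder (\ref{rsddim}) whose top row is the (right-exact) dual of taking $H^0$ of (\ref{ses}) and whose bottom row is the (left-exact) $H^0$ of (\ref{zes}), with vertical maps $\alpha_{\ts_r}$, $SD_{r,L}$, $\beta_D$; the identifications $\alpha_{\ts_r}\circ\delta^{*\vee}=SD_{c_r^{r-1},L}$ (where $\delta^{*\vee}$ is an isomorphism, by (\ref{ws1}) and (\ref{ad1})) and $j_r\circ\beta_D=SD_{r,L\otimes K_X}$ (where $j_r$ is only injective, Proposition \ref{beta1}) let you relabel the outer columns as you do. Two technical points you gloss over but which do work out: postcomposing the right column with $j_r$ preserves exactness of the bottom row at the middle term, and injectivity of $j_r$ is precisely what makes ``$SD_{r,L\otimes K_X}$ surjective $\Rightarrow\beta_D$ surjective'' go through; also the rows are not short exact (the top row has no left zero, the bottom no right zero), so this is a four-lemma-style chase rather than the short five lemma, but all three forms of the chase are valid in this configuration.

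Where your proposal deviates from the paper --- and where it has a genuine gap --- is the second half, which you yourself call ``the real content.'' In the paper there is no case-by-case geometric verification at all: the list of pairs $(X,L)$ in the statement is exactly the list of Theorem 3.14 in \cite{Yuan6}, which is precisely the assertion that these pairs satisfy condition $\cb$ or $\cb'$, and Proposition \ref{beta1} is formulated under exactly those hypotheses; so the paper's proof is one citation plus the (implicit) diagram chase. Your plan to re-derive the conditions by hand --- integrality of general curves, vanishing statements, positivity of restrictions to $G$ and $F$ on $\Sigma_1$ --- is never actually carried out: statements like ``I expect this inequality is precisely what guarantees\dots'' are speculation, and completing them would amount to reproving Theorem 3.14 of \cite{Yuan6}, a substantial piece of work absent from the proposal. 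There is also a small factual error in your degenerate-case discussion: for $X=\p^2$, $d=3$, one has $L\otimes K_X\cong\mo_X$ and $M(u_{L\otimes K_X})$ is not empty but a single point (the zero sheaf), with $SD_{r,\mo_X}$ an isomorphism $\bc\to\bc$ (Remark \ref{zmap}); and in any case none of that case analysis is needed to prove the implication, which follows from the chase regardless of whether the outer spaces vanish.
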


\begin{thm}[Theorem \ref{rank3}]\label{introrank3}Let $(X,L)$ be as in Theorem \ref{introbeta2} and let $r=3=n$.  If $H^0(L\otimes K_X)=0$, then
\begin{enumerate}\item $H^0(W(3,0,3),\lambda_{3}(L\otimes K_X))=0$;

\item $SD_{3,L}$ is an isomorphism.
\end{enumerate}
\end{thm}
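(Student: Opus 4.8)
The plan is to isolate part (1) as the real content and then derive part (2) from it through the bridge of Proposition~\ref{hrmain}. The governing remark is that $H^0(L\otimes K_X)=0$ means $|L\otimes K_X|=\emptyset$, so no curve can carry a $1$-dimensional sheaf with determinant $L\otimes K_X$; hence $\mlk=\emptyset$ and the target $H^0(\mlk,\lambda_{u_{L\otimes K_X}}(c_3^3))$ of $SD_{c_3^3,L\otimes K_X}$ is $0$. Thus $SD_{c_3^3,L\otimes K_X}$ is an isomorphism as soon as its \emph{source} $H^0(\whh,\lambda_3(L\otimes K_X))^\vee$ also vanishes, and this vanishing is exactly part (1).

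For part (1) I would not argue on $\whh=W(3,0,3)$ directly --- it is nonempty --- but push the computation onto the empty $1$-dimensional side. I would run the short exact sequence underlying Proposition~\ref{hrmain}, which for a line bundle $M$ links $H^0(\whh,\lambda_3(M))^\vee$, $H^0(\whh,\lambda_3(M\otimes K_X))^\vee$ and a rank-$2$ determinant-section space attached to $M$. Under the hypotheses of Theorem~\ref{introbeta2} one has $H^0(L\otimes K_X^{\otimes j})=0$ for every $j\ge 1$ (immediate on $\p^2$, a routine check on the Hirzebruch cases), so at each stage $M=L\otimes K_X^{\otimes j}$ the associated $1$-dimensional moduli are empty and both the rank-$2$ and (at the bottom, on $X^{[3]}$) rank-$1$ contributions vanish. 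Iterating the sequence then expresses $H^0(\whh,\lambda_3(L\otimes K_X))^\vee$ through the far term $H^0(\whh,\lambda_3(L\otimes K_X^{\otimes j}))^\vee$ with $j\gg 0$, which vanishes by a positivity/Euler-characteristic estimate (for $M$ sufficiently negative $\lambda_3(M)$ carries no sections), giving $H^0(\whh,\lambda_3(L\otimes K_X))=0$. A cleaner alternative, if the paper's Verlinde-type dimension formula for $H^0(\whh,\lambda_3(M))$ is at hand, is simply to evaluate it at $M=L\otimes K_X$ and read off $0$ from $\mlk=\emptyset$.

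Granting part (1), part (2) is a diagram chase. With both ends of $SD_{c_3^3,L\otimes K_X}$ now zero, this map is trivially an isomorphism; inserting it into the commutative ladder with exact rows of Propositions~\ref{hrmain} and~\ref{beta1} for $r=3$ annihilates the left-hand column, so the horizontal maps identify $SD_{3,L}=SD_{c_3^3,L}$ with $SD_{c_3^2,L}$. The latter is an isomorphism under the present hypotheses --- its injectivity is the rank-$2$ statement available for these $(X,L)$, and the equality of the two section-space dimensions is a Riemann--Roch (numerical strange duality) computation --- so Corollary~\ref{beta2} yields that $SD_{3,L}$ is an isomorphism. (Equivalently, one first deduces injectivity of $SD_{3,L}$ from the injective form of Corollary~\ref{beta2} and then upgrades to an isomorphism using the dimension equality.)

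The main obstacle is part (1): the honest work lies in making the transfer to the empty $1$-dimensional moduli precise --- controlling the intermediate rank-$2$ and rank-$1$ summands produced by the exact sequence and securing the base case $H^0(\whh,\lambda_3(L\otimes K_X^{\otimes j}))=0$ for $j\gg 0$ (e.g.\ by showing the relevant Euler characteristic and all higher cohomology vanish). Once this bookkeeping is in place, the emptiness $\mlk=\emptyset$ does the geometric work and part (2) is formal, modulo the rank-$2$ isomorphism $SD_{c_3^2,L}$ fed in at the last step.
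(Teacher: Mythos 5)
Your skeleton is the same as the paper's proof of Theorem \ref{rank3}: for part (1) you iterate the exact sequence (\ref{ses}) twisted by $L\otimes K_X^{\otimes j}$ (using $-K_X$ effective to get $H^0(L\otimes K_X^{\otimes j})=0$ for all $j\geq1$), kill the restriction term on $\ts_3$ at each stage, and finish by observing that $\lambda_3(L\otimes K_X^{\otimes j})\cong\lambda_3(L)\otimes\lambda_3(K_X^{-1})^{\otimes -j}$ has no sections for $j\gg0$ since $\lambda_3(K_X^{-1})$ is the effective line bundle of the divisor $\ts_3$; for part (2) you run the ladder of Proposition \ref{hrmain} with the column carrying $L\otimes K_X$ annihilated and feed in the rank-$2$ isomorphism. (Minor slip: it is the right-hand column of (\ref{rsddim}), not the left-hand one, that dies; then $f_L$ and $g_3^{\vee}$ become isomorphisms identifying $SD_{3,L}$ with $\alpha_{\ts_3}$, and $\alpha_{\ts_3}\circ\delta^{*\vee}=SD_{c_3^2,L}$ by (\ref{ad1}).)

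There is, however, a genuine gap in how you ground the key step. The vanishing of the ``rank-$2$ contribution,'' i.e.\ of $H^0(\ts_3,\lambda_3(L\otimes K_X^{\otimes j})|_{\ts_3})\cong H^0(W(2,0,3),\lambda_{c_3^2}(L\otimes K_X^{\otimes j}))$ (the identification is Remark \ref{bir}), is \emph{not} a formal consequence of the emptiness of the $1$-dimensional moduli spaces. The implication ``empty $1$-dimensional side $\Rightarrow$ no sections of the determinant bundle on the sheaf side'' is itself a strange-duality--type vanishing theorem: it is exactly what part (1) asserts at rank $3$, and at rank $2$ it is Proposition \ref{rank2}(1), which the paper deduces from Abe's Theorem 6.5 in \cite{Abe} together with Lemma \ref{mustable} --- a substantive input, not bookkeeping. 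Justifying it by the same emptiness heuristic you are trying to establish at rank $3$ is circular. Nor can you repair this by descending once more to ``rank-$1$ contributions on $X^{[3]}$'': the divisor $\ts_r$ and the sequence (\ref{ses}) only exist on $W(r,0,r)$ (the construction in Proposition \ref{ssos} uses $\chi(\mf)=0$, i.e.\ $n=r$), whereas the rank-$2$ spaces occurring here are $W(2,0,3)$ with $n=3\neq 2$; cf.\ Remark \ref{lack} on the absence of such a bridge. The same issue recurs in part (2): the isomorphism $SD_{c_3^2,L}$ is not ``injectivity plus a Riemann--Roch dimension count'' --- it is Proposition \ref{rank2}(2), i.e.\ Abe's Theorem 7.8. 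Once both rank-$2$ facts are cited from Proposition \ref{rank2}, your argument closes and coincides with the paper's; as written, the two deep inputs are treated as free, and the proof does not stand on its own.
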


\begin{thm}[Theorem \ref{rank3a}]\label{introrank3a}Let $(X,L)$ be as in Theorem \ref{introbeta2} and let $r=3=n$.  $X$ and $L$ be as follows.
\begin{enumerate}
\item $X=\p^2$ or $\Sigma_e$ with $e=0,1$.  $L=-K_X$.

\item $X=\Sigma_e$ with $e=0,1$. $L=-K_X+F$ with $F$ the fiber class.
\end{enumerate}
Then $SD_{3,L}$ is an isomorphism.
\end{thm}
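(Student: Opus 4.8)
The plan is to feed each listed pair $(X,L)$ into the bridge of our main result (Proposition~\ref{hrmain} and Proposition~\ref{beta1}, packaged as Corollary~\ref{beta2}/Theorem~\ref{introbeta2}), thereby reducing the assertion that $SD_{3,L}$ is an isomorphism to the same assertion for the two flanking maps $SD_{c_3^2,L}$ and $SD_{3,L\otimes K_X}$. The first point to check is that every pair here already lies in the list of Theorem~\ref{introbeta2}, so that the bridge is legitimately available: writing $-K_X=2G+(e+2)F$ on $\Sigma_e$ (and $-K_X=3H$ on $\p^2$), one verifies directly that $L=-K_X$ and $L=-K_X+F$ meet the ample, respectively $b\ge a+[a/2]$, constraints there for $e=0,1$. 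What is genuinely new relative to Theorem~\ref{introrank3} is that now $H^0(L\otimes K_X)\neq 0$ in every case (indeed $L\otimes K_X=\mo_X$ in Case (1) and $L\otimes K_X=F$ in Case (2)), so the vanishing input of Theorem~\ref{introrank3}(1) is no longer at our disposal and the twisted term $SD_{3,L\otimes K_X}$ must be handled by hand.

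First I would dispose of the twisted flank. In Case (1) we have $L\otimes K_X=\mo_X$; since a nonzero $1$-dimensional sheaf must have nontrivial support, the moduli space of $1$-dimensional sheaves with determinant $\mo_X$ is empty, so the target of $SD_{3,\mo_X}$ vanishes, and I would show that the corresponding source vanishes as well, so that the bridge exact sequence collapses and identifies $SD_{3,-K_X}$ with $SD_{c_3^2,-K_X}$. In Case (2) we have $L\otimes K_X=F$, whose linear system is the ruling pencil of rational curves; here $M(u_F)$ is the one-parameter family of rigid sheaves $\mo_{\p^1}(-1)$ on the fibres, and I would prove that $SD_{3,F}$ is an isomorphism directly from this explicit description, exactly in the spirit of the rank-$3$, $d=1$ base computation.

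Next I would treat the rank-$2$ flank $SD_{c_3^2,L}$. These $L$ have low genus, namely $g_L=1$ in Case (1) and $g_L=2$ in Case (2), and $-K_X$ ample forces $e=0,1$, i.e. $X$ del Pezzo; the relevant higher cohomology on $X$ then vanishes by Kodaira/Fano considerations. I would establish the rank-$2$ strange duality for these pairs using the available rank-$2$ results for low genus (the $g_L=3$ statement quoted in the abstract being the adjacent case), concretely by checking that the theta-section is nonzero so that $SD_{c_3^2,L}$ is injective, and then matching $h^0(M(c_3^2),\lambda_{c_3^2}(L))$ with $h^0(M(u_L),\lambda_{u_L}(c_3^2))$ via a Verlinde-type dimension count that is tractable precisely because $X$ is Fano and $g_L\le 2$.

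The main obstacle will be this rank-$2$ flank together with the degenerate twist. Since the hypothesis $H^0(L\otimes K_X)=0$ of Theorem~\ref{introrank3} genuinely fails, I cannot simply quote its vanishing; instead I must argue \emph{ad hoc} that the $\mo_X$- and $F$-twisted maps are isomorphisms and, more seriously, that the rank-$2$ map $SD_{c_3^2,L}$ is an isomorphism in these low-genus del Pezzo situations, where the space of sections must be computed rather than forced to vanish. Once both flanks are shown to be isomorphisms, the five-lemma applied to the commutative bridge diagram of Theorem~\ref{introbeta2} yields that $SD_{3,L}$ is an isomorphism, settling both cases.
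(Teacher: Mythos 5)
Your skeleton coincides with the paper's: reduce $SD_{3,L}$ through the bridge of Proposition \ref{hrmain}/Corollary \ref{beta2} to the rank-two flank $SD_{c_3^2,L}$ and the twisted flank $SD_{3,L\otimes K_X}$, and your check that the listed pairs satisfy the hypotheses of Theorem \ref{introbeta2} is correct. The gap is at the rank-two flank, which is where the real content lies. The paper does not prove that $SD_{c_3^2,L}$ is an isomorphism for these pairs; it \emph{quotes} Theorem 1.4 (1)(2) of \cite{GY}, which asserts exactly that $SD_{c_n^2,L}$ is an isomorphism for $n\geq 3$, under suitable polarization, for $L=-K_X$ on $\p^2,\Sigma_0,\Sigma_1$ and $L=-K_X+F$ on $\Sigma_0,\Sigma_1$ (this citation is also why the body statement, Theorem \ref{rank3a}, carries the caveat ``under suitable polarization,'' which your statement drops). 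Your substitute --- ``check the theta-section is nonzero so that $SD_{c_3^2,L}$ is injective, then match dimensions by a Verlinde-type count'' --- is not a proof: nonvanishing of the theta section only says the map is not identically zero, which yields injectivity only when the source is one-dimensional, and here it is not; and the equality $h^0(W(2,0,n),\lambda_{c_n^2}(L))=h^0(\ml,\z_L^2(n))$ for $L=-K_X$ or $-K_X+F$ is precisely the hard computation occupying the separate paper \cite{GY}, not a routine verification. Without invoking that result, this step is a genuine hole.

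Your handling of the Case (1) twist is also wrong as stated. For $L=-K_X$ one has $u_{\mo_X}=0$, hence $\lambda_3(\mo_X)\cong\mo_{W(3,0,3)}$, so the source of $SD_{3,\mo_X}$ is $H^0(\mo_{W(3,0,3)})^\vee\cong\bc$, not $0$; and $M(\mo_X,0)$ is a single reduced point (the zero sheaf), so the target is $\bc$ as well (Remark \ref{zmap}). Likewise the bottom-right term of diagram (\ref{rsddim}) is $H^0(D_{\z_L},\z_L^3(3)|_{D_{\z_L}})$ with $D_{\z_{-K_X}}\cong\ls$ and $\z_L^3(3)|_{D_{\z_L}}\cong\mo_{\ls}$, again $\bc$. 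So nothing collapses by vanishing; what must be shown is that $\beta_D\colon\bc\to\bc$ is nonzero, which the paper does in Lemma \ref{like} by exhibiting $\mf\in W(3,0,3)$ and $\mo_C\in D_{\z_{-K_X}}$ with $H^0(\mf\otimes\mo_C)=0$, equivalently $\mf\notin\ts_3$; such $\mf$ exist by Corollary \ref{gevan}. Your Case (2) twist, by contrast, is a correct claim but needs no ad hoc construction: since $H^0(F\otimes K_X)=0$ and $(X,F)$ is in the list of Corollary \ref{beta2}, the paper's own Theorem \ref{rank3} already gives that $SD_{3,F}$ is an isomorphism (whereas your proposed model, the rank-three $d=1$ computation on $\p^2$, rests on the Fourier transform and is unavailable on $\Sigma_e$).
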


Especially for $X=\p^2$, the results can be improved as follows.
\begin{thm}[Theorem \ref{p2}]\label{introp2}Let $X=\p^2$, $L=dH$.  Then
\begin{enumerate}
\item $SD_{r,dH}$ is an isomorphism for $d=1,2$ and $r>0$;
\item $SD_{c_r^{r-1},dH}$ is an isomorphism for $d=1,2$ and $r>1$;
\item $SD_{3,rH}$ is injective for $r>0$;
\item $SD_{c_3^2,rH}$ is injective for all $r>0$. 
\end{enumerate}
\end{thm}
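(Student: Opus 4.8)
The engine of the proof is the inductive reduction supplied by Corollary \ref{beta2} and its off-diagonal analogue, specialized to $X=\p^2$. The decisive feature is that $K_{\p^2}=-3H$, so the twist $L\otimes K_X$ appearing in the main exact sequence is simply $(d-3)H$: twisting by $K_X$ lowers the degree by $3$. All four assertions are then obtained by induction, either on the rank (with the degree held at $d\in\{1,2\}$, for (1) and (2)) or on the degree $m$ of $L=mH$ (with the rank-side class held fixed, for (3) and (4)), the recursion terminating once the $K_X$-twist becomes non-effective. At every step one checks that $(\p^2,dH)$, $d>0$, lies in the list of Corollary \ref{beta2}, so the Corollary and the exact sequence of Proposition \ref{hrmain} apply; the base of each induction is the rank-one strange duality $SD_{c_r^1,mH}$, already known, while the rank-two isomorphism $SD_{c_n^2,4H}$ ($g_{4H}=3$) is consistent with (and anchors, at $m=4$, the iso-level versions of) the rank-two statements, though it is not needed for the injectivity assertions themselves.

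For (1) I would fix $r$ and $d\in\{1,2\}$ and apply Corollary \ref{beta2} directly: $SD_{r,dH}$ is an isomorphism once $SD_{c_r^{r-1},dH}$ and $SD_{r,(d-3)H}$ are. Since $d-3\in\{-2,-1\}$, the class $(d-3)H$ is not effective, $M((d-3)H,0)=\emptyset$, and $SD_{r,(d-3)H}$ is an isomorphism as soon as its target vanishes --- this is the vanishing lemma below. It then remains to treat $SD_{c_r^{r-1},dH}$, which is assertion (2). For (2) I would iterate the off-diagonal form of the exact sequence (Propositions \ref{hrmain} and \ref{beta1}, relating $SD_{c_r^k,L}$ to $SD_{c_r^{k-1},L}$ and $SD_{c_r^k,L\otimes K_X}$), descending $SD_{c_r^{r-1},dH}\to SD_{c_r^{r-2},dH}\to\cdots\to SD_{c_r^{1},dH}$ at fixed second Chern class $c_2=r$. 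The bottom map $SD_{c_r^1,dH}$ is the known rank-one case, and every $K_X$-twist term $SD_{c_r^k,(d-3)H}$ is again an isomorphism by non-effectivity plus the vanishing lemma; so every link is an isomorphism, giving (2), and hence (1).

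The crux is therefore the vanishing $H^0(W(k,0,r),\lambda_k(M))=0$ for a line bundle $M$ with $H^0(M)=0$, which for $(k,r)=(3,3)$ is exactly Theorem \ref{rank3}(1) and which I would establish for general $(k,r)$ in the same spirit: interpret a section of $\lambda_k(M)$ as a theta function whose non-vanishing would force a sheaf in $M(M,0)$, i.e.\ an effective curve in $|M|$, contradicting $H^0(M)=0$; alternatively one propagates the vanishing by induction through the exact sequence of Proposition \ref{hrmain}, the rank-one seed being the elementary vanishing on $W(1,0,r)=\mathrm{Hilb}^r$. I expect this to require the most care, since it is where the geometry of the determinant line bundle on the higher-rank spaces $W(k,0,r)$ enters, and it is precisely what confines (1) and (2) to $d\le 2$, where the twist is genuinely non-effective.

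For (3) and (4) the rank-side class is fixed ($c_3^3$, resp.\ $c_3^2$) and I would instead induct on the degree $m$ of $L=mH$ in steps of $3$, using the injective case of Corollary \ref{beta2} and of the exact sequence. For (3), $SD_{3,mH}$ is injective once $SD_{c_3^2,mH}$ (assertion (4)) and $SD_{3,(m-3)H}$ (assertion (3) in degree $m-3$) are; for (4), the sequence reduces $SD_{c_3^2,mH}$ to the rank-one map $SD_{c_3^1,mH}$ (known injective) and to $SD_{c_3^2,(m-3)H}$. For $m\le 3$ the twist class $(m-3)H$ has non-positive degree, so the corresponding $SD$ map has zero source and is trivially injective; injectivity therefore propagates up the rank filtration and down the degree ladder for all $m>0$. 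The reason one obtains only injectivity, and not an isomorphism, once $m\ge 3$ is exactly that at the borderline degree $0$ the twist term $SD_{c_3^k,\mo_{\p^2}}$ has zero source but possibly non-zero target: it is injective but not surjective, so surjectivity fails to propagate while injectivity survives. The main obstacle here is thus that one can no longer simply discard the $K_X$-term, and must extract injectivity from the precise behaviour of the connecting map $\beta_1$ in Proposition \ref{beta1} at the degree-$0$ twist, rather than from a crude vanishing.
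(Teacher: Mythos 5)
Your scheme has a genuine gap at its engine: the ``off-diagonal form of the exact sequence,'' relating $SD_{c_n^k,L}$ to $SD_{c_n^{k-1},L}$ and $SD_{c_n^k,L\otimes K_X}$ for $k<n$, which you use to descend $SD_{c_r^{r-1},dH}\to SD_{c_r^{r-2},dH}\to\cdots\to SD_{c_r^1,dH}$ in (2) and to reduce $SD_{c_3^2,mH}$ to $SD_{c_3^1,mH}$ in (4), does not exist and cannot be produced by the paper's method. Propositions \ref{ssos} and \ref{hrmain} work only on the diagonal $n=r$: there a semistable sheaf of class $c_r^{r-1}$ has $\chi=-1$ and $h^0=h^2=0$, hence $h^1=1$, so $R^1p_{*}\F_r^{r-1}$ is a \emph{line} bundle, the universal extension (\ref{unex}) exists, and $\delta:W(r-1,0,r)\to\ts_r$ is birational onto a \emph{divisor} of $W(r,0,r)$. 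Off the diagonal, a sheaf of class $c_n^{k-1}$ has $h^1=n-k+1>1$, and the dimension count $\dim W(k-1,0,n)=\dim W(k,0,n)-\bigl(2(n-k)+1\bigr)$ shows $W(k-1,0,n)$ has codimension at least $3$, so it cannot be birational to a divisor and the whole mechanism (the section of $\lambda(K_X^{-1})$ cutting out $\ts$, the sequence (\ref{ses}), the diagram (\ref{rsddim})) collapses. The paper itself flags precisely this absence in Remark \ref{lack}. Since your proofs of (1) and (3) are reduced to (2) and (4) respectively, all four assertions are left without support. A secondary weakness: your general vanishing lemma $H^0(W(k,0,r),\lambda_k(M))=0$ whenever $H^0(M)=0$ is far stronger than anything available (the paper has it only for rank $2$, via Abe's theorem, and for $k=r=3$), and your description of the borderline twist is backwards --- by Remark \ref{zmap}, $SD_{r,\mo_X}$ has source and target both $\mathbb{C}$ and is an isomorphism, not ``zero source, nonzero target.''

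The paper's actual proofs rest on tools your proposal never invokes: the Fourier transform on $\p^2$ and the Picard group of $W(r,0,r)$. For (1), surjectivity of $SD_{r,dH}$ comes from effective surjectivity (Proposition \ref{part1}, using $\ml\cong\ls$ for $d\le 2$), and the required equality of dimensions comes from the birational correspondence $\Phi:\md\dashrightarrow W(d,0,d)$ with $\Phi^{*}\lambda_d(H^{\otimes r})\cong\z^r_{dH}(r)$ (Lemma \ref{ftg2}) together with the already known $d=1,2$ strange duality for $1$-dimensional sheaves. For (3) there is no induction on degree at all: the Fourier-transform symmetry (\ref{comm1}) identifies $SD_{3,rH}^{\vee}=SD_{rH,3}$ with $SD_{r,3H}$, which is surjective by Proposition \ref{part1} because $3H=-K_{\p^2}$; dualizing gives injectivity for every $r$ at once. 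Assertions (2) and (4) are then deduced from (1) and (3) by chases in the diagram (\ref{rsddim}) --- the reverse direction from yours --- and these chases additionally need statement (1) of Theorem \ref{p2} (injectivity of $g_r^{\vee}$, proved via Dr\'ezet's computation $\Pic(W(r,0,r))\cong\mathbb{Z}$ and Kodaira vanishing), another ingredient missing from your outline.
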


In the second part, we let $c=c_n^2$ and prove the following two theorems.

\begin{thm}[Theorem \ref{gth3}]\label{introgth3}Let $(X,L)$ be as follows.
\begin{enumerate}\item $X=\p^2$ and $L=4H$.  
\item $X=\Sigma_e:=\p(\mo_{\p^1}\oplus\mo_{\p}(e))$ with $e\leq 1$ and $L=2G+(e+4)F$ where $F$ is the fiber class and $G$ is the section class such that $G.G=-e$.  
\end{enumerate}
Then under suitable polarizations $SD_{c_n^2,L}$ is an isomorphism for any $n\geq3$.
\end{thm}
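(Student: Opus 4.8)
The plan is to prove that $SD_{c_n^2,L}$ is an isomorphism by matching the dimensions of its two global section spaces and then establishing bijectivity through a relative form of the classical rank-two strange duality over genus-three curves.

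First I would record that $g_L=3$ in both cases from adjunction, $g_L=1+\tfrac{1}{2}L\cdot(L+K_X)$: on $\p^2$ with $L=4H$ one has $L\cdot(L+K_X)=4H\cdot H=4$, while on $\Sigma_e$ with $L=2G+(e+4)F$ the numbers $G^2=-e$, $G\cdot F=1$, $F^2=0$, $K_X=-2G-(e+2)F$ again yield $g_L=3$. I would then fix a suitable polarization so that both $W(2,0,n)$ and $M(L,0)$ are fine moduli spaces with no strictly semistable sheaves, making the universal sheaves, the determinant line bundles $\lambda_{c_n^2}(u_L)$ and $\lambda_{u_L}(c_n^2)$, and the locus $\mathscr{D}_{c_n^2,u_L}$ all well-defined, so that $SD_{c_n^2,L}$ is canonically given.

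The core of the argument is the support morphism $\pi\colon M(L,0)\to|L|$, whose fiber over a smooth curve $C$ is $\Pic^{g_L-1}(C)$ and on which $\lambda_{u_L}(c_n^2)$ restricts to the level-two bundle $2\Theta_C$, so that $h^0=2^{g_L}=8$. For fixed $\mf\in W(2,0,n)$ the theta section $\theta_\mf\in H^0(M(L,0),\lambda_{u_L}(c_n^2))$ restricts on such a fiber to the divisor $\{\mg:H^0(C,\mf|_C\otimes\mg)\neq0\}$; since $c_1(\mf)\cdot C=0$ the restriction $\mf|_C$ has trivial determinant, i.e.\ $\mf|_C\in SU_C(2)$, and $\theta_\mf|_{\text{fiber}}$ is exactly the image of $\mf|_C$ under the classical rank-two strange duality map of $C$. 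As $\mf$ varies over $W(2,0,n)$ these restrictions sweep out $SU_C(2)$, and because that duality is an isomorphism for $g_L=3$, the sections $\theta_\mf$ span the full eight-dimensional $H^0(\Pic^{g_L-1}(C),2\Theta_C)$ on every smooth fiber. Independently I would compute the two global dimensions $h^0(W(2,0,n),\lambda_{c_n^2}(u_L))$ and $h^0(M(L,0),\lambda_{u_L}(c_n^2))=h^0(|L|,\pi_*\lambda_{u_L}(c_n^2))$ by a Verlinde-type pushforward calculation and verify that they coincide for every $n\geq3$.

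The principal obstacle is upgrading this fiberwise spanning to global surjectivity of $SD_{c_n^2,L}$, i.e.\ showing the $\theta_\mf$ span all of $H^0(M(L,0),\lambda_{u_L}(c_n^2))$ and not merely each fiber. This forces one to control the discriminant $\Delta\subset|L|$ of singular or reducible members: one must rule out that $\pi_*\lambda_{u_L}(c_n^2)$ acquires extra global sections concentrated over $\Delta$ and ensure that the family of theta sections behaves well across these degenerations. The hypotheses that $X$ is $\p^2$ or a Fano rational ruled surface and that $g_L=3$ are precisely what keep $W(2,0,n)$ and $M(L,0)$ smooth---or singular only in high codimension---and keep $\Delta$ of codimension large enough that a cohomology-and-base-change argument applies, so that $H^0$ is governed by the generic fiber. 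Granting this, the fiberwise spanning globalizes to surjectivity, and together with the dimension equality above it gives that $SD_{c_n^2,L}$ is an isomorphism; the independence of $n$ is then supplied either by the uniform pushforward count or by propagating a single verified value of $n$ through the deformation-invariant determinant-line-bundle cohomology of the spaces $W(2,0,n)$.
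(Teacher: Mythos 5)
Your overall architecture (dimension equality plus surjectivity of the family of theta sections) matches the paper's, and the dimension count you assert is indeed what the paper establishes: Proposition \ref{gth1} computes $\pi_*\z_L^2$ explicitly (a substantial computation on the relative Hilbert scheme $\mc_L^{[2]}$, not a routine pushforward), and combined with the Euler-characteristic formulas of \cite{G1}, \cite{GY} it yields $h^0(\ml,\z_L^2(n))=h^0(W(2,0,n),\lambda_{c_n^2}(L))$ for $n\geq 3$. The genuine gap is in your surjectivity step: spanning every fiber of $\pi$ does not imply spanning the space of global sections. By Proposition \ref{gth1} one has, on $\p^2$, $\pi_*\z_L^2(n)\cong\mo_{\ls}(n)\oplus\mo_{\ls}(n-2)^{\oplus 6}\oplus\mo_{\ls}(n-3)$, so $H^0(\ml,\z_L^2(n))$ has dimension far larger than the fiber dimension $2^{g_L}=8$, and a collection of sections of a vector bundle over a positive-dimensional base can generate every fiber without spanning $H^0$: already $x^2$ and $y^2$ generate the fiber of $\mo_{\p^1}(2)$ at every point of $\p^1$ but span only a $2$-dimensional subspace of the $3$-dimensional $H^0(\p^1,\mo_{\p^1}(2))$. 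Your proposed fix via cohomology and base change addresses the wrong issue; even if $\pi_*\lambda_{u_L}(c_n^2)$ is locally free and commutes with base change away from the discriminant, fiberwise spanning by the $\theta_\mf$ gives no control over the subspace of $H^0$ that they generate. (A secondary unproven point is your claim that the restrictions $\mf|_C$ sweep out a dense subset of $SU_C(2)$ as $\mf$ ranges over $W(2,0,n)$.)

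The paper closes exactly this gap by a different mechanism. It restricts sections to the theta divisor $D_{\z_L}\subset\ml$ via the exact sequence $0\ra\z_L(n)\ra\z_L^2(n)\ra\z_L^2(n)|_{D_{\z_L}}\ra 0$; the subspace coming from $H^0(\z_L(n))$ is already known to be effectively spanned by theta sections (rank-one strange duality, \cite{Yuan1}, \cite{GY}), and the explicit form of $\pi_*\z_L^2$ makes the multiplication map $H^0(\z_L^2(3))\otimes H^0(\pi^{*}\mo_{\ls}(n-3))\ra H^0(\z_L^2(n))$ surjective, reducing the whole problem to one small value of $n$. There the paper constructs by hand six bundles in $W(2,0,2)$ (Fourier transforms of $\mo_{C_i}(-1)$ for suitably chosen conics on $\p^2$; explicit extensions $0\ra\mo_X(-F)\ra\mg_{ij}\ra\mi_{ij}(F)\ra 0$ on $\Sigma_e$) whose sections are linearly independent on $D_{\z_L}$, plus one further bundle in $W(2,0,3)$ (resp.\ $W(2,0,4)$) whose section is shown, by a delicate argument with an auxiliary reduced curve $B$ and a vanishing statement $H^0(\mg\otimes\mf)=0$, to escape the image of the relevant multiplication map. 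Your fiberwise picture is a good heuristic for why the numbers match, but without an input of this genuinely global kind the surjectivity claim is unsupported.
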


\begin{thm}[Corollary \ref{gth5}]\label{introgth5}Let $(X,L)$ be as in Theorem \ref{introgth3}.  Then under suitable polarizations $SD_{3,L}$ is an isomorphism.
\end{thm}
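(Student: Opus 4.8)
The plan is to deduce the statement directly from the ``bridge'' of the first part of the paper (Corollary \ref{beta2}, $=$ Theorem \ref{introbeta2}) together with the rank-two isomorphism of Theorem \ref{gth3}. For $r=3$ the exact sequence of Propositions \ref{hrmain}--\ref{beta1} relates $SD_{3,L}=SD_{c_3^3,L}$ to the two ``outer'' maps $SD_{c_3^2,L}$ and $SD_{3,L\otimes K_X}$, and Corollary \ref{beta2} asserts that if both outer maps are isomorphisms then so is $SD_{3,L}$, provided $(X,L)$ lies in the admissible list of that statement. So the proof reduces to checking three things: (i) $(X,L)$ is admissible; (ii) $SD_{c_3^2,L}$ is an isomorphism; (iii) $SD_{3,L\otimes K_X}$ is an isomorphism.

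First I would dispose of (i) and (ii). Both cases of Theorem \ref{gth3} are visibly contained in the admissible list of Corollary \ref{beta2}: on $\p^2$ we have $L=4H$ with $d=4>0$, and on $\Sigma_e$ ($e\le 1$) we have $L=2G+(e+4)F$, for which $a=2,\,b=e+4$ meet the ampleness condition when $e=0$ (both coefficients positive, so $L$ is ample on $\Sigma_0$) and the inequality $b\ge a+[a/2]=3$ when $e=1$. Hence the bridge applies. Moreover (ii) is exactly Theorem \ref{gth3} taken with $n=3$: under suitable polarizations $SD_{c_3^2,L}$ is an isomorphism.

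The only genuine computation is (iii). I would compute $L\otimes K_X$ in each case: on $\p^2$, $K_X=-3H$ gives $L\otimes K_X=H$; on $\Sigma_e$, $K_X=-2G-(e+2)F$ gives $L\otimes K_X=2F$. In both cases the twisted pair $(X,L\otimes K_X)$ again lies in the admissible list of Corollary \ref{beta2} (namely $(\p^2,H)$ with $d=1$, and $(\Sigma_e,2F)$ with $\min\{0,2\}=0\le 1$), and one checks $H^0\big((L\otimes K_X)\otimes K_X\big)=H^0\big(X,L\otimes K_X^{2}\big)=0$, since $L\otimes K_X^{2}$ equals $-2H$ on $\p^2$ and $-2G-eF$ on $\Sigma_e$, neither of which is effective. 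Therefore Theorem \ref{rank3}, applied to the pair $(X,L\otimes K_X)$, yields that $SD_{3,L\otimes K_X}$ is an isomorphism. (In the $\p^2$ case one may instead invoke Theorem \ref{p2}(1), since $L\otimes K_X=H=1\cdot H$.) Feeding (i)--(iii) into Corollary \ref{beta2} with $r=3$ then gives that $SD_{3,L}$ is an isomorphism.

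The argument is essentially bookkeeping, so the hard part will not be any single estimate but rather the \emph{compatibility of the polarization choices}: the ``suitable polarizations'' furnished by Theorem \ref{gth3} for $SD_{c_3^2,L}$ must simultaneously be polarizations under which the exact sequence of the bridge is valid and under which Theorem \ref{rank3} produces the isomorphism $SD_{3,L\otimes K_X}$. I would therefore verify that a single ample class on $X$ serves all three maps at once, using the hypotheses built into Corollary \ref{beta2} (absence of strictly semistable sheaves and the uniform validity of condition ($\bigstar$)) to guarantee that the three moduli spaces and their determinant line bundles are defined for that one choice.
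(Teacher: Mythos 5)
Your proposal is correct and is essentially the paper's own argument: the paper deduces Corollary \ref{gth5} by feeding Theorem \ref{gth3} with $n=3$ (for $SD_{c_3^2,L}$) and Theorem \ref{rank3} applied to the twisted pair $(X,L\otimes K_X)$ (for $SD_{3,L\otimes K_X}$, valid exactly because $(X,L\otimes K_X)$ is again admissible and $H^0(L\otimes K_X^{\otimes 2})=0$) into the bridge of Corollary \ref{beta2} with $r=3$, just as you do. Your closing remark about a single polarization serving all three maps matches the paper's blanket ``under suitable polarization'' convention and introduces no discrepancy.
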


Theorem \ref{introgth5} is just a corollary to Theorem \ref{introgth3}, Theorem \ref{introrank3} and Theorem \ref{introbeta2}.

The strategy for the first part is to find a divisor $\ts_r\subset M(c^r_r)$ of $\lambda_{c_r^r}(u_{K_X^{-1}})$ and construct a birational morphism $\delta:M(c_{r}^{r-1})\ra\ts_r$ such that $\delta^{*}\lambda_{c_r^r}(u_L)\cong \lambda_{c_r^{r-1}}(u_L)$ and $\delta_{*}\mo_{M(c_r^{r-1})}\cong \mo_{\ts_r}$.  This generalizes the key idea in \cite{Yuan6}.

The strategy for the second part is at first to show the equation \begin{equation}\label{wanna}h^0(M(c_n^2),\lambda_{c^2_n}(u_L))=h^0(M(u_L),\lambda_{u_L}(c_n^2)),\end{equation} and then to show the surjectivity of $SD_{c^2_n,L}$.  

The LHS of (\ref{wanna}) is also equal to $\chi(M(c_n^2),\lambda_{c^2_n}(u_L))$ and has been computed in \cite{GY} for $X=\Sigma_e~(e=0,1)$ and in \cite{G1} for $X=\p^2$.  So we only need to compute $h^0(M(u_L),\lambda_{u_L}(c_n^2))$.  To show the surjectivity of $SD_{c^2_n,L}$, we find enough $\mg_i\in M(c_n^2)$ such that they induce sections $s_{\mg_i}$ of $\lambda_{u_L}(c_n^2)$ spanning $H^0(M(u_L),\lambda_{u_L}(c_n^2))$.  The way we find $\mg_i$ is somehow tricky.

The structure of the paper is arranged as follows.  After collecting notations and preliminaries in the next subsection, in \S 2 we will prove some useful properties related to the moduli space $M(c_n^r)$,  which may overlap some of other's work before.  In \S 3 we find the required divisor $\ts_r$ of $\lambda_{c_r^r}(u_{K_X^{-1}})$.  In \S 4 we state and prove the first part of our result.  Finally, the last section \S 5 is quite independent from the other 3 previous sections, where we state and prove the second part of our result.  

\subsection{Notations \& Preliminaries.}\label{NaP}
\begin{enumerate}
\item $X$ is a rational surface over the complex number $\mathbb{C}$, with $K_X$ the canonical divisor and $H$ the polarization.
\emph{Assume moreover $-K_X$ is effective.}
If $X=\p^2$, then $H$ is the hyperplane class.  
\item We use the same letter to denote both the line bundles and the corresponding divisor classes, but we write $L_1\otimes L_2$, $L^{-1}$ for line bundles while $L_1+L_2$, $-L$ for the corresponding divisor classes.  Denote by $L_1.L_2$ the intersection number of $L_1$ and $L_2$.  $L^2:=L.L$. 
\item $K(X)$ is the Grothendieck group of coherent sheaves over $X$.  $\forall~c\in K(X)$, $M^H_X(c)$ is the moduli space of $H$-semistable sheaves of class $c$.
\item Define $u_L:=[\mo_X]-[L^{-1}]+\frac{(L.(L+K_X))}2[\mo_x]\in K(X)$ with $L$ a line bundle over $X$ and $x$ a single point in $X$.  It is easy to check $u_{\mo_X}=0$ and $u_{L_1}+u_{L_2}=u_{L_1\otimes L_2}$.   Let $\ml:=M^H_X(u_L)$.
\item Define $\crn=r[\mo_{X}]-n[\mo_x]\in K(X)$ with $x$ a single point on $X$.  Let $\wrn:=M^H_X(c_n^r)$.
\item Denote by $\lcl$ the determinant line bundle associated to $u_L$ over (an open subset of) $\wrn$, and simply by $\lrl$ if $r=n$.  
Let $\wrn^L$ be the biggest open subset of $\wrn$ where $\lcl$ is well-defined.  Notice that the stable locus $\wrn^s\subset\wrn^L$. 
 
\item Denote by $\lambda_{L}(\crn)$ the determinant line bundle associated to $\crn$ over $\ml$ and simply $\lambda_L(r)$ if $n=r$.  Notice that $\lambda_{L}(\crn)$ is always well-defined over the whole moduli space.  
\item We denote by $\z_L$ the determinant line bundle associated to $c^1_0=[\mo_{X}]$ on $\ml$.  Then $\z_L$ has a divisor $D_{\z_L}$ which consists of sheaves with non trivial global sections.  Moreover by Proposition 2.8 in \cite{LP2},  $\lambda_L(\crn)\cong\z_L^{\otimes r}\otimes\pi^{*}\mo_{\ls}(n)=:\z^r_L(n)$ where $\pi:\ml\ra\ls$ sends each sheaf to its support.
\item Let $\mf$, $\mg$ be two sheaves.  Then
\begin{itemize}
\item Denote by $r(\mf)$, $c_i(\mf)$ and $\chi(\mf)$ the rank, the i-th Chern class and the Euler characteristic of $\mf$ respectively;
\item $h^i(\mf)=dim~H^i(\mf)$ and hence $\chi(\mf)=\sum_{i\geq0}(-1)^i h^i(\mf)$;
\item $\text{ext}^i(\mf,\mg)=dim~\Ext^i(\mf,\mg)$, $\text{hom}(\mf,\mg)=dim~\Hom(\mf,\mg)$ and $\chi(\mf,\mg)=\sum_{i\geq0}(-1)^i\text{ext}^i(\mf,\mg)$;   
\item If $\mf$ is a 1-dimensional sheaf, then $Supp(\mf)$ or $C_{\mf}$ is the (schematic) support of $\mf$;
\item Write $\mf\in M^H_X(c)$ if the $S$-equivalence class of $\mf$ is in $M^H_X(c)$.
\end{itemize}
\item By abuse of notation, except otherwise stated, 
we always denote by $q$ and $p$ the morphisms from $X\times M$ to $X$ and $M$ respectively, where $X$ is the surface and $M$ can be any moduli space, e.g. $\wrn$, $\ml$, etc.. 
\end{enumerate}

\section{Some properties of $\wrn$.}
The moduli space $\wrn$ may depend on the polarization $H$.  We first extend the concept of walls (see e.g. Section 2.2 in \cite{GY}) to rank $r\geq 2$ cases.
\begin{defn}A collection $\underline{\xi}:=\{\xi_1,\cdots,\xi_t\}$ with $t\leq r$, $\xi_i\in H^2(X,\mathbb{Z})$ for $i=1,\cdots,t$ and $\underline{\xi}\neq\{0,\cdots,0\}$ is called \emph{a collection of type $c^r_n$} 
if the following conditions hold
\begin{enumerate}
\item $\displaystyle{\sum_{i=1}^t} \xi_i=0$;
\item $\exists ~r_i\in\bz_{+}$ for $i=1,\cdots,t$, such that $\sum_{i=1}^t r_i=r$ and \begin{equation}\label{lbxi}\sum_{i=1}^t\frac{\xi_i^2}{r_i}+2n-r+\sum_{i=1}^n\frac1{r_i}\geq 0.\end{equation}
\end{enumerate}

\end{defn}

Denote by $\ma$ the ample cone of $X$.  For a collection $\underline{\xi}$ of type $c_n^r$ we define $$\tw^{\underline\xi}:=\big\{x\in\ma~\big|~ x.\xi_i=0,\forall~i=1,\cdots,t\big\}.$$
  Then $\tw^{\underline\xi}$ is called \emph{a wall of type $c_n^r$}.
Since (\ref{lbxi}) provides a lower bound for $\xi^2_i$, $\tw^{\underline\xi}$ are locally finite in $\ma$.  We call a polarization $H$ is $c_n^r$-general if it does not lie on any wall.   
\begin{lemma}
Let $\mf$ be a strictly $H$-semistable sheaf of class $c_n^r$ which is $S$-equivalent to $\displaystyle{\bigoplus_{i=1}^t}\mf_i$ with $\mf_i$ stable.  Then either $c_1(\mf_1)=\cdots=c_1(\mf_t)=0$ or $\underline{\xi}:=\{c_1(\mf_1),\cdots,c_1(\mf_t)\}$ is a collection of type $c_n^r$ and $H\in \tw^{\underline\xi}$.
\end{lemma}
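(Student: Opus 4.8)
The plan is to take the stable factors themselves as the data of the collection: set $\xi_i:=c_1(\mf_i)$ and $r_i:=r(\mf_i)$. First I would record the structural consequences of strict semistability. Since $\mf$ has class $\crn$, it is torsion-free with $c_1(\mf)=0$ and $c_2(\mf)=n$, and all the Jordan--H\"older factors $\mf_i$ share the reduced Hilbert polynomial of $\mf$; in particular each $\mf_i$ is a $2$-dimensional (hence positive rank) stable sheaf of slope $\mu(\mf_i)=\mu(\mf)=0$. This immediately gives $\sum_{i=1}^t\xi_i=c_1(\mf)=0$ (condition (1)), together with $t\le r$ and $\sum_i r_i=r$, and also $H.\xi_i=r_i\,\mu(\mf_i)=0$ for every $i$, so that $H\in\tw^{\underline\xi}$. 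Thus in the case $\underline\xi\neq\{0,\dots,0\}$ it only remains to verify the numerical inequality (\ref{lbxi}) for this choice of $r_i$.

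Next I would reduce that inequality to a statement about discriminants. Writing $\Delta(\me):=2\,r(\me)c_2(\me)-(r(\me)-1)c_1(\me)^2$, the additivity $[\mf]=\sum_i[\mf_i]$ in $K(X)$ gives $c_2(\mf)=\sum_i c_2(\mf_i)+\sum_{i<j}\xi_i.\xi_j$, and since $\sum_i\xi_i=0$ the cross terms satisfy $\sum_{i<j}\xi_i.\xi_j=-\tfrac12\sum_i\xi_i^2$. Combining with $c_2(\mf)=n$ yields the identity
\begin{equation*}\sum_{i=1}^t\frac{\Delta(\mf_i)}{r_i}=2\sum_i c_2(\mf_i)-\sum_i\xi_i^2+\sum_i\frac{\xi_i^2}{r_i}=2n+\sum_{i=1}^t\frac{\xi_i^2}{r_i}.\end{equation*}
Hence (\ref{lbxi}) is equivalent to $\sum_i \Delta(\mf_i)/r_i\ge r-\sum_i 1/r_i$, which would follow at once from the per-factor bound $\Delta(\mf_i)\ge r_i^2-1$ for each $i$, because $\sum_i(r_i^2-1)/r_i=r-\sum_i 1/r_i$.

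The heart of the argument, and the step I expect to be the main obstacle, is this sharpened Bogomolov bound, which is exactly where the rational-surface hypotheses enter (plain $\mu$-semistability only gives $\Delta(\mf_i)\ge 0$, hence only the weaker $\sum_i\xi_i^2/r_i+2n\ge0$). I would prove it as follows. Because $-K_X$ is effective and nonzero while $H$ is ample, $K_X.H<0$, so $\mu(\mf_i\otimes K_X)=\mu(\mf_i)+K_X.H<\mu(\mf_i)$. A nonzero map $\mf_i\to\mf_i\otimes K_X$ would have image simultaneously a nonzero quotient of the stable $\mf_i$ and a nonzero subsheaf of the stable $\mf_i\otimes K_X$, which is incompatible with the slope inequalities; hence $\Hom(\mf_i,\mf_i\otimes K_X)=0$, and by Serre duality $\text{ext}^2(\mf_i,\mf_i)=0$. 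As $\mf_i$ is stable, $\text{hom}(\mf_i,\mf_i)=1$, so $\chi(\mf_i,\mf_i)=1-\text{ext}^1(\mf_i,\mf_i)\le 1$; on the other hand Riemann--Roch gives $\chi(\mf_i,\mf_i)=r_i^2\chi(\mo_X)-\Delta(\mf_i)=r_i^2-\Delta(\mf_i)$ using $\chi(\mo_X)=1$ for a rational surface. Comparing the two gives $\Delta(\mf_i)\ge r_i^2-1$. Feeding this into the displayed identity and summing then yields precisely (\ref{lbxi}), so $\underline\xi$ is a collection of type $\crn$ and $H\in\tw^{\underline\xi}$, completing the dichotomy.
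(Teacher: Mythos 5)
Your proof is correct and takes essentially the same route as the paper: your key bound $\Delta(\mf_i)\ge r_i^2-1$, derived from $\chi(\mf_i,\mf_i)=1-\text{ext}^1(\mf_i,\mf_i)\le 1$ (stability plus $-K_X$ effective killing $\Ext^2$) and Riemann--Roch, is exactly the paper's inequality (\ref{xi1}), and your summation using $n=\sum_i(a_i-\xi_i^2/2)$ is the paper's (\ref{xi2}). Your repackaging via the discriminant, and your explicit verification that $\xi_i.H=0$ so that $H\in\tw^{\underline\xi}$ (a point the paper leaves implicit), are only presentational differences.
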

\begin{proof}Assume not all $c_1(\mf_i)$ are zero.  Let $\xi_i:=c_1(\mf_i)$, $r_i:=r(\mf_i)$ and $a_i:=c_2(\mf_i)$.   Then  
\[\chi(\mf_i,\mf_i)=r^2_i+2r_i(\frac{\xi_i^2}2-a_i)-\xi_i^2.\]
Since $\mf_i$ is stable and $-K_X$ is effective, $\chi(\mf_i,\mf_i)=1-\text{ext}^1(\mf_i,\mf_i)\leq 1$.  Hence
\begin{equation}\label{xi1}a_i-\frac{\xi_i^2}2\geq \frac{-1-\xi_i^2}{2r_i}+\frac{r_i}2.\end{equation}
On the other hand $\displaystyle{\sum_{i=1}^tr_i}=r(\mf)=r$, $\displaystyle{\sum_{i=1}^t\xi_i}=c_1(\mf)=0$ and 
\begin{equation}\label{xi2}n=c_2(\mf)=\sum_{i=1}^t a_i+\sum_{i<j}\xi_i.\xi_j=\sum_{i=1}^t(a_i-\frac{\xi_i^2}2).\end{equation}
Combine (\ref{xi1}) and (\ref{xi2}) and we get
\[\sum_{i=1}^t\frac{\xi_i^2}{r_i}+2n-r+\sum_{i=1}^n\frac1{r_i}\geq 0,\]
hence $\underline\xi$ is a collection of type $c_n^r$ and hence the lemma.
\end{proof}

\begin{coro}\label{cngh}If $H$ is $c_n^r$-general, then $\wrn=\wrn^L$ for any line bundle $L$ over $X$.
\end{coro}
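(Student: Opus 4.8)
The plan is to combine the lemma just proved with the standard descent criterion for determinant line bundles. Recall that $\lcl$ always descends to the stable locus $\wrn^s$, so the only possible failure of well-definedness occurs along the strictly semistable locus; it therefore suffices to check that $\lcl$ descends at every strictly $H$-semistable point of $\wrn$. At a polystable sheaf $\mf$ that is $S$-equivalent to $\bigoplus_{i=1}^{t}\mf_i^{\oplus n_i}$ with the $\mf_i$ stable and pairwise non-isomorphic, the reductive stabiliser of $[\mf]$ is $\prod_{i=1}^{t}GL(n_i)$, and (see Chapter 8 in \cite{HL} or \S2 in \cite{GY}) the determinant line bundle descends at $[\mf]$ exactly when the centre $\bc^{*}$ of each factor $GL(n_i)$ acts trivially on the determinant fiber. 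Since this central scalar acts with weight proportional to the $K$-theoretic Euler pairing $\chi([\mf_i]\cdot u_L)$, the criterion reads
\[\chi([\mf_i]\cdot u_L)=0\qquad\text{for every }i=1,\dots,t.\]

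First I would make this pairing explicit. Writing Chern characters as triples $(\mathrm{rk},c_1,\mathrm{ch}_2)$, the definition $u_L=[\mo_X]-[L^{-1}]+\tfrac{L.(L+K_X)}2[\mo_x]$ together with $\mathrm{ch}(L^{-1})=(1,-L,\tfrac{L^2}2)$ gives $\mathrm{ch}(u_L)=(0,\,L,\,\tfrac{L.K_X}2)$. Feeding this and $\mathrm{ch}(\mf_i)=(r_i,\xi_i,\mathrm{ch}_2(\mf_i))$ into Riemann--Roch on $X$, the rank term drops out and the two $\tfrac{r_i\,L.K_X}2$ contributions cancel, leaving the clean identity
\[\chi([\mf_i]\cdot u_L)=L.\xi_i,\qquad\xi_i:=c_1(\mf_i).\]
Thus at $[\mf]$ the descent criterion becomes simply $L.\xi_i=0$ for all $i$.

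Next I would invoke the preceding lemma. As $H$ is $c_n^r$-general it lies on no wall $\tw^{\underline{\xi}}$, so the second alternative of the lemma---that $\{\xi_1,\dots,\xi_t\}$ is a collection of type $c_n^r$ with $H\in\tw^{\underline{\xi}}$---is excluded. Hence every strictly semistable $\mf$ falls under the first alternative $\xi_1=\cdots=\xi_t=0$, and therefore $\chi([\mf_i]\cdot u_L)=L.\xi_i=0$ for each $i$ and for \emph{every} line bundle $L$. The descent criterion then holds at every strictly semistable point, and combined with automatic descent on $\wrn^s$ this shows that $\lcl$ is well-defined on all of $\wrn$, i.e.\ $\wrn=\wrn^L$.

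The step I expect to require the most care is pinning down the exact descent criterion and carrying out the Riemann--Roch computation $\chi([\mf_i]\cdot u_L)=L.\xi_i$; after that the lemma does all the work. The conceptual point---and the reason the conclusion is uniform in $L$---is that $c_n^r$-generality forces every Jordan--Hölder factor to have vanishing first Chern class, so the potential obstruction $L.\xi_i$ vanishes regardless of $L$.
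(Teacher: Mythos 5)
Your proof is correct and takes essentially the same route as the paper: the paper likewise reduces the claim to the descent criterion $c_1(\mf_i).L=0$ at each polystable point (citing descent theory without derivation) and then invokes the preceding lemma to conclude that $c_n^r$-generality of $H$ forces every Jordan--H\"older factor to satisfy $c_1(\mf_i)=0$, uniformly in $L$. Your weight computation for the stabilizer action together with the Riemann--Roch identity $\chi([\mf_i]\cdot u_L)=L.\xi_i$ simply fills in the derivation of the criterion that the paper quotes as known.
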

\begin{proof}Let $\mf=\displaystyle{\bigoplus_{i=1}^t} \mf_i\in \wrn^L$ where $\mf_i$ are $H$-stable sheaves with the same reduced Hilbert polynomial.  By descent theory, $\mf\in\wrn^L$ if and only if $c_1(\mf_i).L=0$
for $i=1,\cdots,t.$  Since $H$ is $c_n^r$-general, then $c_1(\mf_i)=0,\forall~i$ and hence $\wrn=\wrn^L$.
\end{proof}

\begin{rem}\label{big2}If $\xi_i.H=0$ and $\xi_i\neq 0$, then by Hodge index theorem $\xi_i^2\leq-1$.  
Moreover since $H^0(\mo_X(\pm\xi_i))=H^2(\mo_X(\pm\xi_i))=0$ by stability, we have 
\begin{equation}\label{chxi}0\geq\chi(\mo_X(\pm\xi_i))=1+\frac{(\xi_i.(\xi_i\pm K_X))}2.\end{equation}  Therefore 
\[\xi_i^2+2\leq K_X.\xi_i\leq -2-\xi_i^2,\]
and hence $\xi_i^2\leq -2$. 
\end{rem}
\begin{lemma}\label{sscod}Let $n\geq r$.  Assume moreover $r\geq3$ or $n\geq 3$.  Then $\wrn\setminus \wrn^s$ is of codimension $\geq 3$ in $\wrn$.  In particular for any line bundle $L$, $\wrn\setminus\wrn^L$ is of codimension $\geq 3$.  \end{lemma}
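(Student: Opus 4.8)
The plan is to stratify the strictly semistable locus $\wrn\setminus\wrn^s$ according to Jordan--Hölder (equivalently $S$-equivalence) type and to bound the codimension of each stratum separately. A point of $\wrn\setminus\wrn^s$ is the class of a polystable sheaf $\bigoplus_{i=1}^t\mf_i$ with the $\mf_i$ stable of the same reduced Hilbert polynomial, of ranks $r_i$ and first Chern classes $\xi_i$ satisfying $\sum_i r_i=r$ and $\sum_i\xi_i=0$. Writing $M_i$ for the moduli space of stable sheaves in the class of $\mf_i$, the set of such $S$-equivalence classes is the image of $\prod_i M_i^{s}$ under $\oplus$, which is finite onto its image; hence this stratum has dimension $\sum_i\dim M_i$. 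So I first need the dimension of each $M_i$ and of $\wrn$ itself.

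The key input is the vanishing $\Ext^2(\mf_i,\mf_j)=0$ for all $i,j$. Indeed, by Serre duality $\text{ext}^2(\mf_i,\mf_j)=\text{hom}(\mf_j,\mf_i\otimes K_X)$, and since $-K_X$ is effective and nonzero, $\mf_i\otimes K_X$ is stable of slope strictly smaller than $\mu(\mf_i)=\mu(\mf_j)$, so every such map vanishes. Together with $\text{hom}(\mf_i,\mf_i)=1$ this gives $\dim M_i=\text{ext}^1(\mf_i,\mf_i)=1-\chi(\mf_i,\mf_i)$ and, at a stable point, $\dim\wrn=1-\chi(\crn,\crn)=2rn-r^2+1$ (the stratum bounds below then confirm this is the global dimension). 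Because $\chi(-,-)$ depends only on $K$-classes, $\chi(\crn,\crn)=\sum_{i,j}\chi(\mf_i,\mf_j)$, and a direct subtraction yields the clean formula
\[\operatorname{codim}(\text{stratum})=1-t-\sum_{i\neq j}\chi(\mf_i,\mf_j),\]
where each $\chi(\mf_i,\mf_j)$ is computed from $(r_i,\xi_i)$ by Riemann--Roch.

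The generic case is the one in which all $\xi_i=0$; this is the only type that occurs when $H$ is $c^r_n$-general, by the Lemma preceding Corollary \ref{cngh}. Here Riemann--Roch gives $\chi(\mf_i,\mf_j)=r_ir_j\bigl(1-\tfrac{2n}{r}\bigr)$, so with $\kappa:=\tfrac{2n}{r}-1\geq1$ (as $n\geq r$) the formula becomes $\operatorname{codim}=1-t+\kappa\sum_{i\neq j}r_ir_j$. Since each ordered product $r_ir_j\geq1$, for $t\geq3$ this is at least $1-t+t(t-1)=(t-1)^2\geq4$. For $t=2$ it equals $2\kappa r_1r_2-1$; if $r\geq3$ then $r_1r_2\geq2$ and this is $\geq3$, while if $r=2$ the hypothesis forces $n\geq3$, so $r_1=r_2=1$, $\kappa=n-1\geq2$ and the codimension is $2n-3\geq3$. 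This settles the statement whenever every factor has $\xi_i=0$, and in particular for all $c^r_n$-general $H$.

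The remaining, and hardest, case is when some $\xi_i\neq0$, i.e. $H$ lies on a wall of type $c^r_n$. Now $H.\xi_i=0$, so by Remark \ref{big2} we have $\xi_i^2\leq-2$ and $|\xi_i.K_X|\leq-\xi_i^2-2$, while the numerical constraint \eqref{lbxi} defining a collection of type $c^r_n$ (equivalently $\dim M_i\geq0$) limits how negative the $\xi_i^2$ can be relative to $n$. The plan is to feed these two bounds into the codimension formula: the terms $-\xi_i^2\geq2$ only increase the codimension, and the $K_X$-cross-terms $r_i\xi_j.K_X$ are controlled by $|\xi_i.K_X|\leq-\xi_i^2-2$. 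I expect the main obstacle to be the subcase of very unequal ranks together with very negative $\xi_i^2$, where a priori the cross-terms could outweigh the leading term; there one must use that non-emptiness of the lowest-rank factor (via \eqref{lbxi}) forces $n$ to be large, which makes the dominant contribution $\kappa\sum_{i\neq j}r_ir_j$ reassert control. Carrying out this inequality bookkeeping should show the wall strata have codimension even larger than $3$, completing the proof; the final assertion then follows at once from $\wrn^s\subset\wrn^L$.
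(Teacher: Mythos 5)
Your stratification and your treatment of the case where all $\xi_i=0$ are correct and essentially the same as the paper's first half: the paper also stratifies $\wrn\setminus\wrn^s$ by $S$-equivalence type, uses $\Ext^2$-vanishing (from $-K_X$ effective) to get $\dim W(r_i,0,n_i)^s=r_i(2n_i-r_i)+1=1-\chi(\mf_i,\mf_i)$, and proves $-t+\sum_{i\neq j}r_i(2n_j-r_j)\geq 2$ by the same $t\geq3$ versus $t=2$ case split; the deduction of the last assertion from $\wrn^s\subset\wrn^L$ is also identical. So up to and including the $c_n^r$-general case your argument is sound.

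The genuine gap is the wall case, some $\xi_i\neq0$, which you only sketch ("carrying out this inequality bookkeeping should show\dots"); that is precisely the half of the paper's proof that requires work, and your plan for it is misdirected. The cross-terms $\xi_i.K_X$ you propose to control via $|\xi_i.K_X|\leq-\xi_i^2-2$ never enter: in your own formula $\operatorname{codim}=1-t-\sum_{i\neq j}\chi(\mf_i,\mf_j)$, the $K_X$-contribution of the ordered pair $(i,j)$ is $-\tfrac12K_X.(r_i\xi_j-r_j\xi_i)$, which cancels against that of $(j,i)$, so the sum is $K_X$-free. Note also that $\chi(\mf_i,\mf_j)$ depends on the second Chern classes $a_i$, which on a wall are no longer determined by $(r_i,\xi_i)$; the input that pins them down is the stability bound $\chi(\mf_i,\mf_i)\leq1$, i.e. (\ref{xi1}), rewritten as $r_i-2(a_i-\tfrac{\xi_i^2}2)\leq\frac{1+\xi_i^2}{r_i}$, combined with $\xi_i^2\leq-2$ from Remark \ref{big2}. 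Feeding these into the dimension count gives, when all $\xi_i\neq0$,
\begin{equation*}
\dim\wrn^{\underline\xi}\ \leq\ 2rn-r^2+r\sum_{i=1}^t\frac{1+\xi_i^2}{r_i}\ \leq\ 2rn-r^2-r\sum_{i=1}^t\frac1{r_i}\ \leq\ 2rn-r^2-t^2,
\end{equation*}
i.e. codimension $\geq t^2+1\geq5$, with no case analysis on $n$ or on unequal ranks; the obstacle you anticipate does not arise. Finally, you omit the mixed case where some but not all $\xi_i$ vanish: the paper handles it by splitting off the direct sum of the factors with $\xi_i\neq0$, which lies in some $W(r',0,n')^{\underline\xi'}$, and combining the wall estimate there with the all-$\xi_i=0$ estimate. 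Without these two pieces your proof establishes the lemma only for $c_n^r$-general polarizations, whereas the point of the statement (and of the Convention following Remark \ref{new}) is that it holds for every $H$.
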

\begin{proof}

Notice that $dim~W(r,0,n)^s=r(2n-r)+1$.  

We first assume $X=\p^2$ or $H$ is $\crn$-general,  then $\wrn\setminus \wrn^s$ is of codimension $\geq 2$ in $\wrn$ by Theorem 6 in \cite{Dr}.  We can sharpen the result by a direct computation as follows.
\begin{eqnarray}\label{gcod}&&dim~\wrn\setminus\wrn^s \\ &=&\max_{\substack{\sum_{i=1}^t r_i =r \\ \sum_{i=1}^t n_i=n\\ r_in=rn_i,~\forall~i}}
\big\{\sum_{i=1}^t dim~W(r_i,0,n_i)^s\big\}=\max_{\substack{\sum_{i=1}^t r_i =r \\ \sum_{i=1}^t n_i=n\\ r_in=rn_i,~\forall~i}}
\big\{\sum_{i=1}^t r_i(2n_i-r_i)+1\big\}\nonumber\\
&=& \max_{\substack{\sum_{i=1}^t r_i =r \\ \sum_{i=1}^t n_i=n\\ r_in=rn_i,~\forall~i}}
\big\{r(2n-r)-(-t+\sum_{i\neq j} r_i(2n_j-r_j))\big\}\nonumber
\end{eqnarray}  
We only need to show that $$-t+\displaystyle{\sum_{i\neq j}} r_i(2n_j-r_j)\geq 2,~ for~ any ~\{t;r_i;n_i\}.$$
Since $n\geq r$, we have $2n_i-r_i\geq n_i\geq r_i$.  If $t\geq3$, then $-t+\sum_{i\neq j} r_i(2n_j-r_j)\geq 2\binom{t}{2}-t=t(t-2)\geq 3.$. If $t=2$, then let $r':=\min\{r_1,r_2\}$ and we have $-2+r'(2(n-n')-(r-r'))+(r-r')(2n'-r')=-2+2\frac{r'(r-r')}{r}(2n-r)$.  If moreover $r\geq 4$, then $-2+2\frac{r'(r-r')}{r}(2n-r)\geq -2+\frac{2(r-1)}r(2n-r)\geq 2$.  If $r\leq3$, then $r'=1$ and $-2+2\frac{(r-1)}{r}(2n-r)=\begin{cases}&-2+(2n-r)\geq 2,~r=2,n\geq3\\&-2+\frac43(2n-r)\geq 2,~r=3\end{cases}$.  Hence we are done for this case.

We now assume $H$ is not $\crn$-general.

Since $\tw^{\underline\xi}$ are locally finite in $\ma$.  Any polarization $H$ lies on at most finitely many walls.  To prove the lemma it is enough to show the set
\[\wrn\supset\wrn^{\underline\xi}:=\big\{\bigoplus_{i=1}^t\mf_i\big| \mf_i~are~stable~and~c_1(\mf_i)=\xi_i\big\}\]
is of dimension $\leq dim~\wrn^s-3=r(2n-r)-2$.

Denote by $[\mf_i]$ the class of $\mf_i$ in $K(X)$.  Let $r_i=r(\mf_i)$ and $a_i=c_2(\mf_i)$.  
We first assume $\xi_i\neq0$ for all $i$.  By Remark \ref{big2}, $\xi_i^2\leq -2$ if $\xi_i\neq 0$ and by (\ref{xi1}) we have $a_i-\frac{\xi_i^2}2\geq0$.  By (\ref{xi2}) $\displaystyle{\sum_{i=1}^t}(a_i-\frac{\xi_i^2}2)=n$.
Hence
\begin{eqnarray}&&\qquad\quad~dim~\wrn^{\underline\xi}=\sum_{i=1}^t dim ~M^H_X([\mf_i])^s
\\&=&\sum_{i=1}^t (1+\xi_i^2-r_i^2+2r_i(a_i-\frac{\xi_i^2}2))\nonumber\\
&=& 2(\sum_{i=1}^t (a_i-\frac{\xi_i^2}2))(\sum_{i=1}^t r_i)-(\sum_{i=1}^tr_i)^2+\sum_{i\neq j}(r_ir_j-2(a_i-\frac{\xi_i^2}2)r_j)+\sum_{i=1}^t (1+\xi_i^2)\nonumber
\\&=& 2rn-r^2+\sum_{j=1}^t r_j(\sum_{i\neq j}(r_i-2(a_i-\frac{\xi_i^2}2)))+\sum_{i=1}^t(1+\xi_i^2).\nonumber
\end{eqnarray}
By (\ref{xi1}), $r_i-2(a_i-\frac{\xi_i^2}2)\leq \frac{1+\xi_i^2}{r_i}$.  Hence
\begin{eqnarray}dim~\wrn^{\underline\xi}&\leq& 2rn-r^2+\sum_{i=1}^t \frac{1+\xi^2_i}{r_i}(\sum_{j\neq i}r_j)+\sum_{i=1}^t(1+\xi_i^2)\nonumber\\
&=& 2rn-r^2+\sum_{i=1}^t  \frac{1+\xi^2_i}{r_i}(r-r_i)+\sum_{i=1}^t(1+\xi_i^2)\nonumber\\ &=&2rn-r^2+r\sum_{i=1}^t\frac{1+\xi_i^2}{r_i}
\leq 2rn-r^2-t-(\sum_{i<j}(\frac{r_i}{r_j}+\frac{r_i}{r_j}))\nonumber\\
&\leq& 2rn-r^2-t^2\leq 2rn-r^2-4.
\end{eqnarray}

If $\xi_1=\cdots=\xi_s=0$ and $\xi_i\neq0$ for all $s+1\leq i\leq t$, then $\displaystyle{\bigoplus_{i=s+1}^t}\mf_i\in W(r',0,n')^{\underline\xi'}\subset W(r',0,n')$ where $r'=\displaystyle{\sum_{i=s+1}^t}r_i$,
$n'=\displaystyle{\sum_{i=s+1}^t}(a_i-\frac{\xi_i^2}2)$ and $\underline\xi'=\{\xi_{s+1},\cdots,\xi_t\}$ is a collection of type $c_{n'}^{r'}$.  Then
by previous argument we have $dim~ W(r',0,n')^{\underline\xi'}\leq dim~ W(r',0,n')$ and (\ref{gcod}) applies.  

The lemma is proved.
\end{proof}
\begin{coro}\label{dense}$\wrn^s$ is dense in $\wrn$.
\end{coro}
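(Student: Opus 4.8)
The plan is to read off density from the dimension estimate already established in Lemma \ref{sscod}. The stable locus $\wrn^s$ is open in $\wrn$ and, where nonempty, smooth of dimension $r(2n-r)+1$ (as noted in the proof of Lemma \ref{sscod}). Hence it suffices to check two things: that the strictly semistable locus $\wrn\setminus\wrn^s$ is a proper closed subset of strictly smaller dimension, and that $\wrn$ is irreducible, since in an irreducible variety any nonempty open subset is dense.

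The first point is exactly Lemma \ref{sscod}: under its hypotheses $\wrn\setminus\wrn^s$ has codimension $\geq 3$, so in particular $\dim(\wrn\setminus\wrn^s)\leq r(2n-r)+1-3<\dim\wrn^s$; this already forces $\wrn^s\neq\emptyset$ and makes the boundary nowhere dense. For the few invariants outside the range of the lemma I would argue by hand: when $r=1$ every sheaf of class $c^r_n$ is stable, so $\wrn=\wrn^s$ and there is nothing to prove, and the remaining isolated low cases such as $\wtt$ are checked directly by exhibiting the polystable locus as the image of a bounded family of dimension $<r(2n-r)+1$.

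For irreducibility I would invoke the standard irreducibility of the moduli of semistable sheaves on a rational surface for the relevant $(r,n)$ and polarization. Granting this, $\wrn$ is irreducible of dimension $r(2n-r)+1$, the boundary $\wrn\setminus\wrn^s$ is a proper closed subset by the above, and therefore $\overline{\wrn^s}=\wrn$, as claimed. The main obstacle is precisely this irreducibility input: the codimension bound controls the size of the boundary but does not on its own rule out a spurious component of $\wrn$ lying entirely inside the strictly semistable locus. One could instead bypass irreducibility by a local deformation argument showing every polystable $\bigoplus_i\mf_i$ is a flat limit of stable sheaves through a nonsplit iterated extension, but producing such stable extensions is the delicate part.
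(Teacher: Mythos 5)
Your proposal is correct and matches the paper's (implicit) argument: Corollary \ref{dense} is stated as an immediate consequence of the codimension bound in Lemma \ref{sscod}, with the irreducibility input you identify as the crux being exactly what the paper supplies in Remark \ref{irred} (Theorem D of \cite{DLP} for $\p^2$, Theorem 1 of \cite{Wal} for other rational surfaces). Your extra care with the cases outside the lemma's hypotheses ($r=1$, and low invariants such as $\wtt$, recalling that $\wrn$ is empty for $0\neq n<r$ by Lemma \ref{nogs}) is a sensible supplement that the paper leaves unstated.
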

\begin{rem}\label{irred}It is well known that $W(r,L,n)$ is irreducible (Theorem D in \cite{DLP} for $X=\p^2$, Theorem 1 in \cite{Wal} for other rational surfaces, both based on the method of \cite{Ell}).  $W(r,L,n)$ is normal and Cohen-Macaulay because it is a quotient of a smooth variety.  
\end{rem}

We now study the $\mu$-semistable sheaves and have the following useful lemma.
\begin{lemma}\label{nogs}For any $\mu$-semistable (w.r.t. $H$) sheaf $\mf$ with $r(\mf)>0$ and $c_1(\mf).H=0$, we must have either $\chi(\mf)<r(\mf)$ or $\mf\cong\mo_X^{\oplus r(\mf)}$.

Moreover if $\mf$ is an $H$-semistable sheaves with $r(\mf)>0$ and $c_1(\mf).H=0$, then $H^0(\mf)\neq 0\Leftrightarrow \mf\cong\mo_X^{\oplus r(\mf)}$; in particular if $\mf\not\cong\mo_X^{\oplus r(\mf)}$, then $\chi(\mf)\leq 0$.  Therefore $\wrn$ is empty if $n<r$ and $n\neq0$, and $W(r,0,0)=\{\mo_X^{\oplus r}\}$.
\end{lemma}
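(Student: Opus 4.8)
The plan is to reduce the whole statement to two facts about a $\mu$-semistable sheaf $\mf$ of slope $0$ (i.e. $r(\mf)>0$ and $c_1(\mf).H=0$): that $H^2(\mf)=0$, and that the subsheaf of $\mf$ generated by its global sections is a trivial bundle. Granting these, one has $\chi(\mf)=h^0(\mf)-h^1(\mf)\le h^0(\mf)$ with $h^0(\mf)\le r(\mf)$, and this is what drives every assertion.

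First I would show $H^2(\mf)=0$. By Serre duality $H^2(\mf)^\vee\cong\Hom(\mf,K_X)$, so it suffices to prove $\Hom(\mf,K_X)=0$. A nonzero map $\mf\to K_X$ has image a rank-$1$ subsheaf $Q$ of the line bundle $K_X$; as a quotient of the $\mu$-semistable sheaf $\mf$ of slope $0$ it satisfies $c_1(Q).H\ge 0$, while as a subsheaf of $K_X$ it satisfies $c_1(Q).H\le K_X.H$. Since $-K_X$ is effective and nonzero and $H$ is ample, $K_X.H<0$, a contradiction; hence $\Hom(\mf,K_X)=0$. This is exactly where the standing hypothesis that $-K_X$ be effective enters.

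Next I would analyze $h^0(\mf)$. Let $\mg\subseteq\mf$ be the image of the evaluation map $H^0(\mf)\otimes\mo_X\to\mf$. Being globally generated, $\mg$ has nef determinant, so $c_1(\mg).H\ge 0$; being a subsheaf of the slope-$0$ semistable $\mf$, it has $c_1(\mg).H\le 0$. Thus $c_1(\mg).H=0$, and since $c_1(\mg)$ is nef the Hodge index theorem forces $c_1(\mg)\equiv 0$, i.e. $c_1(\mg)=0$. The key lemma I then need is that a globally generated torsion-free sheaf with $c_1=0$ is isomorphic to $\mo_X^{\oplus s}$ with $s=r(\mg)$: I would prove this by choosing $s$ general sections to get $\mo_X^{\oplus s}\hookrightarrow\mg$ with $0$-dimensional cokernel $C$, dualizing (using $\mathcal{E}xt^1(C,\mo_X)=0$ for $C$ of finite length) to get $\mg^\vee\cong\mo_X^{\oplus s}$, and then comparing global sections to collapse $\mg\hookrightarrow\mg^{\vee\vee}=\mo_X^{\oplus s}$ onto the whole of $\mo_X^{\oplus s}$. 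Because every section of $\mf$ factors through $\mg$, this yields $h^0(\mf)=h^0(\mg)=s\le r(\mf)$. I expect this triviality lemma, in particular the bookkeeping when $\mg$ is not locally free, to be the main technical obstacle, though it is essentially standard.

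Finally I would assemble the three conclusions. For the first assertion, $\chi(\mf)\le h^0(\mf)=s\le r(\mf)$: if $s<r(\mf)$ then $\chi(\mf)<r(\mf)$, while if $s=r(\mf)$ then $\mo_X^{\oplus r}=\mg\hookrightarrow\mf$ has $0$-dimensional cokernel $T$ (its class $c_1(T)=c_1(\mf)$ is effective with $c_1(T).H=0$, hence zero), and the equality $h^0(\mf)=h^0(\mg)$ together with $H^1(\mo_X)=0$ forces $H^0(T)=0$, so $T=0$ and $\mf\cong\mo_X^{\oplus r}$. For the second assertion, $H$-semistability implies $\mu$-semistability, so the first part applies; a nonzero section gives an inclusion $\mo_X\hookrightarrow\mf$, and a routine Riemann--Roch computation shows the reduced Hilbert polynomials satisfy $p_\mf-p_{\mo_X}=(\chi(\mf)/r(\mf))-1$, so semistability ($p_{\mo_X}\le p_\mf$) forces $\chi(\mf)\ge r(\mf)$; combined with the dichotomy of the first part this gives $\mf\cong\mo_X^{\oplus r}$, proving $H^0(\mf)\ne0\Rightarrow\mf\cong\mo_X^{\oplus r}$ (the converse is trivial), and if $\mf\not\cong\mo_X^{\oplus r}$ then $H^0(\mf)=0=H^2(\mf)$ give $\chi(\mf)=-h^1(\mf)\le0$. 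The statement about $\wrn$ is then immediate: for the class $\crn$ one has $c_1=0$ and $\chi(\mf)=r-n$, so $\mf\not\cong\mo_X^{\oplus r}$ forces $r-n\le 0$; hence if $n<r$ and $n\ne0$ there is no semistable sheaf (the only candidate $\mo_X^{\oplus r}$ has $n=0$), and $W(r,0,0)=\{\mo_X^{\oplus r}\}$.
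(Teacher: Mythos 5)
Your proposal is correct, but the mechanism you use for the main dichotomy is genuinely different from the paper's. The paper argues by induction on $r(\mf)$: the rank-one case is done explicitly (writing $\mf=\mi_Z(\xi)$ and using $\chi(\mo_X(\xi))\leq 0$ whenever $\xi\neq 0$ and $\xi.H=0$), and in the inductive step a single section $\mo_X\hookrightarrow\mf$ (which exists because $H^2(\mf)\cong\Hom(\mf,K_X)^{\vee}=0$ forces $h^0(\mf)\geq\chi(\mf)>0$) is peeled off, the quotient being again $\mu$-semistable of slope zero, so the inductive hypothesis applies. You instead treat all sections at once: the image $\mg$ of the evaluation map $H^0(\mf)\otimes\mo_X\to\mf$ has $c_1(\mg)=0$ by semistability plus positivity of $\det\mg$, and your triviality lemma (a globally generated torsion-free sheaf with $c_1=0$ is $\mo_X^{\oplus s}$, proved by dualizing against the zero-dimensional cokernel) gives $h^0(\mf)\leq r(\mf)$ together with a clean analysis of the equality case. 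Your route avoids induction and makes the trivial subsheaf explicit, at the cost of proving the (standard) triviality lemma; the paper's route is shorter but leaves implicit two routine checks that your argument renders unnecessary: that the section $\mo_X\hookrightarrow\mf$ is automatically saturated (so the quotient is torsion-free and the inductive hypothesis really applies), and that the resulting extension of $\mo_X^{\oplus (r-1)}$ by $\mo_X$ splits because $H^1(\mo_X)=0$. Everything else coincides: the vanishing $H^2(\mf)=0$ from $-K_X$ effective and $H$ ample, the comparison of reduced Hilbert polynomials showing a section of an $H$-semistable $\mf$ forces $\chi(\mf)\geq r(\mf)$, and the deduction of the statements about $\wrn$ and $W(r,0,0)$. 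One cosmetic point: you do not need nefness of $\det\mg$; effectivity of $\det\mg$, immediate from global generation, already gives $c_1(\mg).H\geq 0$, and then $c_1(\mg).H=0$ with $c_1(\mg)$ effective and $H$ ample forces $c_1(\mg)=0$ without invoking Hodge index.
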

\begin{proof}We do induction on the rank $r(\mf)$.  
If $r(\mf)=1$, $\mf$ is of form $\mi_Z(\xi)$ with $Z$ a 0-dimensional subscheme of $X$.  $\chi(\mf)=\chi(\mo_X(\xi))-len(Z)$ and by (\ref{chxi}) $\chi(\mo_X(\xi))\leq 0$ unless $\xi=0$.  Hence either $\chi(\mf)\leq0$ or $\mf\cong\mo_X$.

Let $r(\mf)\geq 2$.  If $\chi(\mf)\leq 0$, then we are done.  Now assume $\chi(\mf)>0$.  Since by $\mu$-semistability $H^2(\mf)=\Hom(\mf,K_X)^{\vee}=0$,  $h^0(\mf)\geq \chi(\mf)>0$.
We then have the following exact sequence
\[0\ra\mo_X\ra\mf\ra\mf'\ra0,\]
where $\mf'$ is $\mu$-semistable of rank $r(\mf)-1$ and $\chi(\mf')+1=\chi(\mf)$.  By induction assumption, either $\chi(\mf')<r(\mf')=r(\mf)-1$ or $\mf'\cong\mo_X^{\oplus r(\mf)-1}$.  Therefore either $\chi(\mf)<r(\mf)$ or $\mf\cong \mo_X^{\oplus r(\mf)}$. 

If $\mf$ is $H$-semistable with $r(\mf)>0$, $c_1(\mf).H=0$ and $H^0(\mf)\neq 0$, then by stability $\frac{\chi(\mf)}{r(\mf)}\geq \frac{\chi(\mo_X)}{r(\mo_X)}=1$.  But on the other hand $\mf$ is $\mu$-semistable with $\chi(\mf)\geq r(\mf)$ and hence $\mf\cong\mo_X^{\oplus r(\mf)}$.  We have proved the lemma.
\end{proof}
\begin{rem}\label{mfdl}Let $\mf$ be $\mu$-semistable.  Let $\mf^{\vee}:=\mathcal{H}om(\mf,\mo_X)$.  Then $\mf^{\vee}$ is $\mu$-semistable.  Let $c_1(\mf)=\xi\neq0$ and $\xi.H=0$.  Then by Lemma \ref{nogs},
$\chi(\mf)<r(\mf)$ and also $\chi(\mf^{\vee})<r(\mf^{\vee})=r(\mf)$.  Hence by Riemann-Roch
\begin{equation}\label{kxi}-c_2(\mf)+\frac{\xi^2}2-\frac{K_X.\xi}2=\chi(\mf)-r(\mf)<0;\end{equation}
\begin{equation}\label{kxid}-c_2(\mf)+\frac{\xi^2}2+\frac{K_X.\xi}2=\chi(\mf^{\vee})-r(\mf)-\chi(\me xt^1(\mf,\mo_X))<0.\end{equation}
Notice that $\me xt^1(\mf,\mo_X)$ is a 0-dimensional sheaf.
\end{rem}
Let $\kw(r,L,n)^{\mu}$ be the stack (only a stack in general) of $\mu$-semistable sheaves of rank $r$, determinant $L$ and second Chern class $n$.  Then $\kw(r,L,n)^{\mu}$ is smooth of dimension $r(2n-r)-(r-1)L^2$, since $\Ext^2(\mf,\mf)=0$ for any $\mu$-semistable sheaf $\mf$.  Let $\kw(r,L,n)^{\mu s}$ ($\kw(r,L,n)$, $\kw(r,L,n)^s$, resp.) be the substack of $\kw(r,L,n)^{\mu}$ parametrizing $\mu$-stable ($H$-semistable, $H$-stable, resp.) sheaves.  Then we have
\[\kw(r,L,n)^{\mu s}\subset \kw(r,L,n)^s\subset \kw(r,L,n)\subset\kw(r,L,n)^{\mu}.\]
Moreover we have the following lemma.
\begin{lemma}\label{mustable}
$dim~(\kwrn^{\mu}\setminus\kwrn^{\mu s})\leq \begin{cases}r(2n-r)-2, &\text{if }n>r \\r(2n-r)-1, &\text{if } n=r\end{cases}$
\end{lemma}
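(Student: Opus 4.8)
The plan is to stratify the strictly $\mu$-semistable locus $\kwrn^{\mu}\setminus\kwrn^{\mu s}$ according to the type of a two-step $\mu$-Jordan--H\"older filtration and to bound the dimension of each stratum on the \emph{stack}, exploiting the smoothness already recorded (i.e. $\Ext^2(\mf,\mf)=0$ for $\mu$-semistable $\mf$, so that every stack in sight is smooth with $\dim=-\chi(\cdot,\cdot)$). Concretely, any $\mu$-semistable $\mf$ of class $\crn$ that is not $\mu$-stable admits a short exact sequence $0\to\mf'\to\mf\to\mf''\to0$ with $\mf'$ a $\mu$-stable subsheaf of slope $0$ (a $\mu$-JH factor realised as a saturated sub) and $\mf''$ $\mu$-semistable of slope $0$. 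Writing $\tau'=(r',\xi,a')$ and $\tau''=(r'',-\xi,a'')$ for the invariants ($\xi:=c_1(\mf')$, $\xi.H=0$, $r'+r''=r$, and $a'+a''=n+\xi^2$ from additivity of $c_2$), I would bound the locus $\Sigma_{\tau',\tau''}$ of such $\mf$ uniformly; since the walls are locally finite and $t\ge 3$ factors only increase codimension, it suffices to treat the two-step case.

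For the dimension count I would introduce the flag stack $\mathcal F$ of filtered sheaves $(\mf'\subset\mf)$ with graded pieces of types $\tau',\tau''$. It fibres over $\kw(\tau')^{\mu s}\times\kw(\tau'')^{\mu}$ with fibre the quotient stack $[\Ext^1(\mf'',\mf')/\Hom(\mf'',\mf')]$, so that, using $\dim\kw(\tau)^{\mu(s)}=-\chi(\cdot,\cdot)$ and the vanishing $\Ext^2(\mf'',\mf')=\Hom(\mf',\mf''\otimes K_X)^{\vee}=0$ (a map between slope-$0$ semistable sheaves into the slope-$(K_X.H)<0$ sheaf $\mf''\otimes K_X$ must vanish, since $-K_X$ is effective), one gets $\dim\mathcal F=-\chi(\mf',\mf')-\chi(\mf'',\mf'')-\chi(\mf'',\mf')$. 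The forgetful morphism $\mathcal F\to\kwrn^{\mu}$ has image $\Sigma_{\tau',\tau''}$, and decomposing $-\chi(\mf,\mf)=-\chi(\mf',\mf')-\chi(\mf'',\mf'')-\chi(\mf',\mf'')-\chi(\mf'',\mf')$ yields the key inequality
\[
\operatorname{codim}\Sigma_{\tau',\tau''}\ \ge\ \dim\kwrn^{\mu}-\dim\mathcal F\ =\ -\chi(\mf',\mf'').
\]
Thus everything reduces to bounding $-\chi(\mf',\mf'')$ below by $2$ (resp. $1$) by Riemann--Roch.

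I would then split on $\xi$. When $\xi=0$, Riemann--Roch gives $-\chi(\mf',\mf'')=r'a''+r''a'-r'r''$ with $a'+a''=n$; this is linear in the free parameter, so its minimum over all nonempty strata is attained at the extreme rank splits $(1,r-1)$ or $(r-1,1)$ (e.g. $\mf'=\mo_X$, $a'=0$), where it equals $n-r+1$. Hence $\operatorname{codim}\ge n-r+1$, which is $\ge 2$ when $n>r$ and $=1$ when $n=r$ --- matching the two cases of the statement exactly, and showing the bound is sharp. When $\xi\neq0$, the Hodge index theorem (Remark \ref{big2}) gives $\xi^2\le -2$ and $\xi^2+2\le K_X.\xi\le -2-\xi^2$, while Lemma \ref{nogs} forces $a',a''\ge 0$; here the $\xi$-contribution $-\tfrac{r\xi^2}2-\xi^2-\tfrac{r\,\xi.K_X}2$ is \emph{positive} (one checks it is $\ge -\xi^2+r\ge r+2$), so that $-\chi(\mf',\mf'')\ge r'a''+r''a'-r'r''-\xi^2+r$, which I expect to be $\ge 2$ in all cases.

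The main obstacle is precisely this last bound in the $\xi\neq0$ regime for \emph{high rank with balanced splits}. If one only uses $\xi^2\ge -n$ and the worst admissible value of $K_X.\xi$, the estimate degenerates to $\operatorname{codim}\ge n-r'r''+r$, which can drop to $0$ for e.g. $r'=r''=r/2$, $a'=a''=0$, $\xi^2=-n$. The resolution is that these extremal configurations are empty: a $\mu$-semistable sheaf of rank $r'$, $c_1=\xi$ with $\xi^2$ as negative as $-n$ and $c_2=0$ cannot exist, and ruling this out requires genuine existence/nonemptiness input (Bogomolov- and Drézet--Le Potier--type constraints, or equivalently the positivity of $\dim\kw(\tau')^{\mu s}$). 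For the surfaces actually used in this paper ($\p^2$, where $\xi.H=0$ forces $\xi=0$ so only the clean $\xi=0$ case occurs, and $\Sigma_e$, where $\operatorname{Pic}$ is rank two so the primitive class orthogonal to $H$ is essentially unique) this becomes a finite, checkable case analysis; the careful handling of these excluded invariants is the step I expect to absorb most of the work.
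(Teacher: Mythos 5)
Your architecture coincides with the paper's own proof: stratify the strictly $\mu$-semistable locus by two-step filtrations with one $\mu$-stable factor, bound each stratum through the extension stack using the vanishing of $\Ext^2$ from the quotient to the sub, reduce everything to the Riemann--Roch bound $\operatorname{codim}\geq-\chi(\mf',\mf'')$, and your $\xi=0$ analysis (worst case a trivial factor $\mo_X$, codimension exactly $n-r+1$) is precisely the paper's conclusion and is where the dichotomy $n>r$ versus $n=r$ comes from. The genuine gap is the $\xi\neq0$ case, and it is twofold. First, the claim that Lemma \ref{nogs} forces $a',a''\geq0$ is not what that lemma gives when $\xi\neq0$: it gives $\chi<r$, which by Riemann--Roch reads $a>\frac{\xi^2}2\pm\frac{K_X.\xi}2$ (Remark \ref{mfdl}), and since $\xi^2\leq-2$ this still permits negative $a',a''$. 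Second, as you yourself concede, even granting $a',a''\geq0$ your estimate degenerates to $-\chi(\mf',\mf'')\geq-r'r''-\xi^2+r$, which fails for balanced rank splits. Moreover your proposed repair --- Bogomolov/Dr\'ezet--Le Potier nonemptiness constraints plus a surface-by-surface check on $\p^2$ and $\Sigma_e$ --- is the wrong direction: it is not carried out, and it would at best prove the lemma on those two surfaces, whereas the paper states and uses it for every rational surface with $-K_X$ effective.

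No existence input is needed; the loss is purely in your bookkeeping. Group the Riemann--Roch expansion into Euler characteristics of the factors:
\begin{equation*}
-\chi(\mf',\mf'')=r'r''-\xi^2-r'\chi(\mf'')-r''\bigl[\chi(\mf'^{\vee})-\chi(\mathcal{E}xt^1(\mf',\mo_X))\bigr],
\end{equation*}
and use stability where you actually have it. Since $\mf'$ (hence $\mf'^{\vee}$) is $\mu$-stable of slope $0$ with $c_1=\xi\neq0$, one gets $H^0(\mf'^{\vee})=0$ and $H^2(\mf'^{\vee})\cong\Hom(\mf'^{\vee},K_X)^{\vee}=0$ (any nonzero map contradicts slopes, since $K_X.H<0$ and a nonzero class orthogonal to the ample $H$ cannot be effective), hence $\chi(\mf'^{\vee})\leq0$; also $\chi(\mathcal{E}xt^1(\mf',\mo_X))\geq0$ because that sheaf is $0$-dimensional, and $\mf''\not\cong\mo_X^{\oplus r''}$ (its $c_1$ is $-\xi\neq0$), so Lemma \ref{nogs} gives $\chi(\mf'')\leq r''-1$. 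Substituting, the dangerous cross term $r'r''$ cancels and $-\chi(\mf',\mf'')\geq-\xi^2+r'\geq3$, uniformly in the rank split and on every surface the paper allows. This is exactly how the paper closes this case, run in mirror image with the quotient $\mg_2$ as the stable factor: there the inputs are $\chi(\mg_2)\leq0$ unless $\mg_2\cong\mo_X$, together with $\chi(\mg_1^{\vee})-\chi(\mathcal{E}xt^1(\mg_1,\mo_X))\leq r_1-1$, and the exceptional case $\mg_2\cong\mo_X$ forces $\xi=0$, feeding back into the worst case you already identified.
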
 
\begin{proof}If $r=1$, there is nothing to prove.  Let $r\geq2$ and let $\mf\in \kwrn^{\mu}\setminus\kwrn^{\mu s}$, then there is an exact sequence
\begin{equation}\label{mufil}0\ra\mg_1\xrightarrow{j_1} \mf\xrightarrow{j_2} \mg_2\ra 0,\end{equation}
where $\mg_i\in\kw(r_i,\xi_i,a_i)^{\mu}$ with $\xi_i.H=0$, and moreover $\mg_2\in\kw(r_2,\xi_2,a_2)^{\mu s}$.  Hence $\Ext^2(\mg_2,\mg_1)\cong \Hom(\mg_1,\mg_2(K_X))^{\vee}=0$.  
Also by direct computation we have 
\begin{equation}\label{rexi}\begin{cases} \xi_1+\xi_2=0; \\  a_1+a_2+\xi_1.\xi_2=n;\\
r_1+r_2=r.\end{cases}
 \end{equation} 

Fix $\Xi:=(r_i,\xi_i,a_i)$, and let $\ke(\Xi)$ be the substack of $\kwrn$ parametrizing sheaves in the middle of (\ref{mufil}).  It is enough to show 
\[dim~\ke(\Xi)\leq \begin{cases}r(2n-r)-2, &\text{if }n>r \\r(2n-r)-1, &\text{if } n=r\end{cases}.\]

Let $Aut(\mf)^o$ be the subgroup of $Aut(\mf)$ containing all the automorphisms $\tau$ of $\mf$ satisfying that $\tau(j_1(\mg_1))\subset j_1(\mg_1)$, which is equivalent to $j_2\circ\tau\circ j_1=0$ and also equivalent to that $\tau$ induces an element $(\tau_1,\tau_2)\in Aut(\mg_1)\times Aut(\mg_2)$.  Then we have a map $Aut(\mf)^o\ra Aut(\mg_1)\times Aut(\mg_2)$ with kernel isomorphic to $\Hom(\mg_2,\mg_1)$.  On the other hand $Aut(\mg_1)\times Aut(\mg_2)$ acts on $\Ext^1(\mg_2,\mg_1)$ and the stabilizer of the extension in (\ref{mufil}) is isomorphic to $Aut^o(\mf)/\Hom(\mg_2,\mg_1)$.

Let $\kp(\Xi)$ be the stack parametrizing the quotient $[\mf\twoheadrightarrow \mg]$ such that $\mf\in\kwrn^{\mu}$ and $\mg\in\kw(r_2,\xi_2,a_2)^{\mu s}$.  $Aut([\mf\twoheadrightarrow \mg])=Aut(\mf)^o$.  Then we have a surjective map $\kp(\Xi)\ra \ke(\Xi)$ and the dimension of the fiber over $\mf$ is at least $dim~Aut(\mf)/Aut(\mf)^o\geq 0$.  Hence   
$dim~\ke(\Xi)\leq dim~\kp(\Xi)$.

On the other hand we have a map $\kp(\Xi)\ra \kw(r_1,\xi_1,a_1)^{\mu}\times \kw(r_2,\xi_2,a_2)^{\mu s}$ whose fiber over $(\mg_1,\mg_2)$ is the stack $\mext^1(\mg_2,\mg_1)$ associated to $\Ext^1(\mg_2,\mg_1)$.  For every extension $\theta\in\Ext^1(\mg_2,\mg_1)$, $\Hom(\mg_2,\mg_1)\subset Aut(\theta)$.  Hence by $\Ext^2(\mg_2,\mg_1)=0$,
$dim~\mext^1(\mg_2,\mg_1)\leq -\chi(\mg_2,\mg_1)=-r_2(\frac{\xi_1^2}2-a_1-\frac{K_X.\xi_1}2)-r_1(\frac{\xi_2^2}2-a_2+\frac{K_X.\xi_2}2)-r_1r_2+\xi_1.\xi_2=:-\chi(\Xi)$.  Therefore by (\ref{rexi}) we have

\begin{eqnarray}&&dim~\ke(\Xi)\leq dim~\kp(\Xi)\leq dim~\kw(r_1,\xi_1,a_1)^{\mu }+dim~ \kw(r_2,\xi_2,a_2)^{\mu s}-\chi(\Xi)\nonumber\\
&=&\sum_{i=1}^2 (r_i(2a_i-r_i)-(r_i-1)\xi_i^2)-\chi(\Xi)\nonumber\\
&=&r(2n-r)+2r_1(-a_2+\frac{\xi_2^2}2)+2r_2(-a_1+\frac{\xi_1^2}2)+2\xi_1^2-\chi(\Xi)\nonumber\\
&=&r(2n-r)+r_1(-a_2+\frac{\xi_2^2}2-\frac{K_X.\xi_2}2)+r_2(-a_1+\frac{\xi_1^2}2+\frac{K_X.\xi_1}2)+r_1r_2+\xi_1^2\nonumber\end{eqnarray}

$-a_i+\frac{\xi_i^2}2\pm\frac{K_X.\xi_i}2<0$ for $i=1,2$ and $-a_2+\frac{\xi_2^2}2-\frac{K_X.\xi_2}2+r_2\leq0$ or $\mg_2\cong \mo_X$ by Lemma \ref{nogs} and Remark \ref{mfdl}.  
Also either $\xi_1^2\leq -2$ or $\xi_1=0$ by Remark \ref{big2}.  Hence if $\xi_1\neq 0$, then $dim~\ke(\Xi)\leq dim~\kwrn-3$.  If $\xi_1=0$ and $\mg_2\not\cong\mo_X$, then $dim~\ke(\Xi)\leq dim~\kwrn-\min\{3,(n-1)\}$.  If $\mg_2\cong\mo_X$, then $r_2=1$, $\xi_1=\xi_2=0$ and $a_2=0$, $a_1=n$.  Therefore
 \[dim~\ke(\mg_2\cong\mo_X)=r(2n-r)-n+r-1\leq\begin{cases}r(2n-r)-2, &\text{if }n>r \\r(2n-r)-1, &\text{if } n=r\end{cases}.\]

We have proved the lemma.
\end{proof}

\begin{coro}\label{gevan}Let $r\geq 2$ and $n\neq0$.  For a generic sheaf $\mf\in \wrn$, $\Hom(\mf,\mo_X)=0$.
\end{coro}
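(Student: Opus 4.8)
The plan is to reduce the statement to the $\mu$-stability of a generic sheaf and then pass to the dual. First I would note that $\Hom(\mf,\mo_X)=H^0(\mathcal{H}om(\mf,\mo_X))=H^0(\mf^{\vee})$, so it suffices to produce, for generic $\mf\in\wrn$, a dual $\mf^{\vee}$ with no nonzero global sections. The natural candidates are the $\mu$-stable sheaves, and the key tool will be the second part of Lemma \ref{nogs}: for an $H$-semistable sheaf $\mg$ of positive rank with $c_1(\mg).H=0$, one has $H^0(\mg)\neq 0$ only if $\mg\cong\mo_X^{\oplus r(\mg)}$.

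The first step is to establish that a generic $\mf\in\wrn$ is $\mu$-stable. Since $\kwrn^{\mu}$ has dimension $r(2n-r)$, Lemma \ref{mustable} gives $\dim(\kwrn^{\mu}\setminus\kwrn^{\mu s})\leq r(2n-r)-1<\dim\kwrn^{\mu}$, so the open substack $\kwrn^{\mu s}$ is dense in $\kwrn^{\mu}$ and therefore in its open substack $\kwrn$. As $\wrn$ is irreducible (Remark \ref{irred}), the generic point corresponds to a $\mu$-stable sheaf, so the generic sheaf of $\wrn$ is $\mu$-stable.

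The second step is the dualization argument. A $\mu$-stable sheaf $\mf$ of class $\crn$ is torsion-free with $c_1(\mf)=0$, so $\mf^{\vee}$ is again $\mu$-stable (compare Remark \ref{mfdl}) of rank $r$ with $c_1(\mf^{\vee})=0$; in particular $c_1(\mf^{\vee}).H=0$ and $\mf^{\vee}$ is $H$-semistable. Since $r\geq 2$, the $\mu$-stable (hence indecomposable) sheaf $\mf^{\vee}$ cannot be isomorphic to $\mo_X^{\oplus r}$, so Lemma \ref{nogs} forces $H^0(\mf^{\vee})=0$, that is, $\Hom(\mf,\mo_X)=0$, as required.

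The step I expect to be the main obstacle is the first one: correctly controlling the locus of non-$\mu$-stable sheaves inside $\wrn$, uniformly in $r$ and $n$. One must keep track of the fact that Lemma \ref{mustable} is a statement about the stack, and that the strictly $H$-semistable sheaves, which need not satisfy the hypotheses of Lemma \ref{sscod} when $n=r$ is small, are nonetheless swept into $\kwrn^{\mu}\setminus\kwrn^{\mu s}$; arguing at the level of the irreducible stack $\kwrn$ rather than on the coarse space is what makes the density statement clean and avoids the stack-versus-coarse dimension bookkeeping. Once $\mu$-stability of the generic sheaf is in hand, the remaining steps are formal consequences of Lemma \ref{nogs} together with the standard fact that dualization preserves $\mu$-stability.
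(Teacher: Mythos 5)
Your proof is correct, and its core is exactly the argument the paper intends: the statement appears as an unproved corollary immediately after Lemma \ref{mustable}, and the evident proof is your first step, namely generic $\mu$-stability obtained from the dimension bound on $\kwrn^{\mu}\setminus\kwrn^{\mu s}$ together with irreducibility (Remark \ref{irred}) and the passage from the stack to the coarse space, which you handle correctly. Your finishing step is a mild variation: you dualize and invoke Lemma \ref{nogs}, which works but quietly relies on the fact that the dual of a $\mu$-stable torsion-free sheaf is again $\mu$-stable (hence Gieseker-semistable, as Lemma \ref{nogs} requires); this is standard, but the paper only records the $\mu$-semistable version in Remark \ref{mfdl}. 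A more economical finish avoids dualization entirely: if $\phi:\mf\ra\mo_X$ is nonzero and $\mf$ is $\mu$-stable of slope $0$ and rank $r\geq 2$, then $\mathrm{im}\,\phi$ is a rank-one quotient of $\mf$ contained in $\mo_X$, so $c_1(\mathrm{im}\,\phi)$ is the negative of an effective divisor and its slope is $\leq 0=\mu(\mf)$, contradicting $\mu$-stability, since every quotient of rank strictly between $0$ and $r$ must have strictly larger slope. Finally, note that the statement is non-vacuous only for $n\geq r$ (Lemma \ref{nogs} gives $\wrn=\emptyset$ when $n<r$ and $n\neq 0$), which is precisely the range in which the bound of Lemma \ref{mustable} that you quote is available.
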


\begin{lemma}\label{lowrk}Assume we have a non-splitting sequence as follows
\begin{equation}\label{oext}0\ra \mf_n^{r-1}\ra \mf_n^r\ra \mo_X\ra 0,
\end{equation}
where $\mf_n^{r-1}$ ($\mf_n^r$, resp.) is a sheaf of class $c_n^{r-1}$ ($\crn$, resp.).  Then $\mf_n^r$ is $\mu$-semistable iff $\mf_n^{r-1}$ is.  

In particular let $n=r$, then $\mf_r^r$ is semistable if $\mf_r^{r-1}$ is semistable (hence stable since $g.c.d.(r,r-1)=1$); and conversely $\mf_r^{r-1}$ is stable if $\mf_r^r$ is stable.  

\end{lemma}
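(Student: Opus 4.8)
The plan is to read everything off the slopes and reduced Hilbert polynomials of subsheaves, using that $c_1(\mf_n^r)=c_1(\mf_n^{r-1})=c_1(\mo_X)=0$, so all three sheaves have $H$-slope $\mu=0$; note also that torsion-freeness propagates, since the torsion of $\mf_n^r$ injects into $\mf_n^{r-1}$ via (\ref{oext}). For the $\mu$-semistability equivalence, one direction is immediate: if $\mf_n^r$ is $\mu$-semistable then any $\mg\subset\mf_n^{r-1}\subset\mf_n^r$ has $c_1(\mg).H\le0$, so $\mf_n^{r-1}$ is $\mu$-semistable. For the converse I would take $\mg\subset\mf_n^r$, set $\mk:=\mg\cap\mf_n^{r-1}$ and let $\mn\subset\mo_X$ be the image of $\mg$; from $0\to\mk\to\mg\to\mn\to0$ we get $c_1(\mg).H=c_1(\mk).H+c_1(\mn).H$. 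Here $c_1(\mk).H\le0$ by $\mu$-semistability of $\mf_n^{r-1}$, while for the nonzero subsheaf $\mn\subset\mo_X$ one has $\mn^{\vee\vee}=\mo_X(-D)$ with $D$ effective, so $c_1(\mn).H=-D.H\le0$; adding gives $c_1(\mg).H\le0$. This settles the general-$n$ assertion, and notably uses neither $n=r$ nor non-splitting.

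For the case $n=r$ I would first record $\chi(\mf_r^r)=0$ and $\chi(\mf_r^{r-1})=-1$ (as $\chi(\mo_X)=1$), and that the class $c^{r-1}_r$ has coprime rank $r-1$ and Euler characteristic $-1$ (the coprimality $\gcd(r,r-1)=1$), so for it Gieseker semistability and stability coincide. Because $c_1=0$ throughout, comparing reduced Hilbert polynomials of a subsheaf $\mg$ reduces to comparing $\mu(\mg)$ first and, only for the borderline $\mu(\mg)=0$, comparing $\chi(\mg)/r(\mg)$. Assuming $\mf_r^{r-1}$ semistable, hence stable, the equivalence above already gives $\mf_r^r$ $\mu$-semistable, so it remains to show $\chi(\mg)\le0$ for every $\mg\subset\mf_r^r$ with $\mu(\mg)=0$. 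Running the same $\mk,\mn$ decomposition, now $c_1(\mk).H=c_1(\mn).H=0$ is forced, so $\mn=\mi_Z$ with $\chi(\mn)\le1$, and stability of $\mf_r^{r-1}$ yields $\chi(\mk)\le-1$ once $\mk\ne0$; in every such configuration $\chi(\mg)=\chi(\mk)+\chi(\mn)\le0$.

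The one dangerous case is $\mk=0$ with $\mn=\mo_X$, i.e. $\mg\cong\mo_X$ mapping isomorphically to the quotient: this would split (\ref{oext}) and is excluded by hypothesis, which is exactly where non-splitting is used; without it $\mf_r^r$ would contain a copy of $\mo_X$ with $\chi=1>0$ and fail to be semistable. For the converse, assume $\mf_r^r$ stable; it is $\mu$-semistable, so $\mf_r^{r-1}$ is $\mu$-semistable, and it only remains to rule out equality of reduced Hilbert polynomials for a proper $\mg\subsetneq\mf_r^{r-1}$ with $\mu(\mg)=0$. Strict stability of $\mf_r^r$ forces $\chi(\mg)<0$; combined with $r(\mg)\le r-1$—and, in the full-rank case $r(\mg)=r-1$, with the observation that the torsion quotient $\mf_r^{r-1}/\mg$ is then $0$-dimensional of positive length so $\chi(\mg)\le-2$—this gives the strict inequality $\chi(\mg)/r(\mg)<-1/(r-1)=\chi(\mf_r^{r-1})/(r-1)$, proving $\mf_r^{r-1}$ stable.

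The routine content is the slope and Euler-characteristic bookkeeping together with the elementary description of subsheaves of $\mo_X$. The genuinely delicate point—and the only place the non-splitting hypothesis enters—is excluding the slope-zero subsheaf $\mo_X\hookrightarrow\mf_r^r$ in the forward $n=r$ direction; the coprimality $\gcd(r,r-1)=1$ is what frees me to move between semistability and stability for the rank-$(r-1)$ class, and the full-rank torsion-quotient refinement is what supplies the needed strictness in the converse.
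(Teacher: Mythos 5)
Your proposal is correct and takes essentially the same route as the paper: the same decomposition of a potentially destabilizing subsheaf $\mg\subset\mf_r^r$ into $\mk=\mg\cap\mf_r^{r-1}$ and its image $\mn\subset\mo_X$ (the paper's $\mg_1$ and $\mg/\mg_1$), the same use of the non-splitting hypothesis to exclude $\mg\cong\mo_X$, and in the converse direction the same dichotomy on $r(\mg)$ with the observation that a full-rank proper subsheaf has $0$-dimensional nonzero quotient. The only difference is expository: you spell out the $\mu$-semistability equivalence and the coprimality mechanism, which the paper states in one line.
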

\begin{proof}Because $c_1(\mf_n^{r-1})=c_1(\mf_n^r)=c_1(\mo_X)=0$ and $\mo_X$ is $\mu$-semistable, $\mf_n^{r}$ is $\mu$-semistable iff $\mf_n^{r-1}$ is.  


Let $r=n$.  Assume $\mf_r^{r-1}$ is semistable.  Then $\mf_r^r$ is $\mu$-semistable.  Let $\mg\subset \mf_r^r$ with $c_1(\mg).H=0$.  It suffices to show $\chi(\mg)\leq0$.  Let $\mg_1=\mg\cap \mf_r^{r-1}$ and $\mg/\mg_1$ is a subsheaf of $\mo_X$.  Then $c_1(\mg_1).H=c_1(\mg/\mg_1).H=0$ because both $\mo_X$ and $\mf_r^{r-1}$ are $\mu$-semistable.  By stability of $\mf_r^{r-1}$ either $\chi(\mg_1)<0$ or $\mg_1=0$.   Also either $\chi(\mg/\mg_1)\leq 0$ or $\mg/\mg_1\cong\mo_X$.  Therefore either $\chi(\mg)\leq 0$ or $\mg\cong \mo_X$.  But (\ref{oext}) does not split.  So $\chi(\mg)\leq 0$ and $\mf_r^r$ is semistable.

Now assume $\mf_r^r$ is stable, then $\mf_{r}^{r-1}$ is $\mu$-semistable.  Let $\mg'\subset\mf_r^{r-1}\subset\mf_r^r$ with $c_1(\mg').H=0$.  It suffices to show that $\frac{\chi(\mg')}{r(\mg')}<-\frac1{r-1}=\frac{\chi(\mf_r^{r-1})}{r(\mf_r^{r-1})}$.  $\chi(\mg')\leq -1$ by stability of $\mf_r^r$, hence $\frac{\chi(\mg)}{r(\mg)}<-\frac1{r-1}=\frac{\chi(\mf_r^{r-1})}{r(\mf_r^{r-1})}$ if $r(\mg')<r-1$.  If $r(\mg')=r-1$, then $c_1(\mf_r^{r-1}/\mg').H=0=r(\mf_r^{r-1}/\mg')$ and hence $\mf_r^{r-1}/\mg'$ is a 0-dimensional sheaf and $\frac{\chi(\mg')}{r(\mg')}<\frac{\chi(\mf_r^{r-1})}{r(\mf_r^{r-1})}$.  

The lemma is proved.
\end{proof}
\begin{rem}\label{new}Let $\mf_r^{r-1}$ and $\mf_r^{r}$ be as in Lemma \ref{lowrk}.  Then 
\[\mf_r^{r-1}~ is~ stable~ and~ locally~ free ~\Leftrightarrow~\mf_r^r~ is~ stable~ and~ locally~ free.\]  This is because when $\mf_r^{r-1}$ is stable, $\mf_r^r$ is strictly semistable only if (\ref{oext}) splits along the ideal sheaf $\mi_x\subset\mo_X$ of some single point $x\in X$, which is not possible if $\mf_r^{r-1}$ is locally free.
\end{rem}
\begin{flushleft}{\textbf{Convension.}}
From now on, we will deal with global sections of determinant line bundles $\lcl$ over the moduli space $\wrn^L$.  By Lemma \ref{sscod}, $\wrn\setminus\wrn^L$ is always of codimenison $\geq 3$.  Hence without loss of generality, we may assume the polarization $H$ is general enough so that we can always write $\wrn$ instead of $\wrn^L$ for any $L$.
\end{flushleft}

\section{A closed subscheme $\ts_r$ of $\wrr$.}
\begin{prop}\label{ssos}Let $r\geq 2$.
There is a canonical section $s_r$ of the determinant line bundle $\lambda_r(K_X^{-1})$ over $\wrr$ whose zero set is 
\begin{equation}\label{ds}\ts_r:=
\big\{\mf\in \wrr | \Hom(\mf,\mo_X)\neq 0\big\}.\end{equation}
Moreover $\ts_r$ is reduced as a divisor associated to $\lambda_r(-K_X)$ of $\wrr$, and there is a birational morphism $\delta:W(r-1,0,r)\ra \ts_r$ which is surjective on the stable locus.  Moreover for any line bundle $L$ over $X$, $\delta^{*}\lambda_r(L)\cong \lambda_{c_r^{r-1}}(L).$
\end{prop}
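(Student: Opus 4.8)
The plan is to realize $s_r$ as a Cayley--Bott (determinantal) section and to use the rank $r-1$ moduli space as a birational model of its zero divisor. Since $-K_X$ is effective, a choice of anticanonical section cuts out a curve $D\in|-K_X|$ and gives a short exact sequence $0\to K_X\to\mo_X\to\mo_D\to 0$; in $K(X)$ this reads $u_{K_X^{-1}}=[\mo_X]-[K_X]=[\mo_D]$, so $\lambda_r(K_X^{-1})$ is the determinant line bundle attached to the $1$-dimensional class $[\mo_D]$. Writing $\mathbb{F}$ for a (local) universal sheaf on $X\times\wrr$, both $Rp_*\mathbb{F}$ and $Rp_*(\mathbb{F}\otimes q^{*}K_X)$ are perfect complexes of rank $0$, since the fibrewise Euler characteristics vanish by orthogonality of $c_r^r$ and $u_{K_X^{-1}}$. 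First I would \emph{define} $s_r$ as the determinant of the morphism $Rp_*(\mathbb{F}\otimes q^{*}K_X)\to Rp_*\mathbb{F}$ induced by $K_X\hookrightarrow\mo_X$; its cone computes $Rp_*(\mathbb{F}|_{D\times\wrr})$, so $s_r$ is a section of $\det Rp_*\mathbb{F}\otimes(\det Rp_*(\mathbb{F}\otimes q^{*}K_X))^{-1}=\lambda_r(K_X^{-1})$ vanishing exactly where the map fails to be a quasi-isomorphism.

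For the zero set I would run the cohomology of $0\to\mf\otimes K_X\to\mf\to\mf|_D\to0$ for $\mf\in\wrr$ stable. As $-K_X$ is effective and nonzero, $K_X.H<0$, so $\mf\otimes K_X$ and $K_X$ have negative slope; stability then gives $H^0(\mf\otimes K_X)=0$, $H^0(\mf)=0$ and $H^2(\mf)=\Hom(\mf,K_X)^{\vee}=0$, while Serre duality gives $H^2(\mf\otimes K_X)=\Hom(\mf,\mo_X)^{\vee}$. Hence $s_r(\mf)=0$ iff $H^0(\mf|_D)\neq0$ (equivalently $H^1(\mf|_D)\neq0$, since $\chi(\mf|_D)=0$); by the adjunction $\omega_D=(K_X+D)|_D=\mo_D$ this is the same as $\Hom(\mf,\mo_D)=H^0(\mf^{\vee}|_D)\neq0$. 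Applying $\Hom(\mf,-)$ to $0\to K_X\to\mo_X\to\mo_D\to0$ and using $\Hom(\mf,K_X)=0$ yields $0\to\Hom(\mf,\mo_X)\to\Hom(\mf,\mo_D)\to\Ext^1(\mf,K_X)$, so $\Hom(\mf,\mo_X)\hookrightarrow\Hom(\mf,\mo_D)$, which already gives the inclusion $\ts_r\subseteq\{s_r=0\}$.

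The reverse inclusion and the scheme structure I would extract from the birational model. For $\mf'\in W(r-1,0,r)$ one has $\chi(\mf')=-1$ and $H^0(\mf')=H^2(\mf')=0$, whence $\text{ext}^1(\mo_X,\mf')=h^1(\mf')=1$: there is a \emph{unique} non-split extension $0\to\mf'\to\mf\to\mo_X\to0$, and by Lemma \ref{lowrk} and Remark \ref{new} the middle term $\mf$ is stable, with the quotient map exhibiting $\mf\in\ts_r$. This assignment defines $\delta\colon W(r-1,0,r)\to\ts_r$. Conversely, for stable $\mf\in\ts_r$ a nonzero map $\mf\to\mo_X$ has image an ideal sheaf $\mi_Z$, and stability forces $\chi(\mi_Z)=1-\mathrm{len}(Z)>0$, i.e.\ $Z=\emptyset$; thus the map is surjective with kernel in $W(r-1,0,r)$ (stable, again by Lemma \ref{lowrk}), so $\delta$ is surjective onto the stable locus, and birational since $\text{hom}(\mf,\mo_X)=1$ for generic such $\mf$ by Corollary \ref{gevan}. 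Consequently $\ts_r$ is irreducible (Remark \ref{irred}) of dimension $(r-1)(r+1)+1=r^2=\dim\wrr-1$, an irreducible divisor sitting inside the effective divisor $\{s_r=0\}$.

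Finally I would upgrade ``$\subseteq$'' to equality of reduced schemes and identify the pullback. Purity of the Cartier divisor $\{s_r=0\}$ together with $\dim\ts_r=\dim\wrr-1$ forces $\{s_r=0\}=\ts_r$ as sets once one checks that the discrepancy locus $\{H^0(\mf|_D)\neq0\}\setminus\ts_r$ has codimension $\geq2$; reducedness then follows by showing $s_r$ vanishes to order exactly $1$ at the generic point of $\ts_r$, where $\text{hom}(\mf,\mo_X)=1$ and the derivative of the determinantal section is the corresponding nondegenerate cup-product pairing. For the last statement, the universal family of extensions gives $\delta^{*}[\mathbb{F}]=[\mathbb{F}^{r-1}]+[q^{*}\mo_X]$ in $K(X\times W(r-1,0,r))$, so additivity of determinant line bundles yields $\delta^{*}\lambda_r(L)\cong\lambda_{c_r^{r-1}}(L)\otimes\det Rp_*(q^{*}u_L)^{\pm1}$, and the correction term is trivial because $\chi(u_L)=0$ makes $Rp_*(q^{*}u_L)$ of rank $0$ with trivial determinant. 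The main obstacle I anticipate is precisely this last scheme-theoretic identification---controlling the discrepancy locus and proving multiplicity one---rather than the essentially formal construction of $\delta$ and the determinant computation.
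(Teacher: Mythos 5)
Your proposal follows the same skeleton as the paper's proof: the section is the theta section attached to $\mo_C$, $C\in|-K_X|$; the zero locus is matched with $\ts_r$ cohomologically; $\delta$ is built from the unique non-split extension $0\ra\mf'\ra\mf\ra\mo_X\ra0$ (using $h^1(\mf')=1$ and Lemma \ref{lowrk}); and $\delta^{*}\lambda_r(L)\cong\lambda_{c_r^{r-1}}(L)$ comes from additivity of determinant line bundles along the universal extension plus $\chi(u_L)=0$, exactly as in the paper's (\ref{trans}). But there is a genuine gap at the one step that carries the real content: the claim that $\ts_r$, with its reduced structure, is a divisor of $\lambda_r(K_X^{-1})$, i.e.\ that $s_r$ vanishes to order exactly one along $\ts_r$. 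You reduce this to the assertion that ``the derivative of the determinantal section is the corresponding nondegenerate cup-product pairing,'' but you never prove nondegeneracy, and it is not automatic: determinantal/theta sections can perfectly well vanish to higher order. Proving exactly this is what occupies most of the paper's argument, which works on the smooth stacks $\kw(r,0,r)$ and $\kw(r-1,0,r)$, restricts to the locus $\kw^o$ where $\text{hom}(\mf,\mo_X)\leq 1$ (whose complement has codimension $\geq 2$ by Lemma \ref{mustable}, not by Corollary \ref{gevan}, which only concerns generic sheaves in all of $\wrn$), forms the universal extension $0\ra\F_r^{r-1}\ra\F_r^r\ra p^{*}\mr\ra0$ with $\mr=R^1p_{*}\F_r^{r-1}$, identifies $\delta^{*}\mn_{\kz_r^o/\kw^o}\cong\E xt^1_p(\F_r^{r-1},p^{*}\mr)$ via the two tangent-sheaf sequences, and computes its determinant to be $\lambda_{c_r^{r-1}}(K_X^{-1})\cong\delta^{*}\lambda_r(K_X^{-1})$ by Grothendieck duality (Lemma 5.5 in \cite{Abe}). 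Note also that the quotient of the universal extension is $p^{*}\mr$, not $q^{*}\mo_X$; this is precisely why the correction term in your last step is $\mr^{-\chi(u_L)}$ and why $\chi(u_L)=0$ is genuinely needed.

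A second, concrete failure: your generic picture is false for $r=2$, which the Proposition includes. For two distinct points $x,y$ the Mayer--Vietoris sequence $0\ra\mi_{\{x,y\}}\ra\mi_x\oplus\mi_y\ra\mo_X\ra0$ is non-split (since $H^0(\mi_x)=H^0(\mi_y)=0$) and, as $h^1(\mi_{\{x,y\}})=1$, it is \emph{the} unique non-split extension; hence $\delta:W(1,0,2)=X^{[2]}\ra\ts_2$ is the Hilbert--Chow morphism, $\ts_2$ consists entirely of strictly semistable classes $[\mi_x\oplus\mi_y]$ with $\text{hom}(\mf,\mo_X)=2$, and there are no stable points at which to run your ``generic $\text{hom}=1$, nondegenerate derivative'' argument at all. (Relatedly, your claim that the middle term of the extension is stable overstates Lemma \ref{lowrk} and Remark \ref{new}: one only gets semistability unless $\mf'$ is locally free.) The paper sidesteps this by proving the $r=2$ case by citation to \cite{Yuan6} and running its stack argument only for $r\geq3$. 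Finally, one of your two announced ``main obstacles'' is illusory: in your own sequence $0\ra\Hom(\mf,\mo_X)\ra\Hom(\mf,\mo_C)\ra\Ext^1(\mf,K_X)$ the last term is $H^1(\mf)^{\vee}=0$ by Serre duality (as $h^0(\mf)=h^2(\mf)=0$ and $\chi(\mf)=0$), so the set-theoretic equality $\{s_r=0\}=\ts_r$ holds on the nose with no discrepancy locus to control --- this is the paper's one-line identification $H^1(\mf\otimes\mo_C)\cong H^2(\mf(K_X))\cong\Hom(\mf,\mo_X)^{\vee}$.
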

\begin{proof}Since $-K_X$ is effective, we have a curve $C\in |-K_X|$ and its structure sheaf $\mo_C$ is of class $u_{K_X^{-1}}$.  Hence by Lemma 2.3 and Lemma 2.4 in \cite{Da2}, there is a section $s_{\mo_C}$ (unique up to scalars) of line bundle $\lambda_r(K_X^{-1})$ over $W(r,0,r)$ whose zero set is 
$$D_{\mo_C}:=\big\{\mf\in W(r,0,r) | H^1(\mf\otimes\mo_C)\neq 0\big\}.$$

On the other hand we have the following exact sequence
\[0\ra K_X\ra \mo_X\ra\mo_C\ra 0.\]
We then have the following exact sequence
\begin{equation}\label{kcurve}H^1(\mf)\ra H^1(\mf\otimes\mo_C)\ra H^2(\mf(K_X))\ra H^2(\mf).\end{equation}
Since $\mf\in\wrr$, $H^0(\mf)=H^2(\mf)=0$ by semistability of $\mf$.  Also $\chi(\mf)=0$, therefore $H^1(\mf)=0$.  Hence (\ref{kcurve}) gives
\[H^1(\mf\otimes\mo_C)\cong H^2(\mf(K_X))\cong \Hom(\mf,\mo_X)^{\vee}.\]
Hence $D_{\mo_C}=\ts_r$ as sets.  

We denote also by $\ts_r$ the reduced subscheme, it is enough to show 
\begin{equation}\label{div}\mo_{W}(\ts_r)\cong \lambda_r(K_X^{-1}).\end{equation}  
If $r=2$, then we are done by Corollary 2.8 in \cite{Yuan6}.  
We assume $r\geq 3$.   By Lemma \ref{lowrk}, we have a morphism $\delta:W(r-1,0,r)\ra \ts_r$ which is surjective on the stable locus.  $\ts_r$ is Cohen-Macaulay since so is $\wrr$.

Recall that $\kw(r,0,r)$ and $\kw(r-1,0,r)$ are the stacks associated to $W(r,0,r)$ and $W(r-1,0,r)$ respectively.   $\kw$ and $\kw(r-1,0,r)$ are smooth.  Let $\kz_r$ be the substacks of $\kw(r,0,r)$ associated to $\ts_r$.  We also denote by $\delta:\kw(r-1,0,r)\ra \kz_r$ the same map at stack level.  Then by Lemma \ref{mustable}, $\delta$ is surjective outside of a substack of codimension $\geq2$.

Denote by $\kw^o$ the open substack of $\kw(r,0,r)$ consisting of sheaves $\mf$ such that $\text{hom}(\mf,\mo_X)\leq 1$.  Let $\kz_r^o=\kz_r\cap \kw^o$ and $\kw(r-1,0,r)^o=\delta^{-1}(\kz^o_r)$.  Easy to see that $\delta$ restricted to $\kw(r-1,0,r)^o$ is an isomorphism to its image.  On the other hand by Lemma \ref{mustable} and Lemma \ref{lowrk}, we have $codim~(\kw(r-1,0,r)\setminus \kw(r-1,0,r)^o)\geq 2$ and $codim~(\kz_r\setminus \kz_r^o)\geq 2$.  Denote by $\ts_r^o$ the image of $\kz^o_r$ via the corepresentation $\kz_r\ra \ts_r$.  Then by Lemma \ref{sscod} $codim~(\ts_r\setminus\ts_r^o)\geq 2$ and hence $\ts_r$ is normal.  

Let $\mn_{\kz_r^o/\kw^o}$ be the normal bundle of $\kz_r^o$ inside $\kw^o$.  
To show (\ref{div}), it is enough to show $\delta^{*}\mn_{\kz_r^o/\kw^o}\cong \delta^{*}\lambda_{r}(K_X^{-1})$ over $\kw(r-1,0,r)^o$.

Let $\F_r^{r-1}$ be the universal sheaf over $X\times \kw(r-1,0,r)^o$.  Then $\mr:=R^1p_{*}\F_r^{r-1}\cong \E xt^1_p(\mo_{X\times \kw(r-1,0,r)^o},\F_r^{r-1})$ is a line bundle over $\kw(r-1,0,r)^o$.  We have a universal extension over $X\times \kw(r-1,0,r)^o$ as follows.
\begin{equation}\label{unex}
0\ra\F_r^{r-1}\ra \F_r^r\ra p^{*}\mr\ra 0,
\end{equation}
where $\F_r^r$ is the family of sheaves inducing the identification of $\kw(r-1,0,r)^o$ and $\kz_r^o$.  

By the universal property of the determinant line bundles, we have 
\begin{eqnarray}\label{rr1}\delta^{*}\lambda_{r}(K_X^{-1})&\cong& det^{-1}R^{\bullet}p_{*}(\F_r^r\otimes q^{*}\mo_C)\nonumber\\
&\cong& det^{-1}R^{\bullet}p_{*}(\F_r^{r-1}\otimes q^{*}\mo_C)\otimes det^{-1}R^{\bullet}p_{*}(p^{*}\mr\otimes q^{*}\mo_C)\nonumber\\
&\cong&det^{-1}R^{\bullet}p_{*}(\F_r^{r-1}\otimes q^{*}\mo_C)\otimes \mr^{\otimes-\chi(\mo_C)}\nonumber\\
&\cong& det^{-1}R^{\bullet}p_{*}(\F_r^{r-1}\otimes q^{*}\mo_C)=\lambda_{c_r^{r-1}}(K_X^{-1})\end{eqnarray}

Applying the functor $p_{*}\cdot\hh om(\F_r^{r-1},-)$ to (\ref{unex}) we get the following sequence
\begin{equation}\label{tan1}
0\ra\E xt^1_p(\F_r^{r-1},\F_r^{r-1})\ra \E xt^1_p(\F_{r}^{r-1},\F_r^r)\ra \E xt^1_p(\F_r^{r-1},p^{*}\mr)\ra 0,
\end{equation}
where the left zero is because $\hh om(\F_{r}^{r-1},p^{*}\mr)\otimes k(z)=\Hom(\mf_r^{r-1},\mo_X)=0$ and the right zero is because $\E xt^2_p(\F_r^{r-1},\F_r^{r-1})\otimes k(z)=\Ext^2(\mf_r^{r-1},\mf_r^{r-1})=0$ for every $z\in \kz_r^o$.

Applying the functor $p_{*}\cdot\hh om(-,\F_r^{r})$ to (\ref{unex}) we get the following sequence
\begin{equation}\label{tan2}
0\ra\E xt^1_p(\F_r^{r},\F_r^{r})\xrightarrow{\cong} \E xt^1_p(\F_{r}^{r-1},\F_r^r)\ra 0,
\end{equation}
where the left zero is because $\E xt^1_p(p^{*}\mr,\F_r^r)\otimes k(z)=\Ext^1(\mo_X,\mf_r^{r})=0$ and the right zero is because $\E xt^2_p(p^{*}\mr,\F_r^{r})\otimes k(z)=\Ext^2(\mo_X,\mf_r^{r})=0$ for every $z\in \kz_r^o$.

Combine (\ref{tan1}) and (\ref{tan2}) and we have
\begin{equation}\label{tan3}
0\ra\E xt^1_p(\F_r^{r-1},\F_r^{r-1})\ra \E xt^1_p(\F_{r}^{r},\F_r^r)\ra \E xt^1_p(\F_r^{r-1},p^{*}\mr)\ra 0.
\end{equation}
Because $\E xt^1_p(\F_r^{r-1},\F_r^{r-1})$ is the tangent bundle over $\kw(r-1,0,r)^o$ and $\E xt^1_p(\F_{r}^{r},\F_r^r)$ is the pullback of the tangent bundle of $\kw^o$, we have 
\begin{equation}\label{rr2}\delta^{*}\mn_{\kz_r^o/\kw^o}\cong \E xt^1_p(\F_r^{r-1},p^{*}\mr)\cong
det^{-1}R^{\bullet}(p_{*}\cdot\hh om)(\F^{r-1}_r, p^{*}\mr),\end{equation}
where the last isomorphism is because $p_{*}\cdot\hh om(\F^{r-1}_r,\mo_{X\times\kw(r-1,0,r)^o}))=0=\E xt^2_p(\F^{r-1}_r,\mo_{X\times\kw(r-1,0,r)^o}))$.  On the other hand $\F_r^{r-1}$ admits a locally free resolution of finite length, hence by Lemma 5.5 in \cite{Abe} we have
\begin{eqnarray}\label{nors}&&det^{-1}R^{\bullet}(p_{*}\cdot\hh om)(\F^{r-1}_r, p^{*}\mr)\nonumber\\&\cong&
det^{-1}R^{\bullet}(p_{*}\cdot\hh om)(\F^{r-1}_r, \mo_{X\times\kw(r-1,0,r)^o})\otimes\mr\nonumber\\&\cong& det^{-1}R^{\bullet}p_{*}\cdot R^{\bullet}\hh om(\F^{r-1}_r, \mo_{X\times\kw(r-1,0,r)^o})\otimes \mr\nonumber\\
&\cong& (det R^{\bullet}p_{*}\F^{r-1}_r\otimes q^{*}K_X)\otimes \mr\nonumber\\
&\cong&(det R^{\bullet}p_{*}\F^{r-1}_r\otimes q^{*}K_X)\otimes(det^{-1} R^{\bullet}p_{*}\F^{r-1}_r)=\lambda_{c_r^{r-1}}(K_X^{-1}).\end{eqnarray}
Notice that $\mr\cong det^{-1} R^{\bullet}p_{*}\F^{r-1}_r$ because $p_{*}\F^{r-1}_r=R^{2}p_{*}\F^{r-1}_r=0$.

Combining (\ref{rr1}), (\ref{rr2}) and (\ref{nors}), we get $\delta^{*}\mn_{\kz_r^o/\kw^o}\cong \delta^{*}\lambda_{r}(K_X^{-1})$.

For any line bundle $L$ over $X$, by (\ref{unex}) we have the following equation analogous to (\ref{rr1})
\begin{equation}\label{trans}\delta^{*}\lambda_r(L)\cong \lambda_{c_r^{r-1}}(L)\otimes \mr^{-\chi(u_L)}\cong \lambda_{c_r^{r-1}}(L),
\end{equation}
where the last isomorphism is because $\chi(u_L)=0$.  
Hence the proposition.
\end{proof}
\begin{rem}\label{bir}For $r\geq3$, $\ts_r$ is normal and integral.  Moreover $\delta_{*}\mo_{W(r-1,0,r)}\cong \mo_{\ts_r}$ and hence 
$\lambda_r(L)\cong \delta_{*}\delta^{*}\lambda_r(L)\cong\delta_{*}\lambda_{c^{r-1}_r}(L)$. 
Therefore together with Lemma 3.3 in \cite{Yuan6} we have the following isomorphism for $r\geq2$
\begin{equation}\label{ws1}\delta^{*}:~H^0(\ts_r,\lambda_r(L))\xrightarrow{\cong}H^0(W(r-1,0,r),\lambda_{c^{r-1}_r}(L)).\end{equation}
\end{rem}

\section{Higher rank cases.}
\subsection{General results.}\qquad

Let $L$ be a nontrivially effective line bundle.  Let $SD_{c_n^r,L}$ be \emph{the strange duality map} (see e.g. \S 2.2 in \cite{Yuan6}) as follows.
\[SD_{c_n^r,L}:H^0(\wrn,\lcl)^{\vee}\ra H^0(\ml,\z_L^r(n)).\]
We also write $SD_{r,L}:=SD_{c_r^r,L}$.  
Let $SD_{L,c_n^r}$ be the strange duality map dual to $SD_{c_n^r,L}$ as follows.
\[SD_{L,c_n^r}:H^0(\ml,\z_L^r(n))^{\vee}\ra H^0(\wrn,\lcl).\]
We also write $SD_{L,r}:=SD_{L,c_r^r}$.  

Recall that $\z_L\cong \lambda_L([\mo_X])\cong\lambda_L(c_0^1)$ and $\z_L^r(n):=\z_L^{\otimes r}\otimes\pi^{*}\mo_{\ls}(n)\cong\lambda_L(c_n^r)$ with $\pi:\ml\ra\ls$ sending every sheaf to its support.  $\z_L$ has a canonical divisor 
\[D_{\z_L}:=\big\{\mf\in\ml\big| H^0(\mf)\neq0\big\}.\]

On $\ml$ and $\wrr$ we have the following two exact sequences respectively
\begin{equation}\label{zes}0\ra\z_L^{r-1}(r)\ra\z_L^r(r)\ra\z_L^r(r)|_{D_{\z_L}}\ra 0,\end{equation}
\begin{equation}\label{ses}0\ra \lambda_r(L\otimes K_X)\ra\lambda_r(L)\ra\lambda_r(L)|_{\ts_r}\ra0,\end{equation}
where (\ref{ses}) is because of Proposition \ref{ssos}.  

We have the following proposition which generalizes Lemma 3.1 in \cite{Yuan6}.
\begin{prop}\label{hrmain}
Let $r\geq 2$.  By taking the global sections of (\ref{zes}) and the dual of global sections of (\ref{ses}), we have the following commutative diagram
\begin{equation}\label{rsddim}\xymatrix{&H^0(\ts_r,\lambda_r(L)|_{\ts_r})^{\vee}\ar[r]^{~\quad~~g_r^{\vee}}\ar[d]_{\alpha_{\ts_r}}& H^0(\lambda_r(L))^{\vee}\ar[r]^{f_r^{\vee}~~~~}
\ar[d]^{SD_{r,L}}& H^0(\lambda_r(L\otimes K_X))^{\vee}\ar[r]\ar[d]^{\beta_D} &0\\
0\ar[r]& H^0(\z_L^{r-1}(r))\ar[r]_{f_L} & H^0(\z^r_L(r))\ar[r]_{g_L\quad} &H^0(D_{\z_L},\z^r(r)|_{D_{\z_L}})}.
\end{equation}
Moreover 
\begin{equation}\label{ad1}\alpha_{\ts_r}\circ\delta^{*\vee}=SD_{c_r^{r-1},L}:H^0(W(r-1,0,r),\lambda_{c_r^{r-1}}(L))^{\vee}\ra H^0(\ml,\z_L^{r-1}(r)),\end{equation}
where $\delta^{*\vee}$ is the dual to the isomorphism $\delta^{*}$ in (\ref{ws1}).
\end{prop}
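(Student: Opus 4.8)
The plan is to reduce the entire proposition to a single multiplicativity statement for the strange duality (theta) section, after which every map and every square reads off formally. First I fix the master section: $SD_{r,L}$ is induced by the theta section $s_{\mathscr{D}}\in H^0(\wrr\times\ml,\lambda_r(L)\boxtimes\z_L^r(r))$ whose zero divisor is $\mathscr{D}_{c_r^r,u_L}$, and writing $s_{\mathscr{D}}=\sum_i a_i\otimes b_i$ under Künneth gives $SD_{r,L}(\psi)=\sum_i\psi(a_i)\,b_i$. I would then \emph{define} $\alpha_{\ts_r}:=SD_{c_r^{r-1},L}\circ(\delta^{*\vee})^{-1}$, using the isomorphism $\delta^{*}$ of (\ref{ws1}) and $\delta^{*}\lambda_r(L)\cong\lambda_{c_r^{r-1}}(L)$ from Proposition \ref{ssos}; with this definition (\ref{ad1}) holds tautologically, and the genuine content becomes the commutativity of the left square. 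Before that I record that both rows are exact: $H^0$ of (\ref{ses}) is left exact, so its dual (the top row) is right exact with $f_r^{\vee}$ surjective because $s_r$ is a non–zero–divisor on the integral variety $\wrr$; the bottom row is the left exact $H^0$ of (\ref{zes}), with $f_L$ injective. In particular $g_L\circ f_L=0$.

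The heart of the argument will be the following factorization of sections on $W(r-1,0,r)\times\ml$ (up to a nonzero scalar):
\[
\delta^{*}\big(s_{\mathscr{D}}\big|_{\ts_r\times\ml}\big)\;=\;s_{\mathscr{D}'}\cdot p_{\ml}^{*}s_D,
\]
where $s_{\mathscr{D}'}$ is the theta section inducing $SD_{c_r^{r-1},L}$ and $s_D\in H^0(\ml,\z_L)$ cuts out $D_{\z_L}$; here I use $\z_L^r(r)\cong\z_L^{r-1}(r)\otimes\z_L$ together with $\delta^{*}\lambda_r(L)\cong\lambda_{c_r^{r-1}}(L)$. To prove it I would pull back the universal extension (\ref{unex}), $0\to\F_r^{r-1}\to\F_r^r\to p^{*}\mr\to 0$, tensor with the universal sheaf on $X\times\ml$, and invoke the multiplicativity of determinant/theta sections under short exact sequences (the mechanism behind Lemmas 2.3--2.4 in \cite{Da2}): the factor $\F_r^{r-1}$ contributes $s_{\mathscr{D}'}$, while the rank–one factor $p^{*}\mr$, which plays the role of $\mo_X$ fiberwise, pairs with the $\ml$–family to give exactly the section of $\z_L$ with divisor $D_{\z_L}$, the twist $\mr^{-\chi(u_L)}=\mo$ being trivial as in (\ref{trans}).

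Granting this factorization, the left square follows by a direct contraction. For $\phi\in H^0(W(r-1,0,r),\lambda_{c_r^{r-1}}(L))^{\vee}$ one has $g_r^{\vee}\delta^{*\vee}(\phi)=\phi\circ\delta^{*}\circ g_r$, so $SD_{r,L}(g_r^\vee\delta^{*\vee}\phi)=\sum_i\phi(\delta^{*}g_r a_i)\,b_i$; applying $\phi$ to the first tensor factor of the displayed identity (with $s_{\mathscr{D}'}=\sum_j a_j'\otimes b_j'$, whence $s_{\mathscr{D}'}\cdot s_D=\sum_j a_j'\otimes f_L(b_j')$) turns this into $f_L\big(\sum_j\phi(a_j')b_j'\big)=f_L(SD_{c_r^{r-1},L}(\phi))$. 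Thus $SD_{r,L}\circ g_r^\vee\circ\delta^{*\vee}=f_L\circ SD_{c_r^{r-1},L}$, i.e. $SD_{r,L}\circ g_r^{\vee}=f_L\circ\alpha_{\ts_r}$. Composing with $g_L$ and using $g_L\circ f_L=0$ yields $g_L\circ SD_{r,L}\circ g_r^{\vee}=0$, so $g_L\circ SD_{r,L}$ annihilates $\operatorname{im}g_r^{\vee}=\ker f_r^{\vee}$; since $f_r^{\vee}$ is surjective there is a unique $\beta_D$ with $\beta_D\circ f_r^{\vee}=g_L\circ SD_{r,L}$, which is precisely the right square. Hence the whole diagram commutes and $\alpha_{\ts_r},\beta_D$ are constructed.

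The hard part will be the factorization identity. Two subtleties must be handled with care. First, $\delta$ is only birational, an isomorphism away from loci of codimension $\geq 2$, so the identity must be proved on the big open substack carrying (\ref{unex}) and then extended; this is legitimate because $\ts_r$ and $W(r-1,0,r)$ are normal with the relevant complements of codimension $\geq 2$ (Lemma \ref{sscod}, Remark \ref{bir}), so the line bundles' sections extend uniquely and $\delta^{*}$ is the isomorphism (\ref{ws1}). Second, I must confirm that the $p^{*}\mr$–factor contributes exactly $s_D$ and not merely some section of $\z_L$ with the same divisor; this is where the normalization $\chi(u_L)=0$ from (\ref{trans}) and the uniqueness up to scalar of the theta section are used. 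Since strange duality maps are defined only up to scalar, the scalar ambiguity in the factorization is harmless.
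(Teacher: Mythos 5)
Your proposal is correct, and its technical heart is the same as the paper's: your factorization $\delta^{*}\big(s_{\mathscr{D}}|_{\ts_r\times\ml}\big)=s_{\mathscr{D}'}\cdot p_{\ml}^{*}s_D$ is exactly the identity $\det(\eta_{r}^r)=\det(\eta_{\theta})\cdot\det(\eta_r^{r-1})$ that the paper extracts from the universal extension (\ref{unex}) via the resolution diagrams (\ref{res1})--(\ref{res3}), and your K\"unneth contraction is the standard translation of that identity of sections into the identity (\ref{ad2}) of maps. Where you differ is in the logical organization. The paper first establishes the commutativity of (\ref{rsddim}) by a separate, softer geometric argument: it proves $g_L\circ SD_{r,L}\circ g_r^{\vee}=0$ directly by showing $H^0(\mf\otimes\mg)\neq0$ for \emph{every} $\mf\in\ts_r$ and $\mg\in D_{\z_L}$ (tensor the defining extension $0\ra\mf'\ra\mf\ra\mo_X\ra0$ with $\mg$ and use $\chi(\mf'\otimes\mg)=0$ together with the vanishing of $H^2$), which yields both $\alpha_{\ts_r}$ and $\beta_D$ at once; only afterwards does it prove (\ref{ad1}) by the universal-extension computation. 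You instead take (\ref{ad1}) as the \emph{definition} of $\alpha_{\ts_r}$ (legitimate, since $\delta^{*}$ in (\ref{ws1}) is an isomorphism) and derive the commutativity of both squares formally from the single factorization identity, using exactness of the dualized row and $g_L\circ f_L=0$; since $f_L$ is injective and $f_r^{\vee}$ is surjective, the maps you construct agree with the paper's by uniqueness. Your route buys economy --- one key lemma carries the whole proposition and the geometric incidence argument becomes redundant --- at the price of making even the existence of the diagram depend on the universal-extension machinery, and hence on the restriction to the good open substack $\kw(r-1,0,r)^o$ and the codimension-$\geq 2$/normality extension step, which you correctly identify and justify via Lemma \ref{sscod} and Remark \ref{bir}. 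Both routes are sound.
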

\begin{proof}The proof of (\ref{rsddim}) is analogous to \cite{Yuan6}.  We only need to show that $g_L\circ SD_{r,L}\circ g_r^{\vee}=0$ which can be deduced from that $H^0(\mf\otimes\mg)\neq0$ for all $\mg\in D_{\z_L}$ and $\mf\in\ts_r$.  Any $\mf\in\ts_r$ lies in the following sequence
\begin{equation}\label{five}0\ra\mf'\ra\mf\ra\mo_X\ra0.\end{equation}
Tensor (\ref{five}) by $\mg\in D_{\z_L}$, take the global sections and we get 
\[0\ra H^0(\mf'\otimes\mg)\ra H^0(\mf\otimes \mg)\ra H^0(\mg)\ra H^1(\mf'\otimes\mg).\]
If $H^0(\mf'\otimes\mg)=0$, then $H^1(\mf'\otimes\mg)=0$ and hence $H^0(\mf\otimes \mg)\cong H^0(\mg)\neq 0$.  So we are done.

To show (\ref{ad1}), it is enough to show 
\begin{equation}\label{ad2}SD_{r,L}\circ g_{r}^{\vee}\circ\delta^{*\vee}=SD_{c_r^{r-1},L}\circ f_L:H^0(W(r-1,0,r),\lambda_{c_r^{r-1}}(L))^{\vee}\ra H^0(\ml,\z_L^{r}(r)).\end{equation}

Let $\km(L,0)$ be the moduli stack associated to $\ml$.  Then denote by $\G_L$ a universal sheaf over $X\times\km(L,0)$.  Over $X\times\kw(r-1,0,r)^o$ we have a sequence as in (\ref{unex}).  Moreover we can have the following commutative diagram 
\begin{equation}\label{res1}\xymatrix@R=0.4cm@C=0.6cm{&0&0&0&\\0\ar[r]&\F_r^{r-1}\ar[u]\ar[r]&\F_r^r\ar[r]\ar[u]& p^*\mr\ar[r]\ar[u]&0\\
0\ar[r]&\B'\ar[u]\ar[r]&\B\ar[u]\ar[r]&p^{*}\mr\ar[r]\ar[u]^{=}&0\\
&\A\ar[u]\ar[r]^{=}&\A\ar[u]&&\\&0\ar[u]&0\ar[u]&&,}\end{equation}
where $\B$, $\B'$ and $\A$ are locally free and $\B\cong q^{*}\mo_X(-mH)^{\otimes V}$ with $m\gg0$.  
Then over $X\times\km(L,0)\times\kw(r-1,0,r)^o$ we have 
\begin{equation}\label{res2}\xymatrix@R=0.6cm@C=0.6cm{&0&0&0&\\0\ar[r]&\G_L\boxtimes \F_r^{r-1}\ar[u]\ar[r]&\G_L\boxtimes \F_r^r\ar[r]\ar[u]& \G_L\boxtimes p^*\mr\ar[r]\ar[u]&0\\
0\ar[r]&\G_L\boxtimes \B'\ar[u]\ar[r]&\G_L\boxtimes \B\ar[u]\ar[r]& \G_L\boxtimes p^*\mr\ar[r]\ar[u]^{=}&0\\
& \G_L\boxtimes \A\ar[u]\ar[r]^{=}& \G_L\boxtimes \A\ar[u]&&\\&0\ar[u]&0\ar[u]&&}.\end{equation}
Since $\B\cong q^{*}\mo_X(-mH)^{\otimes V}$ with $m\gg0$, we can ask $p_{*}(\G_L\boxtimes \B)=0$.  Also $R^2p_{*}(\G_L\boxtimes\E)=0$ for any coherent $\E$ since the relative dimension of $Supp(\G_L)$ via $p$ is 1.  Hence we have the following commutative diagram (we have two different maps which are both denoted by $p$, by abuse of notation)
\begin{equation}\label{res3}{\footnotesize\xymatrix@R=0.6cm@C=0.6cm{0\ar[r]&p_{*}(\G_L\boxtimes p^*\mr)\ar[r]&R^1p_{*}(\G_L\boxtimes \F_r^{r-1})\ar[r]&R^1p_{*}(\G_L\boxtimes \F_r^r)\ar[r]&R^1p_{*}(\G_L\boxtimes p^*\mr)\ar[r]&0\\ 0\ar[r]&p_{*}(\G_L\boxtimes p^*\mr)\ar[r]\ar[u]^{=}&R^1p_{*}(\G_L\boxtimes \B')\ar[u]\ar[r]^{\eta_{\theta}}&R^1p_{*}(\G_L\boxtimes \B)\ar[r]\ar[u]&R^1p_{*}(\G_L\boxtimes p^*\mr)\ar[r]\ar[u]_{=}&0  \\ 
& &R^1p_{*}(\G_L\boxtimes \A)\ar[u]^{\eta^{r-1}_r}\ar[r]^{=}& R^1p_{*}(\G_L\boxtimes \A)\ar[u]_{\eta_r^r} &&\\& &p_{*}(\G_L\boxtimes \F_{r}^{r-1})\ar[u]\ar[r]& p_{*}(\G_L\boxtimes \F_r^r)\ar[u] &&.
}}\end{equation}
The section $det(\eta_r^{r-1})$ induces the map $SD_{c^{r-1}_r,L}$, while the section $det(\eta_{r}^r)$ induces the map $SD_{r,L}\circ g_r^{\vee}\circ\delta^{*\vee}$.  Also $det(\eta_{\theta})$ is exactly the section associated to the divisor $\delta^*(D_{\z_L})$, hence $SD_{c^{r-1}_r,L}\circ f_L$ is induced by $det(\eta_{\theta})\cdot det(\eta_r^{r-1})$. 
By (\ref{res3}) we have $det(\eta_{r}^r)=det(\eta_{\theta})\cdot det(\eta_r^{r-1})$ and hence we have proved (\ref{ad2}).
\end{proof}
On the map $\beta_D$ in (\ref{rsddim}), we have 
\begin{prop}\label{beta1}Let $(X,L)$ be as in Lemma 3.9 or Lemma 3.10 in \cite{Yuan6}, i.e. either $\cb$ or $\cb'$ in \S 3.2 of \cite{Yuan6} is satisfied, then we have an injective map for all $r>0$
\[j_r: H^0(D_{\z_L},\z_L^r(r)|_{D_{\z_L}})\hookrightarrow H^0(M(L\otimes K_X,0),\z^r_{L\otimes K_X}(r)).\]
Moreover, $j_r\circ\beta_D=SD_{r,L\otimes K_X}$.
\end{prop}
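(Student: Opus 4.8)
The claim has two parts. First, there is an injective map $j_r\colon H^0(D_{\z_L},\z_L^r(r)|_{D_{\z_L}})\hookrightarrow H^0(M(L\otimes K_X,0),\z^r_{L\otimes K_X}(r))$ whenever $(X,L)$ satisfies $\mathbf{CB}$ or $\mathbf{CB'}$. Second, this $j_r$ intertwines the boundary map $\beta_D$ from diagram (\ref{rsddim}) with the strange duality map $SD_{r,L\otimes K_X}$, i.e. $j_r\circ\beta_D=SD_{r,L\otimes K_X}$. I should plan a proof that produces $j_r$ geometrically and then checks the compatibility on the level of defining sections.

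**The plan.** The key is to understand the divisor $D_{\z_L}=\{\mf\in M(L,0)\mid H^0(\mf)\neq 0\}$ and relate it to the smaller moduli space $M(L\otimes K_X,0)$. The plan is to exploit the standard short exact sequence $0\ra K_X\ra\mo_X\ra\mo_C\ra 0$ for $C\in|-K_X|$, exactly as in Proposition \ref{ssos}, but now applied on the one-dimensional side. A one-dimensional sheaf $\mf$ of class $u_L$ with a nonzero global section $\mo_X\ra\mf$ typically has cokernel supported on a curve in $|L\otimes K_X|$ (since a section cuts the support $C_\mf\in|L|$ down by the canonical class), producing a sheaf $\mf'$ of class $u_{L\otimes K_X}$; this should give a rational map $D_{\z_L}\dashrightarrow M(L\otimes K_X,0)$. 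The hypotheses $\mathbf{CB}$ or $\mathbf{CB'}$ from \cite{Yuan6} are precisely the geometric conditions on $(X,L)$ that guarantee this correspondence is well-behaved enough (generically a morphism, with controlled fibers) that pullback of the theta line bundle induces an injection on sections. First I would recall the precise content of Lemmas 3.9/3.10 of \cite{Yuan6} and the maps they construct, then define $j_r$ as the induced map on $H^0$ of $\z^r(r)$ via this correspondence, checking that $\z_L^r(r)|_{D_{\z_L}}$ pulls back (or pushes forward) compatibly to $\z^r_{L\otimes K_X}(r)$.

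**The compatibility $j_r\circ\beta_D=SD_{r,L\otimes K_X}$.** This is where I expect the real work. Both maps have source $H^0(\lambda_r(L\otimes K_X))^\vee$ and target $H^0(M(L\otimes K_X,0),\z^r_{L\otimes K_X}(r))$, so I must show two sections-defining data agree. The natural approach is the same section-chasing/universal-sheaf argument used in Proposition \ref{hrmain}: the map $\beta_D$ is the boundary in the dual of (\ref{ses}), hence is governed by the restriction of $\lambda_r(L)$ to $\ts_r$ and the divisor structure there, while $SD_{r,L\otimes K_X}$ is defined by the incidence divisor $\mathscr{D}_{c_r^r,u_{L\otimes K_X}}$. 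I would set up universal families over $X\times\kw(r,0,r)\times M(L\otimes K_X,0)$ and over the relevant boundary stacks, and exhibit a common determinant-of-cohomology section whose two factorizations give $\beta_D$ (composed into $D_{\z_L}$) on one side and $SD_{r,L\otimes K_X}$ on the other. Concretely, I expect to use the curve sequence $0\ra K_X\ra\mo_X\ra\mo_C\ra 0$ to relate $H^0(\mf\otimes\mg)$ for $\mg\in D_{\z_L}$ with $H^0(\mf\otimes\mg')$ for the associated $\mg'$ of class $u_{L\otimes K_X}$, matching the incidence loci on the nose.

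**Main obstacle.** The hardest step will be controlling the correspondence $D_{\z_L}\dashrightarrow M(L\otimes K_X,0)$ precisely enough to guarantee injectivity of $j_r$ on global sections — this is exactly why the hypotheses $\mathbf{CB}/\mathbf{CB'}$ are imposed, and I would lean on Lemmas 3.9/3.10 of \cite{Yuan6} to supply the needed surjectivity-on-stable-locus and codimension estimates rather than reprove them. The second genuine difficulty is the bookkeeping in the determinant-line-bundle identity: I must ensure that the twist by $\mo_C$ (equivalently, the passage from $L$ to $L\otimes K_X$) is accounted for correctly, using $\chi(u_L)=0$ and the isomorphism $\z_L^r(n)\cong\lambda_L(c_n^r)$, so that no spurious line-bundle factor (like the $\mr^{-\chi(u_L)}$ term appearing in (\ref{trans})) survives. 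Once the correspondence is established as a morphism away from codimension $\ge 2$ and the section identity is verified on a dense open locus, normality (Remark \ref{irred}) and the codimension bounds of Lemma \ref{sscod} let me extend both the map $j_r$ and the equality $j_r\circ\beta_D=SD_{r,L\otimes K_X}$ over the whole space.
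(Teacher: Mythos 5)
Your proposal has the right overall silhouette (a geometric correspondence between $D_{\z_L}$ and $M(L\otimes K_X,0)$, an identification of theta line bundles, matching of incidence loci, extension by normality), and indeed the paper itself gives no details here: its proof is literally a citation of Lemma 3.9 in \cite{Yuan6}, whose machinery is the one reproduced in \S 5 of this paper. But the correspondence you build is wrong, and this is the heart of the matter, not a repairable detail. You take, for $\mf\in D_{\z_L}$, the cokernel of a nonzero section $\mo_X\ra\mf$ and claim it is ``typically'' supported on a curve in $|L\otimes K_X|$, of class $u_{L\otimes K_X}$. It is not: for the generic $\mf\in D_{\z_L}$ the support $C_\mf\in|L|$ is integral, a nonzero section is an injection $\mo_{C_\mf}\hookrightarrow\mf$, and its cokernel is a $0$-dimensional sheaf of length $\chi(\mf)-\chi(\mo_{C_\mf})=g_L-1$ (the vanishing divisor of the section on $C_\mf$). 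A cokernel of class $u_{L\otimes K_X}$ occurs only when the section factors through $\mo_{C_0}$ for some $C_0\in|-K_X|$, which forces $\mf$ to be strictly semistable (S-equivalent to $\mo_{C_0}\oplus\mg'$) — the atypical, nowhere-dense case. The actual bridge, visible in \S 5 of this paper (Lemma \ref{use} and the map $g:\mc_L^{[g_L-1]}\dashrightarrow D_{\z_L}$), passes through the Hilbert scheme of points: for $\mf\in D_{\z_L}$ with $h^1(\mf)=1$ (condition $\cb$ guarantees this locus is dense), Serre duality $\Ext^1(\mf,K_X)\cong H^1(\mf)^{\vee}$ gives the unique non-split extension $0\ra K_X\ra\mi_Z(L\otimes K_X)\ra\mf\ra0$ with $Z\in X^{[g_L-1]}$, and it is the section of the rank-one sheaf $\mi_Z(L\otimes K_X)$ on the surface (unique when $h^0(\mi_Z(L\otimes K_X))=1$) that cuts out a curve $C'\in|L\otimes K_X|$, producing $\mf'=\mi_{Z/C'}(L\otimes K_X)=\operatorname{coker}(\mo_X\ra\mi_Z(L\otimes K_X))\in M(L\otimes K_X,0)$.

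Two further gaps follow from this misidentification. First, direction: even granting a dominant correspondence $D_{\z_L}\dashrightarrow M(L\otimes K_X,0)$, ``pullback of the theta line bundle induces an injection on sections'' goes the wrong way — pullback injects $H^0(M(L\otimes K_X,0),\z^r_{L\otimes K_X}(r))$ into $H^0(D_{\z_L},\z_L^r(r)|_{D_{\z_L}})$, opposite to $j_r$. Since the fibers of the correspondence are positive-dimensional, defining $j_r$ on \emph{all} of $H^0(D_{\z_L},\z_L^r(r)|_{D_{\z_L}})$ requires a descent statement: every such section is constant on those fibers. That is the real content; it rests on the restriction of $\cl_L=(L\otimes K_X)_{[g_L-1]}$ to the fibers being trivial (a determinant-line-bundle computation using $\chi(u_{L\otimes K_X})=0$), together with the codimension and normality inputs packaged in $\cb$/$\cb'$. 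Your proposal never confronts this. Second, the compatibility $j_r\circ\beta_D=SD_{r,L\otimes K_X}$ does not come from tensoring the curve sequence $0\ra K_X\ra\mo_X\ra\mo_C\ra0$; it comes from tensoring the two presentations of $\mf$ and $\mf'$ as quotients of the \emph{common} sheaf $\mi_Z(L\otimes K_X)$ with $\mg\in\wrr$: since $h^i(\mg)=h^i(\mg\otimes K_X)=0$ for generic $\mg\in\wrr$ (Lemma \ref{nogs}, Corollary \ref{gevan}), both sequences give $H^0(\mf\otimes\mg)\cong H^0(\mi_Z(L\otimes K_X)\otimes\mg)\cong H^0(\mf'\otimes\mg)$, which matches the incidence divisors and yields the identity up to scalar.
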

\begin{proof}The proof is the same as Lemma 3.9 in \cite{Yuan6} and hence omitted here.
\end{proof}

\subsection{Applications to $X=\p^2$ or $\Sigma_e$.}\qquad

We won't restate $\cb$ or $\cb'$ in this paper, but according to Theorem 3.14 in \cite{Yuan6} we have 
\begin{coro}\label{beta2}Proposition \ref{beta1} applies to the following cases
\begin{enumerate}
\item $X=\p^2$, $L=dH$ for $d>0$.
\item $X=\p(\mo_{\p^1}\oplus\mo_{\p^1}(e)):=\Sigma_e$ with $F$ the fiber class and $G$ the section such that $G.G=-e$, and 
$L=aG+bF$ is one of the following
\begin{itemize}
\item $\min\{a,b\}\leq 1$;
\item $\min\{a,b\}\geq 2$, $e\neq 1$, $L$ ample;
\item $\min\{a,b\}\geq 2$, $e=1$, $b\geq a+[a/2]$ with $[a/2]$ the integral part of $a/2$.
\end{itemize}
\end{enumerate}
In particular by Proposition \ref{hrmain} for $(X,L)$ as above we have for all $r\geq2$
\[\left.\begin{array}{r}SD_{c_r^{r-1},L}\text{ is injective (surjective, an isomorphism, resp.)}\\
SD_{r,L\otimes K_X}\text{ is injective (surjective, an isomorphism, resp.)}\end{array}\right\}\Rightarrow
\text{So is }SD_{r,L}.\] 
\end{coro}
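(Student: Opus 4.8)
The plan is to treat the two assertions of the corollary separately. For the first assertion---that Proposition \ref{beta1} applies to each listed pair---I would simply invoke Theorem 3.14 in \cite{Yuan6}, which verifies that every $(X,L)$ in the list satisfies condition $\cb$ or $\cb'$; since the hypothesis of Proposition \ref{beta1} is exactly ``$\cb$ or $\cb'$'', no new argument is needed here beyond matching the casework to that theorem. Consequently Proposition \ref{beta1} furnishes, for every $r>0$, an injective map $j_r\colon H^0(D_{\z_L},\z^r(r)|_{D_{\z_L}})\hookrightarrow H^0(\mlk,\z^r_{L\otimes K_X}(r))$ with $j_r\circ\beta_D=SD_{r,L\otimes K_X}$. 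This is the only input from the geometry of $(X,L)$; the remainder of the implication is a diagram chase in (\ref{rsddim}).

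Before chasing, I would translate the two hypotheses into statements about the outer vertical maps. By (\ref{ad1}) and the fact that $\delta^{*\vee}$ is an isomorphism, $\alpha_{\ts_r}=SD_{c_r^{r-1},L}\circ(\delta^{*\vee})^{-1}$ is injective (surjective, an isomorphism) exactly when $SD_{c_r^{r-1},L}$ is. For the right-hand map, since $j_r$ is injective with $j_r\circ\beta_D=SD_{r,L\otimes K_X}$, the map $\beta_D$ is injective precisely when $SD_{r,L\otimes K_X}$ is; and if $SD_{r,L\otimes K_X}$ is surjective then $j_r(\operatorname{im}\beta_D)$ fills the whole target, forcing $j_r$ to be an isomorphism and hence $\beta_D$ to be surjective. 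Thus each hypothesis transfers to the corresponding property of $\alpha_{\ts_r}$ and $\beta_D$. I would also record the exactness of the two rows: dualizing the cohomology sequence of (\ref{ses}) makes the top row exact at its middle term with $f_r^\vee$ surjective, while the cohomology sequence of (\ref{zes}) makes the bottom row exact at its middle term with $f_L$ injective.

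With these in hand the conclusion is a standard diagram chase of five-lemma type. For injectivity, given $x$ with $SD_{r,L}(x)=0$, commutativity gives $\beta_D(f_r^\vee(x))=g_L(SD_{r,L}(x))=0$, so $f_r^\vee(x)=0$ by injectivity of $\beta_D$; exactness of the top row writes $x=g_r^\vee(y)$, whence $f_L(\alpha_{\ts_r}(y))=SD_{r,L}(x)=0$, and injectivity of $f_L$ and of $\alpha_{\ts_r}$ forces $y=0$, so $x=0$. For surjectivity, given $z\in H^0(\z_L^r(r))$, surjectivity of $\beta_D$ produces $w$ with $\beta_D(w)=g_L(z)$, surjectivity of $f_r^\vee$ lifts $w=f_r^\vee(x')$, and then $g_L(z-SD_{r,L}(x'))=0$ puts $z-SD_{r,L}(x')=f_L(u)$ in the image of $f_L$; surjectivity of $\alpha_{\ts_r}$ writes $u=\alpha_{\ts_r}(y)$, and commutativity gives $z=SD_{r,L}(x'+g_r^\vee(y))$. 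The isomorphism case follows by combining the two. The only genuinely delicate point is the surjectivity bookkeeping on the right edge---deducing surjectivity of $\beta_D$ from that of $SD_{r,L\otimes K_X}$ through the merely injective $j_r$---so I would state that implication carefully rather than treating $j_r$ as an isomorphism from the outset.
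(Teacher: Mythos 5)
Your proposal is correct and follows the paper's route exactly: the paper likewise disposes of the first assertion by citing Theorem 3.14 of \cite{Yuan6} (so that $\cb$ or $\cb'$ holds and Proposition \ref{beta1} applies), and then deduces the implication from the commutative diagram (\ref{rsddim}) of Proposition \ref{hrmain}, leaving the four-lemma-style chase implicit. Your write-up merely makes explicit the transfer of hypotheses to $\alpha_{\ts_r}$ and $\beta_D$ (including the correct observation that surjectivity of $SD_{r,L\otimes K_X}$ forces the injective $j_r$ to be an isomorphism) and the chase itself, all of which is sound.
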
  

In this whole subsection let $(X,L)$ always be as in Corollary \ref{beta2}.
\begin{rem}\label{zmap}Notice that in Corollary \ref{beta2} if $H^0(L\otimes K_X)=0$, then $\z_L=\emptyset$ and hence $\beta_{D}=0$.  In this case, $M(L\otimes K_X,0)=\emptyset$ and we define $SD_{r,L\otimes K_X}=0$.  If $L\otimes K_X\cong\mo_X$, then $M(L\otimes K_X,0)=M(\mo_X,0)$ consists of a single point which is the zero sheaf, and $\lambda_r(\mo_X)\cong\mo_{\wrr}$ over $\wrr$.  We can still define $SD_{r,\mo_X}$ via the locus $\mathscr{D}_{r,\mo_X}$ inside $\wrr\otimes M(\mo_X,0)\cong \wrr$ as follows
\[\mathscr{D}_{r,\mo_X}:=\big\{(\mf,\mg)\big|\mf=0\in M(\mo_X,0),\mg\in\wrr,~and~H^0(\mf\otimes\mg)\neq0\big\}.\]
Since $\mathscr{D}_{r,\mo_X}$ is empty and hence $\lambda_{r}(\mo_X)\boxtimes \lambda_{\mo_X}(r)\cong \mo_{\wrr}$ and $SD_{r,L\otimes K_X}$ is a non-zero map from $H^0(\wrr,\lambda_r(\mo_X))=H^0(\wrr,\mo_{\wrr})\cong\mathbb{C}$ to $\mathbb{C}$, which is an isomorphism. 
\end{rem}
\begin{rem}\label{lack}
As proved in \cite{Yuan6}, $SD_{2,L}$ is an isomorphism.  However, we are still in lack of a suitable bridge from $SD_{r,L}$ to $SD_{c_{r+1}^r,L}$, in order to get the expected properties of $SD_{r,L}$ in general by induction on $r$ and $L$.    
\end{rem}

\begin{lemma}\label{like}If $-K_X-L$ is effective, then $\beta_D$ is an isomorphism.  
Therefore in this case we have
\[SD_{c_r^{r-1},L}\text{ is injective (surjective, an isomorphism, resp.)}\Rightarrow
\text{So is }SD_{r,L}\]
\end{lemma}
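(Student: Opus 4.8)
The plan is to split on the effectivity of $L\otimes K_X$ and then feed the outcome into the diagram (\ref{rsddim}) through the factorization of Proposition \ref{beta1}. First I would record the dichotomy forced by the hypothesis: since $L$ is nontrivially effective and $-K_X-L$ is effective, the class $L+K_X$ and its negative $-K_X-L$ cannot both be effective unless $L+K_X=0$, because on the rational surface $X$ one has $h^0(\mo_X)=1$, so $D\geq 0$ and $-D\geq 0$ force $D=0$. Hence \emph{either} $L\otimes K_X\cong\mo_X$ (the case $L=-K_X$) \emph{or} $H^0(L\otimes K_X)=0$. In either case I exploit the relation $j_r\circ\beta_D=SD_{r,L\otimes K_X}$ of Proposition \ref{beta1}, in which $j_r$ is injective.

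In the case $L=-K_X$, Remark \ref{zmap} gives $\lambda_r(\mo_X)\cong\mo_{\wrr}$, so that $H^0(\wrr,\lambda_r(\mo_X))=H^0(\wrr,\mo_{\wrr})\cong\bc$ (using that $\wrr$ is integral and projective by Remark \ref{irred}); moreover $M(\mo_X,0)$ is a single reduced point with $H^0(\cdots)\cong\bc$, and $SD_{r,\mo_X}\colon\bc\to\bc$ is an isomorphism. Since $j_r\circ\beta_D=SD_{r,\mo_X}$ is an isomorphism and $j_r$ is injective into the one-dimensional space $H^0(M(\mo_X,0),\cdots)$, the image of $j_r$ already exhausts that space, so $j_r$ is an isomorphism; therefore $\beta_D=j_r^{-1}\circ SD_{r,\mo_X}$ is an isomorphism.

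In the case $H^0(L\otimes K_X)=0$ we have $M(L\otimes K_X,0)=\emptyset$, so the target of $SD_{r,L\otimes K_X}$ is $0$ and $SD_{r,L\otimes K_X}=0$; as $j_r$ is injective with image in this zero space, its source $H^0(D_{\z_L},\z_L^r(r)|_{D_{\z_L}})$, which is the target of $\beta_D$, must vanish. Thus $\beta_D$ is automatically surjective, and it is an isomorphism precisely when its source $H^0(\wrr,\lambda_r(L\otimes K_X))^{\vee}$ vanishes as well. This vanishing
\[H^0(\wrr,\lambda_r(L\otimes K_X))=0\quad\text{whenever }H^0(L\otimes K_X)=0\]
is the heart of the matter and the step I expect to be the main obstacle. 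I would attack it by using Proposition \ref{ssos} ($\mo_{\wrr}(\ts_r)\cong\lambda_r(K_X^{-1})$) to rewrite the sequence (\ref{ses}) as an identification of $H^0(\lambda_r(L\otimes K_X))$ with the space of global sections of $\lambda_r(L)$ vanishing along the divisor $\ts_r$, i.e. the kernel of the restriction $H^0(\wrr,\lambda_r(L))\to H^0(\ts_r,\lambda_r(L)|_{\ts_r})$, so that the goal becomes injectivity of this restriction. For $r=3$ this is exactly Theorem \ref{rank3}(1); for general $r$ I would aim to deduce it by combining the isomorphism (\ref{ws1}) with an inductive analysis on $W(r-1,0,r)$, or via the dualizing involution $\mf\mapsto\mathcal{H}om(\mf,\mo_X)$ on $\wrr$, which interchanges determinant line bundles twisted by $K_X$.

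Finally, granting that $\beta_D$ is an isomorphism, the stated implication follows from the diagram (\ref{rsddim}) by a four/five-lemma chase, exactly as in Proposition \ref{hrmain} and Corollary \ref{beta2}. Indeed the identity (\ref{ad1}), $\alpha_{\ts_r}\circ\delta^{*\vee}=SD_{c_r^{r-1},L}$, together with the fact that $\delta^{*\vee}$ is an isomorphism (the dual of (\ref{ws1})), shows $\alpha_{\ts_r}$ has the same property (injective, surjective, or isomorphism) as $SD_{c_r^{r-1},L}$. A diagram chase then transfers this property to the middle map $SD_{r,L}$: injectivity of $SD_{r,L}$ uses injectivity of $\alpha_{\ts_r}$, injectivity of $\beta_D$, and left-exactness of the bottom row, while surjectivity uses surjectivity of $\alpha_{\ts_r}$, surjectivity of $\beta_D$, and right-exactness of the top row. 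Because $\beta_D$ is now an isomorphism in either direction, no hypothesis on $SD_{r,L\otimes K_X}$ is needed, which is precisely the improvement over Corollary \ref{beta2}.
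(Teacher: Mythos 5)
Your reduction is sound as far as it goes: the dichotomy ($L=-K_X$ versus $H^0(L\otimes K_X)=0$) is exactly the paper's case division; your treatment of $L=-K_X$ via Remark \ref{zmap} and Proposition \ref{beta1} coincides with the paper's argument; your concluding diagram chase is the same as the paper's appeal to Proposition \ref{hrmain} and Corollary \ref{beta2}; and in the remaining case you correctly identify that everything comes down to the vanishing $H^0(\wrr,\lambda_r(L\otimes K_X))=0$. But at precisely that point the proposal stops being a proof: you flag this vanishing as ``the main obstacle'' and offer only prospective strategies, none of which goes through as stated. Theorem \ref{rank3}(1) covers only $r=3$ (and itself rests on Abe's rank-two results via Proposition \ref{rank2}); the proposed induction through (\ref{ses}) and (\ref{ws1}) would require, at each stage, an analogue of the divisor $\ts_r$ and of the vanishing statement on moduli spaces such as $W(r-1,0,r)$, where $\chi\neq 0$ and no such divisor is constructed --- this is exactly the missing bridge the paper laments in Remark \ref{lack}; and the ``dualizing involution'' $\mf\mapsto\mathcal{H}om(\mf,\mo_X)$ is only a birational correspondence, with no argument given for how it would yield the vanishing. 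So the key step of the lemma is missing for general $r$.

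The paper's own proof of this step is short, direct, and uniform in $r$: choose $B=\cup_i B_i\in|-L-K_X|$ with $B_i\cong\p^1$ and set $\mg_B:=\bigoplus_i\mo_{B_i}(-1)$; this sheaf has class $u_{(L\otimes K_X)^{-1}}$ and induces a section $\sigma_B$ of the \emph{dual} line bundle $\lambda_r(L^{-1}\otimes K_X^{-1})$ whose zero locus is $\big\{\mf\in\wrr\,\big|\,H^0(\mf\otimes\mg_B)\neq0\big\}$. Evaluating at polystable points $\mf=\bigoplus_{j=1}^r\mi_{x_j}$ shows $\sigma_B$ is nonzero (take all $x_j\notin B$) yet vanishes somewhere (take some $x_j\in B$). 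Since $\wrr$ is integral and projective, a line bundle whose inverse carries a nonzero section vanishing at some point has no nonzero sections, whence $H^0(\wrr,\lambda_r(L\otimes K_X))=0$. (For the target of $\beta_D$ the paper instead quotes $D_{\z_L}=\emptyset$ from \cite{Yuan1}; your route through the injectivity of $j_r$ and $M(L\otimes K_X,0)=\emptyset$ is equally valid in the setting of Corollary \ref{beta2}.) If you insert this construction your argument becomes complete; without it, the lemma is established only for $r=3$.
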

\begin{proof}If $L\neq -K_X$, then $D_{\z_L}=\emptyset$ by Proposition 4.1.1 and Corollary 4.3.2 in \cite{Yuan1}.  On the other hand, we want to show that $H^0(\lambda_r(L\otimes K_X))=0$.  It is enough to show that $\lambda_r(L^{-1}\otimes K_X^{-1})$ has a non-zero global section vanishing at some point.  Choose a curve $B\in |-L-K_X|$.  Then we can write $B=\cup_i B_i$ such that $B_i\cong\p^1$.  Let $\mg_B:=\displaystyle{\bigoplus_i}~\mo_{B_i}(-1)$.  There is a natural section $\sigma_B$ of $\lambda_r(L^{-1}\otimes K_X^{-1})$ vanishing at points $\mf\in \wrr$ such that $H^0(\mf\otimes\mg_B)=\displaystyle{\bigoplus_i}~ H^0(\mf\otimes \mo_{B_i}(-1))\neq 0$. 
Let $\mf=\displaystyle{\bigoplus_{j=1}^r} ~\mi_{x_j}$ with $x_j\in X$ and $\mi_{x_j}$ the ideal sheaf of $x_j$.  Then $H^0(\mf\otimes\mg_B)\neq 0\Leftrightarrow \exists~x_j\in B.$. Hence $\sigma_B$ is non-zero and vanishes at some points.  Hence $H^0(\lambda_r(L\otimes K_X))=0$.

If $L=-K_X$, then $D_{\z_L}\cong\ls$ and $\z_L^r(r)|_{D_{\z_L}}\cong \mo_{\ls}$.  Also $\lambda_r(L\otimes K_X)\cong\mo_{\wrr}$.  That $\beta_D$ is an isomorphism follows from Remark \ref{zmap} and Proposition \ref{beta1}.  We also can show $\beta_D$ is an isomorphism directly: it is enough to show $g_L\circ SD_{r,L}$ is not zero.  It is then enough to find a sheaf $\mf\in\wrr$ and a sheaf $\mg\in D_{\z_{-K_X}}$ such that $H^0(\mg\otimes\mf)=0$.  $\mg\in D_{\z_{-K_X}}$ then $\mg\cong\mo_C$ for some $C\in|-K_X|$.  By (\ref{kcurve}) we have $H^0(\mg\otimes\mf)\neq0\Leftrightarrow \mf\in \ts_r$.  Hence $\beta_D$ is an isomorphism. 
We have proved the lemma.
\end{proof}
\begin{rem}\label{S2}Lemma \ref{like} holds not only for $(X,L)$ in Corollary \ref{beta2} but also for all $L$ on $X=\p^2$ or $\Sigma_e$.  For instance, $L=-K_X$ on $X=\Sigma_e$ with $e\geq2$.
\end{rem}

We have $SD_{r,L\otimes K_X}=0$ if $L+ K_X$ is not effective.  However for general $r$, we don't know whether $H^0(\wrr,\lambda_{r}(L\otimes K_X))$ is zero (in other words $\beta_D$ is an isomorphism) or not if neither $L+K_X$ nor $-L-K_X$ is effective.  But we have following proposition for $r=2$ mostly due to Abe (\cite{Abe}).  
\begin{prop}\label{rank2}Let $r=2$.  If $H^0(L\otimes K_X)=0$, then for all $n\geq 2$
\begin{enumerate}\item $H^0(W(2,0,n),\lambda_{c^2_n}(L\otimes K_X))=0$;

\item $SD_{c_{n}^2,L}$ is an isomorphism.
\end{enumerate}
\end{prop}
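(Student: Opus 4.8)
The plan is to obtain both statements from Takeshi Abe's rank-two strange duality results in \cite{Abe}, the hypothesis $H^0(L\otimes K_X)=0$ entering only to make the one-dimensional partner moduli space empty. The key observation is that any one-dimensional sheaf of class $u_{L\otimes K_X}$ is supported on a curve in the linear system $|L\otimes K_X|$; since $H^0(L\otimes K_X)=0$ this system is empty, so $\mlk=M(u_{L\otimes K_X})=\emptyset$ and therefore $H^0(\mlk,\z_{L\otimes K_X}^2(n))=0$. For part (1) I would read off the vanishing from the coincidence of the two sides of strange duality: Abe's computation of $h^0(W(2,0,n),\lambda_{c_n^2}(M))$ in \cite{Abe} expresses this dimension through the geometry of the one-dimensional side, and numerically it reflects the reciprocity $\chi(W(2,0,n),\lambda_{c_n^2}(L\otimes K_X))=\chi(\mlk,\z_{L\otimes K_X}^2(n))$, whose right-hand side is $0$ because $\mlk=\emptyset$. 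Combined with the vanishing of the higher cohomology of $\lambda_{c_n^2}(L\otimes K_X)$ on $W(2,0,n)$ this forces $h^0=\chi=0$, which is part (1). Note that, unlike in Lemma \ref{like}, here $-L-K_X$ need not be effective (e.g. $L=aG+bF$ on $\Sigma_e$ with $a\le 1$ and $b$ large is covered by Corollary \ref{beta2} yet $|-L-K_X|=\emptyset$), so the direct construction of a vanishing section of $\lambda_{c_n^2}(-L-K_X)$ is unavailable and the emptiness argument is essential.

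For part (2) the strategy is a dimension count followed by one half of bijectivity, both supplied by Abe. His rank-two results give the dimension equality $h^0(W(2,0,n),\lambda_{c_n^2}(L))=h^0(\ml,\z_L^2(n))$, so it suffices to prove that $SD_{c_n^2,L}$ is injective (equivalently surjective), and here part (1) enters as the decisive vanishing hypothesis. Structurally $\lambda_{c_n^2}(L)\cong\lambda_{c_n^2}(L\otimes K_X)\otimes\lambda_{c_n^2}(K_X^{-1})$, and the canonical section of $\lambda_{c_n^2}(K_X^{-1})$ built from $\mo_C$ ($C\in|-K_X|$, as in Proposition \ref{ssos}) yields a restriction sequence
\[
0\to\lambda_{c_n^2}(L\otimes K_X)\to\lambda_{c_n^2}(L)\to\lambda_{c_n^2}(L)|_{\mt_n}\to 0,
\]
whose left-hand term has no sections by part (1); thus $H^0(W(2,0,n),\lambda_{c_n^2}(L))$ injects into $H^0(\mt_n,\lambda_{c_n^2}(L)|_{\mt_n})$ and the global sections are controlled entirely by the theta divisor $\mt_n$. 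The substantial remaining step—matching this with the one-dimensional side to conclude injectivity of $SD_{c_n^2,L}$—is exactly Abe's theorem, which I would invoke directly; together with the dimension equality it gives the isomorphism.

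The main obstacle is the cohomological input rather than the formal bookkeeping, and it is genuinely the reason the statement is ``mostly due to Abe''. First, one must ensure that the higher cohomology of $\lambda_{c_n^2}(L\otimes K_X)$ over $W(2,0,n)$ really vanishes even though $L\otimes K_X$ is not effective, so that $h^0$ can be read off the (vanishing) Euler characteristic, and that Abe's hypotheses hold uniformly over all the cases of Corollary \ref{beta2}. Second, and more fundamentally, the clean birational bridge of Proposition \ref{ssos} does \emph{not} extend to $W(2,0,n)$ for $n>2$: for $\mf\in W(2,0,n)$ semistability gives $H^0(\mf)=H^2(\mf)=0$ while $\chi(\mf)=2-n$, so $H^1(\mf)=n-2\neq 0$, whence $H^1(\mf\otimes\mo_C)$ is no longer simply $\Hom(\mf,\mo_X)^{\vee}$ and $\mt_n$ is not the analogue of $\ts_r$ as a locus of quotients onto $\mo_X$. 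This is precisely why one cannot reduce the rank-two case to rank one by the elementary extension argument of \S3 and must instead appeal to Abe's finer analysis of the map $SD_{c_n^2,L}$.
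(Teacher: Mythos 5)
Your framing---that both statements are ultimately Abe's rank-two results, with $H^0(L\otimes K_X)=0$ entering only to empty the one-dimensional side---agrees with the paper, and your closing remark on why the bridge of Proposition \ref{ssos} cannot extend to $W(2,0,n)$ for $n>2$ is correct. But there is a genuine gap in each half. For part (1), you deduce the vanishing from two inputs that do not exist as citable results: (a) an Euler-characteristic reciprocity $\chi(W(2,0,n),\lambda_{c_n^2}(L\otimes K_X))=\chi(M(L\otimes K_X,0),\z_{L\otimes K_X}^2(n))$, and (b) vanishing of the higher cohomology of $\lambda_{c_n^2}(L\otimes K_X)$ on $W(2,0,n)$. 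The $\chi$-level coincidence of the two sides of strange duality is essentially the kind of statement under investigation, known only where both sides have been computed explicitly (as in \cite{GY}, \cite{G1}, and there for effective $L$); the paper itself records in Remark \ref{gth4} that such an equality is unknown even in favourable cases, so it cannot be invoked as a principle. And there is no vanishing theorem for higher cohomology of a determinant line bundle attached to a class with $H^0(L\otimes K_X)=0$ on the singular projective scheme $W(2,0,n)$---you flag this yourself as an obstacle but never discharge it. The paper's proof of (1) does not pass through $\chi$ at all: it quotes Theorem 6.5 of \cite{Abe}, which is a direct $H^0$-vanishing statement for the rank-two ($\mu$-semistable) situation, and then uses the codimension estimate of Lemma \ref{mustable} to carry that vanishing over to the moduli space.

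For part (2), ``invoke Abe's theorem directly'' is indeed the paper's route, but the entire mathematical content of the paper's proof is the verification that Abe's Theorem 7.8 applies, which you list as an obstacle and then omit. Concretely, one must check: (i) $K_X.H<0$; (ii) $H^1(X,\mo_X)=0$; (iii) $\ml$ is irreducible, normal and birational to $\ls$; (iv) the stack $\kw(2,0,n)$ is irreducible for every $n\geq1$. The paper does this using Proposition 4.1.1 of \cite{Yuan1} (which gives $\ml\cong\ls$ under the hypothesis on $L$), Lemma \ref{mustable} (irreducibility for $n\geq2$), and an explicit extension argument showing every sheaf in $\kw(2,0,1)$ is isomorphic to $\mo_X\oplus\mi_x$. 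Your intermediate restriction sequence through the divisor $\mt_n$ does no work once Abe is invoked, and the split into ``dimension equality plus injectivity'' is not how Theorem 7.8 is organized---under (i)--(iv) it directly yields the isomorphism. Finally, the paper handles $n=2$ separately (by Corollary 3.4 and Remark 3.5 of \cite{Yuan6}) rather than by Abe's theorem; your uniform treatment of all $n\geq2$ silently assumes the $n>2$ argument covers that boundary case, which the paper does not do.
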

\begin{proof}If $n=2$, then the proposition follows from Corollary 3.4 and Remark 3.5 in \cite{Yuan6}.  

For $n>2$, Theorem 6.5 in \cite{Abe} and Lemma \ref{mustable} implies (1).  In order to prove (2), it is enough to show that Theorem 7.8 in \cite{Abe} applies, and hence it is enough to check the following four conditions (see \S 7.3 in \cite{Abe}):
\begin{enumerate}
\item[(i)] $K_X.H<0$;
\item[(ii)] $H^1(X,\mo_X)=0$;
\item[(iii)] $\ml$ is irreducible normal and birational to $\ls$;
\item[(iv)] $\kw(2,0,n)$ is irreducible for $n\geq1$. 
\end{enumerate}

By Proposition 4.1.1, $\ml\cong\ls$.  $\kw(2,0,n)$ is irreducible for $n\geq2$ by Lemma \ref{mustable}.  $\mf\in\kw(2,0,1)$ iff $\mf$ lies in the following sequence
\[0\ra\mo_x\ra\mf\ra \mi_x\ra 0,\]
where $\mi_x$ is a ideal sheaf of a single point $x\in X$.  Hence $\mf\cong \mo_X\oplus\mi_x$ since $H^1(\mi_x)=0$.  Therefore $\kw(2,0,1)$ is irreducible.

The proposition is proved.
\end{proof}

By applying Corollary \ref{beta2} and Proposition \ref{rank2}, we get the following theorem. 
\begin{thm}\label{rank3}Let $r=3=n$.  If $H^0(L\otimes K_X)=0$, then
\begin{enumerate}\item $H^0(W(3,0,3),\lambda_{3}(L\otimes K_X))=0$;

\item $SD_{3,L}$ is an isomorphism.
\end{enumerate}
\end{thm}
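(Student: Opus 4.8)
The plan is to prove (1) first; once $H^0(\wrr,\lambda_3(L\otimes K_X))=0$ is known, statement (2) follows almost formally from Corollary \ref{beta2}. For (1) I would iterate the exact sequence (\ref{ses}). Set $M_k:=L\otimes K_X^{\otimes k}$. By Proposition \ref{ssos} we have $\lambda_3(K_X)\cong\mo_{\wrr}(-\ts_3)$, hence $\lambda_3(M_k)\cong\lambda_3(L)\otimes\mo_{\wrr}(-k\ts_3)$, and (\ref{ses}) applied with $M_k$ in the role of $L$ reads
\[0\ra\lambda_3(M_{k+1})\ra\lambda_3(M_k)\ra\lambda_3(M_k)|_{\ts_3}\ra0.\]
Taking global sections and using the restriction isomorphism (\ref{ws1}) of Remark \ref{bir}, the quotient term becomes $H^0(\ts_3,\lambda_3(M_k)|_{\ts_3})\cong H^0(W(2,0,3),\lambda_{c_3^2}(M_k))$. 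The key numerical input is that $H^0(M_k)=0$ for all $k\ge1$: a nonzero section of the effective divisor $-K_X$ yields an inclusion $M_{k+1}\hookrightarrow M_k$, so $H^0(M_{k+1})\hookrightarrow H^0(M_k)$, and the chain terminates at $H^0(M_1)=H^0(L\otimes K_X)=0$ by hypothesis. Thus Proposition \ref{rank2}(1), applied with $L\otimes K_X^{\otimes(k-1)}$ in the role of $L$ (its hypothesis being exactly $H^0(M_k)=0$), gives $H^0(W(2,0,3),\lambda_{c_3^2}(M_k))=0$ for every $k\ge1$, and hence the inclusion $H^0(\lambda_3(M_{k+1}))\hookrightarrow H^0(\lambda_3(M_k))$ is an isomorphism for all $k\ge1$.

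I would then close the argument by integrality. The spaces $H^0(\lambda_3(M_k))=H^0\big(\lambda_3(L)\otimes\mo_{\wrr}(-k\ts_3)\big)$ form a descending chain of subspaces of the finite-dimensional space $H^0(\lambda_3(L))$ — precisely the sections of $\lambda_3(L)$ vanishing to order $\ge k$ along $\ts_3$ — and the previous step shows this chain is already constant from $k=1$ on. A nonzero common element would have to vanish to all orders along $\ts_3$, which is impossible because $\wrr=W(3,0,3)$ is integral (Remark \ref{irred}) and $\ts_3$ is a nonzero prime divisor (Proposition \ref{ssos}, Remark \ref{bir}). Hence every $H^0(\lambda_3(M_k))$ is zero; in particular $H^0(\wrr,\lambda_3(L\otimes K_X))=H^0(\lambda_3(M_1))=0$, which is (1).

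For (2) I would verify the two hypotheses of Corollary \ref{beta2} with $r=3$. First, $SD_{c_3^2,L}$ is an isomorphism by Proposition \ref{rank2}(2) (the case $n=3$, using $H^0(L\otimes K_X)=0$). Second, since $H^0(L\otimes K_X)=0$ the line bundle $L\otimes K_X$ is not effective, so $M(L\otimes K_X,0)=\emptyset$ and by Remark \ref{zmap} the target of $SD_{3,L\otimes K_X}$ is zero; by part (1) its source $H^0(\wrr,\lambda_3(L\otimes K_X))^{\vee}$ is also zero, so $SD_{3,L\otimes K_X}\colon 0\ra 0$ is trivially an isomorphism. Corollary \ref{beta2} then yields that $SD_{3,L}$ is an isomorphism.

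The genuinely non-formal content is (1). A single application of (\ref{ses}) only identifies $H^0(\lambda_3(L\otimes K_X))$ with $H^0(\lambda_3(L\otimes K_X^{\otimes 2}))$ rather than killing it, so the crux is to run the iteration over all $k$ and extract vanishing in the limit; here the integrality of $\wrr$ and the primeness of $\ts_3$ are exactly what make the infinite-order-vanishing argument close. The secondary point I would take care to confirm is that Proposition \ref{rank2}(1) genuinely applies to each twisted bundle $M_k$ — that is, that its vanishing conclusion depends only on $H^0(M_k)=0$ and not on any further positivity assumptions on the pair $(X,L)$.
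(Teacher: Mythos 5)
Your proposal is correct and takes essentially the same route as the paper's proof: iterate the exact sequence (\ref{ses}) with $L\otimes K_X^{\otimes k}$, kill the quotient terms via Remark \ref{bir} and Proposition \ref{rank2}(1) applied to $L\otimes K_X^{\otimes(k-1)}$ (the paper makes this same implicit appeal, so your flagged caveat is shared, not introduced), and then deduce part (2) from Proposition \ref{rank2}(2), Remark \ref{zmap} and Corollary \ref{beta2} exactly as the paper does. The only cosmetic difference is in the last step of (1): you argue that a section surviving all twists would vanish to infinite order along the nonzero prime divisor $\ts_3$ in the integral scheme $W(3,0,3)$, whereas the paper says $H^0(\lambda_3(L)\otimes\lambda_3(K_X^{-1})^{\otimes-n})=0$ for $n\gg0$ because $\lambda_3(K_X^{-1})$ is effective --- the same underlying fact.
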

\begin{proof}Since $-K_X$ is effective, $H^0(L\otimes K_X)=0\Rightarrow H^0(L\otimes K_X^{\otimes n})=0$ for all $n\geq1$.  By Remark \ref{bir} and Proposition \ref{rank2}, we have 
$H^0(\ts_3,\lambda_3(L\otimes K_X^{\otimes n}))=0=H^0(W(2,0,n),\lambda_{c_n^2}(L\otimes K_X^{\otimes n}))$ for all $n\geq 1.$  
Hence by (\ref{ses}), we have \begin{equation}\label{van}H^0(W(3,0,3),\lambda_3(L\otimes K_X))\cong H^0(W(3,0,3),\lambda_3(L\otimes K_X^{\otimes n}))~ for~ any ~n\geq1.\end{equation}
On the other hand, $\lambda_3(L\otimes K_X^{\otimes n})\cong \lambda_3(L)\otimes \lambda_3(K_X^{-1})^{\otimes -n}$ and $\lambda_3(K_X^{-1})$ is an effective line bundle associated to divisor $\ts_3$.  Hence $H^0(\lambda_3(L\otimes K_X^{\otimes n}))=0$ for $n$ large enough.  Therefore $H^0(W(3,0,3),\lambda_3(L\otimes K_X))=0$ by (\ref{van}).  Hence $\beta_D$ is an isomorphism and $SD_{3,L}$ is an isomorphism because so is $SD_{c_3^2,L}$ by Proposition \ref{rank2} (2).  Hence the theorem.
\end{proof}

We have the following theorem as a corollary to Theorem 1.4 (1) (2) in \cite{GY} by applying Corollary \ref{beta2} and Lemma \ref{like}.
\begin{thm}\label{rank3a}Let $r=3=n$.  $X$ and $L$ be as follows.
\begin{enumerate}
\item $X=\p^2$ or $\Sigma_e$ with $e=0,1$.  $L=-K_X$.

\item $X=\Sigma_e$ with $e=0,1$. $L=-K_X+F$ with $F$ the fiber class.
\end{enumerate}
Then $SD_{3,L}$ is an isomorphism under suitable polarization.
\end{thm}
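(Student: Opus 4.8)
The plan is to deduce Theorem \ref{rank3a} from the already-established machinery, specifically Corollary \ref{beta2}, Lemma \ref{like}, and the rank-two strange duality results cited from \cite{GY}. The commutative diagram (\ref{rsddim}) in Proposition \ref{hrmain} reduces the isomorphism of $SD_{3,L}$ to controlling the two outer vertical maps $\alpha_{\ts_r}$ (equivalently $SD_{c_3^2,L}$ via (\ref{ad1})) and $\beta_D$. Since $(X,L)$ in each listed case lies among the cases of Corollary \ref{beta2}, the implication there applies: it suffices to show that $SD_{c_3^2,L}$ is an isomorphism and that $SD_{3,L\otimes K_X}$ is an isomorphism (or that $\beta_D$ is directly an isomorphism).

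First I would handle the map $\beta_D$. In both families of cases the key arithmetic fact is whether $-K_X-L$ is effective. In case (1), $L=-K_X$, so $-K_X-L=\mo_X$ is effective, and Lemma \ref{like} immediately gives that $\beta_D$ is an isomorphism, collapsing the requirement to just $SD_{c_3^2,L}$. In case (2), $L=-K_X+F$, so $-K_X-L=-F$, which is \emph{not} effective, and Lemma \ref{like} no longer applies directly; here I would instead check $H^0(L\otimes K_X)$. Since $L\otimes K_X\cong F$ in case (2) has sections, the clean vanishing route of Theorem \ref{rank3} is unavailable, so I expect this case to require the genuine input $SD_{3,L\otimes K_X}=SD_{3,F}$ being an isomorphism, fed back through the $j_r\circ\beta_D=SD_{r,L\otimes K_X}$ identity of Proposition \ref{beta1}.

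Next I would establish that $SD_{c_3^2,L}$ is an isomorphism. This is exactly the rank-two strange duality map, and the reference Theorem 1.4 (1)(2) in \cite{GY} is invoked precisely to supply this: for these specific $(X,L)$ on $\p^2$ and $\Sigma_e$ ($e=0,1$) with the relevant genus, the rank-two map is known to be an isomorphism under a suitable polarization. I would verify that the value of $g_L$ (equivalently $L.(L+K_X)/2$) for $L=-K_X$ and $L=-K_X+F$ matches the hypotheses of that cited theorem, and that the polarization can be chosen $c_n^r$-general so that the Convention following Remark \ref{new} lets us ignore the bad locus $\wrn\setminus\wrn^L$.

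The main obstacle will be case (2), where $-K_X-L$ fails to be effective and $H^0(L\otimes K_X)\neq 0$, so neither Lemma \ref{like} nor Theorem \ref{rank3} closes the argument on its own. There I must genuinely control $SD_{3,L\otimes K_X}=SD_{3,F}$ and confirm it is an isomorphism, then combine it with the isomorphism of $SD_{c_3^2,L}$ through the five-lemma applied to diagram (\ref{rsddim}): if both outer maps $\alpha_{\ts_3}$ and $\beta_D$ are isomorphisms and the rows are exact, then the middle map $SD_{3,L}$ is an isomorphism. The delicate point is ensuring a \emph{single} polarization makes all three strange duality maps ($SD_{c_3^2,L}$, $SD_{3,F}$, and thereby $SD_{3,L}$) simultaneously well-behaved, which is why the statement carefully says ``under suitable polarization.''
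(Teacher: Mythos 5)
Your overall skeleton is the paper's: combine the rank-two isomorphism $SD_{c_n^2,L}$ from Theorem 1.4 (1)(2) of \cite{GY} with Lemma \ref{like} and Corollary \ref{beta2}, and your treatment of case (1) ($L=-K_X$, where $-K_X-L$ is effective and Lemma \ref{like} makes $\beta_D$ an isomorphism) is complete. The genuine gap is case (2). You correctly note that for $L=-K_X+F$ neither Lemma \ref{like} nor Theorem \ref{rank3} applies \emph{to $L$ itself} (since $-K_X-L=-F$ is not effective and $H^0(L\otimes K_X)=H^0(F)\neq0$), and you correctly reduce the problem to showing that $SD_{3,L\otimes K_X}=SD_{3,F}$ is an isomorphism --- but you never show it. The proposal ends by restating this requirement as ``the main obstacle'' that you ``must genuinely control,'' so case (2) is left unproved; worse, the phrasing suggests you believe this needs new input beyond the paper's machinery.

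It does not: the missing step follows from results already established earlier in the paper, applied to the line bundle $F$ rather than to $L$. Indeed $F=0\cdot G+1\cdot F$ has $\min\{a,b\}=0\leq1$, so $(X,F)$ is among the cases of Corollary \ref{beta2}, and $F\otimes K_X\cong\mo_X(-2G-(e+1)F)$ has no global sections; hence Theorem \ref{rank3} (with $L$ replaced by $F$) gives directly that $SD_{3,F}$ is an isomorphism. Equivalently, one can argue that Proposition \ref{rank2} gives that $SD_{c_3^2,F}$ is an isomorphism because $H^0(F\otimes K_X)=0$, and then Lemma \ref{like} applied to $F$ --- note that $-K_X-F=2G+(e+1)F$ \emph{is} effective, unlike $-K_X-L$ --- upgrades this to $SD_{3,F}$. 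Feeding this isomorphism together with the \cite{GY} isomorphism $SD_{c_3^2,L}$ into Corollary \ref{beta2} closes case (2). Your final worry about a single polarization is handled as in the paper: one chooses the polarization suitable for the finitely many moduli spaces involved, and by the Convention following Remark \ref{new} (based on Lemma \ref{sscod}) the distinction between $\wrn$ and $\wrn^L$ can be ignored.
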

\begin{proof}Let $(X,L)$ be as in the theorem.  By Theorem 1.4 (1) (2), $SD_{c^2_n,L}$ is an isomorphism for $n\geq 3$ under suitable polarization.  Hence the theorem follows from Lemma \ref{like} and Corollary \ref{beta2}.
\end{proof}

We have seen that $SD_{c_n^r,L}$ is an isomorphism for $r=1,2$, $n\geq r$ if either $H^0(L\otimes K_X)=0$ or $L=-K_X$ (see Corollary 4.3.2 in \cite{Yuan1} for $r=1$).  For $r\geq3$, $n\geq r$ and $(r,n)\neq (3,3)$, we only have a partial result as the following proposition. 
\begin{defn}\label{efsur1}The map $SD_{c,u}$ is called \emph{effectively surjective} (\emph{$\mu$-effectively surjective}, resp.) if we can find a finite collection $\big\{s_{\mg_i}\big\}_{i\in I}$ of sections with $\mg_i\in M^H_X(c)$ ($\mg_i\in\kw^H_X(c)^{\mu}$, resp.) spanning $H^0(M^H_X(u),\lambda_u(c))$, where $s_{\mg}$ is the section of $\lambda_u(c)$ induced by the following divisor
\[D_{\mg}:=\big\{\mf\in M^H_X(c)|H^0(\mg\otimes\mf)\neq0\big\}.\] 
\end{defn}
\begin{rem}\label{efsur2}By Proposition 6.18 in \cite{GY}, $SD_{c,u}$ is effectively surjective $\Rightarrow$ $SD_{u,c}$ is surjective.  Moreover by Lemma \ref{mustable}, for $n>r$ $SD_{c^r_n,L}$ is $\mu$-effectively surjective $\Rightarrow$ $SD_{c^r_n,L}$ is surjective.  
\end{rem}
\begin{prop}\label{part1}If either $H^0(L\otimes K_X)=0$ or $L= -K_X$, then $SD_{c^r_n,L}$ is $\mu$-effectively surjective for all $r\geq1$ and $n\geq r$.  Moreover $SD_{c^r_n,L}$ is effectively surjective for $r=1$ or $n=r$.  Therefore $SD_{c^r_n,L}$ is surjective for all $r\geq1$ and $n\geq r$.
\end{prop}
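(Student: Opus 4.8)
The plan is to exhibit, for each pair $(r,n)$ with $n\geq r$, a finite family of $\mu$-semistable sheaves of class $\crn$ whose induced sections span $H^0(\ml,\z_L^r(n))$, to sharpen this to genuinely semistable sheaves exactly when $r=1$ or $n=r$, and to read off surjectivity at the end. The structural engine is the multiplicativity of the sections under direct sums: if $\mg=\mg_1\oplus\mg_2$ with $\mg_i$ of class $c^{r_i}_{n_i}$, $r_1+r_2=r$ and $n_1+n_2=n$, then $D_\mg=D_{\mg_1}\cup D_{\mg_2}$ and, under the canonical identification $\z_L^{r_1}(n_1)\otimes\z_L^{r_2}(n_2)\cong\z_L^r(n)$ furnished by additivity of $\lambda_L$ in the $K$-class, one has $s_\mg=s_{\mg_1}\cdot s_{\mg_2}$. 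In particular every $\mg=\bigoplus_{j=1}^{r}\mi_{Z_j}$ with $\mathrm{len}(Z_j)=n_j$ and $\sum_j n_j=n$ is $\mu$-semistable of class $\crn$ with $s_\mg=\prod_j s_{\mi_{Z_j}}$.

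First I would dispose of the rank-one building block. Since $SD_{c^1_m,L}$ is an isomorphism under either hypothesis (Corollary 4.3.2 of \cite{Yuan1}), it is in particular surjective; but the image of $SD_{c^1_m,L}$ is precisely the linear span of $\{s_{\mi_Z}:\mathrm{len}(Z)=m\}$ over the fine moduli space $\mathrm{Hilb}^m(X)=W(1,0,m)$, because the evaluation functionals $\mathrm{ev}_{\mi_Z}$ span the dual of $H^0(W(1,0,m),\lambda_{c^1_m}(L))$. Hence $\{s_{\mi_Z}:\mathrm{len}(Z)=m\}$ spans $H^0(\ml,\z_L(m))$ for every $m\geq0$, which is the rank-one case of effective surjectivity (genuine, since ideal sheaves are stable).

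Combining the two, the span of $\{s_\mg:\mg=\bigoplus_j\mi_{Z_j}\}$ equals the image of the multiplication map $\bigotimes_j H^0(\ml,\z_L(n_j))\to H^0(\ml,\z_L^r(n))$, so $\mu$-effective surjectivity for $(r,n)$ reduces to surjectivity of this map for a single partition $n=\sum_j n_j$. Now the two hypotheses part company. If $H^0(L\otimes K_X)=0$, the adjunction sequence forces the curves in $|L|$ to have arithmetic genus $0$, so the support morphism is an isomorphism $\ml\cong\ls\cong\p^N$ (Proposition 4.1.1 of \cite{Yuan1}); then $\z_L\cong\mo_{\p^N}(d_0)$ with $d_0=\deg D_{\z_L}\geq0$, $\z_L^r(n)\cong\mo_{\p^N}(rd_0+n)$, and the required surjectivity is simply the projective normality of $\p^N$ (take the partition $(n,0,\dots,0)$). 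This settles $\mu$-effective surjectivity for all $r\geq1$, $n\geq r$ when $H^0(L\otimes K_X)=0$; note that the $\bigoplus_j\mi_{Z_j}$ with unequal $n_j$ are $\mu$-semistable but not $H$-semistable, which is exactly why only the $\mu$-version is obtained in general. The case $L=-K_X$ is the genuinely harder one: here $\ml$ is an elliptic fibration over $\ls$ with section $D_{\z_L}\cong\ls$ (cf. the proof of Lemma \ref{like}), and the multiplication surjectivity for the bundles $\z_L^a(b)$ must be established directly. This is the step I expect to be the main obstacle; I would approach it by dévissage along the analogue of (\ref{zes}) for general $n$ — peeling $\z_L^{r-1}(n)\hookrightarrow\z_L^r(n)$ off the restriction to $D_{\z_L}$, on which $\z_L$ trivializes — or by a relative projective-normality argument over $\ls$.

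Finally I would upgrade and conclude. For $r=1$ the sheaves $\mi_Z$ are stable, so the rank-one step is already effective surjectivity. For $n=r$, take $\mg=\bigoplus_{j=1}^{r}\mi_{x_j}$ with distinct points $x_j$: each $\mi_{x_j}$ is stable with reduced Hilbert polynomial equal to that of $\crn$, so $\mg$ is polystable, hence a genuine point of $\wrr$, and $s_\mg=\prod_j s_{\mi_{x_j}}$; since $\{s_{\mi_x}:x\in X\}$ spans $H^0(\ml,\z_L(1))$ and $H^0(\ml,\z_L(1))^{\otimes r}\to H^0(\ml,\z_L^r(r))$ is surjective (projective normality in Case A, resp. its analogue for $L=-K_X$), these genuine sections span, giving effective surjectivity for $n=r$. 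Surjectivity of $SD_{\crn,L}$ then follows: for $n>r$ from $\mu$-effective surjectivity via Lemma \ref{mustable} and Remark \ref{efsur2}, and for $r=1$ or $n=r$ because effective surjectivity means exactly that the image of $SD_{\crn,L}$, being the span of $\{s_\mg:\mg\in\wrn\}$, is all of $H^0(\ml,\z_L^r(n))$.
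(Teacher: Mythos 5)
Your formal skeleton is sound, and half of your argument is correct. The multiplicativity $s_{\mg_1\oplus\mg_2}=s_{\mg_1}\cdot s_{\mg_2}$ under $\lambda_L(c+c')\cong\lambda_L(c)\otimes\lambda_L(c')$ is right; the identification of the image of $SD_{c^1_m,L}$ with the span of $\{s_{\mi_Z}\}$ is right, so Corollary 4.3.2 of \cite{Yuan1} does give the rank-one spanning statement; and the case $H^0(L\otimes K_X)=0$ goes through exactly as you say (then $\ml\cong\ls$, $D_{\z_L}=\emptyset$ so $\z_L\cong\mo_{\ls}$, and projective normality of $\ls$ plus the sheaves $\mi_Z\oplus\mo_X^{\oplus(r-1)}$, resp. $\bigoplus_j\mi_{x_j}$ for $n=r$, give $\mu$-effective, resp. effective, surjectivity). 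That part is close in spirit to what the paper does, which invokes Proposition 3.8 of \cite{Yuan5} for this case.

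The case $L=-K_X$, however, is not a deferrable technicality: your reduction there is to a statement that is \emph{false}, and no choice of partition or of the $Z_j$ can repair it as long as you only use direct sums of rank-one sheaves. By Theorem 4.4.1(2) of \cite{Yuan1} (cf.\ Remark \ref{S3}), for $L=-K_X$ one has $\pi_{*}\z_L\cong\mo_{\ls}$; concretely, multiplication by the canonical section $s_{\mo_X}$ of $\z_L$ cutting out $D_{\z_L}$ identifies $\pi^{*}H^0(\mo_{\ls}(m))$ with $H^0(\ml,\z_L(m))$ for every $m$. Hence every $s_{\mi_Z}$ factors as $s_{\mo_X}\cdot\pi^{*}t_Z$, so every product $\prod_j s_{\mi_{Z_j}}$ lies in $s_{\mo_X}^{r}\cdot\pi^{*}H^0(\mo_{\ls}(n))$, a subspace of dimension at most $h^0(\mo_{\ls}(n))$. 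But the same theorem gives $\pi_{*}\z_L^{r}\cong\mo_{\ls}\oplus\bigoplus_{i=2}^{r}\mo_{\ls}(-i)$, so
\[
h^0(\ml,\z_L^r(n))=h^0(\mo_{\ls}(n))+\sum_{i=2}^{r}h^0(\mo_{\ls}(n-i))>h^0(\mo_{\ls}(n))
\]
whenever $r\geq2$ and $n\geq2$. Thus the multiplication maps $\bigotimes_j H^0(\z_L(n_j))\to H^0(\z_L^r(n))$ are \emph{never} surjective for $L=-K_X$: your sections miss exactly the part of $H^0(\z_L^r(n))$ not vanishing on $D_{\z_L}$, and in particular your $n=r$ argument with $\bigoplus_j\mi_{x_j}$ cannot give effective surjectivity for $r\geq2$. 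This is precisely the difficulty the paper's proof is organized around: it runs an induction on $r$ through diagram (\ref{rsddim}), spanning the subspace that does vanish on $D_{\z_L}$ (the image of $f_L\circ\alpha_{\ts_{r+1}}$) by sections of non-split extensions $0\to\mg^r_i\to\mg^{r+1}_{i,x_j}\to\mi_{x_j}\to0$ — which uses the already-established surjectivity of $SD_{c^r_{r+1},L}$, not just rank-one inputs — and then hitting the one-dimensional quotient $H^0(D_{\z_L},\z_L^{r+1}(r+1)|_{D_{\z_L}})$ with one further sheaf $\mg^{r+1}\in W(r+1,0,r+1)\setminus\ts_{r+1}$, i.e.\ one with $\Hom(\mg^{r+1},\mo_X)=0$; such sheaves exist by Corollary \ref{gevan} and are never direct sums of ideal sheaves (any $\bigoplus_j\mi_{Z_j}$ has $\Hom(\cdot,\mo_X)\neq0$). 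Your proposal contains no mechanism for producing these last sections, so for $L=-K_X$ the proposition, including its effective-surjectivity clause for $n=r$, remains unproved.
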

\begin{proof}The strategy is analogous to \S 6.2 in \cite{GY}.  
Let $L=-K_X$.  By Theorem 4.4.1 (2) in \cite{Yuan1} we have the surjective multiplication map for all $r\geq1$
\[m_1:~H^0(\ml,\z^r_L(r))\otimes H^0(\ml,\pi^{*}\mo_{\ls}(n-r))\ra H^0(\ml,\z_L^r(n)).\]
By analogous argument to \S 6.2 in \cite{GY}, we see that for all $r\geq 1$, if $\exists~\{\mg^r_i\}_{i\in I_r}\subset \wrr$ such that $\{s_{\mg_i^r}\}_{i\in I_r}$ spans $H^0(\ml,\z^r_L(r))$, then $\exists~\{\mg^{r,n}_i\}_{i\in I_{r,n}}\subset \kw(r,0,n)^{\mu}$ such that $\{s_{\mg_i^{r,n}}\}_{i\in I_{r,}}$ spans $H^0(\ml,\z^r_L(n))$ for all $n\geq r$, hence then $SD_{c^r_n,L}$ is surjective.  So we have the following implication
\begin{equation}\label{im1} SD_{r,L}~\text{{\small is effectively surjective}}\Longrightarrow SD_{c^r_n,L}~\text{{\small is}}~\mu\text{{\small-effectively surjective for all}}~n\geq r.\end{equation}

On the other hand, by Lemma \ref{like}, $s_{\mg^r}$ spans $H^0(D_{\z_L},\z^r_L(r)|_{D_{\z_L}})$ for any $\mg^r\in \wrr\setminus\ts_r$ via the map $\beta_D$ in (\ref{rsddim}).  Let $\ell=dim~\ls$ and choose $\ell$ distinct points $x_1,x_2,\cdots,x_{\ell}$ such that $t_{x_j}$ spans $H^0(\ls,\mo_{\ls}(1))$, where $t_{x_j}$ is the section of $\mo_{\ls}(1)$ induced by asking curves passing through $x_j$.  If $\exists~\{\mg^r_i\}_{i\in I_r}\subset \wrr$ such that $\{s_{\mg_i^r}\}_{i\in I_r}$ spans $H^0(\ml,\z^r_L(r))$, then we can construct a set $\{\mg^{r+1}_{i,x_j},\mg^{r+1}\}_{i\in I_r,1\leq j\leq l} \subset W(r+1,0,r+1)$ such that $\mg^{r+1}\in W(r+1,0,r+1)\setminus \ts_{r+1}$ and $\mg_{i,x_j}^{r+1}$ lies in the following sequence
\begin{equation}\label{basis1}0\ra \mg^r_i\ra\mg^{r+1}_{i,x_j}\ra \mi_{x_j}\ra0,
\end{equation}
where $\mi_{x_j}$ is the ideal sheaf of $x_j$.  It is easy to see $\mg_{i,x_j}^{r+1}\in \ts_{r+1}$ and $\{s_{\mg_{i,x_j}^{r+1}}\}_{i\in I_r,1\leq j\leq \ell}$ spans the image of $\alpha_{\ts_{r+1}}$ in $H^0(\ml,\z^{r}_L(r+1))$ as in (\ref{rsddim}).  Also by (\ref{im1}) we have $\alpha_{\ts_{r+1}}$ is surjective since so is $SD_{c^r_{r+1},L}$.  Hence we have the following implication
\begin{equation}\label{im2}SD_{r,L}~\text{{\small is effectively surjective}}\Longrightarrow SD_{r+1,L}~\text{{\small is effectively surjective}}.\end{equation}
$SD_{1,L}$ is surjective by Corollary 4.3.2 in \cite{Yuan1}.  Combining (\ref{im1}) and (\ref{im2}), we have proved the proposition for $L=-K_X$.

If $L\neq -K_X$, then by Proposition 4.1.1 in \cite{Yuan1}, $\ml\cong\ls$ and $\z_L\cong\mo_{\ls}$.  Use the same argument as Proposition 3.8 in \cite{Yuan5} and we get that $SD_{r,L}$ is effectively surjective.  By the analogous argument as \S 6.2 in \cite{GY}, we also have implication in (\ref{im1}) and hence the proposition. 
\end{proof}
\begin{rem}\label{S3}Proposition \ref{part1} holds not only for $(X,L)$ in Corollary \ref{beta2} but also for all $L$ on $X=\p^2$ or $\Sigma_e$ such that $-L-K_X$ is effective.  In particular by Lemma 3.17 and Corollary 3.19 in \cite{Yuan6}, Theorem 4.4.1 (2) in \cite{Yuan1} also applies to $L=-K_X$, $X=\Sigma_e$ for all $e\geq 2$, i.e. $\pi_{*}\z^r_L\cong \mo_{\ls}\oplus\displaystyle{\bigoplus_{i=2}^r}\mo_{\ls}(-i)$.
\end{rem}

\subsection{More results on $X=\p^2$.}\qquad

Using Fourier transform on $\p^2$ (see also \S 4 in \cite{LP1} or \S 3 in \cite{Yuan5}), we can get results as follows.  
\begin{thm}\label{p2}Let $X=\p^2$, $L=dH$.  Then
\begin{enumerate}
\item $g_r^{\vee}$ in (\ref{rsddim}) is injective for all $r>0$ and $d>0$;
\item $SD_{r,dH}$ is an isomorphism for $d=1,2$ and $r>0$;
\item $SD_{c_r^{r-1},dH}$ is an isomorphism for $d=1,2$ and $r>1$;
\item $SD_{3,rH}$ is injective for $r>0$;
\item $SD_{c_3^2,rH}$ is injective for all $r>0$. 
\end{enumerate}
\end{thm}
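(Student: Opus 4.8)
The engine behind all five statements is Le Potier's Fourier transform on $\p^2$ (cf.\ \S4 in \cite{LP1}, \S3 in \cite{Yuan5}), which identifies a moduli space of sheaves of one numerical type with a moduli space of another type and carries determinant line bundles to determinant line bundles. The plan is to use the transform only to supply the genuinely new input, namely (3), (5) and (1), and then to propagate everything through the diagram (\ref{rsddim}), through Lemma \ref{like}, and through an induction. Concretely I would prove (3) and (5) by the transform, deduce (2) from (3), deduce (4) from (5), and treat (1) on its own.

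For (3) I would argue by rank. When $r=2$ the map $SD_{c_2^1,dH}$ is a rank-one strange duality map, already an isomorphism by \cite{Yuan1} (and \cite{Yuan5}). For $r\ge 3$ the sheaves of class $c_r^{r-1}$ have $\chi=-1$ and no sections, so I would apply the Fourier transform to them and identify $W(r-1,0,r)$ with a moduli space whose strange duality against $\ml$ is already understood for $d=1,2$ (where $\ml=M(dH,0)$ is simple, $M(H,0)\cong\check{\p}^2$ and $M(2H,0)$ the space of conic sheaves); matching $\lambda_{c_r^{r-1}}(dH)$ with its image and checking that the incidence locus $\mathscr{D}$ is preserved would then turn $SD_{c_r^{r-1},dH}$ into a known isomorphism. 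Granting (3), statement (2) is immediate: on $\p^2$ one has $-K_X-dH=(3-d)H$ effective for $d=1,2$, so Lemma \ref{like} upgrades ``$SD_{c_r^{r-1},dH}$ an isomorphism'' to ``$SD_{r,dH}$ an isomorphism'' for $r\ge 2$, while $r=1$ is Corollary 4.3.2 in \cite{Yuan1}.

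For (5) I expect the transform to exchange the second Chern class $n$ of the rank-two sheaf with the degree $d$ of the supporting curve, so that $SD_{c_3^2,rH}$ is matched with a strange duality map in which the curves have degree $3$; on the $W(2,0,3)$ side I would then produce enough $\mg_i$ as in Definition \ref{efsur1} to obtain effective surjectivity, whence $SD_{rH,c_3^2}$ is surjective and $SD_{c_3^2,rH}$ is injective by Remark \ref{efsur2}. With (5) in hand, (4) follows by induction on $r$ with step three through the rank-three instance of (\ref{rsddim}): there $\alpha_{\ts_3}\circ\delta^{*\vee}=SD_{c_3^2,rH}$ is injective by (5), and $\beta_D$ is controlled by $SD_{3,(r-3)H}=j_3\circ\beta_D$ with $j_3$ injective (Proposition \ref{beta1}), so injectivity of $SD_{3,(r-3)H}$ forces $\beta_D$ injective. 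The base cases $r=1,2,3$ are isomorphisms by (2) and Theorem \ref{rank3a}, and a diagram chase on (\ref{rsddim}) then yields injectivity of $SD_{3,rH}$ from injectivity of $\alpha_{\ts_3}$ and $\beta_D$. Finally, statement (1) asserts that $g_r\colon H^0(\lambda_r(dH))\to H^0(\ts_r,\lambda_r(dH)|_{\ts_r})$ is surjective; by the long exact sequence of (\ref{ses}) this reduces to the non-obstruction of $H^1(\wrr,\lambda_r((d-3)H))$, which I would read off from the transform, and it makes the top row of (\ref{rsddim}) a short exact sequence, which is the bookkeeping used above.

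The main obstacle will be the Fourier-transform step itself. One must pin down precisely which moduli spaces $W(r-1,0,r)$ and $W(2,0,3)$ are carried to, verify that $\lambda_{c_r^{r-1}}(dH)$ and $\lambda_{c_3^2}(rH)$ correspond to determinant line bundles on the targets under the transform, and check that the divisor $\mathscr{D}_{c,u}$ cutting out the strange duality section is preserved, so that the transform genuinely \emph{intertwines} the two strange duality maps and not merely the underlying spaces; the semistability and the behaviour under non-generic polarizations of the transformed families are where I expect the real work to lie.
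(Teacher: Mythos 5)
Your diagram-level reductions are sound, and it is worth noting that you run them in the opposite direction from the paper: you want (3)$\Rightarrow$(2) via Lemma \ref{like} and (5)$\Rightarrow$(4) by induction (base cases from (2) and Theorem \ref{rank3a}, inductive step a five-lemma chase in (\ref{rsddim}) using Proposition \ref{beta1}), whereas the paper proves (2) and (4) first and deduces (3) and (5) by chasing (\ref{rsddim}) the other way. Those chases are fine. The genuine gaps are in all three terminal inputs you rely on. The most serious one: Le Potier's Fourier transform, as defined in \cite{LP1} and used in \S 3 of \cite{Yuan5}, exists only for the \emph{balanced} classes --- pure 1-dimensional sheaves with $\chi=0$ and torsion-free sheaves of class $c_d^d$ (so $c_1=0$ \emph{and} $\chi=0$); it exchanges $\md$ with $W(d,0,d)$ and nothing else. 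It does not apply to $W(r-1,0,r)$ or to $W(2,0,3)$, whose sheaves have $\chi=-1$ --- exactly the fact you record and then set aside. So your proposed proofs of (3) and (5) do not get off the ground; the paper never transforms these spaces, but instead reaches $SD_{c_r^{r-1},dH}$ through the divisor $\ts_r\subset\wrr$ and the identity $SD_{c_r^{r-1},L}=\alpha_{\ts_r}\circ\delta^{*\vee}$ of Proposition \ref{hrmain}, and reaches $SD_{c_3^2,rH}$ \emph{from} $SD_{3,rH}$. Likewise for (1): the vanishing $H^1(\wrr,\lambda_r(H^{\otimes(d-3)}))=0$ cannot be ``read off from the transform,'' since $\Phi$ is only a birational map and controls $H^0$ (via normality and small complements), not $H^1$. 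The paper's actual input is Dr\'ezet's theorems in \cite{Dr2}: $\Pic(\wrr)\cong\bz$ with ample generator $\lambda_r(H)$ and dualizing sheaf $\lambda_r((H^{-1})^{\otimes 3r})$, combined with the generalized Kodaira vanishing theorem of \cite{KMM}.

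What the proposal misses is the one place where the transform genuinely carries the argument: where it \emph{is} defined, it interchanges the two sides of strange duality. Lemma \ref{ftg2} and the diagram (\ref{comm1}) identify $SD_{r,dH}=SD_{dH,r}^{\vee}$ with $SD_{rH,d}=SD_{d,rH}^{\vee}$ up to the isomorphisms $\Phi^{*}$. Feeding in Proposition \ref{part1} at the anticanonical bundle $L=3H=-K_{\p^2}$ gives surjectivity of $SD_{r,3H}$ for all $r$; by (\ref{comm1}) this is surjectivity of $SD_{rH,3}=SD_{3,rH}^{\vee}$, hence injectivity of $SD_{3,rH}$ --- statement (4) in one stroke, for every $r$, with no need to exhibit sections. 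Your substitute for (5), direct effective surjectivity of $SD_{c_3^2,rH}$ by producing spanning sections $s_{\mg_i}$ with $\mg_i\in W(2,0,3)$ on $M(rH,0)$ for arbitrary $r$, is precisely the kind of problem the paper only solves in \S 5 for a single numerical case ($g_L=3$, $L=4H$) and at considerable length; it is not available as a step here. The same intertwining, together with the known rank $1$ and rank $2$ dualities (\cite{Yuan1}, \cite{Yuan5}), is also what yields the dimension count $h^0(\wrr,\lambda_r(H^{\otimes d}))=h^0(|dH|,\mo_{|dH|}(r))$ behind (2), with surjectivity again from Proposition \ref{part1}; your route (3)$\Rightarrow$(2) is logically valid but idle once (3) lacks a proof.
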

\begin{proof}[Proof of Statement (1) in Theorem \ref{p2}]To show $g_r^{\vee}$ is injective, it is enough to show $H^1(\lambda_r(H^{\otimes (d-3)}))=0$ for all $r>0$ and $d>0$.  Notice that $W(r,0,r)$ is of weight zero (def. see \S 1.2 in \cite{Dr2}, or \S 2.3 in \cite{Abe2}).  Hence by Theorem B, Theorem E and Theorem F in \cite{Dr2}, we have that $\Pic(W(r,0,r))\cong\mathbb{Z}$ is generated by $\lambda_r(H)$ and the dualizing sheaf over $\wrr$ is $\lambda_r((H^{-1})^{\otimes 3r})$.  Since $\lambda_r(H)$ is effective, it is an ample generator of $\Pic(\wrr)$.  By the generalized version of Kodaira vanishing theorem (Theorem 1-2-5 in \cite{KMM}), we have $H^1(\lambda_r(H^{\otimes(d-3r)}))=0$ for all $d>0$.   Therefore $H^1(\lambda_r(H^{\otimes (d-3)}))=0$ for all $r>0$ and $d>0$, hence Statement (1).
\end{proof}
To prove Statement (2) in Theorem \ref{p2}, we need to use Fourier transform on $\p^2$.  Let $\mathcal{D}$ be the universal curve in $\p^2\times |H|$ as follows.
\begin{equation}\xymatrix@C=0.01cm{
  \p^2\times |H|~~~~~\supset&\mathcal{D}\ar[d]^q\ar[r]^p
                & \p^2 \\
               &~~~~~~ |H|\cong\p^2&
               }.
\end{equation}

Let $\mf$ be a pure 1-dimensional sheaf with Euler characteristic 0, then its Fourier transform is defined to be $\mg_{\mf}:=q_{*}(p^{*}(\mf\otimes\mo_{\p^2}(2)))\otimes\mo_{|H|}(-1)$.  Let $\mg$ be a torsion free sheaf on $|H|$ with first Chern class 0 and Euler characteristic 0, then its Fourier transform is defined to be $\mf_{\mg}:=R^1p_{*}(q^{*}(\mg\otimes\mo_{|H|}(-1)))\otimes\mo_{\p^2}(-1)$.  
Identify $|H|$ with $\p^2$ and Fourier transform gives a birational correspondence
\begin{equation}\label{bcor}\Phi:\md \dashrightarrow W(d,0,d).\end{equation}
By Lemma 4.2 and Corollary 4.3 in \cite{LP1}, $\Phi$ induces an isomorphism from $\md\setminus \dzd$ to the open subset of $W(d,0,d)$ consisting of polystable sheaves whose restrictions to a generic line $\p^1\cong l\in|H|$ are isomorphic to $\mo_l^{\oplus d}$.  Moreover $\Phi$ is an isomorphism for $d=1,2$. 

Let $\md^g$ be the largest open subset where $\Phi$ is well-defined, i.e. $\md^g$ consists of sheaves $\mf$ such that $q_{*}(p^{*}(\mf\otimes\mo_{\p^2}(2)))\otimes\mo_{|H|}(-1)$ are semistable.  By Theorem 4.4 and Theorem 4.8 in \cite{LP1}, $\md^g=\md$ for $d\leq 4$.  For $d\geq 5$, the following lemma shows that the correspondence $\Phi$ can be defined over a larger open subset than $\md\setminus\dzd$.

\begin{lemma}\label{ftg0}Let $\mf\in\dzd$ with $d\geq 5$ such that $h^0(\mf)=h^1(\mf)=1$.  If the non-split extension of $\mf$ by $K_X$ is torsion-free, then $\mf\in\md^g$.  
In particular, if $\mf$ supports on an integral curve, then $\mf\in\md^g$. 
\end{lemma}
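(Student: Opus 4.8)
The plan is to use the non-split extension to trade $\mf$ for a rank-one torsion-free sheaf whose Fourier transform is manifestly semistable. By Serre duality $\Ext^1(\mf,K_X)\cong H^1(\mf)^{\vee}$, so $h^1(\mf)=1$ makes the non-split extension
\[0\ra K_X\ra\me\ra\mf\ra 0\]
unique up to scalar, with $\me$ of rank one and $c_1(\me)=(d-3)H$. Since we assume $\me$ is torsion-free, $\me\cong\mi_Z((d-3)H)$ for a $0$-dimensional $Z$. First I would read off the cohomology of $\me$: from the extension $h^0(\me)=h^0(\mf)=1$ (as $H^0(K_X)=H^1(K_X)=0$), while the connecting map $H^1(\mf)\ra H^2(K_X)$ is the pairing against the nonzero extension class, hence an isomorphism of one-dimensional spaces, giving $h^1(\me)=h^2(\me)=0$. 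A Riemann--Roch computation then records $\chi(\me(2))=2d$ and $\deg Z=\binom{d-1}{2}-1$.

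Next I would identify the two transforms. Twisting the extension by $\mo(2)$ gives
\[0\ra\mo(-1)\ra\me(2)\ra\mf(2)\ra 0,\]
and since $p^{*}\mo(-1)$ restricts to $\mo_{\p^1}(-1)$ on every fibre of $q$, one has $Rq_{*}p^{*}\mo(-1)=0$; therefore $q_{*}p^{*}\me(2)\cong q_{*}p^{*}\mf(2)$ and $R^1q_{*}p^{*}\me(2)\cong R^1q_{*}p^{*}\mf(2)$, so the transform $\mg_{\mf}$ is canonically $q_{*}p^{*}(\me(2))\otimes\mo_{|H|}(-1)$. Feeding in $0\ra\me(2)\ra\mo((d-1)H)\ra\mo_Z\ra 0$ and pushing forward, the induced map $q_{*}p^{*}\mo((d-1)H)\ra q_{*}p^{*}\mo_Z=\bigoplus_{z\in Z}\mo_{H_z}$ is surjective, because fibrewise over a line $\ell$ through $z$ it is the surjective evaluation $H^0(\ell,\mo_{\ell}(d-1))\ra\mo_Z|_z$; here $H_z\subset|H|$ is the pencil of lines through $z$. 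Hence $R^1q_{*}p^{*}\me(2)=0$, so $\mg_{\mf}$ is locally free of rank $d$, exhibited as the elementary modification
\[0\ra\mg_{\mf}(1)\ra q_{*}p^{*}\mo((d-1)H)\ra\bigoplus_{z\in Z}\mo_{H_z}\ra 0\]
of the homogeneous bundle $q_{*}p^{*}\mo((d-1)H)$.

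It then suffices to prove $\mg_{\mf}$ is semistable, which I would do by showing its restriction to a generic line of $|H|$ is trivial. Such a line is a pencil $H_{x_0}$ of lines through a point $x_0\in\p^2$, and $q^{-1}(H_{x_0})$ is the blow-up $S=\mathrm{Bl}_{x_0}\p^2$, with $\sigma:S\ra\p^2$ the blow-down and $\tau:S\ra H_{x_0}\cong\p^1$ the projection from $x_0$. As $R^1q_{*}p^{*}\me(2)=0$, base change gives $\mg_{\mf}|_{H_{x_0}}\cong(\tau_{*}\sigma^{*}\me(2))\otimes\mo(-1)$, and for generic $x_0$ the sheaf $\tau_{*}\sigma^{*}\me(2)$ is locally free on $\p^1$ of rank $d$ and degree $d$ (using $R^1\tau_{*}=0$ on fibres and $\chi(\me(2))=2d$). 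Granting that it is balanced, i.e.\ $\cong\mo_{\p^1}(1)^{\oplus d}$, we obtain $\mg_{\mf}|_{H_{x_0}}\cong\mo_{\p^1}^{\oplus d}$, whence $c_1(\mg_{\mf})=0$; then any saturated $\ms\subset\mg_{\mf}$ restricts on a generic line to a subsheaf of $\mo_{\p^1}^{\oplus d}$, which has degree $\le 0$, so $c_1(\ms).H\le 0$ and $\mg_{\mf}$ is semistable. This places $\mf$ in $\md^g$.

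I expect the balancedness of $\tau_{*}\sigma^{*}\me(2)$ for generic $x_0$ to be the main obstacle. Since it is an open condition on $x_0$ it is enough to verify it at one point, and by the projection formula it is equivalent to an expected-dimension statement for $H^0$ of a suitable twist of $\sigma^{*}\me$ on $S$, which I would extract from the vanishing $h^1(\me)=h^2(\me)=0$ proved above; this is exactly the input that fails for a general member of $\dzd$ and is restored by the torsion-freeness of $\me$, and the range $d\ge 5$ is the one not already covered by Theorems 4.4 and 4.8 of \cite{LP1}. Finally, for the ``in particular'' I would check that $\me$ is automatically torsion-free when $\mf$ is supported on an integral curve $C$: any torsion subsheaf $T\subset\me$ injects into $\mf$ (as $K_X$ is torsion-free), and since $C$ is integral $\mf/T$ is $0$-dimensional, so $H^1(T)\ra H^1(\mf)$ is surjective; dually $\Ext^1(\mf,K_X)\ra\Ext^1(T,K_X)$ is injective, so the extension cannot split over $T$, forcing $T=0$.
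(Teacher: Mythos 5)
The fatal problem is the step you yourself defer as ``the main obstacle'': the balancedness of $\tau_{*}\sigma^{*}\me(2)$ is not merely unproven, it is false, and your entire strategy runs through it. The unique section of $\me$ (i.e.\ $h^0(\me)=h^0(\mf)=1$, which is exactly the hypothesis $\mf\in\dzd$) forces a degree-one sub-line bundle of $\mg_{\mf}|_{H_{x_0}}$ for every $x_0\notin Z$. Concretely, since the fibres of $\tau$ are the strict transforms of lines through $x_0$, one has $\tau^{*}\mo_{\p^1}(1)\cong\sigma^{*}\mo_{\p^2}(1)\otimes\mo_S(-E)$ with $E$ the exceptional curve, hence
\[\Hom_{\p^1}\bigl(\mo_{\p^1}(2),\tau_{*}\sigma^{*}\me(2)\bigr)\cong H^0\bigl(S,\sigma^{*}\me\otimes\mo_S(2E)\bigr)\cong H^0(\p^2,\me)\cong\bc\neq0,\]
where the last isomorphism uses $x_0\notin Z$ and $\sigma_{*}\mo_S(2E)\cong\mo_{\p^2}$. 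So $\mo_{\p^1}(1)\hookrightarrow\mg_{\mf}|_{H_{x_0}}$ and the generic splitting type is $(1,0,\dots,0,-1)$, never trivial. This is also forced by the very result you would quote: by Corollary 4.3 of \cite{LP1}, trivial restriction to the generic line of $|H|$ characterizes the transforms of sheaves in $\md\setminus\dzd$, and your $\mf$ lies in $\dzd$. Hence semistability of $\mg_{\mf}$ cannot be obtained by proving the generic restriction trivial; the premise of that implication fails. (Two smaller gaps: your fibrewise justification that $q_{*}p^{*}\mo((d-1)H)\ra\bigoplus_{z\in Z}\mo_{H_z}$ is surjective treats one point of $Z$ at a time, whereas what is needed is that every line meets $Z$ in length $\le d$ --- this does follow from $h^1(\me)=0$, but requires an argument; and even granting trivial generic splitting, your closing argument only yields slope-semistability, while membership in $\md^g$ requires Gieseker semistability.)

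The proof can be completed from exactly the point where you and the paper part ways. Your first three steps (uniqueness of the non-split extension, the identification $\mg_{\mf}\cong q_{*}p^{*}\me(2)\otimes\mo_{|H|}(-1)$ via $Rq_{*}p^{*}\mo_{\p^2}(-1)=0$, and, for the ``in particular'' clause, the torsion-freeness of $\me$ when the support is integral) agree with the paper and are correct. But instead of restricting to pencils, the paper uses $h^0(\me)=1$ and torsion-freeness to produce $0\ra\mo_{\p^2}\ra\me\ra\mf_1\ra0$, where $\mf_1$ is a semistable $1$-dimensional sheaf of class $u_{(d-3)H}$ with $h^0(\mf_1)=0$, i.e.\ $\mf_1\in M((d-3)H,0)\setminus D_{\z_{(d-3)H}}$; applying the transform (and $R^1q_{*}p^{*}\mo_{\p^2}(2)=0$) exhibits $\mg_{\mf}$ as an extension of the semistable sheaf $\Phi(\mf_1)\in W(d-3,0,d-3)$ by the semistable bundle $q_{*}p^{*}\mo_{\p^2}(2)\otimes\mo_{|H|}(-1)\in W(3,0,3)$. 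Sub and quotient have the same reduced Hilbert polynomial, so the extension is semistable, which is what $\mf\in\md^g$ means. Your unexpected $\mo_{\p^1}(1)$ above is precisely the restriction of this rank-three subbundle, whose splitting type on every pencil $H_{x_0}$ is $(1,0,-1)$.
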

\begin{proof}$\Ext^1(\mf,\mo_{\p^2}(-3))\cong H^1(\mf)^{\vee}$.  Hence there is a unique non-split extension of $\mf$ by $\mo_{\p^2}(-3)$ as follows.
\begin{equation}\label{e1}0\ra \mo_{\p^2}(-3)\ra \widetilde\mi\ra \mf\ra 0.
\end{equation}
Do Fourier transform to (\ref{e1}) and we get 
\begin{equation}\label{e2}0\ra q_{*}(p^{*}(\widetilde\mi\otimes\mo_{\p^2}(2)))\otimes\mo_{|H|}(-1)\xrightarrow{\cong}q_{*}(p^{*}(\mf\otimes\mo_{\p^2}(2)))\otimes\mo_{|H|}(-1)\ra 0,\end{equation}
which is because $q_{*}(p^{*}(\mo_{\p^2}(-1)))=0=R^1q_{*}(p^{*}(\mo_{\p^2}(-1)))$.

$h^0(\widetilde\mi)\cong h^0(\mf)=1$, hence there is a non-zero map $\mo_{\p^2}\ra \widetilde\mi$ which is injective given $\widetilde\mi$ torsion-free.  Hence we have 
\begin{equation}\label{e3}
0\ra \mo_{\p^2}\ra \widetilde\mi\ra \mf_1\ra 0.
\end{equation}
$r(\mf_1)=0$, $c_1(\mf_1)=c_1(\widetilde{\mi})=(d-3)H$, $\chi(\mf_1)=\chi(\mf)=0$ and moreover $h^0(\mf_1)=h^0(\widetilde\mi)-1=0$ which implies that $\mf_1$ is semistable, since every subsheaf of $\mf_1$ can not have positive Euler characteristic.  Hence $\mf_1\in M((d-3)H,0)\setminus D_{\z_{(d-3)H}}$.

Let $\mg_1$ be the Fourier transform of $\mf_1$, then $\mg_1\in W(d-3,0,d-3)$.  Do Fourier transform to (\ref{e3}) and we get
\begin{equation}\label{e4}
0\ra S^2\mt_{\p^2}(-1) \ra q_{*}(p^{*}(\widetilde\mi\otimes\mo_{\p^2}(2)))\otimes\mo_{|H|}(-1)\ra \mg_1\ra 0,
\end{equation}
where $S^2\mt_{\p^2}\cong q_{*}(p^{*}(\mo_{\p^2}(2)))$ is the $2^{\text{nd}}$ symmetric power of the tangent bundle $\mt_{\p^2}$.  $S^2\mt_{\p^2}(-1)\in W(3,0,3)$ and hence by (\ref{e2}) and (\ref{e4}), the Fourier transform of $\mf$ is semistable and hence $\mf\in\md^g$.
\end{proof}

Let $\wrr^g:=\Phi(M(rH,0)^g)$.  
\begin{lemma}\label{ftg1}The complement of $\md^g$ ($\wrr^g$, resp.) in $\md$ ($\wrr$, resp.) is of codimension $\geq 2$.
\end{lemma}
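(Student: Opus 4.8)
The plan is to reduce to the range $d\ge 5$ and then, on each side, to confine the complement of the ``good'' locus to a divisor whose generic point is already good. For $d\le 4$, Theorem 4.4 and Theorem 4.8 in \cite{LP1} give $\md^g=\md$, so $\Phi$ is a genuine morphism on all of $\md$; being a birational morphism between the projective irreducible varieties $\md$ and $\wrr$ (of the same dimension $d^2+1$, with $r=d$), its image $\wrr^g=\Phi(\md)$ is closed and dense, hence equal to $\wrr$. Thus both complements are empty for $d\le 4$, and from now on I take $d\ge 5$. Note also that since $\Phi$ is an isomorphism on $\md\setminus\dzd$ by \cite{LP1}, one has $\md\setminus\md^g\subset\dzd$.

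For the $\md$ side it then suffices to show that the generic point of the divisor $\dzd$ lies in $\md^g$. I would use the support morphism $\pi:\md\ra|dH|$: the generic curve $C\in|dH|$ is smooth of genus $g=\binom{d-1}{2}$, the fibre $\pi^{-1}(C)$ is $\Pic^{g-1}(C)$, and $\dzd$ meets it in the theta divisor $\Theta_C$, which is irreducible with generic point satisfying $h^0=1$ (hence $h^1=1$, as $\chi=0$). A generic point of $\dzd$ is therefore a line bundle of degree $g-1$ on a smooth, hence integral, curve with $h^0=h^1=1$, so Lemma \ref{ftg0} places it in $\md^g$. Consequently $\md\setminus\md^g$ is a proper closed subset of the divisor $\dzd$, i.e. of codimension $\ge 2$ in $\md$.

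For the $\wrr$ side I would exploit that the Fourier transform is involutive and interchanges the two copies of $\p^2$: the inverse transform $\Psi$ is defined exactly on $\wrr^g=\Phi(\md^g)$, where it is inverse to $\Phi$, so that $\wrr\setminus\wrr^g$ is the indeterminacy locus of $\Psi$. By the symmetry of the construction the argument of the previous paragraph should apply verbatim, with $\dzd$ replaced by the analogous theta-type divisor on $\wrr$ (the locus where the explicit inverse transform fails to be semistable) and Lemma \ref{ftg0} by its symmetric analogue, yielding $\operatorname{codim}(\wrr\setminus\wrr^g)\ge 2$.

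The main obstacle is precisely this last step: making the symmetry rigorous. One must first verify that $\Phi$ restricts to an isomorphism $\md^g\xrightarrow{\sim}\wrr^g$ onto an open subset, so that $\wrr^g$ really is the domain of $\Psi$, and then identify the bad divisor in $\wrr$ together with the extension statement replacing Lemma \ref{ftg0}. The delicate point is that a purely formal argument via normality of $\wrr$ and Zariski's main theorem only bounds the indeterminacy of $\Psi$ viewed as a rational map to projective space; to exclude a divisorial component of $\wrr\setminus\wrr^g$ that $\Psi$ might contract into the codimension-$\ge 2$ set $\md\setminus\md^g$, one genuinely needs the geometric input that the generic sheaf of the bad divisor on $\wrr$ is the inverse transform of a semistable $1$-dimensional sheaf.
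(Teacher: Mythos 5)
Your proposal has two gaps, one repairable and one fatal. On the $\md$-side, your generic-point argument only sees components of $\dzd$ that dominate $|dH|$. Since $\dzd$ is the zero locus of a section of $\z_{dH}$ it is pure of codimension one, and a priori it could have a component lying entirely over the discriminant: the locus of sheaves supported on singular \emph{integral} curves already has dimension $\dim\md-1$ (codimension one in $|dH|$, but full-dimensional compactified-Jacobian fibres), so such a component is not excluded by dimension reasons alone, and on it your description of ``the generic point of $\dzd$'' as a line bundle on a smooth curve says nothing. Excluding this (properness of the theta locus in each compactified Jacobian, plus dimension counts for sheaves with non-integral support) is exactly the role of the results the paper imports from \cite{Yuan4} (Theorem 4.17 and Proposition 5.5) before applying Lemma \ref{ftg0}; without some such input your first half is incomplete, though it follows the same basic route as the paper.

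The $\wrr$-side, however, is not a proof at all, as you yourself concede: you never identify the ``bad divisor'' on $\wrr$, never formulate (let alone prove) the symmetric analogue of Lemma \ref{ftg0}, and you correctly observe that no formal argument (involutivity of the transform, normality, Zariski's main theorem) can rule out a divisorial component of $\wrr\setminus\wrr^g$ that the inverse transform contracts into the small set $\md\setminus\md^g$. The paper closes this by an entirely different, non-symmetric argument: by \cite{LP1}, $\Phi$ maps $\md\setminus\dzd$ isomorphically onto the locus of sheaves whose restriction to a generic line is trivial, so $\wrr\setminus\wrr^g$ is contained in the set $\tb$ of sheaves for which \emph{every} line is a jumping line; $\tb$ is the base locus of the subsystem $\widehat{H}\subset H^0(\wrr,\lambda_r(H))$ spanned by the sections $s_{\mo_l(-1)}$, $l\in|H|$; since $\Pic(\wrr)\cong\mathbb{Z}$ with ample generator $\lambda_r(H)$ by \cite{Dr2}, no two distinct divisors in $|\lambda_r(H)|$ can share a component, so either $\tb$ has codimension $\geq 2$ or $\dim\widehat{H}=1$; and $\dim\widehat{H}=h^0(|H|,\mo_{|H|}(r))\geq 3$ because $SD_{H,r}$ is injective by Proposition \ref{part1}. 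Some global input of this kind (Picard rank one together with nontriviality of the span of the jumping-line sections) is what your symmetry strategy is missing, and without it the second half of the lemma remains unproved.
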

\begin{proof}$\md\setminus\md^g$ is of codimension $\geq2$ is by Theorem 4.17 and Proposition 5.5 in \cite{Yuan4} together with Lemma \ref{ftg0}.  

$\wrr^g$ contains all the sheaves whose restrictions on a generic line $\p^1\cong l\in|H|$ are isomorphic to $\mo_l^{\oplus d}$.  It is enough to show that the following set $\tb$ is of codimension $\geq 2$ in $\wrr$.
\[\tb:=\big\{\mg\in\wrr| H^0(\mg\otimes\mo_l(-1))\neq 0, ~\forall~l\in |H|.\big\}\]

Let $\widehat{H}$ be the subspace of $H^0(\wrr,\lambda_r(H))$ generated by all the sections induced by sheaves $\mo_l(-1)$ with $l\in |H|$. Also $\widehat{H}$ is the image of $H^0(M(H,0),\lambda_H(r))^{\vee}\cong H^0(|H|,\mo_{|H|}(r))^{\vee}$ via the strange duality map $SD_{H,r}$. Notice that $\tb$ is the base locus of $\widehat{H}$.  

Since $\lambda_r(H)$ is the ample generator of $\Pic(\wrr)$, every divisor in $|\lambda_r(H)|$ can not be a union of two subdivisors.  Hence either $\tb$ is of codimension $\geq 2$ in $\wrr$, or $dim~\widehat{H}=1$. 
By Proposition \ref{part1}, $SD_{H,r}$ is injective and hence $dim~\widehat{H}=h^0(|H|,\mo_{|H|}(r))\geq 3$.  Hence $\wrr\setminus\wrr^g$ is of codimension $\geq 2$.
Hence the lemma. 
\end{proof}

\begin{lemma}\label{ftg2}$\Phi:~\md\ra W(d,0,d)$ is a birational map of normal projective schemes and $\Phi^{*}\lambda_d(H^{\otimes r})\cong \lambda_{dH}(r)=\z^r_{dH}(r)$, $\forall~ r$.  

Moreover $\Phi^{*}: H^0(W(d,0,d),\lambda_d(H^{\otimes r}))\xrightarrow{\cong} H^0(\md,\z^r_{dH}(r))$ is an isomorphism.
\end{lemma}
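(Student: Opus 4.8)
The plan is to exploit the fact that $\Phi$ restricts to an isomorphism between two open subsets whose complements have codimension at least $2$, and that both the source and the target are normal; once the relevant determinant line bundles are matched on these opens, the statements about global sections follow from Hartogs-type extension.

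First I would record that $\md$ and $W(d,0,d)$ are irreducible, normal projective schemes: for $W(d,0,d)$ this is Remark \ref{irred}, and for $\md$ it is standard (it is a GIT quotient of a smooth parameter space, cf. \cite{LP1}). Writing $W(d,0,d)^g:=\Phi(\md^g)$ as before, the birational correspondence $\Phi$ restricts, by the very definition of $\md^g$ together with Lemma 4.2 and Corollary 4.3 in \cite{LP1} and Lemma \ref{ftg0}, to an isomorphism $\md^g\xrightarrow{\cong}W(d,0,d)^g$. By Lemma \ref{ftg1} both complements $\md\setminus\md^g$ and $W(d,0,d)\setminus W(d,0,d)^g$ are of codimension $\geq 2$. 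In particular $\Phi$ is a birational map of normal projective schemes, which is the first assertion.

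Next I would prove $\Phi^{*}\lambda_d(H^{\otimes r})\cong\z^r_{dH}(r)$ on all of $\md$. Since a line bundle is reflexive and $\md$ is normal with $\md\setminus\md^g$ of codimension $\geq 2$, it suffices to exhibit the isomorphism over $\md^g$, where it then extends uniquely. Over $\md^g$ I would use the relative Fourier transform: starting from a universal sheaf on $\p^2\times\md^g$ and applying $\Phi$ fibrewise produces a universal sheaf on $|H|\times W(d,0,d)^g$ under the identification $|H|\cong\p^2$. Tracking the two determinant constructions through this transform, the key point is that the Fourier transform carries the determinant line bundle built from the test class $u_{H^{\otimes r}}=r\,u_H$ on the $W(d,0,d)$ side (giving $\lambda_d(H^{\otimes r})=\lambda_{c^d_d}(u_{H^{\otimes r}})$) to the one built from the test class $c^r_r$ on the $\md$ side (giving $\z^r_{dH}(r)=\lambda_{dH}(c^r_r)$); this is precisely the $K$-theoretic interchange of classes effected by the transform, and the underlying computation is the content of \cite{LP1} (cf. Lemma 4.2 and Corollary 4.3) and \S 3 of \cite{Yuan5}. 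I expect this step, namely the precise bookkeeping of the determinant line bundle under the Fourier-Mukai kernel, to be the main obstacle.

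Finally, the statement on global sections follows formally. Because both spaces are normal and the two complements have codimension $\geq 2$, restriction induces isomorphisms $H^0(W(d,0,d),\lambda_d(H^{\otimes r}))\cong H^0(W(d,0,d)^g,\lambda_d(H^{\otimes r}))$ and $H^0(\md,\z^r_{dH}(r))\cong H^0(\md^g,\z^r_{dH}(r))$, as sections of a line bundle extend across a codimension $\geq 2$ locus on a normal variety. Combining the isomorphism $\Phi\colon\md^g\xrightarrow{\cong}W(d,0,d)^g$ with the identification $\Phi^{*}\lambda_d(H^{\otimes r})\cong\z^r_{dH}(r)$ identifies the two restricted spaces of sections, whence $\Phi^{*}\colon H^0(W(d,0,d),\lambda_d(H^{\otimes r}))\xrightarrow{\cong}H^0(\md,\z^r_{dH}(r))$ is an isomorphism for all $r$.
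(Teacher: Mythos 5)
Your proposal is correct and follows essentially the same route as the paper: both rest on the isomorphism $\md^g\cong W(d,0,d)^g$ coming from the Fourier transform (with the determinant-line-bundle matching deferred to \cite{LP1} and \cite{Yuan5} — the paper cites Proposition 3.6 of \cite{Yuan5} for exactly the step you flag as the main obstacle), combined with normality and the codimension $\geq 2$ complements of Lemma \ref{ftg1} to extend sections. The only cosmetic difference is that you establish the pullback identification over $\md^g$ and extend it by reflexivity, whereas the paper quotes the identification on the nose and pushes forward $\mo_{\md^g}$ instead.
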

\begin{proof}$\Phi$ is a birational map not only set-theoretically but also schematically, because of Lemma 3.3 (1) in \cite{Yuan5}.  Also by Proposition 3.6 in \cite{Yuan5},  $\Phi^{*}\lambda_d(H^{\otimes r})\cong \lambda_{dH}(r)=\z^r_{dH}(r).$ 

Since both $\md$ and $W(d,0,d)$ are normal and irreducible, $\Phi_{*}\mo_{\md^g}\cong \mo_{W(d,0,d)^g}$.  Therefore
$\Phi_{*}\z_{dH}^r(r)\cong \lambda_d(H^{\otimes r})$.  By Lemma \ref{ftg1} we have
\[\begin{array}{c}H^0(\md,\z^r_{dH}(r))\xrightarrow{\cong} H^0(\md^g,\z^r_{dH}(r))\xrightarrow{\cong}H^0(W(d,0,d)^g,\Phi_{*}\z_{dH}^r(r))\\
\xrightarrow{\cong} H^0(W(d,0,d)^g,\lambda_d(H^{\otimes r}))\xrightarrow{\cong} H^0(W(d,0,d),\lambda_d(H^{\otimes r}))\end{array}.\]
The lemma is proved.
\end{proof}

\begin{proof}[Proof of Statement (2), (3), (4) and (5) in Theorem \ref{p2}]For $d=1,2$, by Proposition \ref{part1} $SD_{dH,r}$ is injective, hence $SD_{r,dH}$ is surjective.  To prove Statement (2), we only need to show that 
\begin{equation}\label{mw0}h^0(\wrr,\lambda_r(H^{\otimes d}))=h^0(|dH|,\mo_{|dH|}(r))~for~d=1,2.\end{equation} 

By Lemma \ref{ftg2}, \begin{equation}\label{mw1}h^0(\wrr,\lambda_r(H^{\otimes d}))=h^0(M(rH,0),\z_{rH}^d(d)).\end{equation}  
By Corollary 4.3.2 in \cite{Yuan1} and Theorem 1.1 in \cite{Yuan5}, we have
\begin{equation}\label{mw2}h^0(M(rH,0),\z_{rH}^d(d))=h^0(W(d,0,d),\lambda_d(H^{\otimes r}))~for~d=1,2.\end{equation}
By Fourier transform, 
\begin{equation}\label{mw3}h^0(W(d,0,d),\lambda_d(H^{\otimes r}))=h^0(\md,\z_{dH}^{r}(r))=h^0(|dH|,\mo_{dH}(r))~for~d=1,2.\end{equation}
Combining (\ref{mw1}), (\ref{mw2}) and (\ref{mw3}), we get (\ref{mw0}) and hence Statement (2).

Statement (3) is a direct consequence of Statement (1) (2), Corollary \ref{beta2} and Lemma \ref{like}.

By Corollary 3.7 in \cite{Yuan5} and Lemma \ref{ftg2} we have the following commutative diagram 
\begin{equation}\label{comm1}\tiny\xymatrix@C=0.5cm@R=1cm{H^0(M(rH,0)^g,\z_{rH}^d(d))^{\vee}\ar[r]^{\imath^{\vee}_{rH}}_{\cong}\ar[d]_{\cong} &H^0(M(rH,0),\z_{rH}^d(d))^{\vee}\ar[r]^{SD_{rH,d}=SD_{d,rH}^{\vee}} & H^0(W(d,0,d),\lambda_d(H^{\otimes r}))\ar[r]^{\jmath_d}_{\cong} &H^0(W(d,0,d)^g,\lambda_d(H^{\otimes r}))\ar[d]^{\cong}\\
H^0(W(r,0,r)^g,\lambda_r(H^{\otimes d}))\ar[r]_{\jmath_r^{\vee}}^{\cong}&H^0(W(r,0,r),\lambda_r(H^{\otimes d}))^{\vee}\ar[r]_{SD_{r,dH}=SD_{dH,r}^{\vee}} &H^0(\md,\z^r_{dH}(r))\ar[r]_{\imath_{dH}}^{\cong}&H^0(M(dH,0)^g,\z^r_{dH}(r)). }\normalsize
\end{equation}

By Proposition \ref{part1} $SD_{r,3H}$ is surjective, hence so is $SD_{rH,3}=SD_{3,rH}^{\vee}$ by (\ref{comm1}) and hence $SD_{3,rH}$ is injective.

Statement (5) is a direct consequence of Statement (1) (4) and Corollary \ref{beta2}.  We have proved the theorem. 
\end{proof}
\begin{rem}\label{friday}By Corollary \ref{beta2} and Theorem \ref{p2} (1), $SD_{3,rH}$ is an isomorphism for all $r>0$ $\Leftrightarrow$ $SD_{c_3^2,rH}$ is an isomorphism for all $r>0$.  
\end{rem}

\section{Another result on $\p^2$ and $\Sigma_e$ with $e\leq 3$.}
\subsection{Statements.}\qquad

In this section we choose $L$ to be some special cases where the arithmetic genus $g_L$ of curves in $\ls$ is 3.  The main result is as follows.

\begin{prop}\label{gth1}\begin{enumerate}\item $X=\p^2$ and $L=4H$.  Then 
$$\pi_{*}\z_{L}^2\cong \mo_{\ls}\oplus\mo_{\ls}(-2)^{\oplus 6}\oplus \mo_{\ls}(-3).$$
\item $X=\Sigma_e$ with $e\leq 3$ and $L=2G+(e+4)F$ where $F$ is the fiber class and $G$ is the section class such that $G.G=-e$.  Then 
$$\pi_{*}\z_{L}^2\cong\mo_{\ls}\oplus\mo_{\ls}(-2)^{\oplus 6}\oplus \mo_{\ls}(-4).$$
\end{enumerate}
\end{prop}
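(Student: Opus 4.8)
The plan is to analyse the support morphism $\pi:\ml\ra\ls$ and compute $\pi_{*}\z_L^2$ as a coherent sheaf on $\ls$, which is $\p^{14}$ in both cases (an easy $\chi$-computation gives $\dim\ls=14$). Over the open locus $U\subset\ls$ of smooth curves the fibre of $\pi$ is $\mathrm{Pic}^{g_L-1}(C)\cong\mathrm{Jac}(C)$ and $\z_L$ restricts to the principal polarization $\mo(\z)$; since $g_L=3$ this gives $\z_L^2|_{\mathrm{fibre}}\cong\mo(2\z)$ with $h^0=2^3=8$ and $h^{>0}=0$. First I would show that $R^i\pi_{*}\z_L^2=0$ for $i>0$ and that $\pi_{*}\z_L^2$ is locally free of rank $8$, by Mumford's vanishing for ample line bundles on abelian varieties together with cohomology-and-base-change over $U$, and then extend across the boundary using the codimension estimates of \S2 and the normality of $\ml$ (Remark \ref{irred}) to control the compactified Jacobians of the singular members of $\ls$.

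Next I would split off the trivial summand. The relative theta divisor $\mathcal{D}\subset\ml$ is the zero locus of the relative theta section, and $\pi_{*}\z_L\cong\mo_{\ls}$ since that section generates a rank-$1$ sheaf fibrewise. Pushing forward
\[0\ra\z_L\ra\z_L^2\ra\z_L^2|_{\mathcal{D}}\ra0\]
and using $R^{>0}\pi_{*}\z_L=0$ (because $h^{>0}(\z)=0$) produces
\[0\ra\mo_{\ls}\ra\pi_{*}\z_L^2\ra\pi_{*}(\z_L^2|_{\mathcal{D}})\ra0\]
with locally free quotient of rank $7$. This accounts for the $\mo_{\ls}$ factor and reduces the problem to identifying $\pi_{*}(\z_L^2|_{\mathcal{D}})$ as $\mo_{\ls}(-2)^{\oplus 6}\oplus\mo_{\ls}(-3)$ (resp. $\mo_{\ls}(-2)^{\oplus 6}\oplus\mo_{\ls}(-4)$), and to showing the extension splits.

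For the rank-$7$ piece I would use the genus-$3$ geometry: for a smooth non-hyperelliptic $C$ (the generic member) the Abel–Jacobi map identifies $\z\subset\mathrm{Pic}^2(C)$ with the symmetric product $C^{(2)}$, so relatively $\mathcal{D}$ is birational to $\mathrm{Sym}^2_{\ls}(\mathcal{C}/\ls)$ of the universal curve $\mathcal{C}$, which fibres over $X^{(2)}$ with fibres the sub-linear-systems of curves through two prescribed points. I would pull $\z_L^2|_{\mathcal{D}}$ back along this model, compute its pushforward to $\ls$ through the projective-bundle structure over $X^{(2)}$, and read off the summands; the discrepancy between the $\p^2$ and $\Sigma_e$ answers (the $\mo(-3)$ versus $\mo(-4)$) should emerge from the distinct geometry of $X^{(2)}$ and of $L$. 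The multiplicity $6$ and the twists are then cross-checked against the Euler characteristics $\chi(\ml,\lambda_L(c_n^2))=h^0(\ls,\pi_{*}\z_L^2(n))$ already computed in \cite{GY} and \cite{G1}. To conclude I would show the sequence splits, either by verifying $\Ext^1(\pi_{*}(\z_L^2|_{\mathcal{D}}),\mo_{\ls})=0$, or by invoking Horrocks' criterion after checking that $H^i(\ls,\pi_{*}\z_L^2(n))$ vanishes for $0<i<14$ and all $n$.

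The main obstacle is the precise computation of $\pi_{*}(\z_L^2|_{\mathcal{D}})$ together with the splitting. The identification $\z\cong C^{(2)}$ breaks down over the hyperelliptic and singular members of $\ls$, so the relative symmetric-product model is only birational and must be corrected along a boundary; tracking $\z_L^2|_{\mathcal{D}}$ through this correction, and verifying that the boundary has codimension large enough not to disturb the pushforward, is the delicate point. If the splitting is approached through Horrocks it demands the full intermediate-cohomology vanishing on the $17$-dimensional space $\ml$, which is itself a substantial computation, so establishing the direct $\Ext^1$ vanishing is likely the more tractable route.
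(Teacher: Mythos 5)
Your overall architecture is exactly the paper's: you split off $\mo_{\ls}$ by pushing forward $0\ra\z_L\ra\z_L^2\ra\z_L^2|_{D_{\z_L}}\ra0$ (the paper's (\ref{g3e1})--(\ref{g3e2}), with the vanishing $R^{>0}\pi_{*}\z_L^r=0$ quoted from \cite{Yuan6}), you model $D_{\z_L}$ birationally by the relative symmetric product/Hilbert scheme $Q=\mc_L^{[g_L-1]}$ of the universal curve (the paper's map $g:[Z\subset C]\mapsto\mi_{Z/C}(L\otimes K_X)$, which is your Abel--Jacobi identification read through Serre duality, cf. diagram (\ref{comm2})), and the final splitting is indeed free, since $\Ext^1$ of a sum of negative line bundles against $\mo_{\ls}$ vanishes on $\ls\cong\p^{14}$; no Horrocks argument is needed. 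So the skeleton coincides with Lemma \ref{gthlem} and its reduction of the Proposition.

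The gaps are in the two places you yourself flag, and they are real. First, the engine of the computation: you propose to compute the pushforward to $\ls$ ``through the projective-bundle structure over $X^{(2)}$,'' but the two fibrations of $Q$ have different bases --- $\rho:Q\ra X^{[2]}$ is the $\p^{12}$-bundle, while the pushforward you need is along $\bar{\pi}:Q\ra\ls$, and $\cl_L^2\cong\rho^{*}\bigl((L\otimes K_X)^{\otimes2}_{(2)}\otimes\mo_{X^{[2]}}(-\Delta)\bigr)$ is a pullback along $\rho$, so $\rho_{*}$ tells you nothing about $\bar{\pi}_{*}$. The paper's actual mechanism (Lemmas \ref{p2z2} and \ref{sez2}) descends via Hilbert--Chow to $Sym^2_{\ls}\mc$ and computes $\ks_2$-equivariantly on $\mc\times_{\ls}\mc\subset X\times X\times\ls$ using explicit Koszul-type resolutions; the multiplicity $6$ and the twists $-3$ versus $-4$ fall out of concrete multiplication maps such as $S^2H^0(\mo_{\p^2}(2))\twoheadrightarrow H^0(\mo_{\p^2}(4))$ and the surjectivity proved in Lemma \ref{sesur}. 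Your fallback --- matching Euler characteristics from \cite{GY},\cite{G1} --- cannot close this, because Hilbert polynomials determine the twists only after one knows the pushforward is a direct sum of line bundles, which is precisely what is at stake. Second, your codimension-based treatment of the birational correction fails in case (2): since $L.F=2$ on $\Sigma_e$, \emph{every} smooth member of $\ls$ is hyperelliptic (whereas in case (1) smooth plane quartics are never hyperelliptic), so $C^{(2)}\ra\Theta_C$ contracts the $g^1_2$ over every curve and the exceptional locus of $g$ is a \emph{divisor} $\widetilde{R}\subset Q$, not a high-codimension boundary. What rescues the pushforward there is not a codimension estimate on $Q$ but normality of $D_{\z_L}$ together with $g_{*}\mo_Q\cong\mo_{D_{\z_L}}$ and the projection formula; this is the content of the paper's Lemma \ref{sez1} (Cohen--Macaulayness of $D_{\z_L}$ plus smoothness in codimension one), and it requires a separate argument that your plan does not supply.
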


\begin{coro}\label{gth2}Let $(X,L)$ be as in Proposition \ref{gth1} and let $-K_X$ be ample (i.e. $X\neq \Sigma_e$ with $e=2,3$), then for any $n\geq 3$ under suitable polarization, 
{\footnotesize\begin{equation}\label{ncgth} h^0(\ml,\z^2_L(n))=\chi(\ml,\z^2_L(n))=h^0(W(2,0,n),\lambda_{c^2_n}(L))=\chi(W(2,0,n),\lambda_{c^2_n}(L)).
\end{equation}}
\end{coro}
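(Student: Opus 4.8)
The plan is to compute the two quantities on the left of (\ref{ncgth}) directly from Proposition \ref{gth1}, to read off the two on the right from the computations of \cite{GY} and \cite{G1}, and then to observe that these two explicit numbers coincide. Throughout write $\pi:\ml\ra\ls$ for the support map and $N=\dim\ls$, so that $\ls\cong\p^N$.

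First I would apply the projection formula. Since $\z^2_L(n)=\z_L^{\otimes 2}\otimes\pi^{*}\mo_{\ls}(n)$, we get
\[\pi_{*}(\z^2_L(n))\cong(\pi_{*}\z_L^2)\otimes\mo_{\ls}(n)\cong\bigoplus_i\mo_{\ls}(a_i+n),\]
where by Proposition \ref{gth1} the exponents $a_i$ consist of $0$, six copies of $-2$, and $-3$ in case (1), and of $0$, six copies of $-2$, and $-4$ in case (2). Next I would record that the higher direct images $R^j\pi_{*}\z_L^2$ vanish for $j>0$: the restriction of $\z_L^2$ to a fibre of $\pi$ — a compactified Jacobian of a genus-$3$ curve in $\ls$ — is a power of the theta bundle and so has vanishing higher cohomology, whence cohomology-and-base-change yields $R^j\pi_{*}\z^2_L=0$ for $j>0$ and $\pi_{*}\z_L^2$ locally free, consistent with the shape in Proposition \ref{gth1}. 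Consequently $R^j\pi_{*}(\z^2_L(n))=0$ for $j>0$ as well.

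Now the most negative $a_i$ is $-4$, so for every $n\ge3$ each twist satisfies $a_i+n\ge-1$, and on $\p^N$ the line bundle $\mo_{\p^N}(k)$ with $k\ge-1$ has all higher cohomology zero and $h^0=\chi$. Combining this with the vanishing of $R^j\pi_{*}$ and the Leray spectral sequence gives $H^{i}(\ml,\z^2_L(n))=0$ for $i>0$, and therefore
\[h^0(\ml,\z^2_L(n))=\chi(\ml,\z^2_L(n))=\sum_i\chi(\ls,\mo_{\ls}(a_i+n)),\]
which establishes the first equality of (\ref{ncgth}) together with an explicit formula, polynomial in $n$.

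Finally, the equality $h^0(W(2,0,n),\lambda_{c^2_n}(L))=\chi(W(2,0,n),\lambda_{c^2_n}(L))$ and the value of this Euler characteristic are exactly what \cite{GY} (for $X=\Sigma_e$, $e=0,1$) and \cite{G1} (for $X=\p^2$) supply; the hypothesis that $-K_X$ be ample and the choice of a suitable $c_n^2$-general polarization are precisely what make those computations applicable here. The remaining and essential step is then the numerical comparison of the binomial sum $\sum_i\chi(\ls,\mo_{\ls}(a_i+n))$ with the value of $\chi(W(2,0,n),\lambda_{c^2_n}(L))$ from those references. This matching — for which the precise decomposition of $\pi_{*}\z_L^2$ recorded in Proposition \ref{gth1} was tailored — is where the real content lies and is the main obstacle; the rest of the argument is the formal cohomological bookkeeping above.
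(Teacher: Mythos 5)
Your reduction of the two outer equalities is sound and matches the paper's own route: by Proposition \ref{gth1}, the projection formula, the vanishing $R^{i}\pi_{*}\z_{L}^{2}=0$ for $i>0$, and Leray, one gets $h^{0}(\ml,\z^{2}_{L}(n))=\chi(\ml,\z^{2}_{L}(n))$ for $n\geq 3$ together with an explicit binomial formula; and the equality $h^{0}(W(2,0,n),\lambda_{c^{2}_{n}}(L))=\chi(W(2,0,n),\lambda_{c^{2}_{n}}(L))$ is exactly what Theorem 1.2 (3) of \cite{GY} resp.\ Theorem 1.3 (1) of \cite{G1} supply under the ampleness and polarization hypotheses. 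One caveat: your justification of $R^{i}\pi_{*}\z_{L}^{2}=0$ by restricting to fibres (powers of theta bundles on compactified Jacobians plus base change) is only a heuristic --- the fibres over non-integral curves are not compactified Jacobians, and base change needs more care --- whereas the paper simply quotes Corollary 3.19 (2) of \cite{Yuan6}, which covers precisely these $(X,L)$; you should cite that rather than argue fibrewise.

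The genuine gap is the middle equality. The entire content of (\ref{ncgth}) is that the number you computed on the $\ml$ side coincides with the number computed in \cite{GY}/\cite{G1} on the $W(2,0,n)$ side, i.e.\ $\chi(\ml,\z^{2}_{L}(n))=\chi(W(2,0,n),\lambda_{c^{2}_{n}}(L))$, and you explicitly stop short of this, calling it ``the main obstacle'' and ``where the real content lies''. A proposal that ends there has reduced the corollary to the statement to be proven, not proven it. Note also that this identification is not a formal consequence of anything you set up: Remark \ref{gth4} (1) records that the very same equality of Euler characteristics is still \emph{unknown} for $n=2$ on $\Sigma_{e}$, so no general principle closes the link for free. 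The paper closes it by invoking Proposition 2.9 of \cite{GY} in combination with Theorem 1.2 (3) of \cite{GY} resp.\ Theorem 1.3 (1) of \cite{G1} --- that is, the references are used not merely for the value of $\chi(W(2,0,n),\lambda_{c^{2}_{n}}(L))$ but for identifying it with the $\ml$ side. To complete your argument you must either invoke that result or actually carry out the numerical comparison of your binomial sum with the formulas of \cite{GY} and \cite{G1}; as written, you establish the first and third equalities of (\ref{ncgth}) but not the second, which is the one the corollary is really about.
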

\begin{proof}The corollary follows straightforward 
from Theorem 1.3 (1) in \cite{G1} (for $X=\p^2$), Theorem 1.2 (3) (for $X=\Sigma_e$), Proposition 2.9 in \cite{GY} and Proposition \ref{gth1} above.  
\end{proof}

\begin{thm}\label{gth3}Let $(X,L)$ be as in Proposition \ref{gth1} and let $-K_X$ be ample, then under suitable polarization $SD_{c_n^2,L}$ is an isomorphism for any $n\geq3$.
\end{thm}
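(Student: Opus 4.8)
The plan is to combine the numerical identity already available from Corollary \ref{gth2} with a surjectivity statement, and then to use that source and target have equal dimension in order to conclude that $SD_{c_n^2,L}$ is an isomorphism. First I would record the reduction announced around (\ref{wanna}). By Corollary \ref{gth2}, equation (\ref{ncgth}) gives for every $n\geq3$
\[h^0(W(2,0,n),\lambda_{c_n^2}(L))=h^0(\ml,\z_L^2(n)).\]
Since the source of $SD_{c_n^2,L}$ is $H^0(W(2,0,n),\lambda_{c_n^2}(L))^{\vee}$ and its target is $H^0(\ml,\z_L^2(n))$, these two finite-dimensional spaces have the same dimension. Hence it suffices to prove that $SD_{c_n^2,L}$ is surjective; surjectivity together with equal dimension forces it to be an isomorphism.

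Second, to obtain surjectivity I would show that $SD_{c_n^2,L}$ is $\mu$-effectively surjective in the sense of Definition \ref{efsur1} and then apply Remark \ref{efsur2}: because $n>r=2$, Lemma \ref{mustable} ensures that the complement of the $\mu$-stable locus has codimension $\geq2$, so sections built on the $\mu$-stack extend over the whole moduli space, and $\mu$-effective surjectivity upgrades to genuine surjectivity of $SD_{c_n^2,L}$. Thus the problem becomes to exhibit finitely many $\mu$-semistable sheaves $\mg_i$ of class $c_n^2$ whose induced theta-sections $s_{\mg_i}$ span $H^0(\ml,\z_L^2(n))$.

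Third, the heart of the matter is the construction of the $\mg_i$, for which I would use the explicit decomposition of $\pi_{*}\z_L^2$ from Proposition \ref{gth1}. Since $\ls$ is a projective space, pushing forward along $\pi:\ml\ra\ls$ identifies $H^0(\ml,\z_L^2(n))$ with $H^0(\ls,\mo_{\ls}(n))\oplus H^0(\ls,\mo_{\ls}(n-2))^{\oplus6}\oplus H^0(\ls,\mo_{\ls}(n-3))$ on $\p^2$ (and with an analogous summand $\mo_{\ls}(n-4)$ on $\Sigma_e$); the total rank $8=2^{g_L}$ matches $h^0$ of twice the principal polarization on the genus-$3$ Jacobian fibers of $\pi$. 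Over a general $C\in\ls$ the zero locus of $s_{\mg}$ on the fiber $\pi^{-1}(C)$ is the generalized theta divisor attached to the restriction $\mg|_C$, a rank-$2$ bundle of degree $0$ on $C$; so decomposable sheaves $\mg=\mi_{Z_1}\oplus\mi_{Z_2}$ restrict to split bundles and their sections are products of rank-$1$ theta functions, which by the rank-$1$ strange duality of \cite{Yuan1} account for the summands built from the lower factors. The genuinely rank-$2$ contribution lands in the deepest summand $\mo_{\ls}(n-3)$ (resp. $\mo_{\ls}(n-4)$) and is not reached by such products; here I would construct $\mu$-semistable sheaves whose restriction to the general curve is a \emph{stable} rank-$2$ bundle, by a choice adapted to the genus-$3$ geometry of $\ls$ (non-split extensions, or elementary modifications supported on special point configurations), so that their theta-sections fill this factor.

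Finally, it is enough to carry this out in the base case $n=3$ and then propagate to all $n>3$ exactly as in \S6.2 of \cite{GY} and in the proof of Proposition \ref{part1}: the multiplication maps
\[H^0(\ml,\z_L^2(3))\otimes H^0(\ml,\pi^{*}\mo_{\ls}(n-3))\ra H^0(\ml,\z_L^2(n))\]
are surjective because the corresponding maps on the $\mo_{\ls}(\cdot)$-summands over the projective space $\ls$ are, and modifying the supporting subschemes by adding points (in the manner of the extensions (\ref{basis1})) turns a spanning set for $n=3$ into one for every $n\geq3$. The main obstacle is the third step: producing enough genuinely rank-$2$ sheaves to reach the top summand of $\pi_{*}\z_L^2$, since the product sections coming from decomposable sheaves fall short and one must exploit the fine structure of the genus-$3$ linear system $\ls$.
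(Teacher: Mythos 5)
Your first two steps do coincide with the paper's reduction: by (\ref{ncgth}) source and target of $SD_{c_n^2,L}$ have the same dimension, and by Remark \ref{efsur2} together with Lemma \ref{mustable} it suffices to produce finitely many sheaves in $\kw(2,0,n)^{\mu}$ whose induced sections span $H^0(\ml,\z_L^2(n))$. But the part you yourself call "the heart of the matter" is precisely what is missing, and it is where essentially all of the paper's work lies. The paper never argues fiberwise on Jacobians; it splits $H^0(\z_L^2(n))$ using the sequence $0\ra\z_L(n)\ra\z_L^2(n)\ra\z_L^2(n)|_{D_{\z_L}}\ra0$, spans the subspace $H^0(\z_L(n))$ by rank-one-type sections (Lemma 6.20 of \cite{GY}) multiplied by the canonical section vanishing on $D_{\z_L}$, and then spans $H^0(\z_L^2(n)|_{D_{\z_L}})$ by two explicit constructions: (a) six stable locally free sheaves in $W(2,0,2)$ --- on $\p^2$ the Fourier transforms of $\mo_{C_i}(-1)$ for six conics $C_i\subset|H|$ whose incidence with five generic points makes the evaluation matrix triangular, on $\Sigma_e$ six locally free extensions of $\mi_{ij}(F)$ by $\mo_X(-F)$ --- whose restrictions to $D_{\z_L}$ are shown to be linearly independent by evaluating at explicitly chosen sheaves $\mf\in D_{\z_L}$; and (b) one further sheaf, in $W(2,0,3)$ for $\p^2$ but in $W(2,0,4)$ for $\Sigma_e$, whose section is proved \emph{not} to lie in the image of the multiplication map $m_3$ (resp.\ $m_4$) by a delicate argument: assuming $s_{\mg}=\sum_i f_i s_{\mg_i}$, one chooses a curve $B\in\ls$ through $Y$ on which all $f_i$ vanish and produces $Z\in B^{[2]}$ with $H^0(\mi_{Z/B}(1)\otimes\mg)=0$ via dualizing-sheaf computations on $B$. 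None of this appears in your proposal; "non-split extensions, or elementary modifications supported on special point configurations" is a wish, not a construction. Moreover your heuristic that decomposable sheaves account for everything except the deepest summand is unsupported: in the paper the six middle summands $\mo_{\ls}(-2)^{\oplus6}$ are already spanned using genuinely stable rank-2 bundles in $W(2,0,2)$, while decomposables only give the $\mo_{\ls}$ factor.

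There is also a concrete error in your propagation step. On $\Sigma_e$ the deepest summand of $\pi_{*}\z_L^2$ is $\mo_{\ls}(-4)$ (as you note yourself), so $H^0(\z_L^2(3))$ contains nothing from that summand since $H^0(\mo_{\ls}(-1))=0$, and multiplication by $\pi^{*}H^0(\mo_{\ls}(n-3))$ can never produce the nonzero space $H^0(\mo_{\ls}(n-4))$ for $n\geq4$. Hence a spanning set at $n=3$ does \emph{not} propagate to all $n\geq3$ on $\Sigma_e$: your claim that the multiplication maps are surjective "because the corresponding maps on the $\mo_{\ls}(\cdot)$-summands are" fails exactly for this summand. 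This is why the paper's extra sheaf in the ruled-surface case lives in $W(2,0,4)$ rather than $W(2,0,3)$, and why the base of the induction must be carried out at level 4 there, with $n=3$ treated separately.
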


\begin{rem}\label{gth4}\begin{enumerate}\item If $r=n=2$, $SD_{2,L}$ is an isomorphism by \cite{Yuan5} and \cite{Yuan6}.  For $X=\p^2$, we also have equality in (\ref{ncgth}), but for $X=\Sigma_e$ we still don't know whether $\chi(\ml,\z^2_L(2))=\chi(W(2,0,2),\lambda_2(L))?$  

\item By Theorem 1.2 (4) in \cite{GY}, Corollary \ref{gth2} and hence Theorem \ref{gth3} are true under any polarization for $n$ very large.
\end{enumerate}
\end{rem}

\begin{coro}\label{gth5}Let $(X,L)$ be as in Proposition \ref{gth1} and let $-K_X$ be ample, then under suitable polarization $SD_{3,L}$ is an isomorphism.
\end{coro}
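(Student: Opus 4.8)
The plan is to invoke the bridge of Corollary \ref{beta2} (built on Proposition \ref{hrmain}) at rank $r=3$, whose two hypotheses are precisely that $SD_{c_3^2,L}$ and $SD_{3,L\otimes K_X}$ both be isomorphisms. Thus the entire proof reduces to supplying these two inputs and then checking that $(X,L)$ itself lies in the admissible family of Corollary \ref{beta2}; no new geometric argument is required, and everything is carried out under the same suitable polarization as in Theorem \ref{gth3}.

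For the first input I would simply specialize Theorem \ref{gth3} to $n=3$. Since $(X,L)$ is as in Proposition \ref{gth1} with $-K_X$ ample, that theorem gives that $SD_{c_n^2,L}$ is an isomorphism for every $n\geq 3$, and in particular $SD_{c_3^2,L}$ is an isomorphism.

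For the second input I would compute the twist $L\otimes K_X$ case by case. On $X=\p^2$ with $L=4H$ one has $L\otimes K_X=H$, while on $X=\Sigma_e$ (with $e=0,1$, as forced by the ampleness of $-K_X$), using $K_X=-2G-(e+2)F$ and $L=2G+(e+4)F$, one finds $L\otimes K_X=2F$. In either case the twisted bundle lies again in the list of Corollary \ref{beta2}: for $\p^2$ one has $H=1\cdot H$ with $d=1>0$, and for $\Sigma_e$ one has $2F=0\cdot G+2F$ with $\min\{0,2\}\leq 1$. Moreover a further twist by $K_X$ yields $-2H$ respectively $-2G-eF$, both of which have no global sections, so $H^0((L\otimes K_X)\otimes K_X)=0$. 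Hence Theorem \ref{rank3}, applied with the line bundle $L\otimes K_X$ in the role of $L$, shows that $SD_{3,L\otimes K_X}$ is an isomorphism.

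Finally, I would check that $(X,L)$ itself is on the list of Corollary \ref{beta2}: for $\p^2$ this is $d=4>0$, and for $\Sigma_e$ the bundle $L=2G+(e+4)F$ has $\min\{a,b\}=2$, with $2G+4F$ ample when $e=0$ and with $b=5\geq a+[a/2]=3$ when $e=1$. With both inputs in hand, the implication of Corollary \ref{beta2} at $r=3$ then yields that $SD_{3,L}$ is an isomorphism. The only point demanding any care is the bookkeeping that each successive twist by $K_X$ stays inside the range where Theorem \ref{gth3}, Theorem \ref{rank3} and Corollary \ref{beta2} apply; the substantive content lives entirely in those cited results, above all in the section-counting and effective-surjectivity analysis behind Theorem \ref{gth3}, so I expect no genuine obstacle beyond this verification.
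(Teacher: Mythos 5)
Your proof is correct and follows essentially the same route as the paper: as the paper itself indicates (see the introduction, where Theorem \ref{introgth5} is described as a corollary of Theorems \ref{introgth3}, \ref{introrank3} and \ref{introbeta2}), the statement is obtained by feeding Theorem \ref{gth3} at $n=3$ and Theorem \ref{rank3} applied to $L\otimes K_X$ (which is $H$ on $\p^2$, resp. $2F$ on $\Sigma_e$) into the bridge of Corollary \ref{beta2} at $r=3$. Your explicit verifications --- that $H^0(L\otimes K_X^{\otimes 2})=0$ and that both $L$ and $L\otimes K_X$ lie in the admissible list of Corollary \ref{beta2} --- are precisely the bookkeeping the paper's one-line proof leaves implicit.
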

\begin{proof}This follows directly from Corollary 4.3.2 in \cite{Yuan1}, Corollary \ref{beta2} and Proposition \ref{gth1}.
\end{proof}

Let $(X,L)$ be as in Proposition \ref{gth1}.  On $M(L,0)$ we have an exact sequence similar as (\ref{zes})
\begin{equation}\label{g3e1}
0\ra\z_{L}\ra\z^{2}_{L}\ra \z^{2}_L|_{D_{\z_{L}}}\ra0.
\end{equation}
Push it forward via $\pi$ to $\ls$.  By Corollary 3.19 (2) in \cite{Yuan6},  we have $\pi_{*}\z_{L}\cong\mo_{\ls}$ and $R^i\pi_{*}\z^r_{L}=0$ for all $i,r>0$.  Hence 
\begin{equation}\label{g3e2}
0\ra\mo_{\ls}\ra\pi_{*}\z_L^2\ra \pi_{*}(\z_L^2|_{D_{\z_L}})\ra0.
\end{equation}
Then Proposition \ref{gth1} is just a direct corollary of the following lemma.
\begin{lemma}\label{gthlem}
\begin{enumerate}
\item $X=\p^2$ and $L=4H$.  Then 
$$\pi_{*}(\z_{L}^2(2)|_{D_{\z_{L}}})\cong \mo_{\ls}^{\oplus 6}\oplus \mo_{\ls}(-1).$$
\item $X=\Sigma_e$ with $e\leq 3$ and $L=2G+(e+4)F$.  Then 
$$\pi_{*}(\z_{L}^2(2)|_{D_{\z_{L}}})\cong \mo_{\ls}^{\oplus 6}\oplus \mo_{\ls}(-2).$$
\end{enumerate}
\end{lemma}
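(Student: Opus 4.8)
The plan is to realize $\pi:\ml\to\ls$ as the relative compactified Jacobian of the universal curve $\mc\subset X\times\ls$ of genus-$3$ curves $C\in\ls$, under which $D_{\z_L}$ is the relative theta divisor and $\z_L$ restricts on the fibre $\pi^{-1}([C])$ to the principal polarization $\mo(\Theta_C)$ on $\overline{\mathrm{Pic}}^{\,2}(C)$. Granting this, $\z_L^2(2)|_{D_{\z_L}}$ restricts on the fibre over $[C]$ to $\mo_{\Theta_C}(2\Theta_C)$, and from $0\to\mo(\Theta_C)\to\mo(2\Theta_C)\to\mo_{\Theta_C}(2\Theta_C)\to0$ together with $h^0(\mo(2\Theta_C))=2^3=8$, $h^0(\mo(\Theta_C))=1$, $h^1(\mo(\Theta_C))=0$ one gets $h^0(\mo_{\Theta_C}(2\Theta_C))=7$ and no higher cohomology on every fibre (the count is insensitive to the singularities of $\Theta_C$ since $\Theta_C$ is ample). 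First I would use this fibrewise constancy — equivalently the twist of $(\ref{g3e2})$ by $\mo_{\ls}(2)$ together with $R^1\pi_*\z_L=0$ from Corollary 3.19 in \cite{Yuan6} — to conclude that $\pi_*(\z_L^2(2)|_{D_{\z_L}})$ is locally free of rank $7$, so that the two asserted splittings at least carry the right rank.

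To pin down the bundle I would resolve the relative theta divisor by the relative length-two Hilbert scheme $\mh:=\mathrm{Hilb}^2(\mc/\ls)$, equipped with its two projections $\rho:\mh\to\ls$ and $\psi:\mh\to\mathrm{Hilb}^2(X)$. There is a birational morphism $\phi:\mh\to D_{\z_L}$ sending $(C,Z)$ to $\mo_C(Z)$: any $\mf\in D_{\z_L}$ sits in $0\to\mo_C\to\mf\to\mo_Z\to0$ with $Z$ of length $g_L-1=2$, so a general $\mf$ is of the form $\mo_C(Z)$, making $\phi$ an isomorphism where $h^0(\mf)=1$. The decisive geometric input is the behaviour of $\phi$ over smooth curves: for $X=\p^2$ a smooth quartic is non-hyperelliptic, carries no $g^1_2$, and $\phi$ is an isomorphism; for $X=\Sigma_e$ the ruling restricts to a degree-$L.F=2$ map $C\to\p^1$, so every smooth $C\in\ls$ is hyperelliptic and $\phi$ contracts the unique $g^1_2$-pencil $\p^1$ in each fibre. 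This dichotomy is exactly what will distinguish the last summands $\mo_{\ls}(-1)$ versus $\mo_{\ls}(-2)$. After checking $\phi_*\mo_\mh\cong\mo_{D_{\z_L}}$ and that $R^i\phi_*$ vanishes for $i>0$ on the relevant line bundle (rational fibres plus the projection formula), the problem reduces to computing $\rho_*\phi^*(\z_L^2(2)|_{D_{\z_L}})$ on $\mh$.

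The remaining work is to identify $\phi^*\z_L$ explicitly and to push it down along $\rho$. On the $\p^2$ side I would use the residuation $\mo_C(Z)\cong K_C\otimes\mo_C(-Z')$, where $Z+Z'$ is cut on $C$ by the secant line $\ell_Z$, together with the $\p^2$-bundle structure $\mathrm{Hilb}^2(\p^2)\to(\p^2)^\vee$, $Z\mapsto\ell_Z$, to express $\phi^*\z_L$ as a combination of $\rho^*\mo_{\ls}(\ast)$ and the tautological class of $\psi$; pushing forward along $\rho$ then exhibits the sheaf as a direct sum of line bundles, the single negative summand being forced by the degree-$3$ residual data. On the $\Sigma_e$ side the same scheme is run with the hyperelliptic $g^1_2=|\mo_C(F)|$ playing the role of the secant pencil, and the contraction of its $\p^1$ accounts for the one extra negative twist. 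Throughout I would certify the split form by computing $\rho_*$ and the higher $R^i\rho_*$ of the relevant twists and matching Hilbert polynomials, rather than by invoking an abstract splitting criterion on $\ls\cong\p^N$, where rank-$7$ bundles need not split.

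The hardest part will be twofold. First, the degenerate locus: $\ls$ contains non-integral and singular curves, where $\overline{\mathrm{Pic}}^{\,2}(C)$, the theta divisor, and $\phi$ all degenerate, and one must ensure both that $h^0(\mo_{\Theta_C}(2\Theta_C))=7$ persists and that the splitting is unaffected; this is most safely handled by showing the bad locus has codimension $\ge2$ in $\ls$ and that the rank-$7$ reflexive sheaf is already determined by its restriction to the complement. Second, and the genuine crux, is the exact determination of the distinguished line summand: computing $\phi^*\z_L$ precisely enough, and controlling the pushforward through the $\p^1$-contraction in the hyperelliptic case, to pin $\mo_{\ls}(-1)$ against $\mo_{\ls}(-2)$ rather than merely bounding the twist.
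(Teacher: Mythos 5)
Your skeleton coincides with the paper's: the lemma is proved there by exactly the resolution you propose, namely the relative Hilbert scheme $Q=\mc_L^{[g_L-1]}$ with a birational morphism onto $D_{\z_L}$ (the paper's $g$ sends $[Z\subset C]$ to $\mi_{Z/C}(L\otimes K_X)$, i.e.\ the Serre dual $\omega_C(-Z)$ of your $\mo_C(Z)$, which has the advantage of being defined as a coherent sheaf even when $Z$ meets the singularities of $C$), followed by $g_{*}\mo_Q\cong\mo_{D_{\z_L}}$ and a computation of the pushforward of the pulled-back line bundle to $\ls$. Your rank count ($7=6+1$ via $h^0(\mo_{\Theta_C}(2\Theta_C))=2^3-1$) and your diagnosis that the hyperelliptic/non-hyperelliptic dichotomy ($L.F=2$ on $\Sigma_e$ versus plane quartics on $\p^2$) is the geometric source of $\mo_{\ls}(-2)$ versus $\mo_{\ls}(-1)$ are both consistent with the paper, where on $\Sigma_e$ it is precisely the locus $h^0(\mi_Z(2F))=2$ that behaves differently.

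The genuine gap is the one you flag yourself: the proposal never computes the direct image, it only promises to, and that computation \emph{is} the lemma. "Expressing $\phi^{*}\z_L$ as a combination of $\rho^{*}\mo_{\ls}(\ast)$ and the tautological class" with "the single negative summand forced by the degree-$3$ residual data" is not an argument, and nothing in your plan would by itself distinguish $\mo_{\ls}(-1)$ from $\mo_{\ls}(-2)$. What the paper actually does at this point is: identify $g^{*}(\z_L^2(2)|_{D_{\z_L}})$ via the Ellingsrud--G\"ottsche--Lehn formula $\me_{[2]}\cong\me_{(2)}\otimes\mo_{X^{[2]}}(-\Delta/2)$, descend through Hilbert--Chow to $Sym^2_{\ls}\mc$, pass to the double cover $\mc\times_{\ls}\mc$, and compute the $\ks_2$-invariant pushforward using the Koszul resolutions of $\mc\subset X\times\ls$ and of $\mc\times_{\ls}\mc\subset X\times X\times\ls$ together with the restriction-to-diagonal sequence; there $\mo_{\ls}^{\oplus 6}$ appears as the kernel of the multiplication map $S^2H^0(\mo_{\p^2}(2))\to H^0(\mo_{\p^2}(4))$ (resp.\ $S^2H^0(\mo_X(4F))\to H^0(\mo_X(8F))$), the last summand appears as the $H^1$-term of the Koszul resolution (resp.\ as $\mo_{\ls}(-2)\otimes S^2H^1(\mo_X(4F-L))$), and the extension splits because $\Ext^1(\mo_{\ls}(-k),\mo_{\ls})=0$. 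Separately, your fallback for the degenerate locus does not close: the pushforward of a line bundle is not in general determined by its restriction to the complement of a codimension-$2$ set, since $\pi_{*}$ of a line bundle need not be reflexive or $S_2$ when fibre dimensions jump (already $f_{*}\mo_{\widetilde{X}}(-nE)=\mathfrak{m}^n_x$ for a blow-down $f$ fails this), so reducing to smooth curves would still require a depth argument for $\pi_{*}$ over the non-integral locus; the paper sidesteps this entirely because its computation is global on $\mc\times_{\ls}\mc$, uniform over all of $\ls$.
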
 

We will prove Lemma \ref{gthlem} and Theorem \ref{gth3} in \S \ref{p2g3} for $X=\p^2$ and \S \ref{hg3} for $X=\Sigma_e$ with $e\geq3$.

Let $\mc_{L}\subset \p^2\times |L|$ be the universal curve and let $\mc_{L}^{[n]}$ be the relative Hilbert scheme of $n$-points on $\mc_{L}$ over $|L|$.  We have two morphisms $\bar{\pi}:\mc_L^{[n]}\ra \ls$ sending each $[Z\subset C]$ to the curve $C$, and $\rho:\mc_{L}^{[n]}\ra X^{[n]}$ sending $[Z\subset C]$ to $Z$, where $X^{[n]}$ is the Hilbert scheme of $n$-points on $X$.  For each line bundle $\me$ over $X$, denote by $\me_{[n]}$ the determinant line bundle $det^{-1}R^{\bullet}p_{*}(q^{*}\me\otimes \I_n)$ over $X^{[n]}$, where $\I_n$ is the universal ideal sheaf over $X\times X^{[n]}$, and denote by $\me_{(n)}$ the line bundle over $X^{[n]}$ induced by the $\ks_n$-linearized line bundle $\me^{\boxtimes n}$ over $X^{ n}$.  It is well known (e.g. see \S 5 in \cite{EGL}) that 
\begin{equation}\label{egl}\me_{[n]}\cong\me_{(n)}\otimes\mo_{X^{[n]}}(-\Delta/2),\end{equation} 
where $\Delta$ is the exceptional divisor of $X^{[n]}\ra Sym^n X$, i.e. $\Delta$ consists of all $Z$ supported at at most $n-1$ points.

By \S 4 in \cite{Yuan5} or the proof of Lemma 3.9 in \cite{Yuan6}, we see that there is a birational map $g:Q:=\mc_{L}^{[g_L-1]}\dashrightarrow D_{\z_L}$, defined by assigning each $[Z\subset C]\in \mc_{L}^{[g_L-1]}$ to $\mi_{Z/C}(L\otimes K_X)$ with $\mi_{Z/C}$ the ideal sheaf of $Z$ inside the curve $C\in\ls$.  Moreover, by Lemma 3.7 in \cite{Yuan4} $g$ induces an isomorphism between $Q^o$ and $D_{\z_L}^o$ defined as follows
$$Q^o:=\left\{[Z\subset C]\in \mc_{L}^{[g_L-1]}\left|\begin{array}{l}h^0(\mi_Z(L\otimes K_X))=1,~ h^1(\mi_Z(L))=0,\\ 
and~C~is~integral.
\end{array}\right.\right\},$$

$$D_{\z_L}^o:=\left\{\mf\in D_{\z_L}\left|\begin{array}{l}h^1(\mf)=1,~ h^1(\mf(-K_X))=0,\\ 
and~Supp(\mf_L)~is~integral.
\end{array}\right.\right\}.$$
$D_{\z_L}^o$ is dense in $D_{\z_L}$ by $\cb$-(1) in \cite{Yuan6}.  Also $g^{*}(\z_L^r(r))\cong\rho^{*}(L\otimes K_X)^{\otimes r}_{[g_L-1]}$.  Define $\cl_{L}:=(L\otimes K_X)_{[g_L-1]}$.  We have the commutative diagram
\begin{equation}\label{comm2}\xymatrix{Q \ar@{-->}[r]^{g}\ar[rd]_{\bar{\pi}}& D_{\z_L}\ar[d]^{\pi}\\ &\ls}.
\end{equation} 

We have a useful lemma as follows.
\begin{lemma}\label{use}Let $X$ be any rational surface and $L$ an effective line bundle.  Let $Z\in X^{[g_L-1]}$. If $h^0(\mi_Z(L\otimes K_X))=1$ with $\mi_Z$ the ideal sheaf of $Z$, then for any non-zero map $\kappa:K_X\ra \mi_Z(L\otimes K_X)$ we have the following exact sequence
\begin{equation}\label{kappa}0\ra K_X\xrightarrow{\kappa}\mi_Z(L\otimes K_X)\ra \mf_L\ra0,\end{equation}
with $\mf_L\in\ml$.
\end{lemma}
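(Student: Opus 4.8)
The plan is to verify in turn that $\kappa$ is injective, that the cokernel $\mf_L$ carries the numerical class $u_L$, and finally that $\mf_L$ is pure and $H$-semistable. First, since $K_X$ is a line bundle and $\mi_Z(L\otimes K_X)$ is torsion-free of rank $1$, every non-zero $\kappa$ is automatically injective, so $\mf_L$ has rank $0$ and $c_1(\mf_L)=c_1(\mi_Z(L\otimes K_X))-c_1(K_X)=L$. For the Euler characteristic I would combine Riemann--Roch with the adjunction formula $g_L=1+\tfrac12 L.(L+K_X)$: from $0\to\mi_Z\to\mo_X\to\mo_Z\to0$ one gets $\chi(\mi_Z(L\otimes K_X))=\chi(\mo_X(L+K_X))-(g_L-1)=g_L-(g_L-1)=1$, while $\chi(K_X)=\chi(\mo_X)=1$, whence $\chi(\mf_L)=0$. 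Thus $\mf_L$ has exactly the class $u_L$.

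Next I would establish purity. Twisting $\kappa$ by $K_X^{-1}$ presents it as a non-zero section $s\in H^0(\mi_Z(L))\subset H^0(\mo_X(L))$, cutting out a curve $C\in\ls$ containing $Z$. Comparing $0\to\mo_X\xrightarrow{s}\mi_Z(L)\to\mf_L\otimes K_X^{-1}\to0$ with the standard sequence $0\to\mo_X\to\mo_X(L)\to\mo_C(L)\to0$ via the snake lemma realizes $\mf_L\otimes K_X^{-1}$ as a subsheaf of $\mo_C(L)$ (with quotient $\mo_Z$). Since $C$ is an effective Cartier divisor on the smooth surface $X$, $\mo_C$ and hence $\mo_C(L)$ is pure of dimension $1$; a non-zero subsheaf of a pure sheaf is pure, so $\mf_L$ is pure. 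In particular every non-zero subsheaf of $\mf_L$ has $1$-dimensional support.

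The crux is semistability, and here I would exploit the hypothesis $h^0(\mi_Z(L\otimes K_X))=1$ directly. Taking cohomology of (\ref{kappa}) and using $H^0(K_X)=H^1(K_X)=0$ on the rational surface $X$ gives $h^0(\mf_L)=h^0(\mi_Z(L\otimes K_X))=1$. Now let $\mg\subset\mf_L$ be any non-zero subsheaf; by purity its support class $L':=c_1(\mg)$ is effective and non-zero. Let $\mathcal K$ be the preimage of $\mg$ in $\mi_Z(L\otimes K_X)$, so that $0\to K_X\to\mathcal K\to\mg\to0$ with $\mathcal K$ torsion-free of rank $1$ and $c_1(\mathcal K)=K_X+L'$. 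A non-zero homomorphism $\mathcal K\to K_X$ would be injective and would force $K_X-c_1(\mathcal K)=-L'$ to be effective, impossible since $L'>0$; hence $h^2(\mathcal K)=\Hom(\mathcal K,K_X)^{\vee}=0$ and $h^0(\mathcal K)\geq\chi(\mathcal K)=1+\chi(\mg)$. But $\mathcal K\subset\mi_Z(L\otimes K_X)$ forces $h^0(\mathcal K)\leq h^0(\mi_Z(L\otimes K_X))=1$, so $\chi(\mg)\leq0$. As $\chi(\mf_L)=0$, this is precisely the slope inequality for semistability, and $\mf_L\in\ml$ follows. The main obstacle is exactly this last step, namely controlling all subsheaves uniformly; the single-section hypothesis $h^0(\mi_Z(L\otimes K_X))=1$ together with the vanishing $\Hom(\mathcal K,K_X)=0$ is what makes the argument work for every $\mg$ at once.
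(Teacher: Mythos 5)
Your proof is correct, and while it rests on the same underlying mechanism as the paper's---pull subsheaves of $\mf_L$ back into the torsion-free sheaf $\mi_Z(L\otimes K_X)$ and play the hypothesis $h^0(\mi_Z(L\otimes K_X))=1$ against Serre duality---both key steps are executed by genuinely different arguments. For semistability the paper argues by contradiction: a destabilizing subsheaf $\mf'\subset\mf_L$ has $\chi(\mf')>0$, and since $h^0(\mf')\leq h^0(\mf_L)=1$ this forces $h^1(\mf')=0$, so $\Ext^1(\mf',K_X)\cong H^1(\mf')^{\vee}=0$ by Serre duality; the preimage extension of $\mf'$ by $K_X$ then splits, embedding the torsion sheaf $\mf'$ into the torsion-free sheaf $\mi_Z(L\otimes K_X)$, which is absurd. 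You instead obtain $\chi(\mg)\leq 0$ directly and uniformly for every non-zero subsheaf $\mg$: your effectivity argument gives $\Hom(\mathcal{K},K_X)=0$ for the preimage $\mathcal{K}$, Serre duality converts this into $h^2(\mathcal{K})=0$, and the chain $1\geq h^0(\mathcal{K})\geq\chi(\mathcal{K})=1+\chi(\mg)$ closes the argument with no case analysis. The two uses of duality are mirror images of one another (vanishing of $\Ext^1$ on the quotient side versus vanishing of $H^2$ on the subsheaf side), so the proofs are close cousins, but yours is the more uniform and self-contained version. Your purity argument is also genuinely different: the paper deduces purity in one line from torsion-freeness (implicitly the same splitting trick, via $\Ext^1(T,K_X)=0$ for $T$ zero-dimensional), whereas you realize $\mf_L\otimes K_X^{-1}$ as a subsheaf of $\mo_C(L)$ by the snake lemma; this is more geometric, and as a byproduct it identifies $\mf_L$ with $\mi_{Z/C}(L\otimes K_X)$, which is exactly the description underlying the birational map $g$ defined just before the lemma. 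Finally, your explicit verification that $\mf_L$ has class $u_L$ (rank $0$, $c_1=L$, $\chi=0$) makes precise what the paper leaves implicit.
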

\begin{proof}$\kappa$ has to be injective since it is non-zero.  $\mf_L$ is pure because $\mi_Z(L\otimes K_X)$ is torsion free.
$h^0(\mf_L)=h^0(\mi_Z(L\otimes K_X))=1$, hence for any $\mf'\subsetneq \mf_L$, $h^0(\mf')\leq1$.  If $\mf_L$ is not semistable, then $\exists~ \mf'\subsetneq \mf_L$, such that $\chi(\mf')>0$ and hence that $h^1(\mf')=1$.  Therefore (\ref{kappa}) must partially split along $\mf'$ which contradicts with the torsion-freeness of $\mi_Z(L\otimes K_X)$.  So the lemma is proven.
\end{proof}

\subsection{Proof for $X=\p^2$ and $L=4H$.}\label{p2g3}\qquad

In this subsection $L=4H$ and $g_L-1=2$. 
Denote by $\mc$ instead of $\mc_{4H}$ the universal curve for simplicity.
Since (\ref{comm2}) commutes, Lemma \ref{gthlem} for $X=\p^2$ follows from the following two lemmas.
\begin{lemma}\label{p2z1}The birational map $g:Q\dashrightarrow D_{\z_{4H}}$ is a morphism and $g_{*}\mo_{Q}\cong \mo_{D_{\z_{4H}}}$, $R^ig_*\mo_Q=0$ for all $i>0$ and $i\neq2$.  

Moreover $R^i\bar{\pi}_{*}\cl_{4H}^r\cong R^{i-2}\pi_{*}(R^2g_{*}\mo_Q\otimes\z_{4H}^r(r)|_{D_{\z_{4H}}})$ for all $i>0$ and $r\geq2$, in particular $R^1\bar{\pi}_{*}\cl_{4H}^r=0$ for all $r\geq 2$.
\end{lemma}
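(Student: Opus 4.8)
The plan is to upgrade $g$ to an honest morphism, recognize it as a birational contraction of $Q=\mc^{[2]}$ onto the theta divisor $D_{\z_{4H}}$, and then deduce the two displayed isomorphisms from the projection formula together with the Grothendieck spectral sequence for $\bar\pi=\pi\circ g$. The only non-formal inputs will be the control of $R^\bullet g_*\mo_Q$ and the normality of $D_{\z_{4H}}$.

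First I would check that $g$ is everywhere defined. On $\p^2$ we have $L\otimes K_X=4H-3H=H$, and every length-$2$ subscheme $Z\subset\p^2$ satisfies $h^0(\mi_Z(H))=1$ (a length-$2$ scheme is curvilinear and lies on a unique line, so it imposes two independent conditions on the $3$-dimensional $H^0(\mo_{\p^2}(H))$). Hence for each $[Z\subset C]\in Q$ the defining sequence
\[0\to K_X\to \mi_Z(L\otimes K_X)\to \mi_{Z/C}(L\otimes K_X)\to 0\]
is exactly the sequence of Lemma \ref{use}, with $\kappa$ the section of $\mi_Z(L)$ cutting out $C$; therefore $\mi_{Z/C}(L\otimes K_X)\in\ml$, and since its $h^0\geq 1$ it lies in $D_{\z_{4H}}$. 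Thus $g\colon Q\to D_{\z_{4H}}$ is a morphism, and as it restricts to the isomorphism $Q^o\xrightarrow{\sim}D_{\z_{4H}}^o$ recorded above it is birational.

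Next I would compute $R^\bullet g_*\mo_Q$ by inspecting the fibres. Since $\pi\circ g=\bar\pi$, every fibre $g^{-1}(\mf)$ is contained in the $\bar\pi$-fibre $C^{[2]}$, hence $\dim g^{-1}(\mf)\leq 2$; Grothendieck vanishing then gives $R^ig_*\mo_Q=0$ for $i\geq 3$. For $\mf$ supported on an integral quartic $C$ the fibre is the linear system $\p(\Hom_C(\mf(-H),\mo_C))$, a projective space of dimension $\leq 1$ by Clifford's inequality for the degree-$2$ sheaf $\mf$ on the genus-$3$ curve $C$; as $\p^0$ and $\p^1$ have no higher cohomology of the structure sheaf, $R^1g_*\mo_Q$ and $R^2g_*\mo_Q$ both vanish over the integral locus. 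The remaining work, and the crux of the proof, is to stratify the (proper, closed) locus of non-integral quartics and describe $g^{-1}(\mf)$ there through the extension datum $\Ext^1(\mf,K_X)\cong H^1(\mf)^\vee$ and the semistability constraints, so as to prove that every fibre is connected with $H^1(\cdot,\mo)=0$ (while $H^2$ may survive); this yields $R^1g_*\mo_Q=0$. Combined with birationality and the normality of $D_{\z_{4H}}$ (a reduced Cohen--Macaulay divisor in the normal variety $\ml$ which is regular in codimension one by the fibrewise theta-divisor picture over $\ls$), Stein factorization gives $g_*\mo_Q\cong\mo_{D_{\z_{4H}}}$ with connected fibres, leaving only $R^2g_*\mo_Q$ possibly nonzero.

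Finally the second assertion is formal. Put $\mathcal N:=\z_{4H}^r(r)|_{D_{\z_{4H}}}$, so that $g^*\mathcal N\cong\cl_{4H}^r$ via $g^*\z_L^r(r)\cong\rho^*\cl_L^r$. By the projection formula $R^qg_*\,g^*\mathcal N\cong\mathcal N\otimes R^qg_*\mo_Q$, which equals $\mathcal N$ for $q=0$, vanishes for $q=1$ and $q\geq 3$, and equals $\mathcal N\otimes R^2g_*\mo_Q$ for $q=2$. Feeding this into $R^p\pi_*\big(R^qg_*\,g^*\mathcal N\big)\Rightarrow R^{p+q}\bar\pi_*\,g^*\mathcal N$ leaves only the rows $q=0$ and $q=2$. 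Moreover $R^p\pi_*\mathcal N=0$ for $p\geq 1$ and $r\geq 2$: applying $R\pi_*$ to $0\to\z_L^{r-1}(r)\to\z_L^r(r)\to\mathcal N\to 0$ and using $R^{\geq 1}\pi_*\z_L^{r-1}(r)=R^{\geq 1}\pi_*\z_L^{r}(r)=0$ (from $R^i\pi_*\z_L^\bullet=0$, Corollary 3.19(2) of \cite{Yuan6}, up to the pullback twist $\mo_{\ls}(r)$), the connecting maps force the vanishing. Thus the $q=0$ row contributes only in total degree $0$, all differentials vanish, the spectral sequence degenerates, and for $i\geq 1$ we read off
\[R^i\bar\pi_*\cl_{4H}^r\cong R^{i-2}\pi_*\big(R^2g_*\mo_Q\otimes \z_{4H}^r(r)|_{D_{\z_{4H}}}\big);\]
taking $i=1$ gives $R^{-1}\pi_*(\cdots)=0$, i.e. $R^1\bar\pi_*\cl_{4H}^r=0$ for $r\geq 2$. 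The genuine obstacle is the fibrewise analysis over the non-integral quartics needed for $R^1g_*\mo_Q=0$ (together with the normality input); everything downstream is bookkeeping.
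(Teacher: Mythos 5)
Your outer steps coincide with the paper's: $g$ is everywhere defined because $h^0(\mi_Z(1))=1$ for every $Z\in(\p^2)^{[2]}$ combined with Lemma \ref{use}; the fibre-dimension bound kills $R^ig_*\mo_Q$ for $i\geq 3$; and your closing derivation of $R^i\bar{\pi}_{*}\cl_{4H}^r\cong R^{i-2}\pi_{*}(R^2g_{*}\mo_Q\otimes\z_{4H}^r(r)|_{D_{\z_{4H}}})$ from the projection formula, the Leray spectral sequence for $\bar{\pi}=\pi\circ g$, and the vanishing $R^{i}\pi_*(\z^r_L(r)|_{D_{\z_L}})=0$ for $i>0$ (via the restriction sequence (\ref{zes})) is exactly the paper's bookkeeping. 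But the middle of your argument --- which you yourself label ``the remaining work, and the crux'' --- is left undone, and it is precisely the content the paper supplies. The paper observes that any $\mf$ in the image of $g$ sits in $0\to\mo_{\p^2}(-3)\to\mi_Z(1)\to\mf\to0$, whence $h^0(\mf)=h^1(\mf)=1$; so over a stable point the fibre is the single nonsplit class in $\p(\Ext^1(\mf,K_X))=\p(H^1(\mf)^\vee)$ and $g$ is an isomorphism over $D^s_{\z_{4H}}$, while a strictly semistable point is $S$-equivalent to $\mo_{C_3}\oplus\mo_{C_1}(-1)$ ($C_1\in|H|$, $C_3\in|3H|$) with fibre $C_1^{[2]}\cong Sym^2(C_1)\cong\p^2$; this locus has dimension $11=\dim D_{\z_{4H}}-5$, so its preimage $T$ has codimension $3$ in $Q$. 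Normality of $D_{\z_{4H}}$ (Cohen--Macaulay, and regular in codimension one since $g$ is an isomorphism over $D^s_{\z_{4H}}$ whose complement has codimension $5$), hence $g_*\mo_Q\cong\mo_{D_{\z_{4H}}}$, and the vanishing $R^1g_*\mo_Q=0$ are all read off from these explicit identifications and codimension counts.

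Moreover, your proposed completion would not suffice even if carried out: ``every fibre is connected with $H^1(\cdot,\mo)=0$'' does not imply $R^1g_*\mo_Q=0$. By the theorem on formal functions, $R^1g_*\mo_Q$ near a point is computed from $H^1$ of all infinitesimal thickenings of the fibre, and vanishing for the reduced fibre $\p^2$ says nothing about $H^1(\p^2,\mi^n/\mi^{n+1})$ (symmetric powers of the conormal sheaf); the paper instead deduces $R^1g_*\mo_Q=0$ from the fact that $g$ is an isomorphism outside the codimension-$3$ subset $T$ of the smooth variety $Q$, onto the complement of a codimension-$5$ subset of the Cohen--Macaulay divisor $D_{\z_{4H}}$ --- a genuinely different (depth-type) input that your plan never produces. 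The same objection applies to your Clifford step over integral quartics: the legitimate reason $R^{>0}g_*\mo_Q$ vanishes there is not that $\p^0$ and $\p^1$ have no higher structure-sheaf cohomology, but that every semistable sheaf on an integral curve is stable, so by the $h^1(\mf)=1$ computation the fibres there are single points and $g$ is an isomorphism over that locus. Until the exceptional fibres are identified and the codimension count is made, neither $g_*\mo_Q\cong\mo_{D_{\z_{4H}}}$ nor the vanishing of the $q=1$ row of your spectral sequence is established, so both displayed conclusions of the lemma remain unproven in your proposal.
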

\begin{lemma}\label{p2z2}$\bar{\pi}_{*}\cl_{4H}^2\cong\mo_{|4H|}^{\oplus 6}\oplus\mo_{|4H|}(-1)$.
\end{lemma}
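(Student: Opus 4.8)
The plan is to compute $\bar{\pi}_{*}\cl_{4H}^{2}$ by presenting $Q=\mc^{[2]}$ as an incidence variety and reducing to cohomology on the fourfold $X^{[2]}$. Inside $X^{[2]}\times\ls$ the locus $\{(Z,C)\mid Z\subset C\}$ is precisely $Q$; it is the zero scheme of the evaluation section of the rank-$2$ bundle $\me:=\mq\boxtimes\mo_{\ls}(1)$, where $\mq$ is the rank-$2$ tautological bundle on $X^{[2]}$ attached to $\mo_X(4H)$ (so $\det\mq\cong\mo_X(4H)_{[2]}$) and the $\mo_{\ls}(1)$-factor carries the universal quartic. Since $\rho$ exhibits $Q$ as a $\p^{12}$-bundle over $X^{[2]}$, this section is regular, and $\mo_Q$ has the Koszul resolution $0\to(\det\mq)^{-1}\boxtimes\mo_{\ls}(-2)\to\mq^{\vee}\boxtimes\mo_{\ls}(-1)\to\mo_{X^{[2]}\times\ls}\to\mo_Q\to0$. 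I would twist by $\mn$ (where $\cl_{4H}^{2}=\rho^{*}\mn$ with $\mn:=\cl_L^{\otimes 2}=\mo_X(H)_{[2]}^{\otimes2}$, recalling $L\otimes K_X=H$) pulled back from $X^{[2]}$, then push to $\ls$ using $\bar{\pi}^{*}\mo_{\ls}(1)\cong\mo_{\rho}(1)$. As each Koszul term is an external product, its pushforward is $R\Gamma(X^{[2]},-)\otimes\mo_{\ls}(j)$, so $R\bar{\pi}_{*}\cl_{4H}^{2}$ is the hypercohomology of the three-term complex with entries $R\Gamma(X^{[2]},\mn)\otimes\mo_{\ls}$, $R\Gamma(X^{[2]},\mn\otimes\mq^{\vee})\otimes\mo_{\ls}(-1)$ and $R\Gamma(X^{[2]},\mn\otimes(\det\mq)^{-1})\otimes\mo_{\ls}(-2)$. (As a sanity check this matches the fibrewise picture from (\ref{comm2}): a fibre of $\bar{\pi}$ over a smooth quartic is $\mathrm{Sym}^2C$, and via $g$ the restriction of $\cl_{4H}^2$ is $2\z|_{\z}$ on the theta divisor of the genus-$3$ Jacobian, with $h^0=8-1=7$.)

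The geometric engine is the span-the-chord morphism $\ell:X^{[2]}\to(\p^2)^{\vee}$, $Z\mapsto\overline{Z}$, which realizes $X^{[2]}$ as a $\p^{2}$-bundle with fibre $m^{[2]}=\mathrm{Sym}^2 m\cong\p^{2}$ and satisfies $\ell^{*}\mo_{(\p^2)^{\vee}}(1)\cong\mo_X(H)_{[2]}$, whence $\mn=\ell^{*}\mo_{(\p^2)^{\vee}}(2)$. Pushing by $\ell$ gives $R\Gamma(X^{[2]},\mn)=R\Gamma((\p^2)^{\vee},\mo(2))=\mathbb{C}^{6}$ concentrated in degree $0$, yielding the summand $\mo_{\ls}^{\oplus6}$. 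For the last entry, $\mn\otimes(\det\mq)^{-1}$ has no cohomology in degrees $\le2$ (on $\mathrm{Bl}_{\Delta}(X\times X)$ it pulls back from the acyclic sheaf $\mo(-2,-2)$ on $\p^2\times\p^2$, so its only cohomology sits in degree $3$); hence it affects neither $R^0$ nor $R^1\bar{\pi}_{*}$. The crux is the middle entry $R\Gamma(X^{[2]},\mn\otimes\mq^{\vee})$, which I would again compute through $\ell$: on each fibre $\mq$ restricts to the tautological bundle $(\mo_{\p^1}(4))^{[2]}$ on $\mathrm{Sym}^2\p^1=\p^2$, sitting in $0\to S^2W^{\vee}\otimes\mo(-1)\to S^4W^{\vee}\otimes\mo\to(\mo_{\p^1}(4))^{[2]}\to0$ with $W=H^0(\mo_{\p^1}(1))^{\vee}$. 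Dualizing and computing on $\p^2$ gives $H^0=H^2=0$ and $H^1\cong\mathbb{C}^{4}$, so $R\ell_{*}\mq^{\vee}$ is a rank-$4$ bundle placed in cohomological degree $1$; consequently $H^0(X^{[2]},\mn\otimes\mq^{\vee})=0$ and $H^1(X^{[2]},\mn\otimes\mq^{\vee})\cong H^0\!\big((\p^2)^{\vee},\mo(2)\otimes R^1\ell_{*}\mq^{\vee}\big)$, which I expect to be one-dimensional and which produces the summand $\mo_{\ls}(-1)$.

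Assembling via the hypercohomology spectral sequence, and using $R^1\bar{\pi}_{*}\cl_{4H}^{2}=0$ from Lemma \ref{p2z1}, the only graded pieces of $\bar{\pi}_{*}\cl_{4H}^{2}$ are $\mo_{\ls}^{\oplus6}$ (from the degree-$0$ entry) and $\mo_{\ls}(-1)$ (from the $H^1$ of the middle entry, which lands in total degree $0$); the vanishing $H^0(X^{[2]},\mn\otimes\mq^{\vee})=0$ is exactly what keeps the $\mo^{\oplus6}$ intact. Finally the extension splits, since $\Ext^1_{\ls}(\mo_{\ls}(-1),\mo_{\ls})=H^1(\p^{14},\mo(1))=0$, giving $\bar{\pi}_{*}\cl_{4H}^{2}\cong\mo_{\ls}^{\oplus6}\oplus\mo_{\ls}(-1)$.

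The main obstacle is the middle term: proving $H^1(X^{[2]},\mn\otimes\mq^{\vee})$ is exactly one-dimensional. This forces one to identify $R^1\ell_{*}\mq^{\vee}$ as an explicit rank-$4$ bundle on $(\p^2)^{\vee}$ built from $\mathcal{K}=\ker(U\otimes\mo\to\mo(1))$ and its determinant (the cokernel of a comultiplication $S^4\mathcal{K}\hookrightarrow S^2\mathcal{K}\otimes S^2\mathcal{K}^{\vee}$), and then to carry out an honest global section count on $(\p^2)^{\vee}$ after twisting by $\mo(2)$; here an error of one $\det\mathcal{K}$-twist would change the splitting type. Everything else—the $\p^2$-bundle structure of $\ell$, the acyclicity of the top Koszul term, and the $\Ext$-splitting—is routine.
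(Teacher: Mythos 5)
Your route is genuinely different from the paper's: the paper never touches $X^{[2]}$ or tautological bundles, but instead passes through the Hilbert--Chow map to $Sym^2_{|4H|}\mc$, works $\ks_2$-equivariantly on $\mc\times_{|4H|}\mc$, resolves that fibre product as a complete intersection in $\p^2\times\p^2\times|4H|$, and splices in the diagonal sequence, the key input being surjectivity of $S^2H^0(\mo_{\p^2}(2))\to H^0(\mo_{\p^2}(4))$. Your skeleton is sound and I can confirm the pieces you actually prove: $Q$ is the zero scheme of a regular section of $\mq\boxtimes\mo_{|4H|}(1)$ on $X^{[2]}\times|4H|$ (codimension $2$ equals the rank); the chord map $\ell$ satisfies $\mn=\ell^{*}\mo_{(\p^2)^{\vee}}(2)$, giving the summand $\mo_{|4H|}^{\oplus 6}$; the top Koszul term has no cohomology in degrees $\leq 2$ (in fact it equals $\ell^{*}\mo(-5)\otimes\mo_{\mathrm{rel}}(-3)$ and is completely acyclic --- though your stated justification is off, since its pullback to the blow-up is $\mu^{*}\mo(-2,-2)\otimes\mo(-E)$, not a pullback from $\p^2\times\p^2$); and the final $\Ext$-splitting on $|4H|\cong\p^{14}$ is fine. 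Your approach trades the paper's equivariance and normality arguments for Hilbert-scheme geometry special to $\p^2$; the paper's version has the advantage of transporting to $\Sigma_e$ in \S 5.3.

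The genuine gap is at exactly the step that produces $\mo_{|4H|}(-1)$: the claim $h^{1}(X^{[2]},\mn\otimes\mq^{\vee})=1$, equivalently $h^{0}\bigl((\p^2)^{\vee},\mo(2)\otimes R^{1}\ell_{*}\mq^{\vee}\bigr)=1$, is only ``expected,'' and your parenthetical recipe for proving it is wrong in precisely the way you feared. The global form of your fibrewise resolution is $0\to\mq^{\vee}\to\ell^{*}S^{4}\mathcal{K}\to\ell^{*}S^{2}\mathcal{K}\otimes\mo_{\mathrm{rel}}(1)\to 0$, and since $\ell_{*}\mo_{\mathrm{rel}}(1)=S^{2}\mathcal{K}$ (no dual), one gets $R^{1}\ell_{*}\mq^{\vee}=\mathrm{coker}\bigl(S^{4}\mathcal{K}\to S^{2}\mathcal{K}\otimes S^{2}\mathcal{K}\bigr)$, not $\mathrm{coker}\bigl(S^{4}\mathcal{K}\to S^{2}\mathcal{K}\otimes S^{2}\mathcal{K}^{\vee}\bigr)$; as $S^{2}\mathcal{K}^{\vee}\cong S^{2}\mathcal{K}\otimes\mo(2)$, your version differs by an $\mo(2)$-twist and the section count would come out $21$-dimensional instead of $1$-dimensional, destroying the lemma. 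The gap is fillable, and here is the missing computation: by Clebsch--Gordan for the rank-$2$ bundle $\mathcal{K}$, the comultiplication embeds $S^{4}\mathcal{K}$ as a direct summand of $S^{2}\mathcal{K}\otimes S^{2}\mathcal{K}$ with $\mathrm{coker}\cong\bigl(S^{2}\mathcal{K}\otimes\det\mathcal{K}\bigr)\oplus(\det\mathcal{K})^{\otimes2}$, so using $\det\mathcal{K}=\mo(-1)$ we get $\mo(2)\otimes R^{1}\ell_{*}\mq^{\vee}\cong S^{2}\mathcal{K}(1)\oplus\mo$; finally $S^{2}\mathcal{K}(1)\cong S^{2}\mt_{(\p^2)^{\vee}}(-3)$ has no sections (twist the symmetric square of the Euler sequence by $\mo(-3)$), leaving exactly $h^{0}=1$, i.e. the trivial summand $(\det\mathcal{K})^{\otimes2}\otimes\mo(2)\cong\mo$ is the sole source of $\mo_{|4H|}(-1)$. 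Without this identification --- and with the correct twist --- the proof does not close, so it had to be part of the argument rather than an expectation.
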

\begin{proof}[Proof of Lemma \ref{p2z1}]$\forall~Z\in (\p^2)^{[2]}$, $h^0(\mi_Z(1))=1$.  Hence $g$ is well-defined over the whole $Q$ by Lemma \ref{use}.  We have
\[0\ra \mo_{\p^2}\ra\mi_Z(1)\ra \mo_{\p^1}(-1)\ra 0,~\forall~Z\in~(\p^2)^{[2]}.\]
Hence $h^0(\mi_Z(4))=13$, $h^1(\mi_Z(4))=0$ and $Q$ is a $\p^{12}$-bundle over $(\p^2)^{[2]}$.  Both $Q$ and $D_{\z_L}$ are projective and $g$ is dominant.  Hence $g$ must be surjective.  We have the following sequence 
\begin{equation}\label{p2coh1}0\ra \mo_{\p^2}(-3)\xrightarrow{\kappa}\mi_Z(1)\ra\mf\ra0,\end{equation}
where $h^0(\mf)=h^0(\mi_Z(1))=1=h^1(\mf)$.  Hence if $\mf$ is stable, $g^{-1}(\mf)$ contains only one element.  Let $D_{\z_{4H}}^s$ be the stable locus of $D_{\z_{4H}}$, then $g$ is an isomorphism over $D_{\z_{4H}}^s$.

If $\mf$ in (\ref{p2coh1}) is strictly semistable, then $Supp(\mf)$ must be reducible.  Write $Supp(\mf)=C_1\cup C_3$, then $\p^1\cong C_1\in |H|$, $C_3\in |3H|$ and $\mf$ is $S$-equivalent to $\mo_{C_3}\oplus\mo_{C_1}(-1)$ since $H^0(\mf)\neq 0$.  This happens iff the map $\kappa$ in (\ref{p2coh1}) factors through $\mo_{\p^2}\hookrightarrow \mi_Z(1)$.  Hence the fiber at $\mo_{C_3}\oplus\mo_{C_2}$ of $g$ is $C_1^{[2]}\cong Sym^2(C_1)\cong\p^2$.  Hence $R^ig_*\mo_Q=0$ for all $i>2$. 

$dim~(D_{\z_{4H}}\setminus D_{\z_{4H}}^s)=dim~|3H|+dim |H|=11=dim~D_{\z_{4H}}-5.$. Thus $D_{\z_{4H}}$ is normal and $g_{*}\mo_Q\cong \mo_{D_{\z_{4H}}}$. 
Let $Q\supset T:=g^{-1}(D_{\z_{4H}}\setminus D_{\z_{4H}}^s)$.  Then $dim~T=dim~Q-3.$ Since $g$ is an isomorphism outside $T$, $R^1g_*\mo_Q=0$. 

Since $R^i\pi_{*}\z_L^r=0$ for all $i,r>0$, $R^i\pi_{*}(\z_L^r|_{D_{\z_L}})=0$ for all $i>0$ and $r\geq2$.  Hence 
by spectral sequence $R^i\bar{\pi}_{*}\cl_{4H}^r\cong R^{i-2}\pi_{*}(R^2g_{*}\mo_Q\otimes\z_{4H}^r(r)|_{D_{\z_{4H}}})$ for all $i>0$ and $r\geq2$.  In particular $R^1\bar{\pi}_{*}\cl_{4H}^r=0$ for all $r\geq2$.
\end{proof}

\begin{proof}[Proof of Lemma \ref{p2z2}]The map $\rho:Q\ra (\p^2)^{[2]}$ is a $\p^{12}$-bundle, so $Q$ is smooth.  By (\ref{egl}) we have 
\begin{equation}\label{egl2}\cl_{4H}^2\cong \rho^{*} H^{\otimes 2}_{(2)}\otimes \rho^{*}\mo_{(\p^2)^{[2]}}(-\Delta)\cong \rho^*H^{\otimes2}_{(2)}\otimes \mo_Q(-\Delta_Q).\end{equation}  
where $Q\supset\Delta_{Q}:=\rho^{*}\Delta$.  We have the Hilbert-Chow map $h:Q\ra Sym^2_{|4H|}\mc$ which is an isomorphism on $Q\setminus \Delta_{Q}$ and over all $[\{2x\}\subset C]$ such that $C$ are smooth at $x$.  
We have the commutative diagram as follows
\begin{equation}\label{comm3}\xymatrix{&Q \ar[r]^{\rho}\ar[ld]_{\bar{\pi}}\ar[d]^{h}& (\p^2)^{[2]}\ar[d]^{h_1}\\ |4H|&Sym^2_{|4H|}\mc\ar[l]^{\bar{\pi}_1}\ar[r]_{\rho_1}&Sym^2\p^2}.
\end{equation} 
Let $\Delta_{\mc}\subset Sym^2_{|4H|}\mc$ be the diagonal.  Then $\Delta_{\mc}\cong \mc$ and $h^*\Delta_{\mc}=\Delta_Q$. 
Notice that $H_{(2)}$ over $(\p^2)^{[2]}$ is the pull-back via $h_1$ the line bundle denoted also by $H_{(2)}$ over $Sym^2\p^2$.  Hence by (\ref{egl2}) we have
\begin{equation}\label{egl3}\cl_{4H}^2\cong h^*(\rho_1^{*} H^{\otimes 2}_{(2)}\otimes \mo_{Sym^2_{|4H|}\mc}(-\Delta_{\mc})).\end{equation}  
Define $\ma:=\rho_1^{*} H^{\otimes 2}_{(2)}\otimes \mo_{Sym^2_{|4H|}\mc}(-\Delta_{\mc})$.  $Sym^2_{|4H|}\mc$ is normal and hence $h_{*}\mo_Q=\mo_{Sym^2_{|4H|}\mc}$.  It is enough to show 
\begin{equation}\label{p2pl1}(\bar{\pi}_1)_{*}\ma\cong\mo_{|4H|}^{\oplus 6}\oplus\mo_{|4H|}(-1).\end{equation}

We also have the following commutative diagram
\begin{equation}\label{p2com1}\xymatrix{\Delta_{\mc}\ar@{^{(}->}[r] \ar[d]_{\cong}&\mc\times_{|4H|}\mc\ar@{^{(}->}[r]\ar@/^2pc/[rrr]_{q_2}\ar@/^3pc/[rrr]^{q_1}\ar[d]_{\sigma} &\p^2\times \p^2\times |4H|\ar[r] &\p^2\times \p^2\ar[d]^{\sigma_1}\ar@<0.5ex>[r]^{\widetilde{q}_1}\ar@<-0.5ex>[r]_{\widetilde{q}_2}&\p^2\\ 
\Delta_{\mc}\ar@{^{(}->}[r] &Sym^2_{|4H|}\mc\ar[rr]_{\rho_1} & & Sym^2\p^2&.}
\end{equation}
$\sigma_1^{*}H_{(2)}\cong \mo_{\p^2}(1)^{\boxtimes 2}$.  We then have
\begin{equation}\label{egl3}\sigma^{*}\ma\cong q_1^{*}\mo_{\p^2}(2)\otimes q_2^{*}\mo_{\p^2}(2)\otimes \mo_{\mc\times_{|4H|}\mc}(-\Delta_{\mc}).\end{equation}

On $\mc\times_{|4H|}\mc$ we have 
\begin{equation}\label{p2ex1}
0\ra\sigma^*\ma\ra  q_1^{*}\mo_{\p^2}(2)\otimes q_2^{*}\mo_{\p^2}(2)\ra  q_1^{*}\mo_{\p^2}(2)\otimes q_2^{*}\mo_{\p^2}(2)|_{\Delta_{\mc}}\ra 0.\end{equation}
Define $\bar{\pi}_2=\bar{\pi}_1\circ~ \sigma$.  Then $(R^i(\bar{\pi}_2)_{*}(\sigma^*\ma))^{\ks_2}\cong R^i(\bar{\pi}_1)_*\ma$, where $\ks_2$ is the $2^{\text{nd}}$ symmetric group.  Since $R^1\bar{\pi}_*\cl^2_{4H}=0$ by Lemma \ref{p2z1}, $R^1(\bar{\pi}_1)_*\ma=0$.

$\ks_2$ acts on $\Delta_{\mc}$ trivially.  $q_1^{*}\mo_{\p^2}(2)\otimes q_2^{*}\mo_{\p^2}(2)|_{\Delta_{\mc}}\cong q^*\mo_{\p^2}(4)$ and hence $(\bar{\pi}_2)_{*}q_1^{*}\mo_{\p^2}(2)\otimes q_2^{*}\mo_{\p^2}(2)|_{\Delta_{\mc}}\cong p_{*}(q^*\mo_{\p^2}(4))$ with $p,q$ the projection of $\mc$ to $|4H|$ and $\p^2$ respectively.  We have on $\p^2\times |4H|$
\begin{equation}\label{p2uc}0\ra \mo_{\p^2}(-4)\boxtimes \mo_{|4H|}(-1) \ra \mo_{\p^2\times |4H|}\ra\mo_{\mc}\ra 0.
\end{equation}
Hence
\begin{equation}\label{p2dia}0\ra \mo_{|4H|}(-1)\ra \mo_{|4H|}\otimes H^0(\mo_{\p^2}(4))\ra p_{*}(q^*\mo_{\p^2}(4))\ra 0.
\end{equation}

$\mc\times_{|4H|}\mc$ is a complete intersection defined by a section of $\widetilde{q}_1^*\mo_{\p^2}(4)\otimes p^{*}\mo_{|4H|}(1)$ and a section of $\widetilde{q}_2^*\mo_{\p^2}(4)\otimes p^{*}\mo_{|4H|}(1)$ inside $\p^2\times\p^2\times |4H|$, where by abuse of notations $p$ is the projection from $\p^2\times\p^2\times |4H|$ to $|4H|$.   Hence on $\p^2\times\p^2\times |4H|$ we have the following exact sequence
\begin{equation}\label{p2pro}{\tiny 0\ra \left.\begin{array}{c}p^{*}\mo_{|4H|}(-2)\\ \otimes \\ \widetilde{q}_1^*\mo_{\p^2}(-4)\\ \otimes\\ \widetilde{q}_2^*\mo_{\p^2}(-4)\end{array}\right.\ra
\left.\begin{array}{c}\widetilde{q}_1^*\mo_{\p^2}(-4)\otimes p^{*}\mo_{|4H|}(-1)\\ \bigoplus\qquad~\\ \widetilde{q}_1^*\mo_{\p^2}(-4)\otimes p^{*}\mo_{|4H|}(-1)\end{array}\right.
\ra\mo_{\p^2\times\p^2\times|4H|}\ra\mo_{\mc\times_{|4H|}\mc}\ra0.}
\end{equation}
Therefore $(\bar{\pi}_2)_{*}(q_1^{*}\mo_{\p^2}(2)\otimes q_2^{*}\mo_{\p^2}(2))\cong \mo_{|4H|}\otimes H^0(\mo_{\p^2}(2))^{\otimes 2}$ and $\ks_2$ acts on $H^0(\mo_{\p^2}(2))^{\otimes 2}$ by switching two factors.  Hence $((\bar{\pi}_2)_{*}(q_1^{*}\mo_{\p^2}(2)\otimes q_2^{*}\mo_{\p^2}(2)))^{\ks_2}\cong \mo_{|4H|}\otimes S^2 H^0(\mo_{\p^2}(2))$.  Since $R^1(\bar{\pi}_1)_*\ma=0$, we then have 
\begin{equation}\label{p2pf}0\ra ((\bar{\pi}_2)_{*}(\sigma^*\ma))^{\ks_2}\ra \mo_{|4H|}\otimes S^2H^0(\mo_{\p^2}(2))\ra p_{*}(q^*\mo_{\p^2}(4))\ra 0.\end{equation}
Combine (\ref{p2dia}) and (\ref{p2pf}) and we get 
\begin{equation}\label{finp2}\xymatrix@C=0.6cm @R=0.5cm{&&&0\ar[d]&\\ &0\ar[d]&&\mo_{|4H|}(-1)\ar[d]&\\ 0\ar[r]&\mo_{|4H|}^{\oplus 6}\ar[r]\ar[d]&\mo_{|4H|}^{\oplus 21}\ar[r]^{\mathtt{r}'}\ar[d]^{\cong}& \mo_{|4H|}^{\oplus 15}\ar[r]\ar[d]&0\\
0\ar[r] &((\bar{\pi}_2)_{*}(\sigma^*\ma))^{\ks_2} \ar[d]\ar[r]&\mo_{|4H|}\otimes S^2H^0(\mo_{\p^2}(2))\ar[r]^{\qquad\mathtt{r}} & p_{*}(q^*\mo_{\p^2}(4))\ar[r]\ar[d] &0\\
&\mo_{|4H|}(-1)\ar[d]&&0&\\&0}.\end{equation}
Hence $(\bar{\pi}_1)_*\ma\cong ((\bar{\pi}_2)_{*}(\sigma^*\ma))^{\ks_2}\cong \mo_{|4H|}^{\oplus 6}\oplus \mo_{|4H|}(-1)$.
\end{proof}
\begin{rem}\label{rsurp2}We can see the surjectivity of map $\mathtt{r}$ in \ref{finp2} directly: the map $\mathtt{r}'$ in (\ref{finp2}) is given by the multiplication $$m:S^2H^0(\mo_{\p^2}(2))\twoheadrightarrow H^0(\mo_{\p^2}(4)),~(s_1,s_2)\mapsto s_1\cdot s_2,$$
which is surjective, hence $\mathtt{r}'$ is surjective and so is $\mathtt{r}$.
\end{rem}

\begin{proof}[Proof of Theorem \ref{gth3} for $X=\p^2$.]
Now we have the numerical condition $$h^0(M(4H,0),\z_{4H}^2(n))=h^0(W(2,0,n),\lambda_{c_n^2}(4H))~for ~n\geq3.$$  To show the strange duality map $SD_{c_n^2,4H}$ is an isomorphism, it is enough to show that it is surjective.  By Proposition \ref{gth1} the multiplication map 
\[m_2:H^0(M(4H,0),\z_{|4H|}^2(3))\otimes H^0(\pi^{*}\mo_{|4H|}(n-3))\twoheadrightarrow H^0(M(4H,0),\z_{|4H|}^2(n))\]
is surjective.  By analogous argument to \S6.2 in \cite{GY}, we only need to find $\big\{\mg_i\big\}_{i\in I}\subset \kw(2,0,3)^{\mu}$ such that the induced sections $\{s_{\mg_i}\}_{i\in I}$ (def. see Definition \ref{efsur1}) spans $H^0(M(4H,0),\z^2_{4H}(3))$, i.e. $SD_{c_3^2,4H}$ is $\mu$-effectively surjective.  

Let $H^0(M(4H,0),\z_{4H}(n))\hookrightarrow H^0(M(4H,0),\z^2_{4H}(n))$ with the embedding given by multiplying the section associated to $D_{\z_{4H}}$.  We can find sections $\{s_{\mg_j}\}_{j\in J}$ that spans $H^0(M(4H,0),\z_{4H}(n))$ (e.g. see Lemma 6.20 in \cite{GY}).  Hence we only need to find $\{s_{\mg_i}\}_{i\in I}$ to span $H^0(\z^2_{4H}(3)|_{D_{\z_{4H}}}).$  We firstly find $\mg_i\in W(2,0,2)$, $1\leq i\leq 6$ such that $\{s_{\mg_i}\}_{i=1}^6$ are linearly independent restricted to $D_{\z_{4H}}$, then $\{s_{\mg_i}\}_{i=1}^6$ spans $H^0(\z^2_{4H}(2)|_{D_{\z_{4H}}})\cong \mathbb{C}^6$ by Lemma \ref{gthlem}. 

$W(2,0,2)\cong |2H|$ by Fourier transform.  For every $\mg\in W(2,0,2)$ the support of its Fourier transform is a curve $C_{\mg}$ of degree 2 inside $|H|\cong \p^2$.  $C_{\mg}$ also consists of all the jumping lines $l$ of $\mg$, i.e. $[l]\in |H|$ such that $\mg|_{l}\not\cong \mo_{l}^{\oplus 2}$.  $C_{\mg}$ is integral $\Leftrightarrow$ $\mg$ is stable hence locally free by Lemma 2.3 in \cite{Yuan6}.  Choose generic 5 distinct points $l_1,l_2,\cdots,l_5\in |H|$,  then we can find a integral curve $C_6$ of degree 2 on $|H|$ passing through all these 5 points.  This can be done since $|\mo_{|H|}(2)|\cong \p^5$.   Then we can find 5 other integral curve $C_1,C_2,\cdots,C_5$ of degree 2 over $|H|$ such that $l_i\in C_j\Leftrightarrow i<j$.  So $C_1$ contains none of those 5 points.  Let $\mg_i$ be the Fourier transform of $\mo_{C_i}(-1)$, then $\mg_i\in W(2,0,2)$ and are locally free.  We claim that $\{s_{\mg_i}\}_{i=1}^6$ are linearly independent restricted to $D_{\z_{4H}}$.  To show the claim we choose $Z_i\in (\p^2)^{[2]}$ such that $Z_i\in l_i$, i.e. $l_i=l_{Z_i}$ (def. see Lemma \ref{p2ind}) for all $1\leq i\leq 5$.  Let $\mf_i\in D_{\z_{4H}}$, $1\leq i\leq 5$ lie in the following sequence
\[0\ra \mo_{\p^2}(-3)\ra \mi_{Z_i}(1)\ra \mf_i\ra 0.\]
Then by Lemma \ref{p2ind} below, we have $s_{\mg_j}(\mf_i)=0\Leftrightarrow i<j$.  If $\exists ~a_1,\cdots,a_6\in \mathbb{C}$, such that $s:=\displaystyle{\sum_{i=1}^6}a_is_{\mg_i}=0$, then $s(\mf_1)=a_1s_{\mg_1}(\mf_1)=0$ hence $a_1=0$ for $s_{\mg_1}(\mf_1)\neq 0$.  Taking value of $s$ on $\mf_2,\cdots,\mf_5$ and we get $a_2=\cdots=a_5=0$.  Hence $a_6=0$ and $s_{\mg_i}$ are linearly independent restricted to $D_{\z_{4H}}$.

Since $\{s_{\mg_i}\}_{i=1}^6$ spans $H^0(\z^2_{4H}(2)|_{D_{\z_{4H}}})$, as what we did before, we can find $\{s_{\mg_{i,x_k}}\}_{i,k}$ spans the image of the map $m_3$ as follows.
\begin{equation}\label{p2m3}H^0(\pi^{*}\mo_{|4H|}(1))\otimes H^0(\z^2_{4H}(2)|_{D_{\z_{4H}}}) \xrightarrow{m_3} H^0(\z^2_{4H}(3)|_{D_{\z_{4H}}}).\end{equation}  
The cokernel of $m_3$ is $\cong\mathbb{C}$ by Lemma \ref{gthlem} (1).  Hence now it suffices to find $\mg\in W(2,0,3)$ such that $s_{\mg}$ is not contained in the image of $m_3$.

We choose $Y\in (\p^2)^{[4]}$ consisting of four different points generic enough, and construct a sheaf $\mg$ as in the following sequence
\begin{equation}\label{p2con2}0\ra\mo_{\p^2}(-1)\ra \mg\ra \mi_Y(1)\ra 0.
\end{equation} 
$\mg$ is locally free iff (\ref{p2con2}) is a unit via the identification
$\Ext^1(\mi_Y(1),\mo_{\p^2}(-1))\cong \Ext^2(\mo_Y,\mo_{\p^2}(-1))\cong H^0(\mo_Y)^{\vee}$.  We let $\mg$ be locally free and then by Lemma \ref{p2w3} below $\mg\in W(2,0,3)$.  We claim that $s_{\mg}$ is not contained in the image of $m_3$ in (\ref{p2m3}).  To show the claim, assume $\exists~f_i\in H^0(\mo_{|4H|}(1))$ for $1\leq i\leq 6$, such that $s_{\mg}=\displaystyle{\sum_{i=1}^6}f_i s_{\mg_i}$ where we also write $f_i$ for its pull back via $\pi$.   Since $|4H|\cong \p^{14}$, it is possible to choose a reduced curve $B\in |4H|$ such that $f_i(B)=0$ for all $1\leq i\leq 6$ and moreover $Y\subset B$.  $B$ might be singular at $Y$. For any $y\in Y$ denote by $m_B(y)$ the multiplicity of $B$ at $y$.  Notice that we can choose the curve $B$ smooth at $y$ unless $f_i$ give singularity at $y$.  In other words, if $m_B(y)\geq2$ for all $B\in Z(f_1,\cdots,f_6)$, then $\exists~f_{i_1},f_{i_2},f_{i_3}\in\{f_i\}_{i=1}^6$ such that $f_{i_j}(a)=\overline{f}_{i_j}(a,y),\forall ~a\in |4H|$ and $\overline{f}_{i_j}=\frac{\partial \overline{f}}{\partial x_j}$ for some $\overline{f}\in H^0(\mo_{\p^2}(4)\otimes\mo_{|4H|}(1))$ with $x_1,x_2,x_3$ the homogenous coordinates over $\p^2$.  If $m_B(y)\geq3$ for all $B\in Z(f_1,\cdots,f_6)$, then 
$\{f_i\}_{i=1}^6=\{\frac{\partial^2 \overline{f}}{\partial x_j\partial x_i}(-,y)\}_{1\leq i\leq j\leq 3}$ for some $\overline{f}\in H^0(\mo_{\p^2}(4)\otimes\mo_{|4H|}(1))$. Therefore, since there are 6 linear equations $f_i$, they can at most give singularity with multiplicity 2 at two points or singularity with multiplicity 3 at one point.  Hence we can ask $\sum_{y\in Y}m_{B}(y)\leq 6$.  

Since $s_{\mg}=\displaystyle{\sum_{i=1}^6}f_i s_{\mg_i}$, for all $\mf\in D_{\z_{4H}}$ supported on $B$ we must have $s_{\mg}(\mf)=0$ which is equivalent to that $H^0(\mf\otimes\mg)\neq 0$.  Hence it suffices to find $Z\in B^{[2]}$ such that $H^0(\mi_{Z/B}(1)\otimes\mg)=0$. 

Since $Y$ contains 4 generic points, we may ask $h^0(\mi_Y(2))=2$ and choose another smooth point $x\in B$ such that there is a unique degree 2 curve $C\in |2H|$ which intersects $B$ transversally and contains $Y$ and $x$.  We may also ask $C$ to be integral.  Since $2H.4H=8$ and $\sum_{y\in Y}m_{B}(y)\leq 6$, hence we can find another point $z\in B\cap C$, $z\not\in Y$ and $z\neq x$.  Let $l$ be the line defined by $x$ and $z$.  Let $l\cap B=\{x,x_1,z,z_1\}$.  Then $x_1, z_1\not\in C$ since $2H.H=2$.  Let $Z=\{x,x_1\}$ and $\mf=\mi_{Z/B}(1)$.  Let $\hat{Z}=\{z,z_1\}$.  Then $\mi_{(Z\cup \hat{Z})/B}\cong\mo_B\otimes \mo_{\p^2}(-1)$ since the four points $x,x_1,z,z_1$ are on the same line.  Also by (\ref{p2con2}) we have 
\begin{equation}\label{p2con3} 0\ra \mi_{Z/B}\ra \mg\otimes\mf\ra \mi_Y(1)\otimes \mi_{Z/B}(1)\ra 0.
\end{equation}    
Notice that $Y\subset B$ and $B$ is reduced.  Hence $\mi_Y(1)\otimes \mi_{Z/B}(1)\cong \mo_Y\oplus \mi_{(Y\cup Z)/B}(2)$.  Therefore (\ref{p2con3}) induces the following sequence
\begin{equation}\label{p2con4}0\ra\widetilde{\mi}_{Z-Y}\ra\mg\otimes\mf\ra \mi_{(Y\cup Z)/B}\otimes\mo_{\p^2}(2)\ra0,
\end{equation}
where $\widetilde{\mi}_{Z-Y}$ is the unique torsion-free extension of $\mo_Y$ by $\mi_{Z/B}$ over $B$.  $\widetilde{\mi}_{Z-Y}$ may not be locally free as $B$ may not be smooth at $Y$.  

The dualizing sheaf $\omega_B$ over $B$ is $\mo_B\otimes\mo_{\p^2}(1)$ and the restriction map $H^0(\mo_{\p^2}(n))\ra H^0(\mo_B\otimes\mo_{\p^2}(n))$ is surjective for all $n\geq 1$.  $H^0(\mi_{(Y\cup Z)/B}\otimes\mo_{\p^2}(2))=0$ because the unique degree 2 curve $C$ containing $Y$ and $x$ does not contain $x_1$.  $\Hom(\widetilde{\mi}_{Z-Y},\omega_B)$ is the kernel of the map $\Hom(\mi_{Z/B},\omega_B)\twoheadrightarrow \Ext^1_{B}(\mo_Y,\omega_B)\cong\mo_Y$.  $\Hom(\mi_{Z/B},\omega_B)\cong H^0(\mi_{\hat{Z}/B}\otimes\mo_{\p^2}(2))$ by $\mi_{(Z\cup \hat{Z})/B}\cong\mo_B\otimes \mo_{\p^2}(-1)$.  Since $C$ is integral and contains $Y$ and $z$ and $2H.2H=4$, it is the unique curve of degree 2 containing $Y$ and $z$.  But $z_1\not\in C$, so every non-zero element in $H^0(\mi_{\hat{Z}/B}\otimes\mo_{\p^2}(2))$ does not vanish over $Y$. Thus $\Hom(\widetilde{\mi}_{Z-Y},\omega_B)=0=H^1(\widetilde{\mi}_{Z-Y})^{\vee}$.  Hence $H^0(\widetilde{\mi}_{Z-Y})=0$ since $\chi(\widetilde{\mi}_{Z-Y})=0$.  By (\ref{p2con4}), we have $H^0(\mg\otimes\mf)=0$.  

We have finished the proof of the theorem for $X=\p^2$.  
\end{proof}

\begin{lemma}\label{p2ind}Let $\mf\in D_{\z_{4H}}$.  Let $(Z,C)\in Q$ be the preimage of $\mf$ via the map $g$ as in Lemma \ref{p2z1}.  Let $\mg \in W(2,0,2)$ and be locally free with $C_{\mg}\subset |H|$ the curve consisting of its jumping lines. Then $H^0(\mf\otimes\mg)\neq0\Leftrightarrow l_Z\in C_{\mg}$, where $l_Z$ is the unique line containing $Z$. 
\end{lemma}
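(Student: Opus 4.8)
The plan is to compute $H^0(\mf\otimes\mg)$ by relating it, through two short exact sequences, to the restriction of $\mg$ to the line $l_Z$, and then to read off the answer from the behaviour of a rank-$2$ bundle of trivial determinant on a line.

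First I would record the shape of $\mf$. Writing $(Z,C)=g^{-1}(\mf)$ with $C\in|4H|$ and $Z\subset C$ of length $g_L-1=2$, Lemma \ref{p2z1} gives $\mf=g(Z,C)=\mi_{Z/C}(L\otimes K_X)=\mi_{Z/C}(H)$ (since $L\otimes K_X=4H-3H=H$), together with the sequence (\ref{p2coh1})
\[0\ra\mo_{\p^2}(-3)\xrightarrow{\kappa}\mi_Z(1)\ra\mf\ra0.\]
Independently, since the length-$2$ scheme $Z$ lies on the unique line $l_Z$ (scheme-theoretically, with $\mi_{Z/l_Z}\cong\mo_{l_Z}(-2)$), the linear form cutting out $l_Z$ yields
\[0\ra\mo_{\p^2}\xrightarrow{l_Z}\mi_Z(1)\ra\mo_{l_Z}(-1)\ra0.\]

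Next I would tensor both sequences with the locally free $\mg$ and pass to cohomology. As $\mg$ is stable of slope $0$ (equivalently $C_\mg$ is integral, the case arising in the application), we have $H^0(\mg)=H^2(\mg)=0$, and $\chi(\mg)=0$ then forces $H^1(\mg)=0$; using $\mg^{\vee}\cong\mg$ (as $\det\mg\cong\mo_{\p^2}$) and Serre duality $H^1(\mg(-3))^{\vee}\cong H^1(\mg)$ we also get $H^0(\mg(-3))=H^1(\mg(-3))=0$. Substituting these vanishings into the two long exact sequences produces isomorphisms
\[H^0(\mf\otimes\mg)\ \cong\ H^0(\mi_Z(1)\otimes\mg)\ \cong\ H^0\big(\mg|_{l_Z}(-1)\big).\]

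Finally I would invoke the dichotomy $\mg|_{l_Z}\cong\mo_{l_Z}(a)\oplus\mo_{l_Z}(-a)$ for a unique $a\geq0$, with $l_Z\in C_\mg$ precisely when $a\geq1$; then $H^0(\mg|_{l_Z}(-1))\cong H^0(\mo_{l_Z}(a-1))$ is nonzero iff $a\geq1$. Combined with the isomorphisms above this gives $H^0(\mf\otimes\mg)\neq0\Leftrightarrow l_Z\in C_\mg$, as claimed.

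The point needing care — the main obstacle — is the cohomology vanishing for $\mg$. The chain of isomorphisms requires $H^0(\mg)=H^1(\mg)=0$, which holds for stable $\mg$ but fails for locally free yet strictly semistable members of $W(2,0,2)$ (for instance Serre extensions $0\ra\mo_{\p^2}\ra\mg\ra\mi_W\ra0$ with $W$ of length $2$, where both $H^0(\mg)$ and $H^1(\mg)$ are nonzero). Since Lemma \ref{p2ind} is applied only to the stable bundles $\mg_i$ obtained as Fourier transforms of $\mo_{C_i}(-1)$ with $C_i$ integral, I would assume $\mg$ stable (equivalently $C_\mg$ integral) throughout, where the two isomorphisms hold cleanly.
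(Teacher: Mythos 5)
Your argument is the same as the paper's: the same two exact sequences ((\ref{p2coh1}) and (\ref{gsi})) tensored with $\mg$, the same cohomology vanishings, and the same identification of jumping lines, so the core of the proof is correct. However, the caveat you flag as "the main obstacle" rests on an error: a Serre extension $0\ra\mo_{\p^2}\ra\mg\ra\mi_W\ra0$ with $W$ of length $2$ is \emph{not} Gieseker semistable, since $\chi(\mo_{\p^2}(m))=\tfrac{m^2+3m+2}{2}>\tfrac{m^2+3m}{2}=\tfrac{1}{2}\chi(\mg(m))$, so $\mo_{\p^2}$ destabilizes $\mg$ and such sheaves do not lie in $W(2,0,2)$ at all (they are only $\mu$-semistable). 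In fact, by Lemma \ref{nogs} every $H$-semistable $\mg$ with $r(\mg)>0$, $c_1(\mg).H=0$ and $H^0(\mg)\neq0$ is isomorphic to $\mo_X^{\oplus r(\mg)}$, which is excluded here by $c_2=2\neq0$; hence $h^0(\mg)=0$, and then $h^2(\mg)=h^0(\mg^{\vee}(K_X))=0$ and $\chi(\mg)=0$ give $h^1(\mg)=0$, for \emph{every} locally free $\mg\in W(2,0,2)$ — no stability hypothesis is needed (indeed strictly semistable sheaves in this class, being extensions of $\mi_y$ by $\mi_x$, are never locally free). So your restriction to stable $\mg$ is harmless for the application but unnecessary, and the lemma as stated is correct.
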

\begin{proof}Since $\mg\in W(2,0,2)$ and is locally free, $H^0(\mg(K_X))=0$ and $H^1(\mg(K_X))=H^1(\mg^{\vee})^{\vee}=H^1(\mg)^{\vee}=0$.  Notice that $\mg^{\vee}\cong \mg$ for $det~\mg\cong\mo_X$ and $H^1(\mg)=0$ because $\chi(\mg)=0=h^0(\mg)=h^2(\mg)$.  Hence by (\ref{p2coh1}) we have
\[H^0(\mi_Z(1)\otimes \mg)\cong H^0(\mf\otimes\mg).\]
On the other hand we have 
\begin{equation}\label{gsi}0\ra\mo_X\ra \mi_Z(1)\ra \mo_{l_Z}(-1)\ra0.\end{equation}
Hence $H^0(\mi_Z(1)\otimes\mg)\cong H^0(\mg\otimes \mo_{l_Z}(-1))$.  $H^0(\mg\otimes \mo_{l_Z}(-1))\neq0\Leftrightarrow \mg|_{l_Z}\not\cong\mo_{l_Z}^{\oplus 2}$.  Hence the lemma.
\end{proof}
\begin{lemma}\label{p2w3}For any locally free sheaf $\mg\in W(2,0,3)$, we have the following exact sequence
\begin{equation}\label{p2con}0\ra\mo_{\p^2}(-1)\ra \mg\ra \mi_Y(1)\ra 0,
\end{equation} 
where $Y\in (\p^2)^{[4]}$ and $H^0(\mi_Y(1))=0.$

Conversely, if $\mg$ lies in (\ref{p2con}) and $\mg$ locally free, then $\mg$ is semistable.
\end{lemma}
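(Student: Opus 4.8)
The plan is to treat the two directions separately, both times reducing to elementary facts about sections of $\mg(1):=\mg\otimes\mo_{\p^2}(1)$ and about maps out of line bundles. For the first assertion, fix a locally free $\mg\in W(2,0,3)$, so $\mg$ is a rank $2$ semistable sheaf with $c_1(\mg)=0$, $c_2(\mg)=3$; since $\mg\not\cong\mo_{\p^2}^{\oplus 2}$, Lemma \ref{nogs} gives $H^0(\mg)=0$. First I would produce a section of $\mg(1)$: by Riemann--Roch $\chi(\mg(1))=3$, while $H^2(\mg(1))\cong H^0(\mg^{\vee}(-4))^{\vee}=0$ because $\mg^{\vee}$ is semistable of slope $0$, so $h^0(\mg(1))\geq 3>0$. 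Choose $0\neq s\in H^0(\mg(1))$ and let $D$ be the divisorial part of its zero scheme. Then $s$ factors through a sub-line-bundle $\mo_{\p^2}(D)\hookrightarrow\mg(1)$, equivalently $\mo_{\p^2}(D)(-1)\hookrightarrow\mg$, so semistability of $\mg$ forces $D.H-1\leq 0$; as $D$ is effective on $\p^2$ this means $D=0$ or $D$ is a line, and a line would give $\mo_{\p^2}\hookrightarrow\mg$, contradicting $H^0(\mg)=0$. Hence $Z(s)$ is $0$-dimensional of length $c_2(\mg(1))=4$, and the section yields $0\ra\mo_{\p^2}\ra\mg(1)\ra\mi_Y(2)\ra 0$ with $Y:=Z(s)\in(\p^2)^{[4]}$, where the cokernel is $\mi_Y(\det\mg(1))=\mi_Y(2)$. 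Twisting by $\mo_{\p^2}(-1)$ gives (\ref{p2con}).

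To finish the first direction I would read off $H^0(\mi_Y(1))=0$ by taking $H^0$ of (\ref{p2con}) and using $H^0(\mo_{\p^2}(-1))=H^1(\mo_{\p^2}(-1))=0$ together with $H^0(\mg)=0$, which forces $H^0(\mi_Y(1))\hookrightarrow H^1(\mo_{\p^2}(-1))=0$.

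For the converse, suppose $\mg$ is locally free and sits in (\ref{p2con}) with $Y\in(\p^2)^{[4]}$ and $H^0(\mi_Y(1))=0$. Then $\mg$ has rank $2$ and $c_1(\mg)=0$, and I would prove $\mu$-stability, which implies Gieseker stability and hence semistability. Let $\mo_{\p^2}(a)\hookrightarrow\mg$ be a saturated sub-line-bundle (every saturated rank-$1$ subsheaf of the torsion-free $\mg$ is reflexive, hence a line bundle), and compose with $\pi:\mg\ra\mi_Y(1)$. If this composition vanishes, then $\mo_{\p^2}(a)\hookrightarrow\mo_{\p^2}(-1)$ forces $a\leq -1$. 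If it is nonzero, then $\Hom(\mo_{\p^2}(a),\mi_Y(1))=H^0(\mi_Y(1-a))\neq 0$, which is impossible for $a\geq 1$ (as $H^0(\mi_Y(1-a))\subseteq H^0(\mo_{\p^2}(1-a))=0$) and for $a=0$ (as $H^0(\mi_Y(1))=0$ by hypothesis). In every case $a<0$, so $\mg$ is $\mu$-stable, and in particular semistable.

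The only genuinely delicate point is the claim in the first direction that some section of $\mg(1)$ has a purely $0$-dimensional zero scheme; this is exactly where semistability together with $H^0(\mg)=0$ is indispensable, ruling out a line as the divisorial component. Everything else is Riemann--Roch, Serre duality, and the routine comparison of a sub-line-bundle against the two terms of the defining sequence (\ref{p2con}).
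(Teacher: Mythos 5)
Your proof is correct, and its forward direction is essentially the paper's: both extract a section of $\mg(1)$ from $\chi(\mg(1))=3>0$ and use semistability plus $H^0(\mg)=0$ to force the zero scheme to be $0$-dimensional of length $4$. The paper compresses this into the single phrase that the cokernel of $\mo_{\p^2}(-1)\ra\mg$ "has to be torsion free because $\mg$ is semistable and locally free''; your explicit exclusion of a divisorial component (degree $\leq 1$ by semistability, degree $1$ ruled out by $H^0(\mg)=0$ via Lemma \ref{nogs}) is exactly what that phrase abbreviates. The converse is where you genuinely diverge. The paper takes a Gieseker-destabilizing rank-$1$ subsheaf $\mg_1$ (not assumed saturated), observes that $c_1(\mg_1)\geq0$ forces $\mg_1\hookrightarrow\mi_Y(1)$, and splits into cases: if $c_1(\mg_1)>0$ the quotient $\mi_Y(1)/\mg_1$ is $0$-dimensional, so $\Ext^1(\mi_Y(1)/\mg_1,\mo_{\p^2}(-1))=0$ and the extension (\ref{p2con}) is forced to split, contradicting local freeness; if $c_1(\mg_1)=0$ then $\mg_1^{\vee\vee}\cong\mo_{\p^2}$ embeds into $\mg$, contradicting $H^0(\mg)=0$. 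You instead reduce to saturated sub-line-bundles $\mo_{\p^2}(a)\subset\mg$ and dispose of all cases with one Hom computation, $\Hom(\mo_{\p^2}(a),\mi_Y(1))=H^0(\mi_Y(1-a))$. This avoids the extension-splitting trick entirely, and it buys you a strictly stronger conclusion: $\mg$ is $\mu$-stable, not merely semistable, which is in fact consistent with how the lemma is used in the proof of Theorem \ref{gth3}. The paper's argument, on the other hand, never needs the saturation reduction.

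One small repair is needed in your converse: for $a\geq1$ you justify $H^0(\mi_Y(1-a))=0$ by the inclusion into $H^0(\mo_{\p^2}(1-a))=0$, but this vanishing is false at $a=1$, where $H^0(\mo_{\p^2})=\bc$. What you need there is $H^0(\mi_Y)=0$, which holds simply because $Y\neq\emptyset$. With that one-line fix the argument is complete.
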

\begin{proof}$\forall~\mg\in W(2,0,3)$, $\chi(\mg(1))=3>0$, hence there is a nonzero map $\mo_{\p^2}(-1)\ra \mg$ which has to be injective.  So we have 
\[0\ra \mo_{\p^2}(-1)\ra \mg\ra \mk\ra 0,\]
where $\mk$ has to be torsion free because $\mg$ is semistable and locally free.  Hence $\mk\cong \mi_Y(1)$ for some $Y\in (\p^2)^{[4]}$.  $H^0(\mi_Y(1))=0$ since $H^0(\mg)=0$.  

Assume $\mg$ lies in (\ref{p2con}) and not semistable.  Let $\mg_1\subset \mg$ be the rank 1 subsheaf distablizing $\mg$.  Then $c_1(\mg_1)\geq0$, $\mg_1$ is a subsheaf of $\mi_Y(1)$ and (\ref{p2con}) partially split along $\mg_1$.  If $c_1(\mg_1)>0$, then $\mi_Y(1)/\mg_1$ is 0-dimensional and hence $\Ext^1(\mi_Y(1)/\mg_1,\mo_{\p^2}(-1))=0$.  Hence (\ref{p2con}) splits and $\mg$ can not be locally free.  If $c_1(\mg_1)=0$,  then $\mg_1^{\vee\vee}\cong\mo_{\p^2}$ and $\mg$ can not be locally free since $H^0(\mg)=0$.  Hence the lemma.
\end{proof}

\subsection{Proof for $X=\Sigma_e$ with $e\leq 3$ and $L=2G+(e+4)F$.}\label{hg3}\qquad

This case is quite analogous to $X=\p^2$.  However there are still some differences.  
In this subsection $L=2G+(e+4)F$, $L\otimes K_X=2F$, $L^2=16$ and $g_L-1=2$.  Denote by $\mc$ instead of $\mc_{L}$ the universal curve for simplicity.  Since (\ref{comm2}) commutes, Lemma \ref{gthlem} for $X=\Sigma_e$ with $e\leq3$ follows from the following two lemmas. 
\begin{lemma}\label{sez1}The birational map $g:Q\dashrightarrow D_{\z_{L}}$ is a morphism on $Q\setminus R$ with $R$ a closed subset of dimension  $dim~Q-3$.  Moreover $g_{*}\mo_{Q}\cong \mo_{D_{\z_{L}}}$.
\end{lemma}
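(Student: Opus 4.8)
The plan is to follow the proof of Lemma \ref{p2z1}, the essential new difficulty being that the naive assignment $[Z\subset C]\mapsto\mi_{Z/C}(L\otimes K_X)=\mi_{Z/C}(2F)$ no longer lands in $\ml$ everywhere: since $2F$ is pulled back from the base of the ruling, $h^0(\mi_Z(2F))$ jumps from $1$ to $2$ exactly when the length-$2$ scheme $Z$ lies on a single fibre, and there the cokernel can be unstable. First I would record the good locus. By Lemma \ref{use}, $g$ is a morphism on $\{h^0(\mi_Z(2F))=1\}$, and whenever $C$ is integral the sheaf $\mi_{Z/C}(2F)$ is a rank-one torsion-free sheaf on an integral curve, hence stable, so $g$ is a morphism there as well. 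Since the universal family $\mi_{\mathcal{Z}/\mc}(2F)$ is flat over $Q$ and a flat family of semistable sheaves induces a morphism to the coarse moduli space, $g$ is in fact a morphism on the whole open set $Q\setminus V$, where $V$ is the (closed) locus on which $\mf:=\mi_{Z/C}(2F)$ is unstable. Here $\rho:Q\to X^{[2]}$ is a $\p^{12}$-bundle (as $L$ is very ample for $e\le 3$, so $h^0(\mi_Z(L))=13$ for every $Z$), whence $Q$ is smooth and irreducible of dimension $16=\dim D_{\z_L}$ and $g$ is birational.

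Next I would pin down $V$. As $\chi(\mf)=0$, semistability means every subsheaf has non-positive Euler characteristic. For a splitting $C=C_1+C_2$ the maximal subsheaf of $\mf$ supported on $C_1$ injects into $\mo_{C_1}(2F-C_2)$, and since $2F=L+K_X$ we have $2F-C_2=C_1+K_X$, so this subsheaf injects into the dualizing sheaf $\omega_{C_1}$ and therefore has Euler characteristic at most $\chi(\omega_{C_1})=p_a(C_1)-1$. Running through the effective sub-divisors $C_1=aG+bF\le L$, the adjunction computation gives $p_a(C_1)\ge 2$ only for $C_1\in|2G+(e+3)F|$ (where $p_a=2$), which forces $C=C_1+F_0$ with $F_0$ a fibre and the destabilizing subsheaf equal to $\omega_{C_1}$ with $\chi(\omega_{C_1})=1$; this occurs precisely when $Z\subset F_0$ with $Z$ disjoint from $C_1$. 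Thus
\[V=\{[Z\subset C]\in Q: C=C_1+F_0,\ C_1\in|2G+(e+3)F|,\ F_0\text{ a fibre},\ Z\subset F_0\},\]
and counting parameters ($1$ for $F_0$, $\dim|2G+(e+3)F|=11$ for $C_1$, and $2$ for $Z\in F_0^{[2]}$) gives $\dim V=14=\dim Q-2$.

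The crux, and the step I expect to be hardest, is to extend $g$ across $V$ and to see that the residual indeterminacy drops one further dimension. Over a point of $V$ the semistabilization of the unstable $\mf$ is an elementary modification at the two nodes $C_1\cap F_0$ that moves one unit of Euler characteristic from $\omega_{C_1}$ to the fibre part $\mo_{F_0}(2F-Z)\cong\mo_{\p^1}(-2)$. Using the local analysis of flat limits of these sheaves as in \cite{Yuan4} and \cite{Yuan5}, I would show that for a generic point of $V$ this limit is forced independently of the approaching family, so that $g$ extends to a morphism there, while the locus $R\subset V$ where the limit fails to be unique — where the two nodes collide or $Z$ meets $C_1\cap F_0$ — is a proper closed subset, hence of dimension $\le\dim V-1=\dim Q-3=13$. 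This is delicate precisely because it requires controlling the universal family to second order along $V$, not merely the naive cokernel.

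Finally, for $g_{*}\mo_{Q}\cong\mo_{D_{\z_L}}$ I would argue as on $\p^2$. The morphism $g$ is an isomorphism over a dense open subset of $D_{\z_L}$ whose complement has codimension $\ge 2$: the strictly semistable sheaves, $S$-equivalent to $\mf'\oplus\mf''$ with $\mf',\mf''$ semistable of Euler characteristic $0$ supported in $|2G+(e+2)F|$ and $|2F|$ respectively, form a locus of dimension at most $9+1=10$, and together with $g(R)$ of dimension $\le 13$ and the smoothness of $Q$ this shows $D_{\z_L}$ is regular in codimension one. Being Cohen--Macaulay — a Cartier divisor in $\ml$, which is normal and Cohen--Macaulay as a quotient of a smooth variety (cf.\ Remark \ref{irred}) — it is then normal by Serre's criterion. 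Since $D_{\z_L}$ is normal and $g$ is proper and birational with connected fibres (after replacing $Q$ by the closure of its graph, which differs from $Q$ only over $R$, of codimension $\ge 2$), Zariski's main theorem yields $g_{*}\mo_{Q}\cong\mo_{D_{\z_L}}$.
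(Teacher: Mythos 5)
Your first two paragraphs are correct, and they are actually more careful than the paper at the same point. For $C=C_1+F_0$ with $C_1\in|L-F|$, $F_0\in|F|$ and $Z\subset F_0$ disjoint from $C_1$, one has $\mi_{F_0/C}(2F)\cong\mo_{C_1}(2F-F_0)\cong\omega_{C_1}\subset\mi_{Z/C}(2F)$ and $\chi(\omega_{C_1})=1$, so the whole $14$-dimensional family $V$ you describe consists of unstable cokernels: the unstable locus has codimension $2$ in $Q$. The paper asserts instead that this locus (its $R$) has dimension at most $\dim|F|+\dim|L-F|+1=13$; that count parametrizes the unstable sheaves $\mf$ themselves (a fibre $F_0$, a curve $C_1$, and a class in $\p\Ext^1(\mo_{F_0}(-2),\omega_{C_1})$) but forgets that each such $\mf$ has a one-dimensional family of preimages $[Z\subset C]$ in $Q$ (indeed $\text{ext}^1(\mf,K_X)=h^1(\mf)=2$, exactly the fact the paper uses a few lines later to compute fibre dimensions over $\widetilde{R}\setminus R$). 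Your direct count $(F_0,C_1,Z)$, giving $1+11+2=14$, is the right one, so your proposal has located a genuine error in the paper's own proof.

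The genuine gap is your third paragraph, on which everything hinges, and the difficulty is not just that the step is left unproved: the claimed path-independence is false. Run Langton's modification along an arc through $v=[Z\subset C_1+F_0]$ whose first-order curve datum is $g$ (i.e. $C_t$ is cut out by $yf+tg+O(t^2)$, where $y$ cuts $F_0$ and $f$ cuts $C_1$, and write $Z=\{a_1,a_2\}$). A local computation at the two nodes $\{p_1,p_2\}=C_1\cap F_0$ shows the limit is the extension of $\omega_{C_1}$ by $\mi_{Z/F_0}(2F)$ whose class in $\Ext^1(\omega_{C_1},\mi_{Z/F_0}(2F))\cong\bc_{p_1}\oplus\bc_{p_2}$ is $(g(p_1),g(p_2))$ in fixed local trivializations. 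If the arc keeps $Z$ fixed, then $g$ vanishes on $Z$, so $g|_{F_0}$ is proportional to the unique section $s_Z$ of $\mo_{F_0}(L)\cong\mo_{\p^1}(2)$ vanishing on $Z$; the ratio $[g(p_1):g(p_2)]=[s_Z(p_1):s_Z(p_2)]$ is then fixed, and all such arcs do give one limit (namely $\mo_C\otimes\mo_X(F)$) — this is the forced behaviour you hope for. But arcs that simultaneously move $Z$ off the fibre are equally transverse to $V$, and for them $g|_{F_0}$ need only vanish at the point of $Z$ that stays put; since no nonzero section of $\mo_{\p^1}(2)$ vanishes at the three distinct points $a_2,p_1,p_2$, the ratio $[g(p_1):g(p_2)]$ sweeps out all of $\p^1$ as such arcs vary. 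The resulting extensions with both components nonzero are stable and pairwise non-isomorphic, so the closure of the graph of $g$ contains a whole $\p^1$ over $v$: $g$ does not extend over the generic point of $V$, and the residual indeterminacy locus is $V$ itself, of codimension $2$, not $3$. No amount of second-order control of the universal family can repair this; it is the codimension-$3$ statement itself (and with it the paper's proof, which never confronts the issue because of the miscount above) that breaks here. What does survive, along the lines of your last paragraph, is the weaker but still useful assertion obtained by replacing $Q$ by the closure $\Gamma$ of the graph: once $D_{\z_L}$ is shown to be normal, Zariski's main theorem applied to the proper birational morphism $\Gamma\ra D_{\z_L}$ identifies $\mo_{D_{\z_L}}$ with the pushforward of $\mo_{\Gamma}$. (Two small additional slips: your bound of $10$ for the strictly semistable locus of $D_{\z_L}$ misses the splittings $\mo_{F_0}(-1)\oplus\mf'$ with $\mf'\in D_{\z_{L-F}}$, which form a $13$-dimensional family, though $13$ is still small enough for your codimension argument; and on $Q\setminus R$ alone, $g$ is not proper, so the appeal to Zariski must indeed go through $\Gamma$ as you indicate.)
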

\begin{lemma}\label{sez2}$\bar{\pi}_{*}\cl_{L}^2\cong\mo_{|L|}^{\oplus 6}\oplus\mo_{|L|}(-2)$.
\end{lemma}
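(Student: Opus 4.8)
The strategy parallels the proof of Lemma \ref{p2z2}, with the arithmetic adjusted to $L\otimes K_X=2F$ and $g_L-1=2$. First I would note that since $e\leq 3$ the line bundle $L=2G+(e+4)F$ is very ample on $\Sigma_e$, so $h^1(\mi_Z(L))=0$ for every $Z\in X^{[2]}$ and $h^0(\mi_Z(L))=h^0(L)-2=13$ is constant; hence $\rho:Q\to X^{[2]}$ is a $\p^{12}$-bundle and $Q$ is smooth. By (\ref{egl}) this gives
\[\cl_L^2\cong\rho^{*}(2F)_{(2)}^{\otimes2}\otimes\mo_Q(-\Delta_Q)=\rho^{*}(4F)_{(2)}\otimes\mo_Q(-\Delta_Q).\]
Passing through the Hilbert--Chow morphism $h:Q\to Sym^2_{|L|}\mc$ (an isomorphism off $\Delta_Q$, with $h^{*}\Delta_{\mc}=\Delta_Q$) and using that $Sym^2_{|L|}\mc$ is normal so that $h_{*}\mo_Q=\mo$, the projection formula gives $\bar\pi_{*}\cl_L^2\cong(\bar\pi_1)_{*}\ma$ for $\ma:=\rho_1^{*}(4F)_{(2)}\otimes\mo(-\Delta_{\mc})$, exactly as in (\ref{egl3}).

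Next I would pull back along the double cover $\sigma$, using $\sigma_1^{*}(4F)_{(2)}=(4F)\boxtimes(4F)$ to obtain $\sigma^{*}\ma\cong q_1^{*}(4F)\otimes q_2^{*}(4F)\otimes\mo(-\Delta_{\mc})$ together with the exact sequence (the analogue of (\ref{p2ex1}))
\[0\to\sigma^{*}\ma\to q_1^{*}(4F)\otimes q_2^{*}(4F)\to q^{*}(8F)|_{\Delta_{\mc}}\to0.\]
Writing $q_1^{*}(4F)\otimes q_2^{*}(4F)=\mathcal M\boxtimes_{|L|}\mathcal M$ with $\mathcal M:=q^{*}(4F)|_{\mc}$, the $\ks_2$-invariants of $R(\bar\pi_2)_{*}(\mathcal M\boxtimes_{|L|}\mathcal M)$ compute the symmetric square of $Rp_{*}\mathcal M$ (with $p\colon\mc\to|L|$, $q\colon\mc\to X$). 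Here the genus-$3$ fibres satisfy $\deg(4F|_C)=4F\cdot L=8>2g_L-2=4$, so $R^1p_{*}\mathcal M=0$ and the derived symmetric square collapses: $((\bar\pi_2)_{*}(\mathcal M\boxtimes\mathcal M))^{\ks_2}\cong\mathrm{Sym}^2\mv$ and $R^1(\bar\pi_2)_{*}(\mathcal M\boxtimes\mathcal M)=0$, where $\mv:=p_{*}\mathcal M$. From the universal-curve sequence on $X\times|L|$ twisted by $4F$ I would read off $0\to\mo^{\oplus5}\to\mv\to\mo(-1)\to0$ (using $h^0(4F)=5$ and $h^1(4F-L)=h^1(-2G-eF)=1$), which splits on $|L|\cong\p^{14}$, so $\mathrm{Sym}^2\mv\cong\mo^{\oplus15}\oplus\mo(-1)^{\oplus5}\oplus\mo(-2)$. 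The same computation with $8F$ gives the diagonal term $p_{*}(q^{*}(8F)|_{\mc})\cong\mo^{\oplus9}\oplus\mo(-1)^{\oplus5}$ (using $h^0(8F)=9$ and $h^1(8F-L)=h^1(-2G+(4-e)F)=5$).

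Taking $\ks_2$-invariants of the displayed sequence and using $R^1(\bar\pi_2)_{*}(\mathcal M\boxtimes\mathcal M)=0$ yields
\[0\to(\bar\pi_1)_{*}\ma\to\mathrm{Sym}^2\mv\xrightarrow{\ \mu\ }p_{*}(q^{*}(8F)|_{\mc})\to R^1(\bar\pi_1)_{*}\ma\to0,\]
so that $(\bar\pi_1)_{*}\ma=\ker\mu$ and it remains to determine $\mu$. The map $\mu$ is multiplication $\mathcal M\cdot\mathcal M\to\mathcal M^{\otimes2}=q^{*}(8F)|_{\mc}$; on the degree-$0$ graded piece it is the multiplication $\mathrm{Sym}^2 H^0(4F)\to H^0(8F)$, which is pulled back from $\mathrm{Sym}^2 H^0(\p^1,\mo(4))\to H^0(\p^1,\mo(8))$ and hence surjective with $6$-dimensional kernel. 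Granting that $\mu$ is an isomorphism on the $\mo(-1)^{\oplus5}$ summands and that the $\mo(-2)$ summand maps into the target with kernel isomorphic to $\mo(-2)$, the kernel is $\mo_{|L|}^{\oplus6}\oplus\mo_{|L|}(-2)$ (all extensions split since $\Ext^1$ vanishes on $\p^{14}$), giving the lemma.

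The main obstacle is precisely the point where $\Sigma_e$ diverges from $\p^2$: in Lemma \ref{p2z2} the analogous bundle $\mo_{\p^2}(-2)$ was acyclic, whereas here $4F-L=-2G-eF$ and $8F-L=-2G+(4-e)F$ both have nonzero $H^1$, so the Koszul pushforwards acquire the extra $\mo(-1)$-twisted summands that must be tracked $\ks_2$-equivariantly. Concretely the work is to prove that $\mu$ is surjective as a map of sheaves on $|L|$ (equivalently $R^1(\bar\pi_1)_{*}\ma=0$), and not merely over the generic smooth fibre, and to verify that the two $\mo(-1)^{\oplus5}$ blocks are matched isomorphically. I expect to settle surjectivity by an explicit multiplication/base-locus argument in the spirit of Remark \ref{rsurp2}, reducing the fibrewise claim to the surjectivity of $\mathrm{Sym}^2 H^0(\p^1,\mo(4))\to H^0(\p^1,\mo(8))$ together with the projective normality of the image of a general curve under $|4F|$.
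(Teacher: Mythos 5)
Your reduction coincides with the paper's own: the Hilbert--Chow identification, the pullback to $\mc\times_{\ls}\mc$ with the restriction-to-diagonal sequence, the identification of the $\ks_2$-invariant pushforward with $S^2(p_{*}q^{*}\mo_X(4F))\cong\mo_{\ls}^{\oplus 15}\oplus\mo_{\ls}(-1)^{\oplus 5}\oplus\mo_{\ls}(-2)$, and the diagonal term $\mo_{\ls}^{\oplus 9}\oplus\mo_{\ls}(-1)^{\oplus 5}$ all match (\ref{sepf}) and the computation around it. But the proof is not complete: the step you defer (``granting that $\mu$ is an isomorphism on the $\mo(-1)^{\oplus 5}$ summands'') is precisely the mathematical content of the lemma, and the route you sketch for it would not close the gap. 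The paper's Lemma \ref{sesur} handles this block by observing that the map $\mo_{\ls}(-1)\otimes H^0(\mo_X(4F))\otimes H^1(\mo_X(4F-L))\to\mo_{\ls}(-1)\otimes H^1(\mo_X(8F-L))$ is a \emph{constant} map twisted by $\mo_{\ls}(-1)$: it is the single restriction map $H^1(\mo_X(4F)\boxtimes\mo_X(4F-L))\to H^1\big((\mo_X(4F)\boxtimes\mo_X(4F-L))|_{\Delta_X}\big)$ on $X\times X$, independent of the curve $C$, and its surjectivity is proved by factoring $\Delta_X\subset X\times_{\p^1}X\subset X\times X$ and using vanishing along the ruling ($H^2(\mo_X(3F)\boxtimes\mo_X(3F-L))=0$ and $R^i\tau_{*}(\mo_X(-G)\boxtimes\mo_X(-3G))=0$).

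Your proposed substitute fails on two counts. First, reducing to the surjectivity of $S^2H^0(\p^1,\mo_{\p^1}(4))\to H^0(\p^1,\mo_{\p^1}(8))$ only controls the block $\mo_{\ls}^{\oplus 15}\to\mo_{\ls}^{\oplus 9}$ (this is what Remark \ref{rsurp2} and the first part of the paper's argument already do); the $\mo(-1)$ blocks are built from $H^1(\mo_X(4F-L))$ and $H^1(\mo_X(8F-L))$, i.e.\ from sections of $\mo_C(4F)$ and $\mo_C(8F)$ that do \emph{not} come from $X$ (equivalently, are not pulled back from $\p^1$), so no statement about multiplication of pulled-back sections reaches them. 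Second, surjectivity of a map of sheaves on $\ls\cong\p^{14}$ must hold at every point, including reducible and non-reduced members of $\ls$, so ``projective normality of the image of a general curve'' cannot yield $R^1(\bar{\pi}_1)_{*}\ma=0$; you flag this difficulty yourself but supply no mechanism for it, whereas the paper's $X\times X$ formulation bypasses it entirely because the relevant map does not depend on the curve. A smaller, fixable issue of the same nature: your vanishing $R^1p_{*}(q^{*}\mo_X(4F))=0$ is argued by a degree count on genus-$3$ fibres, which is only valid for integral curves; the paper instead deduces $H^1(\mo_C(4F))=0$ for \emph{all} $C\in\ls$ from $H^1(\mo_X(4F))=H^2(\mo_X(4F-L))=0$.
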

\begin{proof}[Proof of Lemma \ref{sez1}]$\forall~Z\in X^{[2]}$, $h^0(\mi_Z(L\otimes K_X))=1$ or $2$.  We have
\[0\ra \mo_{\p^2}\ra\mi_Z(2F)\ra \mt\ra 0,~\forall~Z\in~X^{[2]}.\]
For $\mt$ either we have
$$0\ra \mo_F(-1)\ra\mt\ra\mo_F(-1)\ra 0,~if~h^0(\mi_Z(2F))=1;$$
or $$0\ra\mo_F\ra\mt\ra\mo_F(-2)\ra 0, ~if~h^0(\mi_Z(2F))=2.$$
Hence $h^1(\mi_Z(L))=0$, $h^0(\mi_Z(L))=L^2-3=13$ and $Q$ is a $\p^{12}$-bundle over $X^{[2]}$ and hence smooth.
Let $\mf$ be in the following sequence
\begin{equation}\label{secoh1}0\ra K_X\xrightarrow{\kappa}\mi_Z(2F)\ra\mf\ra0,\end{equation}
By Lemma \ref{use} $\mf$ is semistable if $h^0(\mi_Z(2F))=1$.  If $h^0(\mi_Z(2F))=2$, then $h^0(\mf)=2$.  If $\mf$ is not semistable, then $\exists ~\mf_1\subset \mf$ such that $\chi(\mf_1)>0$, $H^1(\mf_1)\neq0$ and $h^0(\mf_1)\leq2$.  Hence $\mf_1\cong \omega_C\cong \mo_C\otimes\mo_X(2F)$ with $C\in |L-F|$ ($g_C=2$) and $\omega_C$ the dualizing sheaf of $C$, and $\mf$ lies in the following sequence
\begin{equation}\label{seds1}0\ra\omega_C\ra\mf\ra\mo_{l}(-2)\ra 0,\end{equation}
with $\p^1\cong l\in |F|$.  $\chi(\mo_l(-2),\omega_C)=F.(L-F)=2$ and $\Ext^2(\mo_l(-2),\omega_C)=\Hom(\mo_C,\mo_l(-4))^{\vee}=0$.  Hence all $(\kappa,\mi_Z)$ in (\ref{secoh1}) with $\mf$ in (\ref{seds1}) form a subset $R$ of dimension $\leq dim~|F|+dim~|L-F|+1=13=dim~Q-3$.  Let $\widetilde{R}\subset Q$ contain all $(\kappa,\mi_Z)$ such that $h^0(\mi_Z(2F))=2$.  Then the image of $\widetilde{R}$ inside $X^{[2]}$ is $Sym^2\mc_F$ hence of dimension 3.  Hence $dim~\widetilde{R}=dim~Q-1$.     

Now we only need to show that $D_{\z_L}$ is normal.  $D_{\z_L}$ is Cohen-Macaulay since so is $\ml$.  Hence it is enough to show that $D_{\z_L}$ is smooth in codimension 1.  

Let $D_{\z_L}^s\subset D_{\z_L}$ consist of stable sheaves.  Strictly semistable sheaves in $D_{\z_{L}}$ are $S$-equivalent either to $\mo_F(-1)~\oplus~\mf'$ with $\mf'\in D_{\z_{L-F}}$ or to $\mo_{F}(-1)~\oplus~\mo_F(-1)~\oplus~\mf''$ with $\mf''\in D_{\z_{L-2F}}$.  Hence $dim~(D_{\z_L}\setminus D_{\z_L}^s)= dim~D_{\z_{L-F}}+dim~|F|=13=dim~D_{\z_L}-3$.  If $\mf\in D^s_{\z_L}$, then for any $\mf_1\subsetneq \mf$ we have $\chi(\mf_1)<0$ and hence $H^1(\mf_1)\neq0$.  Hence we always can find a torsion-free extension of $\mf$ by $K_X$, and hence $g$ is surjective on $D^s_{\z_L}$.  $g$ restricted on $Q\setminus \widetilde{R}$ is an isomorphism.  Let $g(\widetilde{R}\setminus R)^s:=g(\widetilde{R}\setminus R)\cap D_{\z_L}^s$.  It is enough to show $dim~g(\widetilde{R}\setminus R)^s\leq dim~D_{\z_L}-2$.  The fiber of $g$ over $g(\widetilde{R}\setminus R)^s$ are of dimension 1 since $\text{ext}^1(\mf,K_X)=2$ for $\mf\in g(\widetilde{R}\setminus R)$, hence $dim~g(\widetilde{R}\setminus R)^s=dim~\widetilde{R}\setminus R-1=dim~Q-2=dim~D_{\z_L}-2$.  

We have proved the lemma. 
\end{proof}
\begin{proof}[Proof of Lemma \ref{sez2}]Analogous to what we did in the proof of Lemma \ref{p2z2}, we have 
\begin{equation}\label{seegl2}\cl_{L}^2\cong \rho^{*} (2F)^{\otimes 2}_{(2)}\otimes \rho^{*}\mo_{X^{[2]}}(-\Delta)\cong \rho^*(2F)^{\otimes2}_{(2)}\otimes \mo_Q(-\Delta_Q).\end{equation}  
where $Q\supset\Delta_{Q}:=\rho^{*}\Delta$.  We have the Hilbert-Chow map $h:Q\ra Sym^2_{|L|}\mc$ which is an isomorphism on $Q\setminus \Delta_{Q}$ and over all $[\{2x\}\subset C]$ such that $C$ are smooth at $x$.  
We have the commutative diagram as follows
\begin{equation}\label{secomm3}\xymatrix{&Q \ar[r]^{\rho}\ar[ld]_{\bar{\pi}}\ar[d]^{h}& X^{[2]}\ar[d]^{h_1}\\ |L|&Sym^2_{|L|}\mc\ar[l]^{\bar{\pi}_1}\ar[r]_{\rho_1}&Sym^2X}.
\end{equation} 
Let $\Delta_{\mc}\subset Sym^2_{|L|}\mc$ be the diagonal.  Then $\Delta_{\mc}\cong \mc$ and $h^*\Delta_{\mc}=\Delta_Q$. 
Notice that $(2F)_{(2)}$ over $X^{[2]}$ is pull back via $h_1$ the line bundle denoted also by $(2F)_{(2)}$ over $Sym^2X$.  Hence by (\ref{seegl2}) we have
\begin{equation}\label{seegl3}\cl_{L}^2\cong h^*(\rho_1^{*} (2F)^{\otimes 2}_{(2)}\otimes \mo_{Sym^2_{|L|}\mc}(-\Delta_{\mc})).\end{equation}  
Define $\ma:=\rho_1^{*} (2F)^{\otimes 2}_{(2)}\otimes \mo_{Sym^2_{|L|}\mc}(-\Delta_{\mc})$.  $Sym^2_{|L|}\mc$ is normal and hence $h_{*}\mo_Q=\mo_{Sym^2_{|L|}\mc}$.  It is enough to show 
\begin{equation}\label{sepl1}(\bar{\pi}_1)_{*}\ma\cong\mo_{|L|}^{\oplus 6}\oplus\mo_{|L|}(-2).\end{equation}

We also have the following commutative diagram
\begin{equation}\label{secom1}\xymatrix{\Delta_{\mc}\ar@{^{(}->}[r] \ar[d]_{\cong}&\mc\times_{|L|}\mc\ar@{^{(}->}[r]\ar@/^2pc/[rrr]_{q_2}\ar@/^3pc/[rrr]^{q_1}\ar[d]_{\sigma} &X\times X\times |L|\ar[r] &X\times X\ar[d]^{\sigma_1}\ar@<0.5ex>[r]^{\widetilde{q}_1}\ar@<-0.5ex>[r]_{\widetilde{q}_2}&X\\ 
\Delta_{\mc}\ar@{^{(}->}[r] &Sym^2_{|L|}\mc\ar[rr]_{\rho_1} & & Sym^2X&.}
\end{equation}
$\sigma_1^{*}(2F)_{(2)}\cong \mo_{X}(2F)^{\boxtimes 2}$.  We then have
\begin{equation}\label{seegl3}\sigma^{*}\ma\cong q_1^{*}\mo_{X}(4F)\otimes q_2^{*}\mo_{X}(4F)\otimes \mo_{\mc\times_{|L|}\mc}(-\Delta_{\mc}).\end{equation}

On $\mc\times_{|L|}\mc$ we have 
\begin{equation}\label{seex1}
0\ra\sigma^*\ma\ra  q_1^{*}\mo_{X}(4F)\otimes q_2^{*}\mo_{X}(4F)\ra  q_1^{*}\mo_{X}(4F)\otimes q_2^{*}\mo_{X}(4F)|_{\Delta_{\mc}}\ra 0.\end{equation}
Define $\bar{\pi}_2=\bar{\pi}_1\circ~ \sigma$.  Then $((\bar{\pi}_2)_{*}(\sigma^*\ma))^{\ks_2}\cong (\bar{\pi}_1)_*\ma$, where $\ks_2$ is the $2^{\text{nd}}$ symmetric group.   

$\ks_2$ acts on $\Delta_{\mc}$ trivially.  $$q_1^{*}\mo_{X}(4F)\otimes q_2^{*}\mo_{X}(4F)|_{\Delta_{\mc}}\cong q^*\mo_{X}(8F)$$ and hence 
\begin{equation}\label{sedel1}(\bar{\pi}_2)_{*}q_1^{*}\mo_{X}(4F)\otimes q_2^{*}\mo_{X}(4F)|_{\Delta_{\mc}}\cong p_{*}(q^*\mo_{X}(8F)),\end{equation}
with $p,q$ the projection of $\mc$ to $|L|$ and $X$ respectively.  

We have on $X\times\ls$
\begin{equation}\label{seuc}0\ra \mo_X(-L)\boxtimes\mo_{|L|}(-1)\ra \mo_{X\times |L|}\ra\mo_{\mc}\ra 0.
\end{equation}
$H^0(\mo_X(8F-L))=0=H^1(\mo_X(8F))$.  Hence we have the following sequence that splits
\begin{equation}\label{sedia}0\ra \mo_{|L|}\otimes H^0(\mo_{X}(8F))\ra p_{*}(q^*\mo_{X}(8F))\ra \mo_{\ls}(-1)\otimes H^1(\mo_X(8F-L))\ra0.
\end{equation}

We then compute $q_1^{*}\mo_{X}(4F)\otimes q_2^{*}\mo_{X}(4F)$ in (\ref{seex1}).  

$\mc\times_{|L|}\mc\subset \mc\times X\subset X\times X\times \ls$.  Hence we have the following exact sequence on $\mc\times X$.  By abuse of notations $p$ is also the projections from $\mc\times X$ to $|L|$, $\widetilde{q}_2$ also the projection from $\mc\times X$ to $X$.
\begin{equation}\label{sepro2}0\ra p^{*}\mo_{|L|}(-1)\otimes \widetilde{q}_2^*\mo_{X}(-L)
\ra\mo_{\mc\times X}\ra\mo_{\mc\times_{\ls}\mc}\ra0.
\end{equation}
Since $H^0(\mo_X(4F-L))=H^1(\mo_X(4F))=0$ and $h^1(\mo_X(4F-L))=1$, by (\ref{sepro2}) we have
\begin{equation}\label{seviw1}{\tiny 0\ra \left.\begin{array}{c}H^0(\mo_X(4F))\\ \otimes \\ p_{*}(q^{*}\mo_{X}(4F))\end{array}\right.\ra(\bar{\pi}_2)_{*}(\left.\begin{array}{c}q_1^{*}\mo_{X}(4F)\\ \otimes \\ q_2^{*}\mo_{X}(4F)\end{array}\right.)\ra \left.\begin{array}{c}\mo_{\ls}(-1)\\ \otimes \\ H^1(\mo_X(4F-L))\\ \otimes\\ p_{*}(q^{*}\mo_X(4F))\end{array}\right.\ra \left.\begin{array}{c}H^0(\mo_X(4F))\\ \otimes \\ R^1p_{*}(q^{*}\mo_{X}(4F))\end{array}\right..}\end{equation} 
Since for every $C\in \ls$, $H^1(\mo_C(4F))=0$ by $H^1(\mo_X(4F))=H^2(\mo_X(4F-L))=0$, we have $R^1p_{*}(q^{*}\mo_{X}(4F))=0$. Hence 
\begin{equation}\label{seviw2}{\small 0\ra \left.\begin{array}{c}H^0(\mo_X(4F))\\ \otimes \\ p_{*}(q^{*}\mo_{X}(4F))\end{array}\right.\ra(\bar{\pi}_2)_{*}(\left.\begin{array}{c}q_1^{*}\mo_{X}(4F)\\ \otimes \\ q_2^{*}\mo_{X}(4F)\end{array}\right.)\ra \left.\begin{array}{c}\mo_{\ls}(-1)\\ \otimes \\ H^1(\mo_X(4F-L))\\ \otimes\\ p_{*}(q^{*}\mo_X(4F))\end{array}\right.\ra0.}\end{equation} 
Using (\ref{seuc}) to compute $p_*(q^*\mo_X(4F))$, we get an exact sequence that has to split as follows.
\begin{equation}\label{seviw3}0\ra \mo_{|L|}\otimes H^0(\mo_{X}(4F))\ra p_{*}(q^*\mo_{X}(4F))\ra \mo_{\ls}(-1)\otimes H^1(\mo_X(4F-L))\ra0.\end{equation}

Combine (\ref{seviw2}) and (\ref{seviw3}) and we have\setlength{\arraycolsep}{0.05cm}
\begin{equation}\label{seviw4}\renewcommand{\arraystretch}{1.2}
{\scriptsize\begin{array}[c]{ccccccccc}&&0&&0&&0&& 
\\ &&\Big\downarrow&&\Big\downarrow&&\Big\downarrow&&\\
0&\longrightarrow&\mo_{\ls}\otimes H^0(\mo_X(4F))^{\otimes 2}&\longrightarrow
& \me_1 &\longrightarrow &\mo_{\ls}(-1)\otimes (\left.\begin{array}{c}H^0(\mo_X(4F))\\ \otimes \\ H^1(\mo_X(4F-L))\end{array}\right.)&\longrightarrow
&0\\
&&\Big\downarrow&&\Big\downarrow&&\Big\downarrow&&\\
0&\longrightarrow& \left.\begin{array}{c}H^0(\mo_X(4F))\\ \otimes \\ p_{*}(q^{*}\mo_{X}(4F))\end{array}\right.&\longrightarrow
 &(\bar{\pi}_2)_{*}(\left.\begin{array}{c}q_1^{*}\mo_{X}(4F)\\ \otimes \\ q_2^{*}\mo_{X}(4F)\end{array}\right.)&\longrightarrow &\left.\begin{array}{c}\mo_{\ls}(-1)\\ \otimes \\ H^1(\mo_X(4F-L))\\ \otimes\\ p_{*}(q^{*}\mo_X(4F))\end{array}\right.&\longrightarrow &0\\
&&\Big\downarrow&&\Big\downarrow&&\Big\downarrow&&\\
 0&\longrightarrow&  \mo_{\ls}(-1)\otimes (\left.\begin{array}{c}H^0(\mo_X(4F))\\ \otimes \\ H^1(\mo_X(4F-L))\end{array}\right.)&\longrightarrow & \me_2 &\longrightarrow &\mo_{\ls}(-2)\otimes H^1(\mo_X(4F-L))^{\otimes 2}&\longrightarrow &0\\
 &&\Big\downarrow&&\Big\downarrow&&\Big\downarrow&&
\\ &&0&&0&&0&&
\end{array}}
\end{equation}
All the sequences in (\ref{seviw4}) split and hence 
\begin{equation}\label{sesta1} {\scriptsize q_1^{*}\mo_{X}(4F)~\otimes ~q_2^{*}\mo_{X}(4F)\cong\renewcommand{\arraystretch}{1.5}\begin{array}{c}\mo_{\ls}\otimes H^0(\mo_X(4F))^{\otimes2}\\ \bigoplus \\\mo_{\ls}(-1)\otimes\left(\begin{array}{c}H^0(\mo_X(4F))\\ \otimes \\ H^1(\mo_X(4F-L))\end{array}\right)^{\oplus 2}\\ \bigoplus \\ \mo_{\ls}(-2)\otimes H^1(\mo_X(4F-L))^{\otimes 2}\end{array}}.\end{equation}
Easy to see how $\ks_2$ acts on the right hand side of (\ref{sesta1}) and hence
\begin{equation}\label{sesta2}{\scriptsize (q_1^{*}\mo_{X}(4F)\otimes q_2^{*}\mo_{X}(4F))^{\ks_2}\cong\renewcommand{\arraystretch}{1.5}\begin{array}{c}\mo_{\ls}\otimes S^2H^0(\mo_X(4F))\\ \bigoplus \\\mo_{\ls}(-1)\otimes\left(\begin{array}{c}H^0(\mo_X(4F))\\ \otimes \\ H^1(\mo_X(4F-L))\end{array}\right)\\ \bigoplus \\ \mo_{\ls}(-2)\otimes S^2H^1(\mo_X(4F-L))\end{array}}.\end{equation}
 
By (\ref{sedel1}) and (\ref{sedia}), we have 
\begin{eqnarray}\label{sedel2}((\bar{\pi}_2)_{*}q_1^{*}\mo_{X}(4F)\otimes q_2^{*}\mo_{X}(4F)|_{\Delta_{\mc}})^{\ks_2}&\cong&
(\bar{\pi}_2)_{*}q_1^{*}\mo_{X}(4F)\otimes q_2^{*}\mo_{X}(4F)|_{\Delta_{\mc}}\nonumber \\ 
\cong p_{*}(q^{*}\mo_X(8F))&\cong&\renewcommand{\arraystretch}{1.5} {\scriptsize\left.\begin{array}{c}\mo_{\ls}\otimes H^0(\mo_X(8F))\\ \bigoplus \\
\mo_{\ls}(-1)\otimes H^1(\mo_X(8F-L))\end{array}\right.}.\end{eqnarray} 
Therefore by (\ref{seex1}), (\ref{sesta2}) and (\ref{sedel2}), we have the following sequence 
\begin{equation}\label{sepf}\renewcommand{\arraystretch}{1.5}{\scriptsize 0\ra ((\bar{\pi}_2)_{*}(\sigma^*\ma))^{\ks_2}\ra \begin{array}{c}\mo_{\ls}\otimes S^2H^0(\mo_X(4F))\\ \bigoplus \\\mo_{\ls}(-1)\otimes\left(\begin{array}{c}H^0(\mo_X(4F))\\ \otimes \\ H^1(\mo_X(4F-L))\end{array}\right)\\ \bigoplus \\ \mo_{\ls}(-2)\otimes S^2H^1(\mo_X(4F-L))\end{array}\xrightarrow{\mathtt{r}} \left.\begin{array}{c}\mo_{\ls}\otimes H^0(\mo_X(8F))\\ \bigoplus \\
\mo_{\ls}(-1)\otimes H^1(\mo_X(8F-L))\end{array}\right..}\end{equation}
We want the map $\mathtt{r}$ in (\ref{sepf}) to be surjective.  
Notice that the restriction $\mathtt{r}|_{\mo_{\ls}\otimes S^2H^0(\mo_X(4F))}:\mo_{\ls}\otimes S^2H^0(\mo_X(4F))\ra\mo_{\ls}\otimes H^0(\mo_X(8F))$ is given 
by the multiplication map $S^2H^0(\mo_X(4F))\ra H^0(\mo_X(8F))$, $(s_1,s_2)\mapsto s_1\cdot s_2$.  Hence $\mathtt{r}$ is surjective on $\mo_{\ls}\otimes H^0(\mo_X(8F))$.  

For any curve $C\in \ls$, $H^1(\mo_X(8F-L))\cong H^0(\mo_{C}(8F))/H^0(\mo_X(8F))$ and $H^1(\mo_X(4F-L))\cong H^0(\mo_{C}(4F))/H^0(\mo_X(4F))$.
The map 
$\mathtt{r}$ sends $\mo_{\ls}(-1)\otimes H^0(\mo_X(4F))\otimes H^0(\mo_X(4F-L))$ to $\mo_{\ls}(-1)\otimes H^1(\mo_X(8F-L))$ and its restriction on $\mo_{\ls}(-1)\otimes H^0(\mo_X(4F))\otimes H^0(\mo_X(4F-L))$ is given by the following multiplication
\[H^0(\mo_X(4F))\otimes H^0(\mo_C(4F))/H^0(\mo_X(4F))\xrightarrow{m_{\mathtt{r}}} H^0(\mo_C(8F))/H^0(\mo_X(8F)),\]
where we identify $H^1(\mo_X(nF-L))$ with $H^0(\mo_C(nF))/H^0(\mo_X(nF))$ for $n=4,8$.

Let $\Delta_X\subset X\times X$ be the diagonal and $\mi_{\Delta_X}$ be its ideal sheaf.  We have  
\begin{equation}\label{sesur1}{\small 0\ra \mi_{\Delta_X} \otimes\left(\begin{array}{c}\mo_X(4F)\\ \boxtimes\\ \mo_X(4F-L)\end{array}\right)\ra\begin{array}{c}\mo_X(4F)\\ \boxtimes \\ \mo_X(4F-L)\end{array}\ra
\left.\begin{array}{c}\mo_X(4F)\\ \boxtimes\\ \mo_X(4F-L)\end{array}\right|_{\Delta_X}\ra 0.}\end{equation}
$\mo_X(4F)\boxtimes\mo_X(4F-L)|_{\Delta_X}\cong \mo_X(8F-L)$, $H^1(\mo_X(4F)\boxtimes\mo_X(4F-L))\cong H^0(\mo_X(4F))\otimes H^1(\mo_X(4F-L))$ and the map $m_{\mathtt{r}}$ is actually given by the following restriction induced by (\ref{sesur1})
\begin{equation}\label{sesur2}H^1(\mo_X(4F)\boxtimes\mo_X(4F-L))\ra H^1((\mo_X(4F)\boxtimes\mo_X(4F-L))|_{\Delta_X})
\end{equation}

By Lemma \ref{sesur} below we have $m_{\mathtt{r}}$ is surjective and hence so is $\mathtt{r}$.  Therefore by (\ref{sepf})
\[(\bar{\pi}_1)_*\ma\cong((\bar{\pi}_2)_{*}(\sigma^*\ma))^{\ks_2}\cong \mo_{\ls}^{\oplus 6}\oplus\mo_{\ls}(-2).\]
Notice that $H^0(\mo_X(8F-L))=0$, $H^2(\mo_X(8F-L))=H^0(\mo_X(-6F))^{\vee}=0$.  Hence $h^1(\mo_X(8F-L))=-\chi(\mo_X(8F-L))=5$.  Also $h^1(\mo_X(4F-L))=-\chi(\mo_X(4F-L))=1$, and $h^0(\mo_X(nF))=n+1$ for all $n\geq1$.  The lemma is proved.
\end{proof}
\begin{lemma}\label{sesur}The restriction map in (\ref{sesur2}) is surjective.
\end{lemma}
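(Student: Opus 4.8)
The plan is to identify both sides of (\ref{sesur2}) via the K\"unneth and Leray formulas, and then to recognize the restriction map as a trivial multiplication map on the base $\p^1$ of the ruling of $\Sigma_e$. First I would dispose of the cross terms in K\"unneth. Since $\mo_X(4F)=\phi^{*}\mo_{\p^1}(4)$ for the ruling $\phi:\Sigma_e\ra\p^1$ with $F=\phi^{*}(\text{pt})$, one has $H^1(\mo_X(4F))=0$ and $H^0(\mo_X(4F-L))=0$ (the latter because $\phi_{*}\mo_X(4F-L)=0$, see below). Hence K\"unneth collapses to
\[H^1(\mo_X(4F)\boxtimes\mo_X(4F-L))\cong H^0(\mo_X(4F))\otimes H^1(\mo_X(4F-L)),\]
and, since $(\mo_X(4F)\boxtimes\mo_X(4F-L))|_{\Delta_X}\cong\mo_X(8F-L)$, the restriction map in (\ref{sesur2}) becomes the cup-product map
\[H^0(\mo_X(4F))\otimes H^1(\mo_X(4F-L))\ra H^1(\mo_X(8F-L)).\]
It therefore suffices to prove this map is surjective.

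The key observation I would use is that $4F-L=-2G-eF$ is exactly the relative dualizing sheaf $\omega_{\Sigma_e/\p^1}$, which follows from $K_X=-2G-(e+2)F$ and $\phi^{*}K_{\p^1}=\mo_X(-2F)$. Along the $\p^1$-fibration $\phi$ one has by relative duality $\phi_{*}\omega_{\Sigma_e/\p^1}=0$ and $R^1\phi_{*}\omega_{\Sigma_e/\p^1}\cong\mo_{\p^1}$, so the Leray spectral sequences for $\mo_X(4F-L)$ and for $\mo_X(8F-L)=\phi^{*}\mo_{\p^1}(4)\otimes\omega_{\Sigma_e/\p^1}$ degenerate. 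Using the projection formula I then get the identifications
\[H^1(\mo_X(4F-L))\cong H^0(\p^1,\mo_{\p^1}),\quad H^1(\mo_X(8F-L))\cong H^0(\p^1,\mo_{\p^1}(4)),\]
together with $H^0(\mo_X(4F))\cong H^0(\p^1,\mo_{\p^1}(4))$. (These are consistent with the dimensions $h^1(\mo_X(4F-L))=1$ and $h^1(\mo_X(8F-L))=5$ already recorded in the proof of Lemma \ref{sez2}.)

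Finally, because a section of $\mo_X(4F)$ is the pullback $\phi^{*}t_0$ of some $t_0\in H^0(\p^1,\mo_{\p^1}(4))$, the cup product is $\phi^{*}$-linear and, by the projection formula, descends under the above identifications to the multiplication map
\[H^0(\p^1,\mo_{\p^1}(4))\otimes H^0(\p^1,\mo_{\p^1})\ra H^0(\p^1,\mo_{\p^1}(4)),\]
which is an isomorphism, hence surjective. This proves the lemma, and with it the surjectivity of $\mathtt{r}$ in (\ref{sepf}). I expect the only real work to be the bookkeeping in the middle step: carefully matching the coboundary restriction of (\ref{sesur1}) with the cup product and confirming its compatibility with the projection formula, rather than any delicate vanishing, since every cohomology group in sight is computed directly from $R^1\phi_{*}\omega_{\Sigma_e/\p^1}\cong\mo_{\p^1}$.
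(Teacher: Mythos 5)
Your proof is correct, and it takes a genuinely different route from the paper's. After the K\"unneth collapse (which the paper also invokes just before (\ref{sesur2})), you stay on $X$ and work along the ruling $\phi:X\ra\p^1$: the key observation that $\mo_X(4F-L)\cong\omega_{X/\p^1}$, together with $\phi_*\omega_{X/\p^1}=0$, $R^1\phi_*\omega_{X/\p^1}\cong\mo_{\p^1}$ and the Leray/projection-formula identifications, turns the restriction map of (\ref{sesur2}) --- which under K\"unneth is the cup product $H^0(\mo_X(4F))\otimes H^1(\omega_{X/\p^1})\ra H^1(\mo_X(4F)\otimes\omega_{X/\p^1})$ --- into the multiplication map $H^0(\p^1,\mo_{\p^1}(4))\otimes H^0(\p^1,\mo_{\p^1})\ra H^0(\p^1,\mo_{\p^1}(4))$. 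This in fact shows the map is an isomorphism, not merely surjective, consistent with the dimension count recorded in the paper ($h^0(\mo_X(4F))=5$, $h^1(\mo_X(4F-L))=1$, $h^1(\mo_X(8F-L))=5$). The paper argues differently: it factors the restriction through the fiber product $X\times_{\p^1}X\subset X\times X$, in which $\Delta_X$ sits as a divisor, and obtains surjectivity as a composite of two surjections, each coming from an $H^2$-vanishing of the relevant ideal-sheaf twist --- $H^2(\mo_X(3F)\boxtimes\mo_X(3F-L))=0$ by K\"unneth for the first step, and $R^i\tau_*(\mo_X(-G)\boxtimes\mo_X(-3G))=0$ for all $i$ (the fibers of $\tau:X\times_{\p^1}X\ra\p^1$ being $\p^1\times\p^1$) for the second. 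The trade-off: your argument pins down the map explicitly and yields the sharper conclusion, but it leans on the standard compatibility of cup product with the Leray edge maps and the projection formula (the bookkeeping you flag yourself, which is indeed just functoriality of Leray applied to the sheaf map given by multiplication by $\phi^*t$); the paper's argument never needs to compute the map at all --- two vanishings suffice --- at the cost of introducing the auxiliary fiber product.
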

\begin{proof}$X$ is a ruled surface with projection $\tau:X\ra\p^1$.  $X\times_{\p^1}X$ is a divisor in $X\times X$ associated to the line bundle $\mo_X(F)\boxtimes\mo_X(F)$. $H^2(\mo_X(3F)\boxtimes\mo_X(3F-L))\cong H^0(\mo_X(3F))\otimes H^2(\mo_X(3F-L))=0$.  Hence the following map is surjective
\begin{equation}\label{sesur3}H^1(\mo_X(4F)\boxtimes\mo_X(4F-L))\twoheadrightarrow H^1((\mo_X(4F)\boxtimes\mo_X(4F-L))|_{X\times_{\p^1}X})
\end{equation}

$\Delta_{X}$ is a divisor on $X\times_{\p^1}X$ associated to the line bundle $(\mo_{X}(G)\boxtimes\mo_X(G))\otimes \tau^{*}\mo_{\p^1}(e)$.  This is because $\mo_{X\times_{\p^1}X}(\Delta_X)|_{\mo_{\Delta_X}}\cong \mt_{X/\p^1}\cong \mo_X(2G+eF) $ and $\mo_{X\times_{\p^1}X}(\Delta_X)$ restricted to each fiber $F_{\tau}\cong\p^1\times\p^1$ of $\tau$ is $\mo_{\p^1}(1)\boxtimes\mo_{\p^1}(1)$.   
{\footnotesize $$(\mo_X(4F)~\boxtimes~\mo_X(4F-L))|_{X\times_{\p^1} X}~\otimes ~\mi_{\Delta_X/X\times_{\p^1}X} \cong 
(\mo_{X}(-G)~\boxtimes~\mo_X(-3G))~\otimes~ \tau^{*}\mo_{\p^1}(4-2e).$$}
For each fiber $F_{\tau}$ of $\tau$, we have $$(\mo_{X}(-G)~\boxtimes~\mo_X(-3G))|_{F_{\tau}}\cong\mo_{\p^1}(-1)~\boxtimes~\mo_{\p^1}(-3)$$ and hence $R^i\tau_{*}(\mo_{X}(-G)~\boxtimes~\mo_X(-3G))=0$ for all $i$.  Therefore 
$$H^2((\mo_X(4F)~\boxtimes~\mo_X(4F-L))|_{X\times_{\p^1} X}~\otimes ~\mi_{\Delta_X/X\times_{\p^1}X})=0.$$
Hence the following map is surjective
\begin{equation}\label{sesur4}H^1((\mo_X(4F)\boxtimes\mo_X(4F-L))|_{X\times_{\p^1}X})\twoheadrightarrow H^1((\mo_X(4F)\boxtimes\mo_X(4F-L))|_{\Delta_X})
\end{equation}
The map in (\ref{sesur2}) is obtained by composing maps in (\ref{sesur3}) and (\ref{sesur4}).  Hence the lemma.
\end{proof}

\begin{proof}[Proof of Theorem \ref{gth3} for $X=\Sigma_e$.]
We see that $-K_X$ is ample iff $e\leq1$.  Analogously to the proof for $X=\p^2$ in \S \ref{p2g3}, we will at first find $\mg_i\in W(2,0,2)$, $1\leq i\leq 6$ such that $\{s_{\mg_i}\}_{i=1}^6$ are linearly independent restricted to $D_{\z_{L}}$, and then we will find $\mg\in W(2,0,4)$ such that $s_{\mg}$ is not contained in the image of the multiplication map
\begin{equation}\label{sem3}H^0(\pi^{*}\mo_{|L|}(2))\otimes H^0(\z^2_{L}(2)|_{D_{\z_{L}}}) \xrightarrow{m_4} H^0(\z^2_{L}(4)|_{D_{\z_{L}}}).\end{equation}  

Choose four distinct points $\{x_1,x_2,x_3,x_4\}\subset X$ such that any two of them do not lie on the same fiber or on a curve in $|G|$.  Denote by $l_i$ the unique fiber containing $x_i$.  Then $l_i\neq l_j$ for $i\neq j$.  Denote by $\mi_{ij}~(i<j)$ the ideal sheaf of $\{x_i,x_j\}$, then $H^0(\mi_{ij}(F))=H^0(\mi_{ij}(G))=0$.  Construct $\mg_{ij}~(i<j)$ as a locally free extension of $\mi_{ij}(F)$ by $\mo_X(-F)$ as follows.
\begin{equation}\label{secon1}0\ra\mo_X(-F)\ra\mg_{ij}\ra\mi_{ij}(F)\ra0.
\end{equation}
Then by Lemma 6.27 in \cite{GY}, $\mg_{ij}$ are slop-stable.  The number of $\mg_{ij}$ is 6.  To see the linear independence of $s_{\mg_{ij}}|_{D_{\z_L}}$, we define $\mf_{ij}:=\mo_C\oplus\mo_{l_i}(-1)\oplus\mo_{l_j}(-1)$ where $C\in |-K_X|$.  Then $\mf_{ij}\in D_{\z_L}$ and $H^0(\mg_{ij}\otimes \mf_{k\ell})\neq 0\Leftrightarrow \{x_i,x_j\}\cap \{x_k,x_{\ell}\}\neq\emptyset$ (see the proof of Lemma 6.29 in \cite{GY}).  Hence $s_{\mg_{ij}}(\mf_{k\ell})\neq 0\Leftrightarrow \{i,j,k,\ell\}=\{1,2,3,4\}$.  Hence $s_{\mg_{ij}}|_{D_{\z_L}}$ can not be linearly dependent.

Let $Y:=\{x_1,x_2,x_3,x_4\}\in X^{[4]}$, then $H^0(\mi_Y(3F))=H^0(\mi_Y(3G))=0$.  We can ask moreover $H^0(\mi_Y(F+G))=0$.  We construct a locally free sheaf $\mg\in W(2,0,4)$ as in the following sequence
\begin{equation}\label{secon2}0\ra\mo_{X}(-2F)\ra \mg\ra \mi_Y(2F)\ra 0.
\end{equation} 
This can be done by Lemma \ref{sew3} below.  Then $\mg|_{l_{ij}}\cong \mo_{\p^1}(-1)\oplus\mo_{\p^1}(1)$ and hence $s_{\mg}(\mo_C\oplus\mo_{l_i}(-1)\oplus\mo_{l_j}(-1))=0$ for all $C\in|-K_X|$ and $1\leq i<j\leq 6$.  If $s_{\mg}$ is contained in the image of $m_4$ in (\ref{sem3}), then $\exists~f_{ij}\in H^0(\mo_{|L|}(2))$ for $1\leq i<j\leq 4$, such that $s_{\mg}=\displaystyle{\sum_{1\leq i<j\leq 4}}f_{ij} s_{\mg_{ij}}$ where we also write $f_{ij}$ for its pull back via $\pi$.  Then $f_{ij}$ vanishes over the image of $\imath_{ij}:|-K_X|\hookrightarrow\ls$ where $\imath_{ij}$ is given by $C\mapsto C\cup l_k\cup l_{\ell}$ such that $\{i,j,k,\ell\}=\{1,2,3,4\}$.  However the image of $\imath_{ij}$ is defined by linear equations in $\ls$ and hence $f_{ij}$ has to be a product of two linear equations.  Write $f_{ij}=g_{ij}\cdot h_{ij}$ such that $g_{ij},h_{ij}\in H^0(\mo_{\ls}(1))$. 

Since $|L|\cong \p^{14}$, it is possible to choose a curve $B\in |L|$ such that $g_{ij}(B)=0$ for all $1\leq i<j\leq 4$ and moreover $Y\subset B$.  Since $s_{\mg}=\displaystyle{\sum_{1\leq i<j\leq 4}}g_{ij}h_{ij} s_{\mg_{ij}}$, for all $\mf\in D_{\z_{4H}}$ supported on $B$ we must have $s_{\mg}(\mf)=0$ which is equivalent to that $H^0(\mf\otimes\mg)\neq 0$.  Hence it suffices to find $Z\in B^{[2]}$ such that $H^0(\mi_{Z/B}(2F)\otimes\mg)=0$.   

By Lemma \ref{sez1}, we see that for any $\widetilde{Z}\in B^{[2]}$, $\mi_{\widetilde{Z}/B}(2F)$ is semistable iff $\widetilde{Z}$ is not contained in a fiber component of $B$.  Choose a generic fiber $l_5$ different from $l_i$ for $1\leq i\leq4$ such that $l_5\cap B=\{y,z\}$.  Let $Z=\{y,z\}\in B^{[2]}$, then $\mf:=\mi_{Z/B}(2F)\cong\mo_B(F)\in D_{\z_L}$.  Then by (\ref{secon2}) we have
\begin{equation}\label{secon3} 0\ra \mo_B(-F)\ra \mg\otimes\mf\ra \mi_Y(3F)\otimes \mo_{B}\ra 0.
\end{equation}    
Notice that $Y\subset B$.  Hence $\mi_Y(3F)\otimes \mo_B\cong \mo_{Y}\oplus \mi_{Y/B}(3F)$.  Therefore (\ref{secon3}) induces the following sequence
\begin{equation}\label{secon4}0\ra\mi_{Y/B}^{\vee}(-F)\ra\mg\otimes\mf\ra \mi_{Y/B}(3F)\ra0,\end{equation}
where $\mi_{Y/B}^{\vee}$ is the unque torsion free extension of $\mo_B$ by $\mo_Y$ over $B$.

The dualizing sheaf $\omega_B$ over $B$ is $\mo_B\otimes\mo_{X}(2F)$ and the restriction map $H^0(\mo_{X}(nF))\ra H^0(\mo_B\otimes\mo_{X}(nF))$ is surjective for all $0\leq n\leq 3$.  $H^0(\mi_{Y/B}(3F))=0$ because $H^0(\mi_Y(3F))=0$.  $\Hom(\mi_{Y/B}^{\vee}(-F),\omega_B)$ is the kernel of the map $H^0(\mo_B(3F))\twoheadrightarrow \Ext^1_B(\mo_Y,\mo_{B}(3F))\cong \mo_Y$.  But points in $Y$ lie on 4 different fibers and hence every non-zero element in $H^0(\mo_B(3F))$ does not vanish on $Y$ and hence $\Hom(\mi_{Y/B}^{\vee}(-F),\omega_B)\cong H^1(\mi_{Y/B}^{\vee}(-F))^{\vee}=0$.
Therefore we have $H^0(\mi_{Y/B}^{\vee}(-F))=0$ since $\chi(\mi_{Y/B}^{\vee}(-F))=0$.  By (\ref{secon4}), we have $H^0(\mg\otimes\mf)=0$.  

We have finished the proof of the whole theorem.  
\end{proof}

\begin{lemma}\label{sew3}Assume $H=G+aF$ for some $a\in\mathbb{Q}$ and $a>\max\{e,1\}$.  Let $\mg$ be locally free and lie in the following sequence
\begin{equation}\label{secon}0\ra\mo_{X}(-2F)\ra \mg\ra \mi_Y(2F)\ra 0,
\end{equation} 
where $Y\in X^{[4]}$ and $H^0(\mi_Y(3F))=H^0(\mi_Y(3G))=H^0(\mi_Y(G+F))=0.$ Then $\mg\in W(2,0,4)$ and slop-stable .
In particular, we can always find a locally free $\mg$ in (\ref{secon})
\end{lemma}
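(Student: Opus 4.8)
The plan is to prove that $\mg$ is slope-stable; since the defining sequence gives $r(\mg)=2$, $c_1(\mg)=-2F+2F=0$, and $c_2(\mg)=c_2(\mi_Y(2F))+(-2F)\cdot(2F)=4+0=4$ (using $F^2=0$), slope-stability immediately places $\mg$ in $W(2,0,4)$. So the whole content is the stability statement. First I would record the two constraints that any \emph{destabilizing} saturated rank-one subsheaf $\mo_X(D)\subset\mg$ must satisfy: because $c_1(\mg)=0$, the destabilizing condition reads $D\cdot H\ge 0$; and because $\mo_X(D)$ is saturated, the quotient is a torsion-free rank-one sheaf $\mi_W(-D)$, so that $c_2(\mg)=\mathrm{len}(W)-D^2=4$ forces the discriminant bound $D^2\ge -4$.

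Next I would analyze the composite $\mo_X(D)\hookrightarrow\mg\ra\mi_Y(2F)$. If it vanishes then $\mo_X(D)\subseteq\mo_X(-2F)$, and as both are saturated line bundles this gives $D=-2F$, hence $D\cdot H=-2<0$, not destabilizing. Otherwise the composite is nonzero, i.e. $H^0(\mi_Y(2F-D))\neq 0$ and $2F-D$ is effective. Writing $2F-D=mG+cF$, effectivity on $\Sigma_e$ forces $m=(2F-D)\cdot F\ge 0$ and then $c\ge 0$, so $D=-mG+\beta F$ with $\beta=2-c$. When $m=0$ we get $D=\beta F$ with $0\le\beta\le 2$, and then $H^0(\mi_Y((2-\beta)F))\neq 0$ contradicts $H^0(\mi_Y(3F))=0$ (for $2-\beta\le 2$, via multiplication by a section of $\mo_X((1+\beta)F)$, the case $\beta=2$ being $H^0(\mi_Y)=0$).

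For $m\ge 1$ the slope bound $D\cdot H=-m(a-e)+\beta\ge 0$ gives $\beta\ge m(a-e)>0$ since $a>e$, hence $\beta\ge 1$; combining with the discriminant bound $em^2+2m\beta\le 4$, which in particular yields $m\beta\le 2$, leaves only $(m,\beta)\in\{(1,1),(1,2),(2,1)\}$. These match the three hypotheses exactly: $(1,1)$ gives $2F-D=G+F$, contradicting $H^0(\mi_Y(G+F))=0$; $(1,2)$ gives $2F-D=G$, with $H^0(\mi_Y(G))\hookrightarrow H^0(\mi_Y(3G))=0$; and $(2,1)$ forces $e=0$ through $4e+4\le 4$, after which the slope bound gives $a\le \tfrac12$, contradicting $a>1$. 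Thus no destabilizing $D$ exists, $\mg$ is slope-stable, and $\mg\in W(2,0,4)$. Finally, for the \emph{in particular} clause I would invoke the Serre construction: a locally free extension in (\ref{secon}) exists iff $Y$ satisfies the Cayley--Bacharach property with respect to $|K_X\otimes\mo_X(4F)|$; since $K_X+4F=-2G+(2-e)F$ has negative $G$-coefficient, $H^0(\mo_X(K_X+4F))=0$, so this property is vacuous and a locally free $\mg$ always exists.

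The main obstacle is the stability step. The slope inequality alone does \emph{not} bound the multiplicity $m$ of $G$ in the destabilizing class once $a-e$ is small, so the discriminant bound $D^2\ge -4$ coming from $c_2=4$ is essential to cut the problem down to finitely many classes; the delicate point is then that these surviving classes coincide precisely with the three prescribed vanishing conditions on $Y$ (together with the single case forced to $e=0$ and excluded by $a>1$), which is what makes the chosen hypotheses on $Y$ exactly the right ones.
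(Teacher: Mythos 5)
Your proof is correct, and its skeleton is the same as the paper's: a destabilizing sub-line bundle $\mo_X(D)\subset\mg$ either lies in $\mo_X(-2F)$ (hence is not destabilizing) or maps nontrivially to $\mi_Y(2F)$, producing a nonzero section of $\mi_Y(2F-D)$ that must be excluded using the slope inequality and the three vanishing hypotheses on $Y$; your Cayley--Bacharach step for the last clause is literally the paper's ($H^0(K_X(4F))=0$ makes the condition vacuous). The genuine divergence is the mechanism that makes the case analysis finite. The paper uses only the slope constraint $(2F-D).H\le 2$, whose solution set for $e\neq0$ contains $2F-D=mG$ with $m\le 2/(a-e)$ and $F+nG$ with $n\le 1/(a-e)$ --- a list that is unbounded as $a$ approaches $e$ --- and disposes of this tail by rigidity of the negative section, $h^0(\mo_X(mG))=h^0(\mo_X(G))=1$, so that a section of $\mi_Y(mG)$ forces $Y\subset mG$. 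You instead saturate the destabilizing subsheaf, identify the quotient as $\mi_W(-D)$, and extract the discriminant bound $D^2\ge-4$ from $c_2(\mg)=4$, which together with the slope bound pins the candidates to the absolute list $(m,\beta)\in\{(1,1),(1,2),(2,1)\}$, uniformly in $a$ and $e$. Besides uniformity, your route buys genuine generality: the paper's rigidity step implicitly passes from $Y\subset mG$ to a contradiction with $H^0(\mi_Y(3G))=0$, which is valid for reduced $Y$ (the case actually used in the proof of Theorem \ref{gth3}, where $Y$ is four distinct points) but can fail for non-reduced $Y\in X^{[4]}$: for $e\geq1$ the curvilinear scheme with local ideal $(x-\alpha t,\,t^4)$, $\alpha\neq0$, where $t$ cuts $G$ and $x$ cuts the fiber, satisfies all three hypotheses yet is contained in $4G$ and not in $3G$, so when $a-e\le 1/2$ the class $4G$ lies in the paper's enumeration range and the vanishing $H^0(\mi_Y(2F+P))=0$ asserted there is false. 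The lemma nevertheless holds for such $Y$, and it is exactly your discriminant bound that shows why: a section of $\mi_Y(4G)$ cannot arise from a saturated subsheaf of $\mg$, since the quotient's zero-dimensional locus would need negative length. In exchange, the paper's argument is shorter and entirely sufficient for the reduced $Y$ appearing in its application.
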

\begin{proof}The proof is analogous to Lemma 6.27 in \cite{GY}.  Since $\mg$ is locally free, to show $\mg$ is slop-stable it is enough to show $H^0(\mg\otimes \mo_X(P))=0$ for any $P\in \Pic(X)$ and $P.H\leq 0$.  By (\ref{secon}), It is enough to show $H^0(\mo_X(-2F+P))=H^0(\mi_Y(2F))=0$ for all $P.H\leq 0$.  It is obvious that $H^0(\mo_X(-2F+P))=0$ since $(-2F+P).H<0$.  If $H^0(\mi_Y(2F+P))\neq 0$, then $H^0(\mo_X(2F+P))\neq 0$.  On the other hand, $(2F+P).H\leq 2$.  If $e=0$, then $2F+P=2G,2F$ or $G+F$ and $H^0(\mi_Y(2F+P))=0$.  If $e\neq0$, then $H^0(\mo_X(nG))\cong H^0(\mo_X(G))\cong \mathbb{C}$ for all $n\geq1$ and $H^0(\mo_X(F+nG))\cong H^0(\mo_X(F+G))$ for all $n\geq 1$.  $2F+P=2F$, $mG$ with $m\leq \frac2{a-e}$ or $F+nG$ with $n\leq\frac1{a-e}$.  Hence $H^0(\mi_Y(F+P))=0$ and hence $\mg$ is slop stable.  

Cayley-Bacharach condition is fulfilled by $H^0(K_X(4F))=0$, hence we can always find a locally free $\mg$ in (\ref{secon}).
\end{proof}
\begin{flushleft}{\textbf{Acknowledgments.}} I would like to thank T. Abe for inviting me to a mini-workshop at RIMS in Kyoto, where I started to write the paper. 
\end{flushleft}

\end{document}